\theoremstyle{definition}
\newtheorem{rem}[subsubsection]{Remark}
\theoremstyle{plain}
\newtheorem{prop}[subsubsection]{Proposition}
\newtheorem{thm}[subsubsection]{Theorem}
\newtheorem{lem}[subsubsection]{Lemma}
\newtheorem{cor}[subsubsection]{Corollary}
\newcommand{\mbf}{\mathbf}
\newcommand{\mfk}{\mathfrak}
\newcommand{\mscr}{\mathscr}
\newcommand{\mcal}{\mathcal}
\newcommand{\mbb}{\mathbb}
\newcommand{\mrm}{\mathrm}
\newcommand{\A}{\mathfrak a}
\newcommand{\Aa}{\mathfrak a}
\newcommand{\B}{\mathfrak b}
\newcommand{\C}{\mathfrak c}
\newcommand{\D}{\mathfrak  d}
\newcommand{\Hd}{\mbf H_d}
\newcommand{\M}{\mathfrak m}
\newcommand{\z}{\mathfrak z}
\newcommand{\ma}{\mathfrak a}
\newcommand{\mb}{\mathfrak b}
\newcommand{\mc}{\mathfrak c}
\newcommand{\md}{\mathfrak d}
\newcommand{\m}{\mathfrak m}
\newcommand{\p}{ {\rm  p}}
\newcommand{\ro}{\mrm{ro}}
\newcommand{\co}{\mrm{co}}
\newcommand{\ur}{{\rm ur}}
\newcommand{\wt}{\widetilde}
\newcommand{\sgn}{{\rm sgn}}
\newcommand{\T}{\tau}
\newcommand{\U}{\mbf U}
\newcommand{\End}{\mrm{End}}
\newcommand{\nc}{\newcommand}
\nc{\redtext}[1]{\textcolor{red}{#1}}
\nc{\bluetext}[1]{\textcolor{blue}{#1}}
\nc{\greentext}[1]{\textcolor{green}{#1}}
\nc{\yl}[1]{\redtext{From yq: #1}}
\nc{\zb}[1]{\redtext{From zb: #1}}
\title[Geometric Schur Duality, II]{Geometric Schur Duality of  Classical Type, II}
\author{Zhaobing Fan}
\author{Yiqiang Li}
\address{Department of Mathematics\\ University  at Buffalo, SUNY\\Buffalo, NY 14260}
\email{zhaobing@buffalo.edu, yiqiang@buffalo.edu}
\date{\today}
\keywords{Iwahori-Hecke algebra of type $\mbf D$, flag variety of type $\mbf D$, Schur-type duality,  canonical basis}
\subjclass{17B37, 14L35, 20G43}
\begin{document}

\begin{abstract}
We establish  algebraically and geometrically  a duality between the  Iwahori-Hecke algebra of type $\mbf D$ and two  new  quantum algebras arising from the geometry of $N$-step isotropic  flag varieties of type $\mbf D$.
This  duality is  a type $\mbf D$ counterpart  of  the Schur-Jimbo duality of type $\mbf A$ and the Schur-like duality of type $\mbf B/\mbf C$ discovered by  Bao-Wang.
The new  algebras play  a role in the type $\mbf D$ duality  similar to the modified  quantum $\mathfrak{gl}(N)$ in type $\mbf A$, and the modified coideal subalgebras of 
quantum $\mathfrak {gl}(N)$ in type $\mbf B/\mbf C$.
We construct canonical bases for  these two  algebras.
\end{abstract}

\maketitle

\tableofcontents

\section{Introduction}

Let $G$ be a classical linear algebraic group over an algebraically closed field.
One of the milestones  in geometric representation theory 
is the geometric  realization of the associated Iwahori-Hecke algebra of $G$, by using 
the bounded derived category of  $G$-equivariant constructible sheaves on the  product variety of two copies  of the complete flag variety of $G$. 
Via this realization, many problems related to the Iwahori-Hecke algebra of $G$ are solved. For example,  
the positivity conjecture for the structure constants of  the Kazhdan-Lusztig bases (\cite{KL79}) are settled by interpreting the basis elements as the (shifted)  intersection cohomology complexes attached to  $G$-orbit closures in the product variety.

One may wonder if the geometric approach can be adapted to study other objects in representation theory, besides Iwahori-Hecke algebras. 
Indeed, a  modification by replacing the adjective `complete'  in the construction by `partial'  already yields highly nontrivial results, as we explain in the following.

If $G$ is of type $\mbf A$, i.e., $G=\mrm{GL}(d)$, and  the complete flag variety is replaced  by the  $N$-step partial flag variety of $\mrm{GL}(d)$ with $N$ bearing not relation to $d$,  then an  analogous construction  provides  a geometric realization of the $v$-Schur quotient  of the  quantum $\mathfrak{ gl}(N)$  in the classic work ~\cite{BLM90}.
Moreover,  the quantum $\mathfrak{gl}(N)$ can then be realized in the projective limit of the $v$-Schur quotients (as $d$ goes to infinity). 
Remarkably, an idempotented version of quantum $\mathfrak{gl}(N)$ is  discovered inside the projective limit as well admitting a canonical basis.
The role of the canonical basis for  the modified quantum $\mathfrak{gl}(N)$ is similar to that  of  Kazhdan-Lusztig bases  for  Iwahori-Hecke algebras.
Subsequently, the Schur-Jimbo duality,
as a bridge  connecting the Iwahori-Hecke algebra of $\mrm{GL}(d)$ and (modified) quantum $\mathfrak{gl}(N)$,
is realized geometrically by considering the product variety of the  complete flag variety and the $N$-step partial flag variety of $\mrm{GL}(d)$ in ~\cite{GL92}.
The modified quantum $\mathfrak{sl}(N)$ (a variant  of quantum $\mathfrak{gl}(N)$)  and its canonical basis are further categorified in the works  ~\cite{L10} and  ~\cite{KL10}, which play a fundamental role in higher representation theory and the categorification of knot invariants.

If $G$ is of type $\mbf B /\mbf C$, i.e., $G=\mrm{SO}(2d+1)/\mrm{SP}(2d)$, and the  variety involved  is replaced
by the  $N$-step isotropic flag variety of $\mrm{SO}(2d+1)/\mrm{SP}(2d)$, then one gets a geometric realization of the modified forms of
 two coideal subalgebras $\U^{\imath}$ and $\U^{\jmath}$ of quantum $\mathfrak {gl}(N)$ in ~\cite{BKLW13} by mimicking the approach in ~\cite{BLM90}.
Moreover, the  canonical bases of these modified coideal subalgebras are constructed and studied  for the first time.  
 Along the way,
a duality of Bao-Wang in ~\cite{BW}  relating  the (modified) coideal subalgebras and the Iwahori-Hecke algebra
 of type $\mbf B/ \mbf C$  associated to
$\mrm{SO}(2d+1)/\mrm{SP}(2d)$ is also geometrically realized  in a similar manner as the  type-$\mbf A$ case.
(See also \cite{G97} for a duality closely related to the  duality of Bao-Wang.)
The canonical basis theory for  these coideal subalgebras is initiated in the seminal work ~\cite{BW}, 
and is used substantially to give simultaneously a new formulation of  the Kazhdan-Lusztig conjecture of type $\mbf B/\mbf C$ on the irreducible character problem and the  resolution of  the analogous problem for the ortho-symplectic Lie superalgebras.

To this end, it is compelling to ask what happens to the remaining classical case: $G=\mrm{SO}(2d)$ of type $\mbf D$.
The purpose of this paper is to provide an answer to this question, as a sequel to ~\cite{BKLW13}.
More precisely, we obtain   two quantum algebras $\mcal K$ and $\mcal K^m$ via  the  geometry of
the $N$-step isotropic flag variety of type $\mbf D$ and a stabilization process following ~\cite{BLM90} and ~\cite{BKLW13}.
We show that both algebras  possess three distinguished bases, i.e., the standard, monomial
and canonical bases,  similar to the results in type $\mbf{ABC}$.
We further establish new dualities between these two algebras and the Iwahori-Hecke algebra of type $\mbf D$ attached to $\mrm{SO}(2d)$ algebraically and geometrically.

Unlike type $\mbf{ABC}$, the algebras $\mcal K$ and $\mcal K^m$ are not  modified forms of some known quantum algebras in literature, even though
they resemble the modified forms $\dot{\U}^{\imath}$, $\dot{\U}^{\jmath}$ of coideal subalgebras of quantum $\mathfrak{gl}(N)$.
It is natural to ask for a presentation of the two algebras by generators and relations. 
We have a complete answer for the algebra $\mcal K^m$, and partial results for $\mcal K$. 
We show that the algebra $\mcal K^m$ admits  defining relations similar to those of $\dot{\U}^{\imath}$, but with the size of the set of idempotent generators doubled,
after extending the underlying ring  to
the field  of rational functions.  
Despite all the similarities, we caution the reader that   $\dot{\U}^{\imath}$  is not a subalgebra of $\mcal K^m$.
The presentation   for $\mathcal K^m$ is obtained by showing that (the complexification of) $\mcal K^m$   is isomorphic to the modified form of
a new unital associative  algebra $\U^m$ containing  the coideal subalgebra $\U^{\imath}$ and two additional  idempotents. The appearance of the new idempotents 
reflects  the geometric fact that there are two connected components for maximal isotropic Grassmannians in the  type $\mbf D$ geometry. 
As for the bigger algebra $ \mcal K$,  we formulate another new unital associative  algebra $\U$ containing   the coideal subalgebra $\U^{\jmath}$ and three extra idempotents,
and expect its modified form to be isomorphic to $\mcal K$  after a suitable field extension.
As an  evidence in support of this expectation, we show that $\U$ and  the Iwahori-Hecke algebra of type $\mbf D$
satisfy a double centralizer property.
Notice  that the commuting actions between $\U^{\imath}$, $\U^{\jmath}$
and  the Iwahori-Hecke algebra of type $\mbf D$ are first observed in ~\cite[7.8]{ES13b} (see also ~\cite{ES13a}), so this result can be thought of as an enhancement of those in {\it loc. cit.}

As an application, we expect that  the type-$\mbf D$ duality and the canonical basis theory for the new algebras $\mcal K$ and $\mcal K^m$ developed in this paper will   shed light on  
the type-$\mbf D$ problems similar to those addressed in  ~\cite{BW}, currently under investigation by H. Bao.

Since our results are governed in principle  by  the (parabolic) Kazhdan-Lusztig polynomials of type $\mbf D$, they are obviously different from those in type $\mbf {ABC}$ in 
~\cite{BLM90} and ~\cite{BKLW13}.
Furthermore, the geometry of type $\mbf D$ is more challenging to handle.
In particular, there  are mainly three new technical barriers  in our type $\mbf D$  setting that we overcomed.
The first one is that there are two connected components for the maximal isotropic Grassmannian associated to $\mrm{SO}(2d)$. 
This forces us to parameterize the $\mrm{SO}(2d)$-orbits by using $signed$ $matrices$ instead of matrices in type $\mbf{ABC}$.
The second one is  that the number of isotropic lines in a given quadratic space of even dimension  over a finite field  depends on its isometric class, 
we choose  to work in the case when the group is split, which has a remarkable hereditary property (see Lemma ~\ref{lem1}).
The last one is that during  the stabilization process, one can not subtract/add  by an (even) multiple of the identity matrix as in
{\it loc. cit.}, because the signs of the matrices may change. To circumvent this difficulty, we subtract/add  an even multiple of
a matrix obtained by changing the middle entry to be zero in the diagonal of  the identity matrix.
All these factors make  the computations and arguments more involved than those in previous
cases.

As  this paper provides a complete picture  for the cases of the classical groups,
the problem of whether a similar picture exists for exceptional groups is still wide open.
Meanwhile, for $G$ replaced by a loop group of type $\mbf A$, 
there exists a similar geometric theory involving affine Iwahori-Hecke algebras of type $\mbf A$ and affine quantum $\mathfrak{gl}(N)$ in ~\cite{L99}, ~\cite{GV93}, ~\cite{SV00} and ~\cite{M10}.
The investigation for $G$ being  a loop group of type $\mbf{BCD}$ will be presented in a separate article. 

{\bf Acknowledgement.}
Y. Li thanks Huanchen Bao, Jonathan  Kujawa and Weiqiang Wang for fruitful collaborations, which pave the way for the current project.
We  thank Weiqiang Wang for comments on an earlier version of this article. 
Y. Li is partially supported by the NSF grant DMS 1160351.


\section{Schur dualities of type $\mbf D_d$}

In this section, we shall introduce the algebras $\U$ and $\U^m$, and formulate algebraically
the dualities between these two algebras and the  Iwahori-Hecke algebras of type $\mbf D_{d}$.

\subsection{The algebra $\U$ and the first duality}
\label{U-first}
Let $[a, b]$ denote the set of integers between $a$ and $b$.
Let $\U$ be the unital associative $\mbb Q(v)$-algebra generated by the symbols
$$
E_i, F_i, H_a^{\pm 1},  \ \mbox{and} \  J_{\alpha}, \quad
\forall i\in [1,n], a\in [1, n+1],  \alpha \in \{ +,0,  -\},
$$
satisfying the following relations.
\allowdisplaybreaks
\begin{eqnarray*}
&& J_++J_0+J_-=1,\quad J_{\alpha}J_{\beta}=\delta_{\alpha,\beta}J_{\alpha}, \\
&&  J_{\alpha}H_a=H_aJ_{\alpha},\\
&&J_{\pm}E_i=(1-\delta_{in})E_iJ_{\pm},\\
&& F_iJ_{\pm}= (1-\delta_{in})J_{\pm}F_i,\\
&&J_{\pm}F_nE_n-F_nE_nJ_{\mp}=\frac{H_n^{-1}H_{n+1}-H_nH_{n+1}^{-1}}{v-v^{-1}}(J_{\pm}-J_{\mp}),\\
&&H_aH_b=H_b H_a,\quad H_aH_a^{-1}=1,\\
&&H_aE_i=v^{ \delta_{a i} -\delta_{a,i+1}  - \delta_{a,n+1} \delta_{i,n}}E_iH_a, \\
&&H_aF_i=v^{- \delta_{ai} + \delta_{a,i+1} + \delta_{a,n+1}\delta_{i,n}}F_iH_a,\\
&& E_iF_j-F_jE_i=\delta_{ij}\frac{H_iH_{i+1}^{-1}-H_i^{-1}H_{i+1}}{v-v^{-1}},  \quad \quad \,   {\rm if} \ (i, j)\neq (n,n),\\
&&E_iE_j=E_jE_i,\quad F_iF_j=F_jF_i,\quad \hspace{2.1cm}  {\rm if}\ |i-j|>1,\\
&&E_i^2E_j-(v+v^{-1})E_iE_jE_i+E_jE_i^2=0,\quad \hspace{.51cm}  {\rm if}\ |i-j|=1,\\
&&F_i^2F_j-(v+v^{-1})F_iF_jF_i+F_jF_i^2=0,\quad \hspace{.71cm}  {\rm if}\ |i-j|=1,\\
&&E^2_nF_n+F_nE_n^2=(v+v^{-1})(E_nF_nE_n-E_n(vH_nH_{n+1}^{-1}+v^{-1}H_n^{-1}H_{n+1})),\\
&& F_n^2E_n+E_nF_n^2=(v+v^{-1})(F_nE_nF_n-(vH_nH_{n+1}^{-1}+v^{-1}H_n^{-1}H_{n+1})F_n),
\end{eqnarray*}
where $i, j\in [1,n]$, $a, b\in [1,n+1]$ and $\alpha\in \{ +, 0, -\}$.
Notice that the subalgebra $\mbf U^{\jmath}$ generated by $E_i$, $F_i$ and $H_a^{\pm 1}$ for any $i\in [1, n]$, $a\in[1,n+1]$ is the coideal subalgebra in the same notation in ~\cite{BKLW13}. See also ~\cite{Le02} and ~\cite{ES13b}.

Let $\mbf V$ be a vector space over $\mbb Q(v)$ of dimension $2n+1$.
We fix a basis $( \mbf v_i)_{ 1\leq i\leq 2n+1}$ for $\mbf V$.
Let $\mbf V^{\otimes d}$ be the $d$-th tensor space of $\mbf V$.
Thus we have a basis
$(\mbf v_{r_1}\otimes \cdots\otimes \mbf v_{r_d})$,  where $r_1,\cdots, r_d\in [1, 2n+1]$,  for the tensor space $\mbf V^{\otimes d}$.

For a sequence $\mbf r=(r_1,\cdots, r_d)$, we write $\mbf v_{\mbf r}$ for $\mbf v_{r_1}\otimes \cdots\otimes \mbf v_{r_d}$.
For a sequence $\mbf r=(r_1,\cdots, r_d)$, it defines uniquely a sequence of length $2d$ of the form
$$\tilde{ \mbf r}=(r_1, \cdots, r_d, 2n+2-r_d, 2n+2-r_{d-1},\cdots, 2n+2-r_1)$$
such that $r_i+r_{2d+1-i} =2n+2$.
We shall  identify $\mbf r$ with $\tilde{\mbf r}$ in what follows.

For a sequence $\mbf r$ and a fixed integer $p\in [1, 2d]$, we define  the sequence $\mbf r'_p$ and $\mbf r''_p$ by
\[
(\mbf r'_p)_j=
\begin{cases}
r_j, & j\neq p, 2d+1-p,\\
r_p +1, & j=p,\\
r_{2d+1 -p}-1, &j=2d+1-p,
\end{cases}
\quad
\mbox{and}
\quad
(\mbf r''_p)_j=
\begin{cases}
r_j, & j\neq p, 2d+1-p,\\
r_p -1, & j=p,\\
r_{2d+1 -p}+1, &j=2d+1-p.
\end{cases}
\]

\begin{lem}
\label{U-action-1}
We have  a left  $\U$-action on $\mbf V^{\otimes d}$ defined by, for any $i\in [1, n]$, $a\in [1, n+1]$,
 \begin{align}
 \label{U-action-1-formula}
E_i\cdot \mbf v_{\mbf r} &=v^{-\sum_{1\leq j\leq 2d} \delta_{i+1,r_j}}
\sum_{1\leq p\leq 2d: r_p=i}v^{2\sum_{j<p}\delta_{i+1,r_j}} \ \mbf v_{\mbf r'_p}, \nonumber\\
F_i \cdot \mbf v_{\mbf r}  &=v^{-\sum_{1\leq j\leq 2d} \delta_{i,r_j}}
\sum_{1\leq p\leq 2d:r_p=i+1}v^{2\sum_{j>p}\delta_{i, r_j}} \mbf v_{\mbf r''_p},  \nonumber \\
H_a^{\pm 1} \cdot \mbf{v_r}  &=v^{\mp \sum_{1\leq j\leq 2d} \delta_{a, r_j}  } \mbf{v_r}, \nonumber\\
J_{+}\cdot \mbf v_{\mbf r} &=
\begin{cases}
\mbf v_{\mbf r}, & r_i\neq n+1,\forall i, \#\{ j\in [1,d]| r_j\geq n+1\}  \ is \ even,\\
0, &\mbox{otherwise},
\end{cases}\nonumber \\
J_{-}\cdot \mbf v_{\mbf r} &=
\begin{cases}
\mbf v_{\mbf r}, & r_i\neq n+1,\forall i, \#\{ j\in [1,d]| r_j\geq n+1\}  \ is \ odd,\\
0, &\mbox{otherwise},
\end{cases}
\nonumber\\
J_{0}\cdot \mbf v_{\mbf r} &=
\begin{cases}
\mbf v_{\mbf r}, & r_i= n+1,\ \mbox{for some} \ i,\\
0, &\mbox{otherwise}.
\end{cases}
\nonumber
\end{align}
\end{lem}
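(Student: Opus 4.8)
The plan is to verify directly that the formulas in \eqref{U-action-1-formula} respect every defining relation of $\U$, by reducing as much as possible to facts already available from the type $\mbf{B/C}$ setting. Observe first that the generators $E_i,F_i,H_a^{\pm1}$ act by exactly the same formulas as the coideal-subalgebra action of $\mbf U^{\jmath}$ on $\mbf V^{\otimes d}$ in \cite{BKLW13} (under the identification $\mbf r\leftrightarrow\tilde{\mbf r}$, so that $\mbf v_{\mbf r}$ is really a vector in the tensor space of length $2d$ with the palindromic constraint $r_i+r_{2d+1-i}=2n+2$). Hence all relations of $\U$ that involve only $E_i,F_i,H_a^{\pm1}$ — the Serre relations among the $E_i$'s and among the $F_i$'s, the $H$-commutation and $H$-weight relations, the Chevalley relation for $(i,j)\neq(n,n)$, and the two quartic relations at the node $n$ — hold automatically by the corresponding lemma of \cite{BKLW13}. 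So the only new work concerns the idempotents $J_+,J_0,J_-$: one must check (i) they are orthogonal idempotents summing to $1$ on $\mbf V^{\otimes d}$; (ii) they commute with each $H_a$; (iii) the mixed relations $J_\pm E_i=(1-\delta_{in})E_iJ_\pm$ and $F_iJ_\pm=(1-\delta_{in})J_\pm F_i$; and (iv) the crucial relation
\[
J_{\pm}F_nE_n-F_nE_nJ_{\mp}=\frac{H_n^{-1}H_{n+1}-H_nH_{n+1}^{-1}}{v-v^{-1}}(J_{\pm}-J_{\mp}).
\]

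For (i) and (ii): the three conditions defining $J_+,J_0,J_-$ — "$n+1$ does not occur among $r_1,\dots,r_d$ and $\#\{j\in[1,d]:r_j\geq n+1\}$ is even", resp. odd, resp. "$n+1$ occurs" — are visibly mutually exclusive and exhaustive on basis vectors, giving orthogonality and $J_++J_0+J_-=1$; and since $H_a$ acts diagonally on the basis, it commutes with each $J_\alpha$. For (iii): applying $E_i$ with $i<n$ changes some $r_p=i$ into $i+1$ and the mirror entry $r_{2d+1-p}=2n+2-i$ into $2n+1-i$; since $i<n$ we have $i+1\leq n$ and $2n+1-i\geq n+1$ but the mirror bookkeeping is paired, so neither the "contains $n+1$" condition nor the parity of $\#\{j\leq d: r_j\geq n+1\}$ is affected — this is where one uses $i\neq n$. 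Hence $J_\pm$ and $E_i$ commute for $i<n$, and similarly for $F_i$; the $\delta_{in}$ factor on the right-hand side is there precisely because for $i=n$ the operator $E_n$ moves an entry across the threshold $n\leftrightarrow n+1$ and so does \emph{not} commute with the $J$'s. For (iv): restrict to a fixed $\mbf v_{\mbf r}$ and note that $F_nE_n$ preserves the "no $n+1$" locus but, acting there, it raises some $r_p=n$ to $n+1$ and then lowers it back, so on vectors with $r_i\ne n+1$ it acts diagonally; on such a vector the scalar by which $F_nE_n$ acts is computed from the two displayed sums, and one checks it equals $\sum_{j}\delta_{n,r_j}\cdot(\text{$v$-power})$ matching the right side; meanwhile the parity swap $+\leftrightarrow-$ on the left is exactly what $F_nE_n$ does when it temporarily crosses the threshold, which produces the $(J_\pm-J_\mp)$ on the right. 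On vectors with some $r_i=n+1$ both sides vanish (the $J$'s kill them and $H_n^{-1}H_{n+1}-H_nH_{n+1}^{-1}$ gives $v^{\pm1}$ cancellations — one should double-check this case, but it is routine).

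The step I expect to be the main obstacle is relation (iv): unlike in type $\mbf{B/C}$, the presence of the middle index and the two parity sectors $J_+,J_-$ means the identity is genuinely new, and one must carefully track the $v$-exponents coming from the factors $v^{-\sum\delta_{i+1,r_j}}$, $v^{2\sum_{j<p}\delta_{i+1,r_j}}$ in $E_n$ and the analogous ones in $F_n$, including the contributions of the mirrored entries in $\tilde{\mbf r}$ at positions $p$ and $2d+1-p$. The bookkeeping is the same in spirit as the verification of the $(i,j)=(n,n)$ relations in \cite{BKLW13}, but the answer must be reorganized so that the diagonal part lands on $\frac{H_n^{-1}H_{n+1}-H_nH_{n+1}^{-1}}{v-v^{-1}}$ and the off-diagonal (threshold-crossing) part produces the sign $(J_\pm-J_\mp)$; once the two quartic relations at node $n$ are known to hold (inherited from $\mbf U^{\jmath}$), the remaining consistency is forced, so in practice it suffices to verify (iv) on a generic basis vector and invoke linearity.
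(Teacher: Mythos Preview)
Your approach is correct in outline but takes a genuinely different route from the paper. The paper does \emph{not} verify the relations directly on $\mbf V^{\otimes d}$. Instead it realizes $\mbf V^{\otimes d}$ geometrically via the isomorphism~(\ref{V-V}) with $\mbb Q(v)\otimes_{\mcal A}\mcal V$, where $\mcal V=\mcal A_G(\mscr X\times\mscr Y)$ for the type~$\mbf D$ isotropic flag variety $\mscr X$. The generators $E_i,F_i,H_a^{\pm1},J_\alpha$ are defined as explicit $G$-invariant functions in the convolution algebra $\mcal S=\mcal A_G(\mscr X\times\mscr X)$; the defining relations of $\U$ are then checked \emph{inside} $\mcal S$ by convolution computations (Proposition~\ref{prop3}), and the formulas of the lemma are obtained by computing the convolution action of $\mcal S$ on $\mcal V$ (Corollary~\ref{cor9}). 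So the logical order is reversed: the paper first proves the relations geometrically, then reads off the action; you propose to write down the action and then verify the relations algebraically.

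Each approach has advantages. Yours is self-contained and avoids setting up the flag-variety machinery; it also makes transparent that all relations not involving $J_\alpha$ are inherited from the $\U^{\jmath}$-action of \cite{BKLW13}. The paper's route, on the other hand, yields more: the convolution-algebra realization is exactly what is later used to build monomial and canonical bases and to establish the double-centralizer property, so proving Proposition~\ref{prop3} is not redundant work. One small correction to your sketch of~(iv): on a vector $\mbf v_{\mbf r}$ with no entry equal to $n+1$, the operator $F_nE_n$ is \emph{not} diagonal --- it has both a diagonal summand (returning to $\mbf v_{\mbf r}$) and off-diagonal summands (the ``swapped'' vectors with $r_p=n$ and $r_{2d+1-p}=n+2$ interchanged, which lie in the opposite parity sector). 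The relation holds because in $J_{\pm}F_nE_n-F_nE_nJ_{\mp}$ the off-diagonal parts cancel, leaving precisely the diagonal scalar $[m]_v$ (with $m=\#\{j:r_j=n\}$) times $\pm\mbf v_{\mbf r}$, which matches the right-hand side. With that bookkeeping fixed, your verification goes through.
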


The lemma  follows from  (\ref{V-V}),
Proposition ~\ref{prop3},  and Corollary ~\ref{cor9}.

Recall that the Iwahori-Hecke algebra $\Hd$ of type $\mbf D_d$ is a unital associative algebra over $\mathbb Q(v)$
generated by $\T_i$ for $i\in [1, d]$ and subject to the following relations.
\begin{equation*}
\begin{split}
& \T_i^2   =(v^2-1) \T_i + v^2 , \quad 1\leq i \leq d,\\
& \T_j \T_{j+1} \T_j  = \T_{j+1} \T_j \T_{j+1},\; \,  1\leq j\leq d-2,\\
& \T_i \T_j  =\T_j \T_i,\ \hspace{1.9cm} 1 \leq j\leq i-2\leq d-3,\\
& \T_d \T_l  = \T_l \T_d,\; \hspace{2cm}   l\neq d-2.\\
& \T_d \T_{d-2} \T_d  = \T_{d-2} \T_d \T_{d-2}.
\end{split}
\end{equation*}

\begin{lem}
\label{H-action-1}
We have a right $\Hd$-action on $\mbf V^{\otimes d}$ given by, for $1\leq j\leq d-1$,
\begin{eqnarray}
&\mbf v_{r_1 \dots r_{2d}} \T_j =
\begin{cases}
\mbf v_{r_1\dots r_{j-1} r_{j+1} r_j\dots r_{2d-j} r_{2d-j-1} r_{2d-j+1} \dots r_{2d}} ,  &  r_j < r_{j+1};\\
v^2 \mbf v_{r_1\dots r_{2d}}, &  r_j = r_{j+1};\\
(v^2-1) \mbf v_{r_1\dots r_{2d}} + v^2 \mbf v_{r_1 \dots r_{j-1} r_{j+1} r_j  \dots  r_{2d-j} r_{2d-j-1} r_{2d-j+1} \dots r_{2d}}, & r_j> r_{j+1}.
\end{cases}
\vspace{5pt}\\
&\hspace{12pt} \mbf v_{r_1 \dots r_{2d}} \T_d =
\begin{cases}
\mbf v_{r_1\dots r_{d-2} r_{d+1} r_{d+2}r_{d-1}r_d r_{d+3}  \dots r_{2d}} ,  & r_{d-1}+r_d < N+1;\\
v^2 \mbf v_{r_1\dots r_{2d}}, &  r_{d-1}+r_d =N+1;\\
(v^2-1) \mbf v_{r_1\dots r_{2d}} + v^2 \mbf v_{r_1\dots r_{d-2} r_{d+1} r_{d+2}r_{d-1}r_d r_{d+3}  \dots r_{2d}}, &r_{d-1}+r_d > N+1.
\end{cases}
\end{eqnarray}
Here we identify the sequence $\mbf r$ with the associated sequence $\tilde{ \mbf r}$.
\end{lem}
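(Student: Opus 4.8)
The action of the generators $\T_1,\dots,\T_{d-1}$ is, after the identification $\mbf r\leftrightarrow\tilde{\mbf r}$, nothing but the classical (Jimbo) type-$\mbf A_{d-1}$ Hecke action on the first $d$ tensor factors of $\mbf v_{\mbf r}$: the ``mirrored'' transpositions on the slots $d{+}1,\dots,2d$ appearing in the formula are forced by the relation $r_i+r_{2d+1-i}=2n{+}2$ defining $\tilde{\mbf r}$ and carry no extra information. So the plan is to reduce everything to $\T_d$. Granting this, the quadratic relations for $\T_1,\dots,\T_{d-1}$, the braid relations $\T_j\T_{j+1}\T_j=\T_{j+1}\T_j\T_{j+1}$ ($1\le j\le d-2$) and the commutations $\T_i\T_j=\T_j\T_i$ ($|i-j|>1$) are precisely the statement that $\mbf V^{\otimes d}$ is the ordinary $\mbf A_{d-1}$ tensor module of Schur--Jimbo duality, which I would simply quote. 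The genuinely new content is carried by $\T_d$, and I would verify, in order: (a) $\T_d$ is well defined; (b) $\T_d^2=(v^2-1)\T_d+v^2$; (c) $\T_d\T_l=\T_l\T_d$ for $l\ne d-2$; and (d) $\T_d\T_{d-2}\T_d=\T_{d-2}\T_d\T_{d-2}$.

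For (a) it suffices to observe that, rewritten in the length-$d$ picture (here $N=2n+1$), $\T_d$ sends $(\dots,r_{d-1},r_d)$ to $(\dots,\,N{+}1{-}r_d,\,N{+}1{-}r_{d-1})$ with a coefficient depending only on the sign of $r_{d-1}+r_d-(N{+}1)$; since $r_{d-1},r_d\in[1,N]$ one has $N{+}1{-}r_{d-1},N{+}1{-}r_d\in[1,N]$, so the output is again of the form $\widetilde{(\,\cdot\,)}$. For (b) I would apply $\T_d$ twice to $\mbf v_{\mbf r}$: one application replaces $r_{d-1}+r_d$ by $2(N{+}1)-(r_{d-1}+r_d)$, hence negates $r_{d-1}+r_d-(N{+}1)$ (or preserves $0$), and the three cases telescope to $(v^2-1)\T_d+v^2$. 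For (c): when $l\le d-3$ the operators $\T_l$ and $\T_d$ act on disjoint sets of tensor slots, so there is nothing to check; when $l=d-1$ both act only on the two factors carrying $r_{d-1},r_d$, and I would check $\T_{d-1}\T_d=\T_d\T_{d-1}$ directly on $\mbf v_a\otimes\mbf v_b$ --- the nine cases indexed by the signs of $a-b$ and $a+b-(N{+}1)$ collapse pairwise under $x\mapsto N{+}1{-}x$, matching the fact that $s_{d-1},s_d$ are non-adjacent nodes of the $\mbf D_d$ diagram.

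Step (d) is the main obstacle: it is the one relation coupling the ``straight'' Jimbo operator $\T_{d-2}$ with the ``reflected'' operator $\T_d$, and it has by far the largest case count. Brute force is feasible --- restrict to the span of the three factors carrying $a=r_{d-2},\,b=r_{d-1},\,c=r_d$, compare the two sides on $\mbf v_a\otimes\mbf v_b\otimes\mbf v_c$, and organize by the signs of $a-b,\ b-c,\ a-c,\ b+c-(N{+}1),\ a+b-(N{+}1)$, many cases again being identified by $x\mapsto N{+}1{-}x$ --- but I would prefer a structural shortcut. The cleanest is geometric: realize $\mbf V^{\otimes d}$ as the right convolution module over $\Hd$ built from $\mrm{SO}(2d)$-orbits on (an $n$-step isotropic flag variety) $\times$ (the complete isotropic flag variety $\mathcal F$), with $\Hd$ acting by convolution on the $\mathcal F$-factor; then well-definedness and \emph{all} relations of $\Hd$, (d) included, are automatic from associativity of convolution, and the only remaining task is to compute the convolution action --- a standard Bruhat-cell calculation whose sole subtlety, the two connected components of the maximal isotropic Grassmannian (cf.\ Lemma~\ref{lem1}), is exactly what produces the ``bar'' $r\mapsto N{+}1{-}r$ in the $\T_d$-formula. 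Alternatively, since $\mbf V^{\otimes d}$ also carries the type-$\mbf B_d$ Hecke action of \cite{BKLW13} with the special generator $\T_0$ acting on the $d$-th factor, one expects $\T_d$ to be the conjugate of $\T_{d-1}$ by $\T_0$ (mirroring $s_0s_{d-1}s_0=s_d$ in $W(\mbf B_d)$), which would deduce (b)--(d) from the already-known relations for $\T_{d-1}$ and the rank-two relation between $\T_0$ and $\T_{d-1}$. I would present the geometric argument as the actual proof and keep (a)--(c) as a cross-check.
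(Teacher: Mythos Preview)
Your chosen approach---the geometric one---is exactly what the paper does: it transports the problem through the isomorphism $\mbf V^{\otimes d}\cong\mbb Q(v)\otimes_{\mcal A}\mcal V$ of (\ref{V-V}), invokes Lemma~\ref{Geometric-Hecke} so that the $\Hd$-relations are automatic, and then computes the convolution action (Lemma~\ref{H-action-Y}), quoting \cite{GL92} for $\T_1,\dots,\T_{d-1}$ and handling $\T_d$ by reducing to $D=4$ and checking the finitely many cases for $n=1,2$ by hand. Your ``standard Bruhat-cell calculation'' for the $\T_d$-formula is thus carried out in the paper as an explicit small-rank enumeration rather than a general argument, but otherwise your plan and the paper's proof coincide.
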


This lemma follows from (\ref{V-V}) and  Lemmas ~\ref{Geometric-Hecke} and  ~\ref{H-action-Y}.

We now can state the first duality.

\begin{prop}
The left $\U$-action in Lemma ~\ref{U-action-1} and the right $\Hd$-action in Lemma ~\ref{H-action-1}  on $\mbf V^{\otimes d}$ are commuting.  They form a double centralizer for $n\geq d$, i.e.,
$$
\Hd \simeq \End_{\U} (\mbf V^{\otimes d})
\quad
\mbox{and}
\quad
\U\to \End_{\Hd} (\mbf V^{\otimes d}) \ \mbox{is surjective}.
$$
\end{prop}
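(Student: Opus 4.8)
The plan is to follow the well-trodden strategy for Schur–Jimbo-type dualities, adapted to type $\mbf D$. First I would verify the commutation of the two actions: since both are built by the same geometric recipe (convolution with $G$-orbit closures in products of flag and isotropic-flag varieties, as promised by the references to (\ref{V-V}) and the geometric lemmas), the $\U$-action comes from correspondences on the $N$-step isotropic flag side and the $\Hd$-action from correspondences on the complete-flag side, and these live on opposite factors of the double flag variety. Concretely one checks on the explicit formulas in Lemmas~\ref{U-action-1} and~\ref{H-action-1} that $E_i, F_i, H_a^{\pm 1}, J_\alpha$ each commute with $\T_j$ and $\T_d$; the $H_a$ and $J_\alpha$ cases are immediate since those operators are diagonal or near-diagonal in the basis $\mbf v_{\mbf r}$, while the $E_i, F_i$ versus $\T_j$ check is the standard Jimbo-type bookkeeping with the $v$-powers, and the new point is the interaction of $E_n, F_n$ with $\T_d$, where the ``folding'' of positions $d-1, d, d+1, d+2$ must be matched against the action at the middle index $n+1$. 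I would present this as a direct (if tedious) computation and relegate the worst of it to a lemma or to the cited geometric statements.

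Next, for the double centralizer when $n \geq d$, I would argue on both sides. For $\Hd \simeq \End_{\U}(\mbf V^{\otimes d})$: the tensor space $\mbf V^{\otimes d}$ decomposes, as a right $\Hd$-module, into the permutation-type modules indexed by the $W(\mbf D_d)$-orbits on the set of sequences $\mbf r \in [1,2n+1]^d$, i.e.\ by ``weights''; when $n \geq d$ there are enough coordinate values that every orbit type of a length-$d$ sequence with the relevant $\mbf D_d$ symmetry is realized, so $\mbf V^{\otimes d}$ contains (a copy isomorphic to) the regular representation of $\Hd$ as a direct summand. Faithfulness of the $\Hd$-action then gives $\Hd \hookrightarrow \End_{\U}(\mbf V^{\otimes d})$, and the reverse inclusion follows because any $\U$-endomorphism must preserve each weight space and commute with the $E_i, F_i$ that link adjacent weight spaces, forcing it to be given by right multiplication by a Hecke algebra element — the classical ``separation of variables'' argument. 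For the surjectivity $\U \to \End_{\Hd}(\mbf V^{\otimes d})$, I would invoke the geometric realization: $\End_{\Hd}(\mbf V^{\otimes d})$ is identified with the convolution algebra on the product of two copies of the $N$-step isotropic flag variety (the Schur-type algebra $\mcal S$ of this paper), and the image of $\U$ is shown to be all of it by exhibiting the standard/BLM-type generators of that convolution algebra as (products of) the images of $E_i, F_i, H_a, J_\alpha$ — exactly the stabilization/monomial-basis analysis that the paper sets up for $\mcal K$ and $\mcal K^m$.

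The main obstacle I expect is precisely the type-$\mbf D$ novelty flagged in the introduction: the presence of \emph{two} connected components of the maximal isotropic Grassmannian, which forces the signed-matrix parametrization and is why $\U$ carries the three idempotents $J_+, J_0, J_-$ rather than a single identity. In the double-centralizer argument this means the weight-space decomposition of $\mbf V^{\otimes d}$ must be refined by the parity/sign data recorded by $J_\pm$, and one must check that the $\T_d$-action mixes exactly the $J_+$ and $J_-$ sectors in the way dictated by the relation $J_\pm F_n E_n - F_n E_n J_\mp = \tfrac{H_n^{-1}H_{n+1} - H_n H_{n+1}^{-1}}{v - v^{-1}}(J_\pm - J_\mp)$; getting this matching right is what makes the surjectivity statement genuinely more delicate than in type $\mbf{ABC}$. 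A secondary technical point is the hypothesis $n \geq d$: one needs enough room in $[1, 2n+1]$ so that all $\mbf D_d$-weight orbits occur and so that the map to the convolution algebra is onto; I would isolate this as the place where the bound is used and check it does not interact badly with the middle coordinate $n+1$ (the $J_0$ sector).
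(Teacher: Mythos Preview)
Your proposal is essentially correct and in large part parallel to the paper's argument, but it is less economical in one respect.  For the surjectivity $\U\to\End_{\Hd}(\mbf V^{\otimes d})$ you do exactly what the paper does: identify $\End_{\Hd}(\mbf V^{\otimes d})$ with the convolution algebra $\mcal S=\mcal S_{\mscr X}$ and then show that the images of $E_i,F_i,H_a^{\pm1},J_\alpha$ generate $\mbb Q(v)\otimes_{\mcal A}\mcal S$ (this is Corollary~\ref{S-generator}, proved via the monomial basis of Theorem~\ref{thm1}).  The difference is in the other two pieces.  For commutation you propose a direct check on the formulas, and for $\Hd\simeq\End_{\U}(\mbf V^{\otimes d})$ you sketch a weight-space/``separation of variables'' argument.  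The paper bypasses both: commutation is automatic once one knows the $\U$-action factors through $\mcal S$ (convolution on the left factor commutes with convolution on the right factor of $\mscr X\times\mscr Y$), and the isomorphism $\Hd\simeq\End_{\U}(\mbf V^{\otimes d})$ follows immediately from $\End_{\mcal S}(\mcal V)\simeq\mcal H_{\mscr Y}$, which is the geometric double centralizer of Lemma~\ref{eq34} (cited to Pouchin~\cite{P09}).  Since you already invoke that geometric identification for one half, you get the other half for free once $\U\twoheadrightarrow\mcal S$ is established, making your separate algebraic argument redundant.

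Your hands-on route is not wrong, and it has the virtue of being more self-contained; but the ``separation of variables'' step is only sketched, and in type~$\mbf D$ the refinement by the $J_\pm,J_0$ sectors that you correctly flag would need to be carried out carefully before the argument is complete.  The paper's route avoids this entirely by leaning on the single black box~\cite{P09}.
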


The proposition follows from  the previous two lemmas, Lemma ~\ref{eq34},  Proposition ~\ref{prop3} and Corollary ~\ref{S-generator}.


\subsection{The algebra $\U^m$ and the second duality}
\label{Um}

Let $\U^m$ be an associative $\mbb Q(v)$-algebra with unit generated by the symbols
\[
E_i, F_i, H_a^{\pm 1}, T, J_{\alpha}, \quad \forall i\in [1, n-1], a\in [1, n], \alpha\in \{ +, -\},
\]
and subject to the following defining relations.
  \allowdisplaybreaks
\begin{eqnarray*}
&& J_++J_-=1,\quad J_{\alpha}J_{\beta}=\delta_{\alpha,\beta}J_{\alpha}, \\
&&  J_{\pm}H_a=H_aJ_{\pm},\quad J_{\pm}E_i=E_iJ_{\pm},\quad F_iJ_{\pm}= J_{\pm}F_i,\\
&&J_{\pm}T = TJ_{\mp},\\
&&H_aH_b=H_b H_a,\quad H_a H_a^{-1}=1,\\
&&H_aE_i=v^{-\delta_{a,i+1} + \delta_{a i} - \delta_{a,n+1}\delta_{i,n}}E_iH_a,\quad \\
&&H_aF_i=v^{- \delta_{ai}  + \delta_{a,i+1} +  \delta_{a,n+1}\delta_{i,n}}F_iH_a,\\
&&H_i T=TH_i,\\
&& E_iF_j-F_jE_i=\delta_{ij}\frac{H_iH_{i+1}^{-1}-H_i^{-1}H_{i+1}}{v-v^{-1}},\\
&&E_iE_j=E_jE_i,\quad F_iF_j=F_jF_i,\quad \hspace{2cm}  {\rm if}\ |i-j|>1,\\
&&E_i^2E_j-(v+v^{-1})E_iE_jE_i+E_jE_i^2=0,\quad  \quad {\rm if}  \ |i-j|=1,\\
&&F_i^2F_j-(v+v^{-1})F_iF_jF_i+F_jF_i^2=0,\quad \hspace{.6cm} {\rm if}\ |i-j|=1,\\
&&TE_i=E_iT,\ TF_i=F_iT,\quad \hspace{2.87cm}  {\rm if} \  i\leq n-2,\\
&&E_{n-1}^2T-(v+v^{-1}) E_{n-1}TE_{n-1}+TE_{n-1}^2=0,\\
&&F_{n-1}^2T- (v+v^{-1} F_{n-1}TF_{n-1}+TF_{n-1}^2=0,\\
&&T^2E_{n-1}-(v+v^{-1})TE_{n-1}T+E_{n-1}T^2=E_{n-1},\\
&&T^2F_{n-1}-(v+v^{-1})TF_{n-1}T+F_{n-1}T^2=F_{n-1}.
\end{eqnarray*}

Note that the subalgebra $\U^{\imath}$  generated by $E_i$, $F_i$, $H_a^{\pm 1}$ and $T$ is  the algebra in the same notation in ~\cite[5.3]{BKLW13}. See also ~\cite{Le02} and ~\cite{ES13b}.

Let $\mbf W$ be the subspace of $\mbf V$ spanned by the basis elements $\mbf v_i$ for $i\neq n+1$.
Its $d$-th tensor space  $\mbf W^{\otimes d}$ is naturally a subspace of $\mbf V^{\otimes d}$ spanned by the vectors $\mbf{v_r}$ such that $r_i\neq n+1$ for any $i$.
Then we see that $\mbf W^{\otimes d}$ is a vector space of dimension $(2n)^d$.

By Lemma ~\ref{H-action-1}, the $\Hd$-action on $\mbf V^{\otimes d}$ induces an $\Hd$-action on   $\mbf W^{\otimes d}$.
Moreover,

\begin{lem}
\label{U-action-2}
We have a $\U^m$-action on $\mbf W^{\otimes d}$ given by
the same formulae for $E_i$, $F_i$, $H_a^{\pm 1}$ and $J_{\pm}$ for $i\in [1, n-1]$ and $a\in [1, n]$ as in Lemma ~\ref{U-action-1}, together with
\[
T\cdot\mbf{v_r}= \left (F_nE_n + \frac{H_n H_{n+1}^{-1} - H_n^{-1} H_{n+1}}{v-v^{-1}} \right )\cdot \mbf{v_r}.
\]
\end{lem}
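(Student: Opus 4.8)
The plan is to deduce the statement from Lemma~\ref{U-action-1}, realizing every operator of $\U^m$ on $\mbf W^{\otimes d}$ as the restriction — or, in the case of $T$, a well-chosen composite of restrictions — of operators of $\U$ on $\mbf V^{\otimes d}$. One first checks that $\mbf W^{\otimes d}$ is stable under the listed operators: for $i\in[1,n-1]$ the index moves occurring in $E_i,F_i$ only ever exchange the value pairs $\{i,i+1\}$ and $\{2n+1-i,2n+2-i\}$, and neither contains $n+1$; the operators $H_a^{\pm 1}$ and $J_{\pm}$ are diagonal in the basis $(\mbf v_{\mbf r})$; and since every $\mbf v_{\mbf r}\in\mbf W^{\otimes d}$ has all $r_i\neq n+1$, the operator $J_0$ of Lemma~\ref{U-action-1} vanishes on $\mbf W^{\otimes d}$, so $J_++J_-$ acts there as the identity. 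Writing $D=\dfrac{H_nH_{n+1}^{-1}-H_n^{-1}H_{n+1}}{v-v^{-1}}$, a diagonal operator, one notes that although $E_n$ and $F_n$ do not separately preserve $\mbf W^{\otimes d}$, the composite $F_nE_n$ does: applying $E_n$ to $\mbf v_{\mbf r}\in\mbf W^{\otimes d}$ introduces the value $n+1$ at a mirror pair of positions $\{p,2d+1-p\}$, and $F_n$ then removes it, returning either $\mbf v_{\mbf r}$ itself or the vector obtained from $\mbf v_{\mbf r}$ by exchanging a mirror pair of values $(n,n+2)$ — in every case a vector of $\mbf W^{\otimes d}$. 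Thus $T:=(F_nE_n+D)|_{\mbf W^{\otimes d}}$ is a well-defined endomorphism, which a short tally of the $v$-powers coming from the $\delta_{n,r_j}$ exponents identifies with the operator by which the generator $T$ of $\U^{\imath}$ acts on tensor space in \cite{BKLW13}.

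Next one checks the relations. Those involving neither $T$ nor $J_{\pm}$ are literally the restrictions to $\mbf W^{\otimes d}$ of the corresponding relations of $\U$ in Lemma~\ref{U-action-1} (for $i\le n-1$, $a\le n$ the exponents there already have $\delta_{in}=\delta_{a,n+1}=0$, and the factors $1-\delta_{in}$ equal $1$). The relations involving $T$ but not $J_{\pm}$ are exactly the remaining defining relations of the coideal subalgebra $\U^{\imath}$ of \cite[5.3]{BKLW13}, hence follow from the $\U^{\imath}$-action established there; they can also be verified directly from Lemma~\ref{U-action-1}: the relations $H_aT=TH_a$ and, for $i\le n-2$, $TE_i=E_iT$, $TF_i=F_iT$ reduce, after cancelling the powers of $v$ coming from the $H$--$E_n$ and $H$--$F_n$ rules, to the commutation of $E_i,F_i,H_a$ with $E_n,F_n$ (for $|i-n|>1$) and with $D$; and the homogeneous and inhomogeneous Serre-type relations
\begin{equation*}
E_{n-1}^2T-(v+v^{-1})E_{n-1}TE_{n-1}+TE_{n-1}^2=0,\qquad T^2E_{n-1}-(v+v^{-1})TE_{n-1}T+E_{n-1}T^2=E_{n-1},
\end{equation*}
together with their $F$-analogues, are obtained by substituting $T=F_nE_n+D$ and simplifying inside the subalgebra of $\End(\mbf V^{\otimes d})$ generated by $E_{n-1},E_n,F_n,H_n^{\pm1},H_{n+1}^{\pm1}$, using $E_{n-1}F_n=F_nE_{n-1}$, the two $E_n$--$F_n$ relations in the presentation of $\U$, the Serre relation between $E_{n-1}$ and $E_n$, and the rules $H_nE_{n-1}=v^{-1}E_{n-1}H_n$, $H_{n+1}E_{n-1}=E_{n-1}H_{n+1}$ (and their $F$-counterparts).

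It remains to treat the relations involving $J_{\pm}$. The identities $J_\alpha J_\beta=\delta_{\alpha\beta}J_\alpha$, $J_\pm H_a=H_aJ_\pm$, $J_\pm E_i=E_iJ_\pm$ and $F_iJ_\pm=J_\pm F_i$ are restrictions from Lemma~\ref{U-action-1}, and $J_++J_-=1$ was observed above. For the crossing relation $J_\pm T=TJ_\mp$ we invoke the relation $J_\pm F_nE_n-F_nE_nJ_\mp=-D(J_\pm-J_\mp)$ of Lemma~\ref{U-action-1} (valid on $\mbf W^{\otimes d}$ since $J_0=0$ there) together with $J_\pm D=DJ_\pm$, which yield
\begin{equation*}
J_\pm T=J_\pm(F_nE_n+D)=F_nE_nJ_\mp-D(J_\pm-J_\mp)+DJ_\pm=F_nE_nJ_\mp+DJ_\mp=TJ_\mp .
\end{equation*}

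The main obstacle is the second of the displayed Serre-type relations and its $F$-analogue: being inhomogeneous — right-hand side $E_{n-1}$, resp. $F_{n-1}$ — they encode precisely what distinguishes a coideal subalgebra from an ordinary quantum group, and their proof demands either the somewhat lengthy tensor-space computation indicated above or an appeal to \cite{BKLW13}. Showing that $F_nE_n$ (but neither factor) stabilizes $\mbf W^{\otimes d}$ and recording its matrix coefficients is the other point where the type-$\mbf D$ bookkeeping about mirror pairs of entries has to be carried out with care.
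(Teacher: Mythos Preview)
Your argument is correct and is in substance parallel to the paper's, but the routing is different. The paper does not verify the $\U^m$-relations directly on $\mbf V^{\otimes d}\supset\mbf W^{\otimes d}$; instead it passes through the geometric side: the isomorphism $\mbf W^{\otimes d}\simeq\mbb Q(v)\otimes_{\mcal A}\mcal W$, the identification of $T$ with the restriction of $F_nE_n+\frac{H_nH_{n+1}^{-1}-H_n^{-1}H_{n+1}}{v-v^{-1}}$ inside the convolution algebra $\mcal S^m$, and Proposition~\ref{prop7}, which asserts that the convolution functions $E_i,F_i,H_a^{\pm1},T,J_\pm$ in $\mcal S^m$ satisfy the defining relations of $\U^m$. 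That proposition in turn is obtained either from Proposition~\ref{prop3} (the $\U$-relations in $\mcal S$) via the substitution $T=F_nE_n+D$, or by a direct convolution computation of the $T$-Serre relations. So at bottom both arguments reduce the $\U^m$-relations to the $\U$-relations through $T=F_nE_n+D$; you do this on tensor space, the paper does it in $\mcal S^m\subset\mcal S$ and then transports. Your derivation of $J_\pm T=TJ_\mp$ from the $J_\pm F_nE_n$-relation of $\U$ is exactly the tensor-space analogue of what the paper gets for free from Proposition~\ref{prop3}.

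Two small comments. First, your appeal to \cite{BKLW13} for the $\U^\imath$-relations of $T$ is legitimate, but you should say explicitly why the action formulas there coincide with yours: the point is that the tensor-space action of $E_i,F_i,H_a,T$ depends only on the combinatorics of the sequences $\mbf r$, not on the underlying type-$\mbf B/\mbf C$ versus type-$\mbf D$ geometry. Second, your description of $F_nE_n$ on $\mbf W^{\otimes d}$ is slightly telegraphic (each term of $E_n\mbf v_{\mbf r}$ carries a single mirror pair of $n+1$'s, and $F_n$ then sends it to a sum of two vectors in $\mbf W^{\otimes d}$); the paper sidesteps this bookkeeping entirely by working with the function $T\in\mcal S^m$, whose support condition $|V_n\cap V_n'|=d-1$ makes the stability manifest.
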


This lemma  is proved by (\ref{W-W}),  (\ref{T=FE}), Lemma ~\ref{Sm-W} and  Proposition \ref{prop7}.
We can now state the second duality.

\begin{prop}
The $\U^m$-action and $\Hd$-action on $\mbf W^{\otimes d}$ are commuting. They  enjoy the double centralizer property when $n\geq d$.
\end{prop}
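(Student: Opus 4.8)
The plan is to deduce the second duality from the first one by restriction, using the embedding $\mbf W^{\otimes d}\hookrightarrow \mbf V^{\otimes d}$ and the explicit formula for $T$ in Lemma~\ref{U-action-2}. First I would check that both actions on $\mbf W^{\otimes d}$ are well-defined and land inside $\mbf W^{\otimes d}$: the $\Hd$-action does by the remark preceding Lemma~\ref{U-action-2}, and the $\U^m$-action does because the generators $E_i,F_i$ for $i\le n-1$, $H_a^{\pm 1}$ for $a\le n$, and $J_\pm$ never produce a vector with an index equal to $n+1$ when applied to such $\mbf v_{\mbf r}$, while $T=F_nE_n+\frac{H_nH_{n+1}^{-1}-H_n^{-1}H_{n+1}}{v-v^{-1}}$ applied to $\mbf v_{\mbf r}$ with no index $n+1$ produces only vectors whose indices lie in $\{n,n+2\}$ at the two moved slots, hence still avoid $n+1$. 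Commutativity of the $\U^m$- and $\Hd$-actions on $\mbf W^{\otimes d}$ then follows for the $E_i,F_i,H_a,J_\pm$ generators directly from Proposition~\ref{prop3} (the first duality), since these act by the same formulae; for $T$ it follows because $T$ is built out of $E_n,F_n,H_n^{\pm1},H_{n+1}^{\pm1}$, all of which commute with the $\Hd$-action on $\mbf V^{\otimes d}$ by the first duality, and the $\Hd$-action on $\mbf W^{\otimes d}$ is the restriction of that on $\mbf V^{\otimes d}$.

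Next I would establish the double centralizer property for $n\ge d$. The cleanest route is to identify $\mbf W^{\otimes d}$ with a direct summand, or more precisely with an $\Hd$-stable and $\U^m$-compatible subspace, of $\mbf V^{\otimes d}$ on which the first duality already gives us strong information. One shows $\End_{\U^m}(\mbf W^{\otimes d})\cong \Hd$: the inclusion $\Hd\hookrightarrow \End_{\U^m}(\mbf W^{\otimes d})$ is injective because the $\Hd$-action on $\mbf W^{\otimes d}$ is already faithful for $n\ge d$ (one exhibits, as in the type $\mbf{ABC}$ arguments, a vector $\mbf v_{\mbf r}$ with all $r_i$ distinct and $\ne n+1$, which is possible precisely when $n\ge d$, whose $\Hd$-orbit is a basis of a free rank-$|W(\mbf D_d)|$ submodule). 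For surjectivity, note that any $\U^m$-endomorphism of $\mbf W^{\otimes d}$ in particular commutes with all $E_i,F_i$ for $i\le n-1$, with $H_a^{\pm1}$ for $a\le n$, and with $T$, hence with $F_nE_n$ and with $H_{n+1}^{\pm1}$ restricted to $\mbf W^{\otimes d}$ (since $H_n^{\pm1}$ is available and the combination $T-F_nE_n$ is a function of $H_n^{\pm1}H_{n+1}^{\mp1}$); combined with the weight-space decomposition this should be enough to recover the full $\U$-action on $\mbf W^{\otimes d}$ — or, more carefully, enough of it — so that surjectivity follows from the surjectivity of $\U\to\End_{\Hd}(\mbf V^{\otimes d})$ restricted to the summand. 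Symmetrically, $\U^m\to\End_{\Hd}(\mbf W^{\otimes d})$ is surjective: an $\Hd$-endomorphism of $\mbf W^{\otimes d}$ extends (using that $\mbf W^{\otimes d}$ is cut out inside $\mbf V^{\otimes d}$ by the eigenvalue condition $J_0=0$, and $J_0$ commutes with $\Hd$) to an $\Hd$-endomorphism of $\mbf V^{\otimes d}$ preserving the summand, which by the first duality lies in the image of $\U$; one then checks that the part of $\U$ preserving and acting nontrivially on $\mbf W^{\otimes d}$ is exactly the image of $\U^m$, the generator $T$ accounting for the "$E_n,F_n$ through the barrier" part.

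The geometric alternative, which is presumably the one matching the cited results, is to realize both dualities simultaneously via the geometry of pairs of isotropic flag varieties of type $\mbf D$, as indicated by the references to Lemma~\ref{Sm-W} and Proposition~\ref{prop7}: the space $\mbf W^{\otimes d}$ corresponds to the Grothendieck group of a suitable category of equivariant sheaves on a product of a complete flag variety and an $N$-step isotropic flag variety, the $\Hd$-action is convolution on the complete-flag factor, the $\U^m$-action is convolution on the isotropic-flag factor, and the double centralizer property is then a formal consequence of the fact that orbit closures give bases on both sides together with the transitivity of the diagonal action (the standard Beilinson–Lusztig–MacPherson/Grojnowski–Lusztig mechanism). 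I would structure the proof as: (1) well-definedness and stability on $\mbf W^{\otimes d}$; (2) commutativity, reduced to Proposition~\ref{prop3}; (3) faithfulness of $\Hd$ for $n\ge d$; (4) the two surjectivity statements, reduced to the first duality via the $J_0=0$ summand. The main obstacle I anticipate is step (4), specifically showing that the $\U^m$-action on $\mbf W^{\otimes d}$ is "large enough" — i.e. that $T$ together with the truncated Chevalley generators really does generate all the operators coming from the $\U$-action restricted to $\mbf W^{\otimes d}$, rather than a proper subalgebra. This is exactly where the relation $T=F_nE_n+\frac{H_nH_{n+1}^{-1}-H_n^{-1}H_{n+1}}{v-v^{-1}}$ and the identity~(\ref{T=FE}) must be used decisively to see that $E_n$ and $F_n$ act on $\mbf W^{\otimes d}$ only through $T$, and that no information is lost; getting the bookkeeping of weights and the two idempotents $J_\pm$ right will be the delicate part.
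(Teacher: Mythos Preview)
Your geometric alternative is indeed the paper's route, and your algebraic reduction to the first duality does not go through as written. The paper argues as follows: identify $\mbf W^{\otimes d}$ with $\mbb Q(v)\otimes_{\mcal A}\mcal W$ via (\ref{W-W}); the convolution algebra $\mcal S^m=\mcal A_G(\mscr X^m\times\mscr X^m)$ and $\mcal H_{\mscr Y}\cong\Hd$ satisfy a double centralizer on $\mcal W$ by Lemma~\ref{Geometric-duality-II} (which is quoted from \cite{P09}); Proposition~\ref{prop7} says the map $\U^m\to\mbb Q(v)\otimes\mcal S^m$ is well defined, and Proposition~\ref{lumpsum}(c) (built on the monomial-basis Theorem~\ref{thm2}) says it is \emph{surjective}. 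The double centralizer for $(\U^m,\Hd)$ then follows formally: the image of $\U^m$ in $\End(\mbf W^{\otimes d})$ is all of $\mbb Q(v)\otimes\mcal S^m=\End_{\Hd}(\mbf W^{\otimes d})$, and conversely $\End_{\U^m}(\mbf W^{\otimes d})=\End_{\mcal S^m}(\mbf W^{\otimes d})=\Hd$. Your geometric sketch is correct in spirit but does not isolate this surjectivity step, which is the genuinely non-formal input; the ``standard BLM/GL mechanism'' you invoke is precisely Theorem~\ref{thm2}, and it has to be proved anew for $\mscr X^m\times\mscr X^m$ (with the signed-matrix bookkeeping specific to type $\mbf D$).

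Your algebraic route has a concrete gap in both directions of step~(4). For $\Hd\cong\End_{\U^m}(\mbf W^{\otimes d})$, you try to show that a $\U^m$-endomorphism $\phi$ of $\mbf W^{\otimes d}$ is a $\U$-endomorphism of $\mbf V^{\otimes d}$, but $\U$ does not act on $\mbf W^{\otimes d}$ (the operators $E_n,F_n$ map out of it), so there is nothing to check and nothing to conclude; commuting with $F_nE_n$ and $H_{n+1}$ on $\mbf W^{\otimes d}$ (where $H_{n+1}$ acts as $1$) does not force $\phi$ to commute with $E_n$ and $F_n$ on $\mbf V^{\otimes d}$. For the surjectivity of $\U^m\to\End_{\Hd}(\mbf W^{\otimes d})$, extending an $\Hd$-endomorphism of $\mbf W^{\otimes d}$ by zero on the $J_0$-summand gives an element in the image of $\U$ by the first duality, but the claim that ``the part of $\U$ preserving $\mbf W^{\otimes d}$ is exactly the image of $\U^m$'' is precisely the content you must prove and is not a consequence of the first duality; unwinding it you are again asking whether $E_i,F_i,H_a,J_\pm,T$ generate all of $\mcal S^m$, i.e.\ Proposition~\ref{lumpsum}(c). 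So the algebraic reduction is circular at the crucial point, and the paper's geometric argument (orbit parametrization plus monomial basis) is doing real work that your proposal does not replace.
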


The proof is given by Lemma ~\ref{U-action-2}, Lemma ~\ref{Geometric-duality-II},  Proposition ~\ref{prop7} and Proposition ~\ref{lumpsum}.

\section{A geometric setting}

We now turn to the  geometric setting in order to prove the above results among others.

\subsection{Preliminary}

We start by recalling some results on counting isotropic subspaces in an even dimensional quadratic  space over a finite field.  We refer to ~\cite{W93} and the references therein for more details.

Let $\mbb F_q$ be a finite field of $q$ elements and of odd characteristic.
Recall that $d$ is a fixed positive integer, and  we set $$D=2d.$$
On the $D$-dimensional vector space $\mbb F_q^D$, we  fix a non-degenerate symmetric bilinear form $Q$ whose associated  matrix  is
\begin{align}
\label{bilinear-form}
\begin{bmatrix}
  0 &I_d\\
  I_d & 0
\end{bmatrix}
\end{align}
under the standard  basis of $\mbb F_q^D$.
By convention,  $W^{\perp}$ stands  for  the orthogonal complement of a vector subspace  $W$ in $\mbb F_q^D$.
Moreover, we call $W$ isotropic if $W\subseteq W^{\perp}$. We write $|W| $ for the dimension of $W$.

For any isotropic subspace $W$, the bilinear form $Q$ induces a non-degenerate symmetric bilinear form $Q|_{W^{\perp}/W}$ on $W^{\perp}/W$. One of the reasons that we fix $Q$ of the form (\ref{bilinear-form}) is its hereditary property in the following lemma, which can be proved inductively.

\begin{lem}\label{lem1}
The associated matrix  of  $Q|_{W^{\perp}/W}$ is of the form (\ref{bilinear-form}) with rank $d-2|W|$
under a certain basis.
\end{lem}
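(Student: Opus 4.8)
The plan is to argue by induction on $|W|$. When $|W|=0$ the claim is trivial, since $W^{\perp}/W = \mbb F_q^D$ with the form $Q$ itself, which has matrix of the shape (\ref{bilinear-form}) and rank $2d = d - 2|W|$ with $d$ there read as $2d$ — more precisely, the statement should be read so that a $2m$-dimensional split quadratic space has ``rank $m$'' in the normalization where (\ref{bilinear-form}) with blocks $I_m$ is said to have rank $m$; I will make this normalization explicit at the start. The inductive step reduces the case of a $k$-dimensional isotropic $W$ to the case of a $1$-dimensional isotropic subspace inside the quadratic space $W'^{\perp}/W'$ attached to a $(k-1)$-dimensional isotropic $W' \subset W$, using the canonical identification $(W/W')^{\perp_{W'^{\perp}/W'}} / (W/W') \cong W^{\perp}/W$ as quadratic spaces; this identification is formal and I would state it as a preliminary remark rather than belabor it.

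So the crux is the base-of-the-step case: given a non-degenerate symmetric bilinear form on an even-dimensional space $U = \mbb F_q^{2m}$ whose matrix is $\begin{bmatrix} 0 & I_m \\ I_m & 0\end{bmatrix}$ in some basis $e_1,\dots,e_m,f_1,\dots,f_m$ (so $Q(e_i,f_j)=\delta_{ij}$, $Q(e_i,e_j)=Q(f_i,f_j)=0$), and a $1$-dimensional isotropic line $L = \langle x \rangle$, I must exhibit a basis of $L^{\perp}/L$ in which $Q|_{L^{\perp}/L}$ again has the hyperbolic shape $\begin{bmatrix} 0 & I_{m-1} \\ I_{m-1} & 0\end{bmatrix}$. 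First I would use Witt's theorem (or an explicit change of coordinates) to move $x$ to the standard basis vector $e_1$: since $L$ is isotropic and the form is non-degenerate, there is an isometry of $U$ carrying $x$ to $e_1$, and isometries carry the standard hyperbolic basis to another basis realizing (\ref{bilinear-form}), so without loss of generality $L = \langle e_1\rangle$. Then $L^{\perp} = \langle e_1, e_2,\dots,e_m, f_2,\dots,f_m\rangle$ (everything except $f_1$), and $L^{\perp}/L$ has basis given by the images $\bar e_2,\dots,\bar e_m,\bar f_2,\dots,\bar f_m$; one reads off directly that $Q|_{L^{\perp}/L}$ pairs $\bar e_i$ with $\bar f_i$ by $1$ and kills all other pairs, i.e.\ it has matrix $\begin{bmatrix} 0 & I_{m-1} \\ I_{m-1} & 0\end{bmatrix}$, and its rank is $m-1 = m - 2\cdot 1$ in the chosen normalization. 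Feeding this back through the induction (with $m = d$ at the outset and $m$ dropping by $1$ for each basis vector of $W$) gives rank $d - 2|W|$ after quotienting by a $|W|$-dimensional isotropic subspace.

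\textbf{Main obstacle.} The genuine content, and the only place one must be careful, is the invocation of Witt extension/cancellation to normalize the isotropic line inside a form already presented in the shape (\ref{bilinear-form}) — equivalently, checking that the quadratic space $L^{\perp}/L$ is again \emph{split} of the hyperbolic type rather than accidentally of the non-split (anisotropic-kernel) type. This is exactly the subtlety the introduction flags (``the number of isotropic lines\dots depends on its isometric class''), so it deserves an honest line of justification: the point is that $\begin{bmatrix} 0 & I_m \\ I_m & 0\end{bmatrix}$ is the maximally split even form, a descending-induction-stable property, whereas a general non-degenerate even form need not stay split. All other steps — the formal identification of iterated perp-quotients and the bookkeeping of ranks — are routine.
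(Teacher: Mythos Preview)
Your inductive argument is correct and is precisely the approach the paper intends: the paper gives no proof beyond the phrase ``which can be proved inductively,'' and your reduction to the case $|W|=1$ via iterated perp--quotients, followed by the explicit computation after normalizing $L=\langle e_1\rangle$, is the standard way to carry this out. One cosmetic slip: the equation ``$m-1 = m-2\cdot 1$'' is false as written; the mismatch comes from what is evidently a typo in the lemma's statement (the rank of the induced form is $D-2|W| = 2(d-|W|)$, so the block size $I_d$ drops to $I_{d-|W|}$, not $I_{d-2|W|}$), so simply record the correct rank rather than trying to reconcile it with the stated formula.
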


By using Lemma ~\ref{lem1}, we can  count the number of isotropic lines inductively to get the following lemma.

\begin{lem}
\label{S_d}
The cardinality, $S_d$, of the set of isotropic lines in $\mbb F_q^D$ is $\frac{q^{2d-1}-1}{q-1}+q^{d-1}$.
\end{lem}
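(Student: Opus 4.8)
The plan is to count isotropic lines in the split quadratic space $\mbb F_q^D$ with $D = 2d$ by a direct recursion on $d$, using the hereditary property of $Q$ from Lemma~\ref{lem1}. Write $S_d$ for the number of isotropic lines. The base case $d=1$ is immediate: the form $\begin{bmatrix}0&1\\1&0\end{bmatrix}$ on $\mbb F_q^2$ has exactly two isotropic lines (the two coordinate axes), and the proposed formula gives $\frac{q-1}{q-1} + q^0 = 1 + 1 = 2$, so the formula checks out.

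For the inductive step, I would fix one isotropic line $L_0$ (say the span of a standard basis vector, which is isotropic since $Q$ has the form~(\ref{bilinear-form})) and classify the other isotropic lines $L$ according to their position relative to $L_0$. There are two cases. First, the isotropic lines $L$ with $L \subseteq L_0^{\perp}$: such an $L$ either equals $L_0$ or maps to an isotropic line in the quotient space $L_0^{\perp}/L_0$, which by Lemma~\ref{lem1} is again a split quadratic space of the form~(\ref{bilinear-form}) of dimension $2(d-1)$. Conversely every isotropic line in the quotient lifts, but one must be careful: a line in the quotient pulls back to a $2$-plane containing $L_0$, and I need to count how many isotropic lines in that $2$-plane (other than $L_0$) project to the given quotient line. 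Since the $2$-plane is the preimage, it carries a degenerate form with radical $L_0$, and a short computation shows each such quotient line lifts to exactly $q$ isotropic lines, of which $q-1$ are distinct from $L_0$ — actually one must check this count carefully, as it is the crux of the recursion. Second, the isotropic lines $L \not\subseteq L_0^{\perp}$: here I count the vectors $v$ with $Q(v,v)=0$ and $Q(v, w_0) \neq 0$ for a fixed generator $w_0$ of $L_0$, then divide by $q-1$; the condition $Q(v,w_0)\neq 0$ picks out an affine-hyperplane's worth of $v$, intersected with the quadric cone, and this is a standard finite-field quadric count.

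Assembling the two contributions gives a recursion of the shape $S_d = 1 + (q-1)\cdot(\text{something involving } S_{d-1}) + (\text{quadric count})$, which I would then solve — either by verifying directly that the proposed closed form $\frac{q^{2d-1}-1}{q-1} + q^{d-1}$ satisfies the recursion with the correct initial value, or by unrolling the recursion as a geometric sum. I expect the main obstacle to be bookkeeping the lifting multiplicity in the first case correctly (the interplay between lines in $L_0^{\perp}/L_0$, $2$-planes through $L_0$, and isotropic lines therein) and making sure the split/hereditary hypothesis of Lemma~\ref{lem1} is invoked at exactly the right spot so that the quadratic space $L_0^{\perp}/L_0$ really is split of the standard form. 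Once the recursion is pinned down, verifying the closed formula is a routine induction. Alternatively — and perhaps more cleanly — one can avoid the recursion entirely: the number of isotropic vectors (including zero) in a $2d$-dimensional split quadratic space over $\mbb F_q$ is classically $q^{2d-1} + q^d - q^{d-1}$, so the number of nonzero isotropic vectors is $q^{2d-1} + q^d - q^{d-1} - 1$, and dividing by $q-1$ gives
\[
S_d = \frac{q^{2d-1} + q^d - q^{d-1} - 1}{q-1} = \frac{q^{2d-1}-1}{q-1} + \frac{q^d - q^{d-1}}{q-1} = \frac{q^{2d-1}-1}{q-1} + q^{d-1},
\]
which is exactly the claimed formula; so the real content is either citing the standard quadric-point count for the split form or deriving it via the Lemma~\ref{lem1} recursion as above.
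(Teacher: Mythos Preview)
Your approach matches the paper's: the paper's entire proof is the single sentence ``By using Lemma~\ref{lem1}, we can count the number of isotropic lines inductively,'' and your recursion via a fixed isotropic line $L_0$ and the hereditary quotient $L_0^{\perp}/L_0$ is exactly how that induction goes. One small slip in your bookkeeping: the preimage of an isotropic line $\bar L \subset L_0^{\perp}/L_0$ is a \emph{totally} isotropic $2$-plane (every vector in it is isotropic), so it contains $q+1$ lines, of which $q$ (not $q-1$) are distinct from $L_0$; together with the $q^{2d-2}$ isotropic lines not contained in $L_0^{\perp}$ (your coordinate count there is correct) this gives the clean recursion $S_d = 1 + q\,S_{d-1} + q^{2d-2}$, which the closed form satisfies. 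Your alternative route through the classical isotropic-vector count $q^{2d-1}+q^d-q^{d-1}$ is also correct and is arguably the quicker way to the formula, though it bypasses the hereditary Lemma~\ref{lem1} that the paper wishes to emphasize for later use in Lemma~\ref{countingisotrop}.
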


We remark that the order of the set  $S_d$  with respect to a symmetric bilinear form on $\mbb F_q^D$  not isometric to $Q$ may not be the same as the number in the above lemma.
We will need the following lemma later.
We write $W\overset{a}{\subset} V$ if $W\subseteq V$ and $|V/W|=a$.

\begin{lem}
\label{countingisotrop}
Let $V=(V_i)_{1\leq i\leq 5}$ be a fixed flag of $\mbb F_q^D$
such that $V_{i-1} \subset V_{i}$, $V_i=V_{5-i}^{\bot}$, $|V_i/V_{i-1}|=a_i$  and $a_i \geq 0$, for any  $i\in [1,5]$.
Consider the sets
$$Z_i=\{U\subset V_i| |U|=1, U\ \text{is isotropic}, \ U\not \subseteq V_{i-1}\}, \quad \forall i\in [1, 5].$$
We have
$$(\mrm i). \
 \# Z_3=q^{a_1+a_2} \left (\frac{q^{a_3-1}-1}{q-1}+q^{\frac{a_3}{2}-1} \right )
\quad \mbox{and} \quad
(\mrm{ii}). \
 \# Z_4=q^{a_1+a_2+a_3-1}\frac{q^{a_4}-1}{q-1}.$$
\end{lem}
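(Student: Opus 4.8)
\textbf{Proof plan for Lemma~\ref{countingisotrop}.}
The plan is to reduce both counts to the two already-established facts: Lemma~\ref{lem1} (hereditary property of the split form $Q$) and Lemma~\ref{S_d} (the count $S_d = \frac{q^{2d-1}-1}{q-1}+q^{d-1}$ of isotropic lines in a split even-dimensional space). The key observation is that an isotropic line $U\subseteq V_i$ with $U\not\subseteq V_{i-1}$ is the same datum as a nonzero isotropic vector in $V_i$ whose image in $V_i/V_{i-1}$ is nonzero, counted up to scalar; and since $V_i = V_{5-i}^\perp$ with $V_1,\dots,V_4$ forming an isotropic flag, the quotients $V_i^\perp/V_i$ carry a split form of controlled rank.

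For part (ii) I would proceed by a direct fibration over the quotient. Since $V_4 = V_1^\perp$ and $V_1$ is isotropic, the form on $V_4$ has radical exactly $V_1$, and $V_4/V_1$ is a split space of rank $a_2+a_3$ (using Lemma~\ref{lem1} with $|V_1|=a_1$, noting $a_1+a_4 = a_1+a_2+a_3$ forces $a_4 = a_2+a_3$ by the symmetry $a_i=a_{5-i}$... actually $V_i = V_{5-i}^\perp$ gives $a_i = a_{5-i}$, so $a_4=a_2$, $a_5=a_1$). A line $U\subseteq V_4$ isotropic with $U\not\subseteq V_3$: any vector $u\in V_4$ is automatically isotropic once... no — one must be careful: $V_4$ is not isotropic. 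Instead, note $V_3$ is isotropic (as $V_3 = V_2^\perp \supseteq$ ... wait, $V_3=V_2^\perp$ and $V_2\subseteq V_3$, so $V_3$ need not be isotropic either). The clean route: $V_4/V_3 \hookrightarrow V_2^\perp/V_2$ hmm. I would instead count pairs $(U, \text{its image } \bar u \in V_4/V_3)$: fix a nonzero $\bar u \in V_4/V_3$ (there are $\frac{q^{a_4}-1}{q-1}$ such lines in $V_4/V_3$, and $U$ determines such a line), then count isotropic $u\in V_4$ lifting a chosen representative direction. Because $V_3$ is contained in $V_4 = V_1^\perp$ and $V_1\subseteq V_3$, the condition that $u+V_1$ be isotropic in the split space $V_4/V_1$ of rank $a_2+a_3$ is a single inhomogeneous quadratic condition on the $V_3/V_1$-part of the lift once $\bar u\neq 0$; this is nonzero as a linear-plus-constant map (since $\bar u$ pairs nontrivially with something, as $V_4/V_3$ sits in a nondegenerate-mod-radical space), so exactly a $q^{a_1+a_2+a_3-2}$-fraction... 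I expect the fiber count to come out to $q^{a_1+a_2+a_3-1}$ per line, giving the stated formula. The main obstacle here is bookkeeping the radicals correctly so that the relevant quadratic form is genuinely nondegenerate on the complement, forcing the clean power of $q$.

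For part (i), the extra term $q^{a_3/2-1}$ signals that one is counting isotropic lines inside something behaving like a split space of rank $a_3$, which is exactly the shape of Lemma~\ref{S_d} (with $2d$ there replaced by $a_3$, so $a_3$ is even — consistent with $V_3 = V_2^\perp$ and $V_2$ isotropic, making $V_3/V_2$ split of even rank $a_3$). I would fibre $Z_3$ over the line $\bar U = (U+V_2)/V_2 \subseteq V_3/V_2$: such $\bar U$ is a nonzero isotropic line in the split space $V_3/V_2$ of rank $a_3$, and by Lemma~\ref{S_d} there are $\frac{q^{a_3-1}-1}{q-1}+q^{a_3/2-1}$ of them. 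Then I must check the fibre — lifts of a chosen representative of $\bar U$ to an isotropic vector in $V_3$ itself, modulo scalars — has size exactly $q^{a_1+a_2}$. Since $V_2\subseteq V_3 = V_2^\perp$ and $V_2$ is isotropic with $|V_2|=a_1+a_2$, a lift $u$ of a representative $\bar u\neq 0$ of $\bar U$ is isotropic in $V_3$ iff an inhomogeneous linear (not quadratic, because $\bar u$ is already isotropic mod $V_2$, so the quadratic term vanishes) condition holds on $u$'s $V_2$-component — but $V_2$ is totally isotropic and pairs with $V_3$ only through... here one uses that $Q$ pairs $V_2$ with $V_3/V_2$? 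No: $V_2 = V_3^\perp$, so $V_2$ pairs trivially with $V_3$. Hence the isotropy condition on the lift is automatically satisfied, every lift works, and the fibre is the full affine space $V_2$ modulo scalars acting together with $\bar u$ — giving $q^{|V_2|} = q^{a_1+a_2}$ after accounting for the $U$ versus vector normalization. I expect this "every lift is isotropic" phenomenon to be the crux, and getting the scalar/normalization count to land on $q^{a_1+a_2}$ rather than $q^{a_1+a_2}\cdot\frac{q-1}{q-1}$-type corrections is the delicate point. Both parts then assemble by multiplying the base count by the fibre count.
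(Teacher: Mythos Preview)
For part (i) your approach is correct and essentially identical to the paper's: both fiber $Z_3$ over the isotropic lines in $V_3/V_2$ via $U\mapsto (U+V_2)/V_2$, observe that because $V_2=V_3^\perp$ the isotropy of a lift is automatic (your ``every lift is isotropic'' remark), count the base by Lemma~\ref{S_d}, and get fibers of size $q^{|V_2|}=q^{a_1+a_2}$.

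For part (ii) your route is different from the paper's and is viable, but you have not supplied the one step that makes the fiber count land on $q^{a_1+a_2+a_3-1}$. Fixing a representative $\bar u\in V_4\setminus V_3$ and writing a lift as $\bar u+v$ with $v\in V_3$, decompose $v=v'+v''$ with $v''\in V_2$ and $v'$ in a complement to $V_2$ inside $V_3$; then $Q(\bar u+v,\bar u+v)=0$ becomes, for each fixed $v'$, an affine-linear equation in $v''$ with linear part $2Q(\bar u,\,\cdot\,)|_{V_2}$. The crucial point you gesture at but do not pin down is that this linear functional is \emph{nonzero} precisely because $\bar u\notin V_2^{\perp}=V_3$; hence it has $q^{a_1+a_2-1}$ solutions in $v''$ for each of the $q^{a_3}$ choices of $v'$, and after dividing out the scalar freedom one obtains the desired fiber. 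Your vague ``$\bar u$ pairs nontrivially with something'' is the right instinct, but it needs to be made into exactly this statement.

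The paper instead maps $U\mapsto V_1+U$ to the set $Z_4'$ of isotropic $W$ with $V_1\overset{1}{\subset}W\subset V_4$ and $W\not\subset V_3$; the fibers have size $q^{a_1}$, and $\#Z_4'$ is computed by working in the nondegenerate split space $V_4/V_1$ and subtracting: all isotropic lines there (Lemma~\ref{S_d}) minus those lying in $V_3/V_1$, the latter split further via part~(i) and the count of lines in the totally isotropic $V_2/V_1$. This avoids any affine-quadric fiber analysis, trading it for a short inclusion--exclusion that recycles the formula from part~(i). Your direct fibration over $V_4/V_3$ is arguably more conceptual, but the paper's approach requires no new computation beyond what is already on the page.
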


\begin{proof}
To prove (i), we consider the set $Z_3'=\{W\subset V_3| V_2 \overset{1}{\subset} W, W\ \text{is isotropic}\}$.
Let $\phi: Z_3\rightarrow Z'_3$ be the map defined by  $U\mapsto V_2+U$. Clearly, the map $\phi$ is surjective.
Observe that  the order of each fiber is $q^{|V_2|}=q^{a_1+a_2}$ and, moreover,
$Z_3'$ gets identified with the set of all isotropic lines in $V_3/V_2$.
By Lemma ~\ref{S_d}, we have
$\#Z_3'=\frac{q^{a_3-1}-1}{q-1}+q^{\frac{a_3}{2}-1}.$
This proves (i).

We now prove (ii). Consider the set $Z_4'=\{W\subset V_4| V_1 \overset{1}{\subset} W, W\not \subset V_3, W\ \text{is isotropic}\}$.
Consider the map  $\phi': Z_4\rightarrow Z'_4$, $U\mapsto V_1+U$.
This is a well-defined and surjective  map and the cardinality of each fiber is $q^{|V_1|}=q^{a_1}$.
To calculate the cardinality of $Z_4'$,
we set $$\tilde Z_i=\{U\subset V_i/V_1||U|=1, U\ \text{is isotropic}\}.$$
By a similar calculation in (i), we have
\begin{align*}
\#Z'_4
&=\#\tilde Z_4-\#\tilde Z_3=\#\tilde Z_4-(\#\tilde Z_3\setminus \tilde Z_2+\#\tilde Z_2)\\
&=\frac{q^{a_2+a_3+a_4-1}-1}{q-1}+q^{a_2+\frac{a_3}{2}-1}-
\left (q^{a_2} (\frac{q^{a_3-1}-1}{q-1}+q^{\frac{a_3}{2}-1})+\frac{q^{a_2}-1}{q-1}\right )\\
&=q^{a_2+a_3-1}\frac{q^{a_4}-1}{q-1}.
\end{align*}
This proves (ii).
\end{proof}

\subsection{The first double centralizer} 
\label{setup}

We fix another  positive integer $n$ and let
\(
N=2n+1.
\)
We fix a maximal isotropic vector subspace $M_d$ in $\mbb F_q^D$ (of dimension $d$).  Consider the following sets.
\begin{itemize}
\item The set $\mscr X$ of $N$-step flags $V=( V_i)_{ 0\leq i\leq N}$
          in $\mbb F_q^D$ such that  $V_i\subseteq V_{i+1}$, $V_i= V_j^{\perp}$, for any   $i+j = N$.

\item The set $\mscr Y$  of complete flags  $F= (F_i)_{0\leq i\leq D} $ in $\mbb F_q^D$
          such that $F_i\subset F_{i+1}$, $|F_i|=i$ and  $ F_i = F_j^{\perp}$, for any $i+ j = D$,  and $|F_d\cap M_d|\equiv d \bmod 2$.
\end{itemize}

Let $G=\mrm{SO}(D)$ be the special orthogonal group attached to $Q$.
The sets $\mscr X$ and $\mscr Y$ admit naturally $G$-action from the left. Moreoever,
$G$ acts  transitively on $\mscr Y$ thanks to
the condition $|F_d\cap M_d|\equiv d \bmod 2$.
Let $G$ act diagonally on the product $\mscr X\times \mscr X$ (resp. $\mscr X\times \mscr Y$ and $\mscr Y\times \mscr Y$).
Set
\begin{equation}\label{eq32}
  \mcal A=\mbb Z[v, v^{-1}].
\end{equation}
Let
\begin{align}
\label{S(X)}
\mcal S_{\mscr X}=\mcal A_G(\mscr X\times\mscr X)
\end{align}
be the set of all $\mcal A$-valued $G$-invariant functions on $\mscr X\times \mscr X$.
Clearly, the set $\mcal S_{\mscr X}$ is a  free $\mcal A$-module.
Moreover,  $\mcal S_{\mscr X}$ admits  an associative  $\mcal A$-algebra structure `$*$' under a standard  convolution product as discussed in ~\cite[2.3]{BKLW13}. In particular, when $v$ is specialized to $\sqrt q$, we have
\begin{equation}
  \label{eq30}
  f * g(V, V')=\sum_{V''\in \mscr X}f(V, V'')g(V'',V'), \quad \forall\ V,V'\in \mscr X.
\end{equation}
Similarly, we define  the free $\mcal A$-modules
\begin{equation}\label{V}
\mcal V=\mcal A_G(\mscr X\times\mscr Y)
\quad
\mbox{and}
\quad
\mcal H_{\mscr Y}=\mcal A_G(\mscr Y\times \mscr Y).
\end{equation}
A similar  convolution product gives an associative algebra structure on $\mcal H_{\mscr Y}$ and
 a left $\mcal S_{\mscr X}$-action   and a right $\mcal H_{\mscr Y}$-action on $\mcal V$.
Moreover, these two actions commute and hence we have the following $\mcal A$-algebra homomorphisms.
\[
\mcal S_{\mscr X} \to \End_{\mcal H_{\mscr Y}} (\mcal V)
\quad\mbox{and}\quad
\mcal H_{\mscr Y} \to \End_{\mcal S_{\mscr X}} (\mcal V).
\]
By ~\cite[Theorem 2.1]{P09}, we have the following double centralizer property.

\begin{lem}
\label{eq34}
$\End_{\mcal H_{\mscr Y}}(\mcal V)\simeq \mcal S_{\mscr X}$ and
$\End_{\mcal S_{\mscr X}}(\mcal V) \simeq \mcal H_{\mscr Y}$, if
$ n\geq d.$
\end{lem}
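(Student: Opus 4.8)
The statement is extracted from \cite[Theorem 2.1]{P09}, so the plan is to record the shape of that theorem and verify that its hypotheses hold for the pair $(\mscr X,\mscr Y)$ of this subsection. Recall that, quite generally, for a group $G$ acting on finite sets the convolution algebras $\mcal S_{\mscr X}=\mcal A_G(\mscr X\times\mscr X)$, $\mcal H_{\mscr Y}=\mcal A_G(\mscr Y\times\mscr Y)$ and the bimodule $\mcal V=\mcal A_G(\mscr X\times\mscr Y)$ fit into algebra maps $\mcal S_{\mscr X}\to\End_{\mcal H_{\mscr Y}}(\mcal V)$ and $\mcal H_{\mscr Y}\to\End_{\mcal S_{\mscr X}}(\mcal V)$, and \cite[Theorem 2.1]{P09} asserts these are isomorphisms once (a) $G$ acts transitively on $\mscr Y$, and (b) there is a $G$-equivariant injection $\iota\colon\mscr Y\hookrightarrow\mscr X$ whose image carries a \emph{full} idempotent, i.e.\ $\mcal S_{\mscr X}\,e\,\mcal S_{\mscr X}=\mcal S_{\mscr X}$, where $e$ is the $\mcal A_G$-function supported on the diagonal of $\iota(\mscr Y)\times\iota(\mscr Y)$ inside $\mscr X\times\mscr X$. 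Under (a)--(b) one identifies $\mcal V\cong\mcal S_{\mscr X}\,e$ as $(\mcal S_{\mscr X},\mcal H_{\mscr Y})$-bimodules and $\mcal H_{\mscr Y}\cong e\,\mcal S_{\mscr X}\,e$ as algebras; then $\End_{\mcal S_{\mscr X}}(\mcal V)\simeq e\mcal S_{\mscr X}e\simeq\mcal H_{\mscr Y}$ is formal, while $\End_{\mcal H_{\mscr Y}}(\mcal V)\simeq\mcal S_{\mscr X}$ uses fullness of $e$. So it remains to check (a) and (b).

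For (a): by Witt's extension theorem the full orthogonal group $\mrm O(D)$ acts transitively on the set of complete isotropic flags $F_1\subset\cdots\subset F_d$ with $\dim F_i=i$ in $\mbb F_q^D$. An element of $\mrm O(D)\setminus\mrm{SO}(D)$ interchanges the two rulings of maximal isotropic subspaces (the two components of the maximal isotropic Grassmannian) and changes $\dim(F_d\cap M_d)$ by one; hence the parity constraint $\dim(F_d\cap M_d)\equiv d\bmod 2$ in the definition of $\mscr Y$ singles out exactly one $\mrm O(D)$-orbit, on which $G=\mrm{SO}(D)$ is already transitive. (This is recorded in the text just after the definition of $\mscr Y$, and is the even-dimensional companion of Lemma~\ref{lem1}.)

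For (b): assume $n\ge d$, so that $N=2n+1>2d=D$. Given $F=(F_i)_{0\le i\le D}\in\mscr Y$, define $\iota(F)=V$ by $V_i=F_i$ for $0\le i\le d$, $V_i=F_d$ for $d\le i\le N-d$, and $V_{N-i}=F_i^{\perp}$ for $0\le i\le d$. Since $F_d=F_d^{\perp}$ is maximal isotropic and $d\le N-d$, these prescriptions agree on overlaps and yield an increasing chain satisfying $V_i=V_{N-i}^{\perp}$ for all $i$, the middle identity $V_n=V_{n+1}^{\perp}$ being $F_d=F_d^{\perp}$; thus $\iota(F)\in\mscr X$, and $\iota$ is manifestly $G$-equivariant and injective, as $F$ is recovered from $V$ via $F_i=V_i$, $i\le d$. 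When $n<d$ there is simply no room for such an injection, which is exactly where the hypothesis $n\ge d$ enters.

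The remaining, and main, point is the fullness of $e$. One must show that for every $G$-orbit $\mcal O$ on $\mscr X\times\mscr X$ the characteristic function $[\mcal O]$ lies in $\mcal S_{\mscr X}\,e\,\mcal S_{\mscr X}$; equivalently, one writes down the convolution products $[V,W]*[W,V']$ with $W\in\iota(\mscr Y)$ ranging over the orbit of a complete flag of the prescribed parity, and checks that the transition matrix expressing them in the orbit basis of $\mcal S_{\mscr X}$ is unitriangular for a suitable order on orbits, so that the two-sided ideal they generate is all of $\mcal S_{\mscr X}$. That such ``refining'' $W$ exist when $n\ge d$ is clear, since every $N$-step flag of $\mscr X$ is then a coarsening of a complete flag in $\mscr Y$; the real work is to run the unitriangularity argument \emph{integrally} over $\mcal A=\mbb Z[v,v^{-1}]$. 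This parallels the corresponding verification in \cite[\S2]{BKLW13}, the only new feature being that in type $\mbf D$ the $G$-orbits on $\mscr X\times\mscr X$ are parametrized by \emph{signed} matrices; with the signed-matrix dictionary developed in the later sections the argument of \emph{loc.\ cit.}\ carries over, and this is the step I expect to require the most care.
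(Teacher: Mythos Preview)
Your approach is the same as the paper's: both invoke \cite[Theorem 2.1]{P09}. The paper is considerably more terse---it simply cites that theorem and adds a one-sentence remark that although the result in \emph{loc.\ cit.}\ is stated over $\mbb C$, the proof can be adapted to the ring $\mcal A$. You have expanded this into an explicit verification of the hypotheses (transitivity of $G$ on $\mscr Y$, the equivariant injection $\mscr Y\hookrightarrow\mscr X$ for $n\ge d$, and fullness of the idempotent), and you correctly flag the passage from $\mbb C$ to $\mcal A$ as the point requiring care, which is exactly the paper's single caveat.
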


We note that the  result in ~\cite[Theorem 2.1]{P09} is obtained over  the field $\mbb C$ of complex numbers,
but the  proof can be adapted  to  our setting   over the ring  $\mcal A$.

\subsection{$G$-orbits on $\mscr X\times \mscr Y$ and $\mscr Y\times \mscr Y$}
\label{sec3.2}

We shall give a description of the $G$-orbits on $\mscr X\times \mscr Y$  and  $\mscr Y\times \mscr Y$.
The description of the $G$-orbits on $\mscr X\times \mscr X$ is  more complicated, and  postponed until   Section ~\ref{sec4.2}.

We start by introducing the following notations associated to a matrix
$M=(m_{ij})_{1\leq i, j \leq c}$.
\begin{align} \label{ro-co}
\begin{split}
\ro (M) & = \left (\sum_{j=1}^{c}  m_{ij} \right )_{1\leq i\leq c}, \\
\co (M) & = \left (\sum_{i=1}^{c} m_{ij} \right )_{1\leq j \leq c}, \\
\ur (M)  & = \sum_{i\leq c/2, j> c/2} m_{ij}.
\end{split}
\end{align}
We also write $\ro(M)_i$ and $\co (M)_j$ for the $i$-th and $j$-th component of the row vectors of $\ro (M)$ and $\co(M)$, respectively.

To a pair $(F, F') \in \mscr Y\times \mscr Y$, we can associate a $D\times D$ matrix $\sigma=(\sigma_{ij})$ by setting
\begin{equation}\label{bi}
\sigma_{ij} = \dim \frac{F_{i-1}+ F_i \cap F_j'}{F_{i-1} +F_i \cap F_{j-1}'}, \quad \forall 1\leq i, j\leq D.
\end{equation}
This assignment defines a bijection
\begin{align}
G \backslash \mscr Y\times \mscr Y \simeq  \Sigma,
\end{align}
where
$\Sigma$ is the set of all matrices $\sigma \equiv (\sigma_{ij})$ in $\mbox{Mat}_{D\times D} (\mbb N)$ such that
\begin{align*}
 \ro (\sigma)_i = 1, \quad  \ro (\sigma)_j =1, \quad \sigma_{ij} = \sigma_{D+1-i, D+1-j},\quad \ur (\sigma) \equiv 0 \bmod 2,\quad\forall i, j\in [1, D].
\end{align*}

A similar assignment yields a bijection
\begin{align}\label{eq44}
G \backslash \mscr X\times \mscr Y \simeq \Pi ,
\end{align}
where the set $\Pi$ consists of all matrices $B=(b_{ij})$ in $\mrm{Mat}_{N\times D} (\mbb N)$ subject to
\[
 \co (B)_j =1;\quad  b_{ij} = b_{N+1- i, D+1-j},\  \forall\ i\in [1, N],\  j\in [1, D].
 \]
Moreover, we have
\begin{equation}
 \begin{split}
\# \Sigma = 2^{d-1} \cdot d! \quad {\rm and}\quad
\# \Pi = (2n+1)^d.
\end{split}
\end{equation}

\subsection{$\mcal H_{\mscr Y}$-action on $\mcal V$}
We shall provide an explicit description of the action of $\mcal H_{\mscr Y}$ on $\mcal V$.
For any $1\leq j\leq d-1$, we define  a function $\T_j$ in $\mcal H_{\mscr Y}$  by
\begin{align*}
\T_j (F, F') &=
\begin{cases}
1, & \mbox{if} \; F_i = F_i' \ \forall i\in [1, d]\backslash  \{j\}, F_j\neq F_j';\\
0, & \mbox{otherwise}.
\end{cases}\\
\T_d &= e_{(d-1, d+1)(d, d+2)},
\end{align*}
where $e_{(d-1, d+1)(d, d+2)}$ is
the  characteristic function of the $G$-orbit corresponding to the permutation  matrix $(d-1, d+1)(d, d+2)$.
Then we have the following well-known result.

\begin{lem} \label{Geometric-Hecke}
 The assignment of sending the  functions $\T_j$,  for $1\leq j\leq  d$, in  the algebra $\mcal H_{\mscr Y}$ to the generators of
 $\Hd$ in the same notations is an isomorphism.
\end{lem}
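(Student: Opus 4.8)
The statement to prove is Lemma~\ref{Geometric-Hecke}: the functions $\T_j$, $1\le j\le d$, in $\mcal H_{\mscr Y}$ satisfy the defining relations of the Iwahori--Hecke algebra $\Hd$ of type $\mbf D_d$ and generate $\mcal H_{\mscr Y}$. The plan is to follow the standard template used for type $\mbf A$/$\mbf{BC}$: first identify $\mscr Y$ with the coset space $G/B$ for a Borel $B$ of $G=\mrm{SO}(D)$, so that $G$-orbits on $\mscr Y\times\mscr Y$ are indexed by $B\backslash G/B\simeq W$, the Weyl group of type $\mbf D_d$; then match the generators $\T_j$ with the simple reflections. Concretely, for $1\le j\le d-1$ the orbit supporting $\T_j$ corresponds to the transposition $(j,j+1)(D-j,D+1-j)$ (a simple reflection $s_j$ of the type-$\mbf D$ Weyl group sitting inside $S_D$), and $\T_d$ corresponds to the ``branching'' reflection, the permutation $(d-1,d+1)(d,d+2)$, which is the extra simple reflection $s_d$ of $\mbf D_d$. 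The first step is therefore to check that these $d$ orbits are exactly the ones attached to the $d$ simple reflections under the bijection $G\backslash\mscr Y\times\mscr Y\simeq\Sigma$ of Section~\ref{sec3.2}, using the description of $\Sigma$ in terms of permutation matrices $\sigma$ with $\ur(\sigma)\equiv 0\bmod 2$; the parity condition is precisely what forces the type-$\mbf D$ Weyl group rather than $S_D$ or the type-$\mbf B_d$ group, and the orbit condition $|F_d\cap M_d|\equiv d\bmod 2$ defining $\mscr Y$ guarantees a single $G$-orbit so that $\mscr Y$ is genuinely a full flag variety of $G$.

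Next I would verify the quadratic relation $\T_i^2=(v^2-1)\T_i+v^2$ for each $i$ by a direct convolution computation: specializing $v\mapsto\sqrt q$ as in~\eqref{eq30}, $\T_i*\T_i(F,F')$ counts flags $F''$ with $F''_k=F_k$ away from the relevant indices and $F''$ in the prescribed relative position to both $F$ and $F'$; when $F=F'$ this count is $q$ (the number of lines through a point in a $\mbb P^1$, or the appropriate Schubert cell count, which is where Lemma~\ref{S_d} or a $\mbb P^1$-fibration argument enters for $\T_d$), giving the $v^2$ term, and when $F,F'$ are in relative position $s_i$ one gets the $q-1$ contribution, giving $(v^2-1)\T_i$. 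The braid relations among $\T_1,\dots,\T_{d-1}$ are the classical type-$\mbf A$ braid relations for the convolution algebra on partial-flag configurations and follow from the corresponding identities for $B\backslash G/B$; the mixed relations involving $\T_d$ --- commutation $\T_d\T_l=\T_l\T_d$ for $l\ne d-2$ and the braid relation $\T_d\T_{d-2}\T_d=\T_{d-2}\T_d\T_{d-2}$ --- reduce to computations inside rank-$2$ and rank-$3$ parabolic subgroups of $G$, i.e. to the structure of the Hecke algebras of type $\mbf A_1\times\mbf A_1$ and type $\mbf A_3$ (equivalently $\mbf D_3$) respectively, so they follow once the rank-$\le 3$ cases are checked directly. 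Finally, surjectivity onto $\mcal H_{\mscr Y}$ follows from the fact that $\mcal H_{\mscr Y}$ has $\mcal A$-basis the characteristic functions $\{e_w\}_{w\in W}$, and by the standard Hecke-algebra argument (via a reduced-word/length induction using the quadratic and braid relations) every $e_w$ is, up to a power of $v$ and lower-length terms, a product of the $\T_i$'s; since $\mcal H_{\mscr Y}$ is free of rank $|W|=2^{d-1}d!=\#\Sigma$ over $\mcal A$ and $\Hd$ is known to have the same rank, the surjection is an isomorphism.

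The main obstacle I anticipate is the analysis of $\T_d$: unlike $\T_1,\dots,\T_{d-1}$, which act only on the ``type $\mbf A$ part'' of the flag and whose relations are inherited from the familiar $\mrm{GL}$-picture, the reflection $\T_d$ mixes the $d$-th step with its perp partner and genuinely sees the orthogonal structure and the two connected components of the maximal isotropic Grassmannian (cf. the parity constraint $\ur(\sigma)\equiv 0\bmod 2$ and the component condition on $\mscr Y$). Establishing the quadratic relation for $\T_d$ and the braid relation $\T_d\T_{d-2}\T_d=\T_{d-2}\T_d\T_{d-2}$ requires a careful count of isotropic subspaces in the rank-$2$ quotient $F_{d-2}^{\perp}/F_{d-2}$, which is exactly the setting of Lemma~\ref{lem1} and Lemma~\ref{S_d} (the hereditary property ensuring this quotient is again a split form of the type~\eqref{bilinear-form}), and of its obvious rank-$3$ analogue; the bookkeeping of which flags lie in which connected component throughout the convolution is the delicate point, though it is purely mechanical once the rank-$2$ and rank-$3$ counts are in hand. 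Since this is a ``well-known'' result I would in practice cite the relevant computation in the literature (e.g. the type-$\mbf D$ analogue of~\cite[2.3]{BKLW13}) and only spell out the $\T_d$ verification in detail.
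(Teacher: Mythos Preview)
The paper does not actually prove this lemma: it is introduced with the phrase ``we have the following well-known result'' and no argument is given. Your proposal supplies the standard proof (identify $\mscr Y$ with $G/B$, match $G$-orbits on $\mscr Y\times\mscr Y$ with the Weyl group of type $\mbf D_d$ via $\Sigma$, verify the quadratic and braid relations by rank-$\le 3$ convolution counts, and conclude by a rank comparison $|W|=2^{d-1}d!=\#\Sigma$), which is correct and is precisely what the ``well-known'' label is pointing to. There is nothing to compare; your write-up simply fills in what the paper leaves to the literature.
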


Given $B=(b_{ij})\in \Pi$, let $r_c$ be the unique number in $[1,N]$ such that $b_{r_c, c} =1$ for each $c\in [1, D]$.
The correspondence $B\mapsto \tilde{\mbf r}= (r_1,\cdots, r_D)$ defines a bijection between $\Pi$ and the set of all sequences
$(r_1,\cdots, r_d)$ such that $r_i+ r_{D+1-i} = N+1$ for any $i\in [1, D]$.
Denote by $e_{r_1 \dots r_D}$ the characteristic function of the $G$-orbit corresponding to the matrix $B$ in $\mcal V$.
It is clear that the collection of these characteristic functions provides a basis for $\mcal V$.

Recall from Section ~\ref{U-first} that we have the space $\mbf V^{\otimes d}$ spanned by vectors $\mbf{v_r}$ and
to each sequence $\mbf r$ a sequence $\tilde{\mbf r}$ is uniquely defined.
Thus we have an isomorphism of vector spaces over $\mathbb Q(v)$:
\begin{align}\label{V-V}
\mbf V^{\otimes d} \to \mbb Q(v)\otimes_{\mcal A}  \mcal V, \quad   \mbf v_{\mbf r} \mapsto e_{r_1,\cdots, r_D}.
\end{align}
Moreover, we have

\begin{lem}
\label{H-action-Y}
The action of $\mcal H_{\mscr Y}$ on $\mcal V$ is described as follows.
For $1\leq j\leq d-1$, we have
\begin{eqnarray}
&e_{r_1 \dots r_D} \T_j =
\begin{cases}
e_{r_1\dots r_{j-1} r_{j+1} r_j\dots r_{D-j} r_{D-j-1} r_{D-j+1} \dots r_D} ,  &  r_j < r_{j+1};\\
v^2 e_{r_1\dots r_D}, &  r_j = r_{j+1};\\
(v^2-1) e_{r_1\dots r_D} + v^2 e_{r_1 \dots r_{j-1} r_{j+1} r_j  \dots  r_{D-j} r_{D-j-1} r_{D-j+1} \dots r_D}, & r_j> r_{j+1}.
\end{cases}
\label{1}
\vspace{5pt}\\
&\hspace{12pt}e_{r_1 \dots r_{D}} \T_d =
\begin{cases}
e_{r_1\dots r_{d-2} r_{d+1} r_{d+2}r_{d-1}r_d r_{d+3}  \dots r_D} ,  & r_{d-1}+r_d < N+1;\\
v^2 e_{r_1\dots r_D}, &  r_{d-1}+r_d =N+1;\\
(v^2-1) e_{r_1\dots r_D} + v^2 e_{r_1\dots r_{d-2} r_{d+1} r_{d+2}r_{d-1}r_d r_{d+3}  \dots r_D}, &r_{d-1}+r_d > N+1.
\end{cases}
\label{2}
\end{eqnarray}
\end{lem}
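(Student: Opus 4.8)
The plan is to compute the convolution product $e_{r_1\dots r_D}*\T_j$ directly from the definition~(\ref{eq30}), using the $G$-orbit description of $\mscr X\times\mscr Y$ and $\mscr Y\times\mscr Y$ from Section~\ref{sec3.2}. Fix a pair $(V,F)\in\mscr X\times\mscr Y$ lying in the orbit $B\in\Pi$ with associated sequence $\tilde{\mbf r}=(r_1,\dots,r_D)$, and compute $\sum_{F''\in\mscr Y} e_{r_1\dots r_D}(V,F'')\,\T_j(F'',F)$. For $1\le j\le d-1$, the condition $\T_j(F'',F)\neq 0$ forces $F''_i=F_i$ for all $i\neq j$ (and, by the self-duality $F''_i=(F''_{D-i})^\perp$, also $F''_i=F_i$ for $i\neq D-j$), while $F''_j$ ranges over the lines strictly between $F_{j-1}$ and $F_{j+1}$ distinct from $F_j$ inside the two-dimensional space $F_{j+1}/F_{j-1}$; the self-duality pins down $F''_{D-j}$ accordingly. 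So the sum is really over a $\mbb P^1$'s worth of choices of $F''_j$, and I would split it according to whether $F''_j$ is required to satisfy the incidence conditions that put $(V,F'')$ into a prescribed orbit.

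The combinatorics is governed by comparing $r_j$ and $r_{j+1}$, i.e. by where the jumps of the flag $V$ occur relative to $F_j$. First I would treat the case $r_j=r_{j+1}$: then the line $F_j$ and the generic line in $F_{j+1}/F_{j-1}$ impose the same incidence with $V$, so $(V,F'')$ stays in the orbit $B$ for all but one choice, and counting the number of such $F''_j$ over $\mbb F_q$ (there are $q$ of them, since one line is excluded from $\mbb P^1(\mbb F_q)$) together with the $v\leftrightarrow\sqrt q$ normalization gives the coefficient $v^2$. Next, the case $r_j<r_{j+1}$: here exactly the line $F''_j=F_j$ already realizes the ``swapped'' sequence after accounting for the shift, the generic other choice produces the orbit with $r_j,r_{j+1}$ interchanged (together with the mirror swap at positions $D-j,D-j+1$), and one checks that the single term surviving is $e_{\dots r_{j+1}r_j\dots r_{D-j}r_{D-j-1}\dots}$ with coefficient $1$. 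Finally $r_j>r_{j+1}$ is the generic ``Hecke'' case where the sum splits into $q-1$ copies of the original orbit and one copy of the swapped orbit, yielding $(v^2-1)e_{\dots}+v^2 e_{\dots\text{swap}\dots}$; this is consistent with the quadratic relation $\T_j^2=(v^2-1)\T_j+v^2$ already recorded for $\Hd$, which I would use as a sanity check rather than reprove.

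For the action of $\T_d$, the argument is the same in spirit but now localized at the ``middle'' of the flag. Here $\T_d$ is the characteristic function of the orbit of the permutation $(d-1,d+1)(d,d+2)$, so $\T_d(F'',F)\neq 0$ forces $F''_i=F_i$ for $i\notin\{d-1,d\}$ (hence also for $i\notin\{d+1,d+2\}$ by self-duality), and the relevant variation now takes place inside the four-dimensional space $F_{d+2}/F_{d-2}$ carrying the induced nondegenerate form $Q|_{F_{d+2}/F_{d-2}}$, whose rank-$2$ form of type~(\ref{bilinear-form}) is exactly the content of Lemma~\ref{lem1}. The trichotomy is now dictated by $r_{d-1}+r_d$ versus $N+1$: the case $r_{d-1}+r_d=N+1$ is the isotropic/diagonal case giving $v^2$, the case $r_{d-1}+r_d<N+1$ gives the clean swap $e_{\dots r_{d+1}r_{d+2}r_{d-1}r_d\dots}$ with coefficient $1$, and $r_{d-1}+r_d>N+1$ gives $(v^2-1)e_{\dots}+v^2e_{\dots\text{swap}\dots}$. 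The main obstacle I anticipate is precisely this $\T_d$ computation: one must count isotropic lines in a rank-$2$ even quadratic space over $\mbb F_q$ and identify which ones produce which target orbit, keeping careful track of the parity/sign conditions (the $\ur(\sigma)\equiv 0\bmod 2$ constraint and the $|F_d\cap M_d|\equiv d\bmod 2$ condition defining $\mscr Y$) so that $F''$ genuinely lands in $\mscr Y$ and in the correct orbit; Lemmas~\ref{lem1}, \ref{S_d} and~\ref{countingisotrop} are exactly the tools for this, and once the count is in hand the normalization by powers of $v$ is routine bookkeeping matching the dimensions of the orbit closures.
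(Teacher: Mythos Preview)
Your outline for formula~(\ref{1}) is the standard type-$\mbf A$ convolution computation, which is exactly what the paper invokes (it cites~\cite[1.12]{GL92} and says the proof is identical to type~$\mbf A$). One small caution: your case descriptions are phrased as if $(V,F)$ were fixed in the orbit $r$ and you vary $F''$, but the convolution $(e_r*\T_j)(V,F)=\sum_{F''}e_r(V,F'')\T_j(F'',F)$ is evaluated at an arbitrary $(V,F)$ and asks how many $F''$ put $(V,F'')$ into the orbit $r$; getting this direction right is what makes the coefficient in the $r_j<r_{j+1}$ case equal to $1$ rather than $q$.

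For formula~(\ref{2}) you and the paper make the same reduction to the four-dimensional quotient $F_{d+2}/F_{d-2}$ with its inherited split form (Lemma~\ref{lem1}), but the executions differ. The paper observes that only the relative position of $r_{d-1},r_d$ matters, reduces to $D=4$ and $n\le 2$, and then simply lists every case by hand. Your plan to invoke Lemmas~\ref{S_d} and~\ref{countingisotrop} is more than is needed: those lemmas count \emph{all} isotropic lines in a given stratum, whereas the set $\{F'':\T_d(F'',F)\neq 0\}$ is the Schubert cell of the simple reflection $s_d$, hence an affine line with $q$ points. Concretely, in the split $4$-space the admissible $\bar F''_d$ are exactly the maximal isotropics in the same ruling as $\bar F_d$ but distinct from it (the parity condition $|F''_d\cap M_d|\equiv d\bmod 2$ forces the same ruling), and $\bar F''_{d-1}=\bar F''_d\cap \bar F_{d+1}$ is then determined. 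So the genuine work is not an isotropic-line count but the same incidence bookkeeping as in type~$\mbf A$, now with a two-step jump; either your parametrization or the paper's brute-force table gets you there.
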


\begin{proof}
Formula (\ref{1}) agrees with the one in  ~\cite[1.12]{GL92}, whose  proof is also the same as the one for type-$\mbf A$ case.
We shall  prove (\ref{2}).  It suffices to show the result by specializing $v$ to $\sqrt q$.
By the  definition of convolution product, we have
\[
e_{r_1 \dots r_{D}} \T_d (V, F) =\sum_{F'\in \mscr Y} e_{r_1 \dots r_{D}} (V, F')  \T_d(F', F).
\]
By the definition of $\T_d$, we have $F'_i=F_i$ if $i\neq d-1,d, d+1$ or $d+2$.
So the calculation is reduced to the case when $D=4$.
Note also that it is enough to calculate the case when $n=2$, which we will assume.

If two of $r_1, r_2, r_3, r_4$ are equal, then the calculation can be  reduced further to the case when $n=1$.
In this case, we have
  \begin{eqnarray*}
e_{2132}\T_2=e_{3221},\quad & e_{2222} \T_2=v^2e_{2222},\quad & e_{2312}\T_2=(v^2-1)e_{2312}+v^2e_{1223}.\\
e_{1223}\T_2=e_{2312},\quad & e_{1313} \T_2=v^2e_{1313},\quad & e_{3221}\T_2=(v^2-1)e_{3221}+v^2e_{2132}.\\
e_{1133}\T_2=e_{3311},\quad & e_{3131} \T_2=v^2e_{3131},\quad & e_{3311}\T_2=(v^2-1)e_{3311}+v^2e_{1133}.
  \end{eqnarray*}
  For the case when  $r_1, r_2, r_3, r_4$ are all distinct, we have
  \begin{eqnarray*}
e_{2514}\T_2=(v^2-1)e_{2514}+v^2e_{1425},\quad && e_{4512} \T_2=(v^2-1)e_{4512}+v^2e_{1245}.\\
e_{5241}\T_2=(v^2-1)e_{5241}+v^2e_{4152},\quad && e_{5421} \T_2=(v^2-1)e_{5421}+v^2e_{2154}.\\
e_{1245}\T_2=e_{4512},\quad  e_{1425} \T_2=e_{2514},\quad && e_{2154} \T_2=e_{5421},\quad  e_{4152} \T_2=e_{5241}.
  \end{eqnarray*}
Formula (\ref{2}) follows from  the above computations.
\end{proof}

\section{Calculus of the algebra $\mcal S$}
\label{secS_D}

Recall from the previous section  that $\mcal S_{\mscr X}$ is the convolution algebra on $\mscr X\times \mscr X$
defined in (\ref{S(X)}).
For simplicity, we shall denote $\mcal S$ instead of $\mcal S_{\mscr X}$.
In this section, we determine  the generators for $\mcal S$ and the associated  multiplication formula.
Furthermore, we provide with   a (conjectural)  algebraic presentation of $\mcal S$  and deduce various bases.

\subsection{Defining relations of $\mcal S$}

 For any $i\in [1, n]$, $a\in [1, n+1]$, we set
\begin{align}
\begin{split}
E_i (V, V') &=
\begin{cases}
v^{-|V'_{i+1}/V'_i|}, &\mbox{if}\; V_i\overset{1}{\subset} V_i', V_j=V_{j'},\forall j\in [1,n]\backslash \{i\}; \\
0, &\mbox{otherwise}.
\end{cases}
\\
F_i (V, V') &=
\begin{cases}
v^{-|V'_i/V'_{i-1}|}, &\mbox{if}\; V_i\overset{1}{\supset} V_i', V_j=V_{j'},\forall j\in [1,n]\backslash \{i\}; \\
0, &\mbox{otherwise}.
\end{cases}
\\
H_a^{\pm 1} (V, V') & =
\begin{cases}
v^{\mp |V_a'/V_{a-1}'|} , &\mbox{if}\; V=V';\\
0, & \mbox{otherwise}.
\end{cases}
\\
J_+(V,V') &=
\begin{cases}
  1 , &\mbox{if}\; V=V',\ |V_n|=d\ {\rm and}\ |V_n\cap M_d|\equiv d\bmod 2;\\
0, & \mbox{otherwise}.
\end{cases}
\\
J_-(V,V') &=
\begin{cases}
  1 , &\mbox{if}\; V=V',\ |V_n|=d\ {\rm and}\ |V_n\cap M_d|\equiv d-1\bmod 2;\\
0, & \mbox{otherwise}.
\end{cases}
\\
J_0&=1-J_+-J_-.
\end{split}
\end{align}

It is clear that these functions are elements in $\mcal S$.

\begin{prop}\label{prop3}
The functions $E_i, F_i$,  $H_a^{\pm 1}$  and $J_{\alpha}$  in $\mcal S$, for any $i\in [1,n]$, $a\in [1,n+1]$ and $\alpha\in \{ \pm, 0\}$,   satisfy
the defining relations of the algebra $\U$ in Section  ~\ref{U-first}, together with the following ones.
\begin{equation}\label{relation B1}
 H_{n+1}\prod_{i=1}^n H_i\,^2=v^{-D},\quad {\rm and}\quad \prod_{l=1}^d(H_j-v^{\redtext{-}l})=0,\quad  \forall j\in [1,n].
\end{equation}
\end{prop}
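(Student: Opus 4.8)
The plan is to verify all the stated relations by direct computation in the convolution algebra $\mcal S$ after specializing $v$ to $\sqrt q$, exactly as in the companion paper \cite{BKLW13}. Since $\mcal S$ is a free $\mcal A$-module and the relations are polynomial identities in $v$ with coefficients that are Laurent polynomials, it suffices to check each identity for infinitely many prime powers $q$, hence over $\mbb F_q$ for all $q$. The key technical input is a multiplication formula: for each generator $E_i$, $F_i$, $H_a^{\pm 1}$, $J_\alpha$ and each basis element $e_B$ (the characteristic function of the $G$-orbit attached to a signed matrix $B$), one writes $E_i * e_B$, etc., as an explicit $\mcal A$-linear combination of basis elements, with structure constants expressed through the counting lemmas of Section~3 (Lemma~\ref{S_d} and Lemma~\ref{countingisotrop}). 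This is the analogue of the BLM multiplication formula; most of the relations not involving the index $n$ (the commutation of the $H_a$, the $v$-weight relations $H_aE_i = v^{\cdots}E_iH_a$, the Serre relations for $|i-j|=1$, commutativity for $|i-j|>1$, and the Chevalley relation for $(i,j)\neq(n,n)$) are then formally identical to the type-$\mbf A$ computations in \cite{GL92} and \cite{BLM90}, and I would simply cite those, noting that the isotropy conditions do not interfere since they only constrain the middle steps $V_n$ of the flags.

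Next I would treat the relations genuinely special to type~$\mbf D$, namely those involving $E_n$, $F_n$, the idempotents $J_\alpha$, and the middle pair $H_n, H_{n+1}$: the rule $J_\pm E_i = (1-\delta_{in})E_iJ_\pm$ and its $F$-counterpart, the twisted Chevalley relation $J_\pm F_nE_n - F_nE_nJ_\mp = \frac{H_n^{-1}H_{n+1}-H_nH_{n+1}^{-1}}{v-v^{-1}}(J_\pm - J_\mp)$, and the two degree-three relations $E_n^2F_n + F_nE_n^2 = (v+v^{-1})(E_nF_nE_n - E_n(vH_nH_{n+1}^{-1}+v^{-1}H_n^{-1}H_{n+1}))$ together with its transpose. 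Here the crucial point is that applying $E_n$ (resp.\ $F_n$) to an $N$-step isotropic flag changes the parity of $|V_n|$ relative to $M_d$ or moves $V_n$ across maximal isotropic subspaces, so it swaps $J_+ \leftrightarrow J_-$; one must carefully track the three cases in the definition of $J_+, J_-, J_0$ (whether $|V_n| = d$ and, if so, the parity of $|V_n\cap M_d|$). I would organize this by computing $E_n * e_B$ and $F_n * e_B$ on the basis, reading off from Lemma~\ref{countingisotrop}(i) the two terms in $\#Z_3$ — the `$\frac{q^{a_3-1}-1}{q-1}$' part and the `$q^{a_3/2-1}$' part — and observing that it is precisely the second term, reflecting the two connected components of the maximal isotropic Grassmannian, that produces the $J_\pm - J_\mp$ correction and the $vH_nH_{n+1}^{-1}+v^{-1}H_n^{-1}H_{n+1}$ terms. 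The relations $J_++J_0+J_-=1$, $J_\alpha J_\beta = \delta_{\alpha\beta}J_\alpha$, and the commutation of $J_\alpha$ with $H_a$ are immediate from the definitions since all these functions are supported on the diagonal $V=V'$.

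Finally, the two extra relations in \eqref{relation B1} are the easiest: $H_a^{\pm1}$ is supported on the diagonal and acts on $e_B$ by the scalar $v^{\mp|V_a'/V_{a-1}'|}$, i.e.\ by $v^{\mp b_{aa}}$ in the signed-matrix notation, so $H_{n+1}\prod_{i=1}^n H_i^2$ acts by $v^{-(b_{n+1,\cdot} + 2\sum_{i=1}^n b_{i,\cdot})}$; since the column sums of $B$ encode a dimension vector of $\mbb F_q^D$ with the symmetry $b_{ij}=b_{N+1-i,D+1-j}$, the total exponent is $-D$, giving the first identity. For the second, $H_j$ acts on $e_B$ by $v^{-|V_j'/V_{j-1}'|}$ where $|V_j'/V_{j-1}'| = |V_{N+1-j}'/V_{N-j}'|$ lies in $[0,d]$ (each of the $d$ basis tensor slots contributes at most one, and the symmetry pairs slot $j$ with slot $N+1-j$), so $\prod_{l=1}^d(H_j - v^{-l})$ annihilates every basis element. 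I expect the main obstacle to be the bookkeeping in the degree-three relations $E_n^2F_n + F_nE_n^2 = \cdots$: here $E_n^2 * e_B$ involves a \emph{sum over two successive rank-one expansions} whose $q$-powers come from iterating Lemma~\ref{countingisotrop}, and one must match the resulting quadratic polynomial in $q$ against the right-hand side term by term, keeping the $J_\alpha$-supports and the $H_n^{\pm1}H_{n+1}^{\mp1}$ scalars straight; this is where the type~$\mbf D$ sign subtleties are most delicate, and I would carry it out by reducing (as in the proof of Lemma~\ref{H-action-Y}) to the smallest nontrivial cases $D=4$, $n=1,2$ and checking the finitely many orbits explicitly.
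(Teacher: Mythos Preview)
Your overall strategy---direct computation in the convolution algebra, citing the type-$\mbf A$ arguments for the relations away from the index $n$, and treating the $E_n,F_n,J_\alpha$ relations separately via the counting lemmas---is exactly what the paper does. Two points deserve comment.

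First, for the pair of degree-three relations you single out as the hardest, the paper proves only the $E_n^2F_n$ relation by direct computation of the three functions $E_nF_nE_n(V,V')$, $E_n^2F_n(V,V')$, $F_nE_n^2(V,V')$ as explicit expressions in $\lambda'_n,\lambda'_{n+1},|V_n|$. The $F_n^2E_n$ relation is then obtained for free by applying the anti-automorphism $\rho:\mcal S\to\mcal S$, $\rho(f)(V,V')=f(V',V)$, which satisfies $\rho(E_n)=v^{-1}H_n^{-1}H_{n+1}F_n$, $\rho(F_n)=v^{-2}H_nH_{n+1}^{-1}E_n$, $\rho(H_n^{\pm1})=H_n^{\pm1}$. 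You do not mention this trick; it halves the work.

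Second, your proposed shortcut of ``reducing to $D=4$, $n=1,2$'' for the degree-three relations is not valid here, and this is a genuine gap. The reduction in Lemma~\ref{H-action-Y} works because the Hecke generator $T_d$ only touches four consecutive steps of the complete flag, so the convolution literally factors through a four-dimensional quotient. By contrast, $E_n$ and $F_n$ change $V_n$ inside $V_{n+2}=V_{n-1}^\perp$, and the quotient $V_{n+2}/V_{n-1}$ has dimension $\lambda'_n+\lambda'_{n+1}+\lambda'_{n+2}$, which is unbounded as $D$ grows. The structure constants you need (e.g.\ the $(\frac{v^{2\lambda'_n}-1}{v^2-1}+\cdots)$ appearing in $E_nF_nE_n$) are genuine functions of these parameters, and checking a single small $D$ does not verify the identity for all values. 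The paper simply computes each triple product as a closed-form function of $(V,V')$ valid for arbitrary $\lambda'_n,\lambda'_{n+1}$ and checks the identity symbolically; this is what you must do, and it is no harder than the case-check you propose---just a few lines of $q$-binomial algebra rather than a finite enumeration.
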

\begin{proof}
The proofs of the  identities in the first four rows of the defining relations of $\U$ are straightforward.
We show the  identity in the fifth row.
Let $\lambda'_i=|V'_i/V'_{i-1}|$.
We have
\begin{equation*}
F_nE_{n}(V,V')=\left\{\begin{array}{ll}
 \frac{v^{2\lambda_n'}-1}{v^2-1}v^{-\lambda_n'-\lambda'_{n+1}+1}& {\rm if}\ V=V',\vspace{5pt}\\
 v^{-\lambda_n'-\lambda'_{n+1}+1}& {\rm if}\ |V_n\cap V_n'|=|V_n|-1=|V_n'|-1,\vspace{5pt}\\
  0&{\rm otherwise}.
  \end{array}
  \right.
\end{equation*}
We set
$$\mscr X^3=\{V\in \mscr X||V_n|=d,\ |V_n\cap M_d|\equiv d-1\bmod 2\}.$$
It  is  a $G$-orbit and we have $|V_n\cap V_n'|\equiv d\bmod 2$ for any $V, V'\in \mscr X^3$.
Therefore,
\begin{equation*}
(J_+F_nE_{n}-F_nE_nJ_-)(V,V')=\left\{\begin{array}{ll}
 \frac{v^{\lambda_n'}-v^{-\lambda_n'}}{v-v^{-1}}& {\rm if}\ V=V'\not\in \mscr X^3,\vspace{5pt}\\
 -\frac{v^{\lambda_n'}-v^{-\lambda_n'}}{v-v^{-1}}& {\rm if}\ V=V'\in \mscr X^3,\vspace{5pt}\\
  0&{\rm otherwise}.
  \end{array}\right.
\end{equation*}
It is easy to check that the right hand side is equal to $\frac{H_n^{-1}H_{n+1}-H_nH_{n+1}^{-1}}{v-v^{-1}}(J_{+}-J_{-})(V,V')$.

We now show the penultimate identity.
By a direct calculation, we have
  \begin{equation*}
    E_nF_nE_n(V,V')=\left\{\begin{array}{l}
      (\frac{v^{2\lambda'_n}-1}{v^2-1}+\frac{v^{2\lambda_{n+1}'+2}-1}{v^2-1}
       +v^{D-2|V_n|-2}-1)v^{-\lambda'_n-2\lambda_{n+1}'+1},
       \quad {\rm if}\ V_n \overset{1}{\subset} V'_n,
       \vspace{5pt}\\
      v^{-\lambda'_n-2\lambda_{n+1}'+1}, \hspace{52pt} {\rm if}\ |V_n \cap V'_n|=|V_n|-1
      \ {\rm and}\ V_n\not\subset V'_{n+1},\\
      (v^2+1)v^{-\lambda'_n-2\lambda_{n+1}'+1},\quad {\rm if}\ |V_n \cap V'_n|=|V_n|-1
      \ {\rm and}\ V_n \subset V'_{n+1},\\
      0,\hspace{110pt}{\rm otherwise}.
    \end{array}
  \right.
  \end{equation*}
  \begin{equation*}
    E_n^2F_n(V,V')=\left\{\begin{array}{l}
      (\frac{v^{2\lambda_{n+1}'-2}-1}{v^2-1}
       +v^{D-2|V'_n|-2})(v^2+1)v^{-\lambda'_n-2\lambda_{n+1}'+2},
       \quad {\rm if}\ V_n \overset{1}{\subset} V'_n\ {\rm and}\ |V'_n|<d,
       \vspace{5pt}\\
      (v^2+1)v^{-\lambda'_n-2\lambda_{n+1}'+2} ,\quad {\rm if}\ |V_n \cap V'_n|=|V_n|-1, |V'_n|<d
      \ {\rm and}\ V_n \subset V'_{n+1},\\
      0, \hspace{112pt} {\rm otherwise}.
    \end{array}
  \right.
  \end{equation*}
  \begin{equation*}
    F_n E_n^2(V,V')=\left\{\begin{array}{ll}
      \frac{v^{2\lambda_{n}'-2}-1}{v^2-1}
       (v^2+1)v^{-\lambda'_n-2\lambda_{n+1}'},
       &{\rm if}\ V_n \overset{1}{\subset} V'_n,\\
      (v^2+1)v^{-\lambda'_n-2\lambda_{n+1}'}, &{\rm if}\ |V_n \cap V'_n|=|V_n|-1,\hspace{90pt}\\
      0, & {\rm otherwise}.
    \end{array}
  \right.
\end{equation*}
The penultimate identity follows.

To prove the last identity,
we define a map $\rho: \mcal S \rightarrow \mcal S$ such that $\rho(f)(V,V')=f(V',V)$.
It is clear that  $\rho$ is an anti-automorphism.
Moreover, we have
\begin{align} \label{rho}
\rho(E_n)=v^{-1}H_n^{-1}H_{n+1}F_n,\quad  \rho(F_n)=v^{-2}H_nH_{n+1}^{-1} E_n,
\quad \mbox{and}\quad
 \rho(H_n^{\pm 1})=H_n^{\pm 1}.
\end{align}
Applying $\rho$ to both sides of the penultimate  identity, we get the last identity.
The rest relations are reduced to type-$\mbf A$ case, and will not be reproduced here.
\end{proof}

\subsection{Parametrization of $G$-orbits on $\mscr X\times \mscr X$}
\label{sec4.2}

In order to describe the structure of the algebra $\mcal S$, we need to parametrize the $G$-orbits in $\mscr X\times \mscr X$.
Recall from Section ~\ref{setup} that
$\mscr X$ is the set of $N$-step flags in $\mbb F_q^D$ such that $ V_i= V_j^{\perp}, \forall\  i+j = N$.
For any pair $(V, V')$ of flags in $\mscr X$, we can assign an $N$ by $N$ matrix as (\ref{bi}) whose $(i,j)$-entry equal to
$\dim \frac{V_{i-1}+ V_i\cap V_j'}{V_{i-1} + V_i\cap V_{j-1}'}$. It is clear that this assignment is $G$-invariant.
Thus we have a map
\begin{align}\label{tphi}
\tilde \Phi: G\backslash \mscr X\times \mscr X \to \Xi,
\end{align}
where  $\Xi$ is the set of all $N\times N$ matrices  $A$  with entries in $\mbb N$ subject to
\[
\sum_{i,j\in [1,N]}a_{ij}=D,\ a_{ij} = a_{N+1 -i, N+1-j}, \quad  \forall i, j\in [1, N].
\]
This map is surjective, but not injective. We need to refine it.

Recall that $M_d$ is a fixed maximal isotropic subspace in $\mbb F_q^D$ and
$$\mscr X^3=\{ V \in \mscr X| |V_n|=d\ {\rm and}\ |V_n\cap M_d| \equiv d-1\bmod 2\}.$$
We set
\begin{itemize}
  \item $\mscr X^1=\{V\in \mscr X | |V_n|<d\}$,
  \item $\mscr X^2=\{ V \in \mscr X| |V_n|=d\ {\rm and}\ |V_n\cap M_d| \equiv d\bmod 2\}$.
\end{itemize}
We have a partition of $\mscr X$:
\[
\mscr X =\mscr X^1\sqcup \mscr X^2 \sqcup \mscr X^3.
\]

Let $O(D)$ be the orthogonal group associated to $Q$.
For any $g\in O(D)\setminus G$, the
map $\psi_g: \mscr X^2 \rightarrow \mscr X^3$, defined by  $V\mapsto g\cdot V$,  is a bijection,
 which yields the following bijections.
\begin{equation}
\label{parity}
\begin{split}
& G \backslash \mscr X^1\times \mscr X^2 \rightarrow G \backslash \mscr X^1\times \mscr X^3, \quad
 G \backslash \mscr X^2\times \mscr X^1 \rightarrow G \backslash \mscr X^3\times \mscr X^1, \\
& G \backslash \mscr X^2\times \mscr X^2 \rightarrow G \backslash \mscr X^3\times \mscr X^3, \quad
 G \backslash \mscr X^2\times \mscr X^3 \rightarrow  G \backslash \mscr X^3\times \mscr X^2.
\end{split}
\end{equation}
Moreover, corresponding  pairs on both sides under  the bijections in (\ref{parity}) get sent to the same matrix by $\tilde \Phi$.
In corresponding to (\ref{parity}), we define a sign function
\begin{align}
\label{sign-e}
\sgn (i, j)  =
\begin{cases}
0 & (i, j) =(1, 1),\\
+ & (i, j)= (1,2), (2, 1), (2,2), (2,3),\\
- & (i, j)= (1, 3), (3, 1), (3,3), (3,2).
\end{cases}
\end{align}

Recall the notation $\ro(A)$ and $\co(A)$ from (\ref{ro-co}), we set
\begin{align} \label{Xi-set}
\begin{split}
\Xi^0 &=\{ A\in \Xi|\
 \ro (A)_{n+1} > 0, \ \co (A)_{n+1}> 0  \}\times \{0\},
 \\
\Xi^+ &=\Xi\backslash \Xi^0 \times \{+\},
\\
\Xi^- &=\Xi\backslash \Xi^0\times \{ - \}.
\end{split}
\end{align}
For convenience, we sometimes write $A^{\pm}$ for $(A,\pm )\in\Xi^{\pm}$ and $A^0 $ for $(A, 0)$ in $\Xi^0$.
We further set
\begin{align}\label{Xi}
\Xi_{\mbf D}=\Xi^+\sqcup \Xi^0 \sqcup \Xi^-.
\end{align}
Elements in $\Xi_{\mbf D}$ will be called $signed$ $matrices$.  We have

\begin{lem}
The map $\tilde \Phi$ in (\ref{tphi}) induces a bijection
\begin{align}
\label{Phi}
\Phi: G\backslash \mscr X\times \mscr X\to \Xi_{\mbf D}, \quad
G.(V, V') \mapsto \A \equiv (A,\alpha),
\end{align}
such that  $\tilde \Phi(G.(V,V') )=A$ and
$\alpha=\sgn (i, j)$ if $(V, V') \in \mscr X^i\times \mscr X^j$.
Moreover, we have
\begin{equation} \label{Xi-number}
  \begin{split}
\# \Xi_{\mbf D}=\binom{2n^2+2n+d}{d}+2\binom{2n^2+n+d-1}{d}-\binom{2n^2+d-1}{d}.
\end{split}
\end{equation}
\end{lem}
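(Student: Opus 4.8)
The plan is to prove the two assertions in the lemma in sequence: first that $\tilde\Phi$ descends to a well-defined bijection $\Phi:G\backslash\mscr X\times\mscr X\to\Xi_{\mbf D}$, and then the counting formula \eqref{Xi-number}. For the first part, I would begin from the surjective but non-injective map $\tilde\Phi$ of \eqref{tphi} onto $\Xi$, and analyze its fibers. The key observation is that the combinatorial invariant $A=\tilde\Phi(G.(V,V'))$ remembers the dimensions $|V_n|$, $|V_n'|$ (as $\co(A)_{n+1}$ and $\ro(A)_{n+1}$, roughly), so the only ambiguity is the \emph{parity} datum distinguishing $\mscr X^2$ from $\mscr X^3$ when $|V_n|=d$ (resp. $|V_n'|=d$). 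Using the bijections $\psi_g:\mscr X^2\to\mscr X^3$ for $g\in O(D)\setminus G$ and the resulting identifications \eqref{parity}, I would show that each fiber of $\tilde\Phi$ over a matrix $A$ with $\ro(A)_{n+1}>0$ and $\co(A)_{n+1}>0$ is a single $G$-orbit (both flags lie in $\mscr X^1$, giving the $0$-sign), while over a matrix $A$ with $\ro(A)_{n+1}=0$ or $\co(A)_{n+1}=0$ the fiber splits into exactly two $G$-orbits, distinguished by the common value of the sign $\sgn(i,j)$ where $(V,V')\in\mscr X^i\times\mscr X^j$ — here one uses that $\sgn$ is constant on each of the sets paired up in \eqref{parity} and that $O(D)$ acts transitively on each fiber of $\tilde\Phi$. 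This establishes that $\Phi$ is the desired bijection onto $\Xi^0\sqcup\Xi^+\sqcup\Xi^-=\Xi_{\mbf D}$.

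For the second part, the cardinality of $\Xi_{\mbf D}$ is computed from \eqref{Xi} and \eqref{Xi-set}: $\#\Xi_{\mbf D}=\#\Xi+2(\#\Xi-\#\Xi^{0,\mathrm{mat}})$ where $\Xi^{0,\mathrm{mat}}$ denotes the underlying set of matrices appearing in $\Xi^0$, i.e. those $A\in\Xi$ with $\ro(A)_{n+1}>0$ and $\co(A)_{n+1}>0$. So it suffices to count (a) $\#\Xi$ and (b) the number of $A\in\Xi$ with $\ro(A)_{n+1}=0$ or $\co(A)_{n+1}=0$. Both counts reduce to counting symmetric (under the central involution $a_{ij}=a_{N+1-i,N+1-j}$) nonnegative-integer $N\times N$ matrices with prescribed entry-sum $D=2d$. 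The symmetry constraint on an odd-sized matrix means the data is freely determined by the entries $a_{ij}$ with $(i,j)$ in a fundamental domain for the involution together with the fixed central entry $a_{n+1,n+1}$; since the total sum is even and the off-domain entries are paired, one obtains that $\#\Xi$ equals the number of ways to distribute $d$ among $2n^2+2n+1$ appropriately-counted slots — giving $\binom{2n^2+2n+d}{d}$ by stars-and-bars. The analogous count with the extra vanishing condition on the $(n+1)$-st row (or column) removes a block of free parameters, yielding $\binom{2n^2+n+d-1}{d}$, and by inclusion–exclusion the count of matrices with $\ro(A)_{n+1}=0$ \emph{or} $\co(A)_{n+1}=0$ is $2\binom{2n^2+n+d-1}{d}-\binom{2n^2+d-1}{d}$. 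Assembling $\#\Xi_{\mbf D}=\#\Xi + 2\bigl(\text{that count}\bigr)$ — note $\#\Xi^0=\#\Xi-(\text{that count})$ and $\#\Xi^{\pm}=\#\Xi-\#\Xi^0$ each, so the total is $\#\Xi+2(\#\Xi-\#\Xi^0)=3\#\Xi-2\#\Xi^0$, which rearranges to the displayed formula — completes the proof.

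The main obstacle I anticipate is the careful bookkeeping in the bijectivity argument: one must verify that $\tilde\Phi$ really is an $O(D)$-orbit-invariant (not just $G$-invariant) and that the $O(D)$-orbit over each matrix in $\Xi$ is either one $G$-orbit or two, with the split governed precisely by the parity of $|V_n\cap M_d|$ and $|V_n'\cap M_d|$; this is exactly the place where the type-$\mbf D$ phenomenon (two components of the maximal isotropic Grassmannian) enters, and it requires Lemma \ref{lem1} to guarantee the relevant stabilizers behave uniformly across the split group. The arithmetic in the second part is routine stars-and-bars once the right fundamental domain for the central involution is fixed, though one must be attentive to the parity constraint forced by $D$ being even and to whether the central entry $a_{n+1,n+1}$ contributes an independent parameter.
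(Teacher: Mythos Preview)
Your approach matches the paper's: the bijectivity is deduced from the identifications in \eqref{parity} (the paper states this in one line; your fuller discussion of the $O(D)$-action and fiber analysis is fine, though the appeal to Lemma~\ref{lem1} is not really needed here---Witt's theorem suffices), and the counting is done by computing $\#\Xi$ and the number of matrices with $\ro(A)_{n+1}=0$ or $\co(A)_{n+1}=0$ via inclusion--exclusion, exactly as the paper does.

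However, there is an arithmetic slip in your final assembly. You write $\#\Xi_{\mbf D}=\#\Xi+2(\#\Xi-\#\Xi^{0,\mathrm{mat}})=3\#\Xi-2\#\Xi^0$, but the decomposition $\Xi_{\mbf D}=\Xi^0\sqcup\Xi^+\sqcup\Xi^-$ gives
\[
\#\Xi_{\mbf D}=\#\Xi^0+2\bigl(\#\Xi-\#\Xi^0\bigr)=2\#\Xi-\#\Xi^0=\#\Xi+\bigl(\#\Xi-\#\Xi^0\bigr),
\]
i.e.\ $\#\Xi$ plus \emph{one} copy of ``that count'', not two. (You replaced the first summand $\#\Xi^0$ by $\#\Xi$.) With your own values $\#\Xi=\binom{2n^2+2n+d}{d}$ and $\#\Xi-\#\Xi^0=2\binom{2n^2+n+d-1}{d}-\binom{2n^2+d-1}{d}$, the corrected sum yields exactly \eqref{Xi-number}; your version $3\#\Xi-2\#\Xi^0$ would not. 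This is a bookkeeping error, not a conceptual one---all the component counts you obtained are correct.
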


\begin{proof}
  The first part follows from  (\ref{parity}) and the definition of $\Xi_{\bf D}$. We now calculate $\#\Xi_{\bf D}$.
  From (\ref{Xi-set})
  $\#\Xi_{\bf D}=\#\Xi+\#\Xi^-$. We have
  \begin{equation}
  \label{Xi-1}
  \begin{split}
    \#\Xi&=\#\left
    \{a_{ij}, i\in [1,n], \forall j; a_{n+1,j},j\in [1,n]|\sum_{ i\leq n; j}a_{ij} +\sum_{j\leq n}a_{n+1,j}=d-\frac{a_{n+1,n+1}}{2} \right
    \}\\
    &=\sum_{l=0}^d\binom{2n^2+2n+d-l-1}{d-l}=\binom{2n^2+2n+d}{d}.
  \end{split}
  \end{equation}
  Denote $\Xi^-_1=\{A\in \Xi^-|a_{n+1,j}=0,\forall j\}$ and $\Xi^-_2=\{A\in \Xi^-|a_{i,n+1}=0,\forall i\}$.
  Then $\Xi^-=\Xi^-_1\sqcup \Xi^-_2$, and
  \begin{equation}
  \label{Xi-2}
    \#\Xi^-=\#\Xi^-_1+\#\Xi^-_2-\#\Xi^-_1\cap \Xi^-_2=2\binom{2n^2+n+d-1}{d}-\binom{2n^2+d-1}{d}.
  \end{equation}
  Lemma follows from (\ref{Xi-1}) and (\ref{Xi-2}).
\end{proof}

\subsection{Multiplication formulas in $\mcal S$}\label{sec4.3}

For each signed matrix $\A \in \Xi_{\mbf D}$, we denote by $\mathcal O_{\A}$ the associated $G$-orbit.
We introduce the following notations.
\begin{align} \label{Anot}
\begin{split}
\sup(\A) & = (i, j),\quad {\rm if}\ \mcal O_{\A} \subseteq  \mscr X^i\times \mscr X^j. \\
\sgn(\A) & = \sgn (\sup(\A)). \\
\ro (\A)   & = \ro (A), \\
 \co (\A) & = \co (A), \\
  \ur (\A) &= \ur (A), \\
  \A+B    &= A+B \quad {\rm if} \  \A=(A,\alpha) \ \mbox{and}\ B \ \mbox{a matrix}. \\
 \p( \A)   & =
\begin{cases}
1 & {\rm if} \ \ur (\A) \ {\rm is \ odd},\\
0 & {\rm otherwise}.
\end{cases}
\end{split}
\end{align}
We note that $\A +B$ is a matrix instead of a signed matrix.
For a signed matrix $\A \in \Xi_{\mbf D}$, we define
\begin{equation}\label{eq68}
\begin{split}
&s_l(\A)=
\left\{\begin{array}{ll}
    1 & {\rm if}\ {\rm ro}(\A)_{n+1}>0,\\
    2 & {\rm if} \ {\rm ro} (\A)_{n+1} =0, \sgn(\A) =+,\\
    3 & {\rm if} \ {\rm ro} (\A)_{n+1} =0, \sgn (\A)  = -.
  \end{array}
  \right.
  \\
& s_r(\A)=\left\{\begin{array}{ll}
    1 & {\rm if}\ {\rm co}(\A)_{n+1}>0,\\
    3- \p(\A)\delta_{0,{\rm ro} (\A)_{n+1}} & {\rm if} \ {\rm co} (\A)_{n+1} =0, \sgn(\A)  = -,\\
    2+ \p(\A)\delta_{0,{\rm ro} (\A)_{n+1}} &{\rm if} \ {\rm co} (\A)_{n+1} =0, \sgn (\A) = +.
  \end{array}
  \right.
\end{split}
\end{equation}
Then, we have
\begin{lem}
$\sup(\A)=( s_l(\A), s_r(\A))$ and $\sgn (\A) = \sgn (s_l(\A), s_r(\A))$,  for all $\A \in \Xi_{\mbf D}$.
\end{lem}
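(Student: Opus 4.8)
The plan is to verify the identity $\sup(\A)=(s_l(\A), s_r(\A))$ by unwinding the definitions of both sides in terms of the numerical data $\ro(\A)_{n+1}$, $\co(\A)_{n+1}$, $\sgn(\A)$ and $\p(\A)$, and then deriving the second identity as a formal consequence using the description of $\sgn$ in \eqref{sign-e}. The key geometric input is that for a pair $(V,V')$ representing $\A$, the condition $|V_n|<d$ (i.e. $V\in \mscr X^1$) is equivalent to $V_{n}\subsetneq V_{n+1}$, which translates into $\ro(\A)_{n+1}>0$; likewise $|V'_n|<d$ is equivalent to $\co(\A)_{n+1}>0$. Thus the first coordinate $s_l(\A)$ correctly detects whether $\sup(\A)$ has first entry $1$, and when $\ro(\A)_{n+1}=0$ the sign $\sgn(\A)$ (which by definition of $\Phi$ in \eqref{Phi} records $\sgn(i,j)$) pins down whether the first entry is $2$ or $3$ via \eqref{sign-e}: $\sgn=+$ forces first entry $2$, $\sgn=-$ forces first entry $3$. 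This is exactly the three-way split in the formula for $s_l(\A)$.

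First I would handle $s_l$. If $\ro(\A)_{n+1}>0$ then $V\in\mscr X^1$ so $\sup(\A)=(1,\,\cdot\,)$, matching $s_l(\A)=1$. If $\ro(\A)_{n+1}=0$ then $V\in \mscr X^2\sqcup \mscr X^3$, and by definition $\sgn(\A)=\sgn(\sup(\A))$; consulting \eqref{sign-e}, the value $+$ occurs precisely when the first entry of $\sup(\A)$ is $2$ (the pairs $(2,1),(2,2),(2,3)$) and the value $-$ precisely when it is $3$ (the pairs $(3,1),(3,3),(3,2)$), so $s_l(\A)$ is read off correctly. Note the case $\ro(\A)_{n+1}=0$ with $\sup(\A)=(1,1)$, i.e. $\sgn=0$, cannot arise here since $\ro(\A)_{n+1}=0$ forces $|V_n|=d$.

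The subtle part is $s_r$, where the correction term $\p(\A)\delta_{0,\ro(\A)_{n+1}}$ appears. When $\co(\A)_{n+1}>0$ we have $V'\in\mscr X^1$, so the second entry of $\sup(\A)$ is $1$, matching $s_r(\A)=1$. When $\co(\A)_{n+1}=0$, we have $V'\in\mscr X^2\sqcup\mscr X^3$; the issue is to determine which. If additionally $\ro(\A)_{n+1}>0$ (so $V\in\mscr X^1$, and the Kronecker delta vanishes), then $\sgn(\A)=\sgn(1,j)$ where $j\in\{2,3\}$, and \eqref{sign-e} gives $\sgn=+ \iff j=2$, $\sgn=-\iff j=3$; this matches $s_r(\A)=3$ resp. $2$ with the delta absent. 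If instead $\ro(\A)_{n+1}=0$, then both $V,V'$ lie in $\mscr X^2\sqcup\mscr X^3$, and here $\sgn(\A)$ alone does not separate $(2,2)$ from $(3,3)$ nor $(2,3)$ from $(3,2)$ — both diagonal pairs give $+$, both anti-diagonal pairs give $-$. The extra data needed is the parity $\p(\A)=\ur(\A)\bmod 2$: I expect the geometric claim to be that, with $|V_n|=|V'_n|=d$, the relative position of the two maximal isotropic subspaces $V_n$, $V'_n$ (which lie in the same or opposite components of the maximal isotropic Grassmannian according to $|V_n\cap V'_n|\equiv d\bmod 2$) is governed by $\ur(\A)\bmod 2$; combined with $\sgn(\A)$ this determines $s_r$. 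This geometric parity statement, and its compatibility with the definition \eqref{bi} of the matrix attached to $(V,V')$, is the main obstacle; everything else is bookkeeping against \eqref{sign-e}. Finally, once $\sup(\A)=(s_l(\A),s_r(\A))$ is established, the identity $\sgn(\A)=\sgn(s_l(\A),s_r(\A))$ is immediate from $\sgn(\A)=\sgn(\sup(\A))$.
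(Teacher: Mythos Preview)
The paper states this lemma without proof, treating it as a routine unwinding of the definitions in \eqref{sign-e}, \eqref{Anot} and \eqref{eq68}. Your outline is the right one and matches what the paper has in mind, but there are two slips and one point you flagged as an obstacle that can be dispatched quickly.

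\medskip

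\textbf{Slips in reading the sign table.} You write that ``both diagonal pairs give $+$, both anti-diagonal pairs give $-$''. This is a misreading of \eqref{sign-e}: in fact $\sgn(2,2)=\sgn(2,3)=+$ while $\sgn(3,2)=\sgn(3,3)=-$, so the sign depends only on the \emph{first} coordinate once both lie in $\{2,3\}$. Consequently $\sgn(\A)$ does separate $(2,2)$ from $(3,3)$ and $(2,3)$ from $(3,2)$; what it cannot separate is $(2,2)$ from $(2,3)$, and $(3,2)$ from $(3,3)$. Since in this situation $s_l(\A)$ is already determined by $\sgn(\A)$, it is precisely the second coordinate that remains ambiguous and must be fixed by $\p(\A)$. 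Similarly, in the case $\ro(\A)_{n+1}>0$, $\co(\A)_{n+1}=0$ you have the correspondence reversed: with the Kronecker delta vanishing, the formula gives $s_r(\A)=2$ when $\sgn(\A)=+$ and $s_r(\A)=3$ when $\sgn(\A)=-$, matching $\sgn(1,2)=+$, $\sgn(1,3)=-$.

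\medskip

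\textbf{The parity claim.} The ``main obstacle'' you identify is in fact a one-line computation. Suppose $\ro(\A)_{n+1}=\co(\A)_{n+1}=0$, so $|V_n|=|V'_n|=d$. From the definition \eqref{bi} one has $|V_n\cap V'_n|=\sum_{i\le n,\,j\le n}a_{ij}$ and $|V_n|=\sum_{i\le n,\,j}a_{ij}$, hence
\[
\ur(\A)=\sum_{i\le n,\,j\ge n+1}a_{ij}=|V_n|-|V_n\cap V'_n|=d-|V_n\cap V'_n|.
\]
The standard fact about maximal isotropic subspaces in a split even orthogonal space is that $V_n$ and $V'_n$ lie in the same $\mrm{SO}(D)$-component if and only if $d-|V_n\cap V'_n|$ is even, i.e.\ $\p(\A)=0$. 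Combined with the correct reading of the sign table above (first coordinate $2$ iff $\sgn(\A)=+$), this gives exactly the formula for $s_r(\A)$ in \eqref{eq68}: the correction $\pm\p(\A)$ flips the second coordinate precisely when $V_n$ and $V'_n$ are in opposite components.
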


For any $n\in \mbb Z,  k\in \mbb N$,
 we set
$$(n)_v=\frac{v^{2n}-1}{v^2-1},
\quad {\rm and}\quad
\begin{pmatrix}
  n\\k
\end{pmatrix}_{\!\!\!{}v}= \prod_{i=1}^k\frac{(n+1-i)_v}{(i)_v}.
$$
Let
$$E_{ij}^{\theta}=E_{ij}+E_{N+1-i, N+1-j},$$
where $E_{ij}$ is the $N\times N$ matrix whose $(i,j)$-entry is 1 and all other entries are 0.
Let $e_{\A }$  be the characteristic function of the $G$-orbit corresponding to $\A\in \Xi_{\mbf D}$.
It is clear that  the set $\{e_{\A  }| \A \in \Xi_{\mbf D}\}$ forms a basis of $\mcal S$.
For convenience, we set
\[
e_{\A} =0,\quad \mbox{if}\  \A\not \in  \Xi_{\mbf D}.
\]
Recall the notations, such as  $\A + B$,  from (\ref{Anot}). We have

\begin{prop}\label{prop4}
Suppose that $\Aa\equiv A^{\alpha}$,  $\B$, $\C   \in \Xi_{\mbf D}$ and $h\in [1, n]$.

$(a)$ If $\B$ is chosen such that  $\B-E_{h,h+1}^{\theta}$  is  diagonal,
 ${\rm co}(\B)={\rm ro}(\A)$ and
$s_r(\B)=s_l(\Aa)$, then
\begin{align} \label{34}
e_{\B} * e_{\A}=\sum_{p\in[1,N]}  v^{2\sum_{j>p}a_{hj}}  & (1+ a_{hp})_v\
 e_{\A_p}, \quad \mbox{where}\\
 & \A_p =(A+E^{\theta}_{hp}-E^{\theta}_{h+1,p}, \sgn (s_l(\B), s_r(\Aa))) \in \Xi_{\mbf D}. \nonumber
 \end{align}

$(b)$ If $h\neq n$ and $\C$ has that  $\C-E_{h+1,h}^{\theta}$ is diagonal, ${\rm co}(\C)={\rm ro}(\A)$ and
$s_r(\C)=s_l(\Aa)$,
then
\begin{align} \label{35}
e_{\C} *  e_{\A}=\sum_{1\leq p\leq N} v^{2\sum_{j<p}  a_{h+1,j}}  & (1+a_{h+1,p})_v\
e_{\A(h, p)}, \quad \mbox{where}\\
& \A(h, p) = (A-E^{\theta}_{hp}+E^{\theta}_{h+1,p}, \sgn (s_l(\C), s_r(\Aa))) \in \Xi_{\mbf D}. \nonumber
\end{align}

$(c)$
If the condition $h\neq n$ in (b) is replaced by $h=n$, then we have
\begin{align}\label{eq33}
e_{\C}   * e_{\A}  = &
\sum_{p\neq n+1, a_{n,p}\geq 1}v^{2\sum_{j<p}a_{n+1,j}}  (1+a_{n+1,p})_v\
e_{\A(n, p)} +\\
& v^{2\sum_{j<n+1}a_{n+1,j}} ((1+a_{n+1,n+1})_v
+(1-\delta_{0, \ro(\A)_{n+1}})
v^{a_{n+1,n+1}})\
e_{\A(n, n+1)}. \nonumber
\end{align}
\end{prop}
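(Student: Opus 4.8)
\textbf{Proof proposal for Proposition \ref{prop4}.}
The plan is to compute the convolution products directly after specializing $v$ to $\sqrt q$, using the geometric description of the $G$-orbits on $\mscr X\times\mscr X$ from Section~\ref{sec4.2}; since all three formulas are polynomial identities in $v$, it suffices to verify them for infinitely many prime powers $q$. Fix $(V,V'')\in\mathcal O_{\A}$ (with $\A\equiv A^{\alpha}$). By definition of the convolution product, $e_{\B}*e_{\A}(V',V'')$ counts the number of flags $V\in\mscr X$ lying in the orbit determined by the diagonal-plus-$E^{\theta}_{h,h+1}$ matrix $\B$ relative to $V'$ and in the orbit $\A$ relative to $V''$; the constraint that $\B-E^{\theta}_{h,h+1}$ is diagonal with $\co(\B)=\ro(\A)$ forces $V'$ and $V$ to agree in every step except the $h$-th (and, by the $\perp$-symmetry, the $(N-h)$-th), with $V'_h\overset{1}{\subset}V_h$ or the reverse, so the count reduces to choosing a line in a subquotient.

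First I would treat case (a): here $V_h=V'_h+L$ for a line $L$, and $L$ must sit in $V''_p$ but not in $V''_{p-1}$ for exactly one index $p$, which determines into which orbit $\A_p$ the pair $(V,V'')$ falls — this is where the matrix bookkeeping $A\mapsto A+E^{\theta}_{hp}-E^{\theta}_{h+1,p}$ comes from, exactly as in the type-$\mbf{ABC}$ computations of \cite{BKLW13}. The number of such lines $L$, for fixed $p$, is $q^{\sum_{j>p}a_{hj}}$ times $(1+a_{hp})_{\sqrt q}=\frac{q^{1+a_{hp}}-1}{q-1}$ — a standard count of lines in $V''_p\cap V_{h+1}'$ modulo $V''_{p-1}\cap V_h'$, giving the geometric series — and $v^{2\sum_{j>p}a_{hj}}$ is the $v^2=q$ promotion of $q^{\sum_{j>p}a_{hj}}$. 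The only genuinely type-$\mbf D$ point in (a) is the sign: one must check that $\sgn(\A_p)=\sgn(s_l(\B),s_r(\A))$, i.e. that passing from $V''$ to $V'$ through $V$ moves one from $\mscr X^{s_l(\B)}$ through $\mscr X^{s_r(\A)}$, which follows from the bijections \eqref{parity} and Lemma~\ref{Xi} identifying $\sup(\A)$ with $(s_l(\A),s_r(\A))$. Case (b) with $h\neq n$ is formally dual — apply the anti-automorphism $\rho$ from \eqref{rho}, or argue symmetrically with $V_h\overset{1}{\supset}V'_h$ — and the line now lives in $V''_p\cap V_{h+1}'$ modulo $(V''_{p-1}+V''_p\cap V_h')$, producing $(1+a_{h+1,p})_v$ and the weight $v^{2\sum_{j<p}a_{h+1,j}}$.

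The main obstacle is case (c): when $h=n$, removing a line from $V'_n=V_n$ no longer stays away from the "middle" step $V''_{n+1}$, so the line $L$ with $V_n=V'_n\oplus L$ can land in $V''_{n+1}$ but outside $V''_n$, and the enumeration of such lines is governed not by the naive geometric series but by the count of \emph{isotropic} lines in an even-dimensional quadratic subquotient — precisely the phenomenon isolated in Lemmas~\ref{lem1}, \ref{S_d}, and especially Lemma~\ref{countingisotrop}(i). Concretely, for $p=n+1$ the relevant count splits according to whether $\ro(\A)_{n+1}=0$: if $\ro(\A)_{n+1}>0$ the flag $V$ can still be chosen with $|V_n|<d$ (the line stays in the isotropic locus freely, contributing $(1+a_{n+1,n+1})_v$), whereas if $\ro(\A)_{n+1}=0$ there is an extra family of choices making $V\in\mscr X^2\sqcup\mscr X^3$, accounted for by the term $(1-\delta_{0,\ro(\A)_{n+1}})v^{a_{n+1,n+1}}$, and the parity $\p(\A)$ entering $s_r$ in \eqref{eq68} records which connected component of the maximal isotropic Grassmannian is hit. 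I would organize the proof of (c) by applying Lemma~\ref{countingisotrop} to the flag $V''_{n-1}\subset V''_n\subset V''_{n+1}\subset V''_{n+2}\subset V''_{n+3}$ with $a_i$ read off from $\ro(\A)$ and $\co(\A)$, matching the two cases (i) and (ii) there against the $p=n+1$ and $p\neq n+1$ summands respectively, and finally checking the sign assignment $\sgn(s_l(\C),s_r(\A))$ using the refined parity formula for $s_r$ in \eqref{eq68}. The $p\neq n+1$ summands are routine and essentially identical to (b).
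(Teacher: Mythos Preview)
Your treatment of (a) and (b) is fine and matches the paper, which simply refers to \cite[Lemma~3.2]{BLM90}; the only new point is the sign, and your remark about it is correct. Your overall plan for (c) --- reduce to $v=\sqrt q$, count isotropic lines via Lemma~\ref{countingisotrop}, and split on $\ro(\A)_{n+1}$ --- is also the paper's plan. But two concrete steps in your sketch of (c) are wrong and would not go through as written.

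First, your case split on $\ro(\A)_{n+1}$ is inverted. The factor $(1-\delta_{0,\ro(\A)_{n+1}})$ \emph{kills} the extra term when $\ro(\A)_{n+1}=0$, not when it is positive. Geometrically, $\ro(\A)_{n+1}=0$ means the intermediate flag has maximal isotropic $n$-th step; since a $(d-1)$-dimensional isotropic subspace is contained in exactly two maximal isotropic subspaces, one in each component $\mscr X^2,\mscr X^3$, the sign of $\A$ selects a \emph{unique} one, giving coefficient $1$ (and indeed $a_{n+1,n+1}=0$ here, so $(1+a_{n+1,n+1})_v=1$). It is when $\ro(\A)_{n+1}>0$ that the count of isotropic lines in the even-dimensional quotient acquires the two-piece shape $\frac{q^{m}-1}{q-1}+q^{m/2-1}$ of Lemma~\ref{S_d}, and the second piece is what produces $v^{a_{n+1,n+1}}$. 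Second, the flag you propose to feed into Lemma~\ref{countingisotrop}, namely $V''_{n-1}\subset\cdots\subset V''_{n+3}$, does not sit in the right ambient space and ignores the left flag entirely. The line being counted is $V_n/V'_n$ inside the quadratic space $V'_{n+1}/V'_n$ (your notation), and its position relative to the right flag $V''$ is measured by the images $W_j=(V'_n+V'_{n+1}\cap V''_j)/V'_n$; for $p>n+1$ one applies Lemma~\ref{countingisotrop}(ii) to the 5-step flag $0\subset W_{N-p}\subset W_{N-p+1}\subset W_{p-1}\subset W_p\subset W_N$, and for $p=n+1$ it is Lemma~\ref{countingisotrop}(i) on $0\subset W_n\subset W_{n+1}\subset W_N$. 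For $p\le n$ the space $V'_n+V'_{n+1}\cap V''_p$ is already isotropic, so that case really is the routine linear count you describe; but $p>n+1$ is not routine and still needs part (ii).
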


\begin{rem}\label{remEx}
Although the formulas (\ref{34}), (\ref{35}) and (\ref{eq33}) look similar to formulas (3.9), (3.10) and (3.11) in ~\cite{BKLW13}, they are different  in many ways. For example, if we take $h=n$ and   $\A$ in (\ref{34}) to be such that $A - E^{\theta}_{n+1, n}$ is diagonal, then we have
\[
e_{\B} * e_{\A} =
\begin{cases}
(1+ a_{nn})_v \ e_{\A_n}, & \mbox{if}\ \sgn (\B) =\sgn (\A),\\
  e_{\A_{n+2}}, & \mbox{otherwise}.
\end{cases}
\]
While in ~\cite{BKLW13}, the product $e_B * e_A$ is a sum of the terms in  the right-hand side of the above identity in a similar situation.
\end{rem}

\begin{proof}
The proof of (a) and (b) is the same as the one for Lemma 3.2 in \cite{BLM90}.
We must  show (c),  which  can be  reduced to analogous  results at the  specialization of  $v$ to $\sqrt q$.
We first deal with the case when  $\ro(\A)_{n+1}>0$.
Let $V=(V_i)_{ 1\leq i\leq N}$ and $V'=(V'_i)_{ 1\leq i\leq N}$ be two flags  such that $(V,V')\in \mcal O_{\A'}$
for  a signed matrix  $\A'=(a'_{ij},  \sgn(s_l(\C), s_r(\A))$.
Set $Z=\{U| V_n\overset{1}{\subset} U\subset V_{n+1},  U\ \text{is isotropic}\}$.
Let $V_U$ be the flag obtained by replacing $V_n$ (resp. $V_{n+1}$) in $V$ by $U$ (resp. $U^{\bot}$).
Then $(V, V_U)\in \mcal O_{\C}$ if and only if $U\in Z$.
Let
$$Z_p=\{U\in Z|V_n\cap V_j'=U\cap V_j'\ {\rm if}\ j<p; V_n\cap V_j'\neq U\cap V_j'\ {\rm if}\ j\geq p\},
$$
so that
$\{Z_p|1\leq p\leq N\}$ form a partition of $Z$.
Moreover, if $U\in Z_p$ and $(V_U, V')\in \mcal O_{\A}$, then
$$a_{n,p}=a'_{n,p}+1\quad {\rm and}\quad a_{n+1,p}=a'_{n+1,p}-1.$$
Hence $\A' = \A(n,p)$.
In particular, we have
$$e_{\C}  * e_{\A}=\sum_p \#Z_p\ e_{\A(n,p)}.$$
Observe that
$$Z_p \simeq \{U\ \text{is isotropic}|V_n\overset{1}{\subset}U \subset V_n+V_{n+1}\cap V_p',\
U\not \subset V_n+V_{n+1}\cap V_{p-1}'\}.$$
If $p\leq n$, $V_n+V_{n+1}\cap V_p'$ is isotropic, then
$$\#Z_p=(q-1)^{-1}(q^{\sum_{1\leq j\leq p}a'_{n+1,j}}-q^{\sum_{1\leq j\leq p-1}a'_{n+1,j}})
=q^{\sum_{1\leq j< p}a_{n+1,j}}\frac{q^{1+a_{n+1,p}}-1}{q-1}.$$
This matches with the coefficient of the first term on the right-hand side of (\ref{eq33}) for $p\leq n$.

We now compute the number $\# Z_p$ for $p\geq n+1$. We  set $W_i= \frac{V_n+V^{\bot}_{n}\cap V'_{i}}{V_n}$ and
consider the following flags
$$\begin{array}{ll}
0\subset W_{N-p} \subset  W_{N-p+1}
\subset W_{p-1}
\subset W_{p}
\subset W_N,& {\rm if}\  p>n+1, \vspace{5pt}\\
0\subset W_n \subset  W_{n+1}
\subset W_N,& {\rm if}\  p=n+1.
\end{array}$$
So $Z_p\simeq \{U\ \text{is isotropic}\ |\ U\subset W_p, U\not \subset W_{p-1}, |U|=1\}$ if $p\geq n+1$.
From this observation and applying Lemma \ref{countingisotrop}, we have that $\# Z_p$ matches with the coefficients of the terms in (\ref{eq33}).
Therefore, we have  (c)
when $\ro (\A)_{n+1} >0$.

Finally, we assume that $\ro(\A)_{n+1} =0$. In this case,  $\sgn(\A) = +$ or $-$,  and hence
$e_{\C} e_{\A} (V,V')=\sum_{V_U} e_{\C} (V,V_U) e_{\A} (V_U,V')$ and $V_U$ runs as follows.
$$
\left\{\begin{array}{ll}
V_U\in \mscr X^2& {\rm if}\  \sgn (\A) =+, \\
V_U\in \mscr X^3& {\rm if}\ \sgn(\A)  = -.
\end{array}\right.
$$
Moreover, if $(V_U,V')\in \mcal O_{\A}$, then $|U|=d$ and $|V_n|=d-1$.
Given such $V_n$, there exists a unique maximum isotropic vector subspace $U$ such that $V_n  \overset{1}{\subset} U$ and
$|U \cap M_d| \equiv d \bmod 2$ (or $|U \cap M_d| \not\equiv d\bmod 2$, but not both),
where $M_d$ is the fixed maximum isotropic subspace in Section \ref{setup}.
In this case, the coefficient of $e_{\A(n, n+1)}$ is equal to 1 in both cases. Therefore, we have
(c) for the case $\ro (\A)_{n+1} =0$.
\end{proof}

Recall the notations from (\ref{Anot}). By Proposition \ref{prop4} and an induction process, we have the following corollary.

\begin{cor}\label{cor4}

Suppose that $\Aa=A^{\alpha}$, $\B$, $\C  \in \Xi_{\mbf D}$, $h\in [1, n]$ and $r\in \mbb N$.

$(a)$ If ${\rm co}(\B)={\rm ro}(\A)$, $s_r(\B)=s_l(\Aa)$
and  $\B-rE_{h,h+1}^{\theta}$  is diagonal,  then we have
\begin{align} \label{at}
e_{\B} * e_{\A}
=\sum_{t =(t_u)\in \mbb N^N: \sum_{u=1}^Nt_u=r}
v^{2\sum_{j>u}a_{hj}t_u}\prod_{u=1}^N\begin{pmatrix}a_{hu}+t_u \\
t_u\end{pmatrix}_{\!\!\!{}v} e_{\A_t}, \quad \mbox{where} \\
\A_t=\left (A+\sum_{u=1}^Nt_u(E^{\theta}_{hu}-E^{\theta}_{h+1,u}),\sgn (s_l(\B), s_r(\Aa)) \right ) \in \Xi_{\mbf D}. \nonumber
\end{align}

$(b)$  If $h\neq n$, ${\rm co}(\C)={\rm ro}(\A)$,  $s_r(\C)=s_l(\Aa)$ and  $\C-rE_{h+1,h}^{\theta}$ is diagonal,  then
\begin{align} \label{aht}
e_{\C} *  e_{\A}
=\sum_{t =(t_u)\in \mbb N^N: \sum_{u=1}^Nt_u=r}
v^{2\sum_{j < u}a_{h+1,j}t_u}\prod_{u=1}^N\begin{pmatrix}
a_{h+1,u}+t_u\\
t_u\end{pmatrix}_{\!\!\!{}v}e_{\A(h, t)}, \quad\mbox{where}\\
\A(h, t) =\left  (A-\sum_{u=1}^Nt_u(E^{\theta}_{hu}-E^{\theta}_{h+1,u}), \sgn (s_l(\C), s_r(\Aa)) \right ) \in \Xi_{\mbf D}. \nonumber
\end{align}

$(c)$ If the condition $h\neq n$ in (b) is replaced by $h=n$, then we have
\begin{align} \label{ann}
e_{\C} * e_{\A}
&= \sum_{t=(t_u)\in \mbb N^N: \sum_{u=1}^N t_u=r}  v^{\tilde \beta(t)} \ \mathcal G  \ e_{\A(n, t)},\quad  \mbox{where} \ \\
\tilde \beta(t)
& = 2\sum_{j<u}a_{n+1,j}t_u+2\sum_{N+1-j<u<j}t_jt_u+\sum_{u>n+1}t_u(t_u-1),  \nonumber \\
\mathcal G
&=
\prod_{u<n+1}
\begin{pmatrix}
a_{n+1,u}+t_u+t_{N+1-u}\\
t_u
\end{pmatrix}_{\!\!\!{}v}
\cdot
 \prod_{u> n+1}
\begin{pmatrix}
a_{n+1,u}+t_u \\
t_u\end{pmatrix}_{\!\!\!{}v}
\cdot
\prod_{i=0}^{t_{n+1}-1}  L_i,  \nonumber\\
L_i
& = \frac{(a_{n+1,n+1}+1+2i)_v+(1-\delta_{0,i}\delta_{0, \ro(\A)_{n+1} }) v^{a_{n+1,n+1}+2i}} {(i+1)_v}. \nonumber
\end{align}
\end{cor}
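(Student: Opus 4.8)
The plan is to derive Corollary \ref{cor4} from Proposition \ref{prop4} by induction on the integer $r$, exactly in the spirit of the passage from a single multiplication formula to an iterated one in \cite[\S3]{BLM90} and \cite{BKLW13}. Parts (a) and (b) are the straightforward ones: one writes $e_{\B} = e_{\B_1} * e_{\B_0}$ where $\B_0 - (r-1)E^{\theta}_{h,h+1}$ is diagonal and $\B_1 - E^{\theta}_{h,h+1}$ is diagonal with matching row/column vectors and signs, applies the inductive hypothesis to $e_{\B_0} * e_{\A}$, then applies (\ref{34}) term by term to the result, and finally reorganizes the double sum over $(t_u')$ and the single index $p$ into one sum over $t=(t_u)\in\mbb N^N$ with $\sum t_u = r$. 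The only bookkeeping here is the Vandermonde-type identity $\sum_{p}\binom{a+t_p}{t_p}_v v^{2(\cdots)} = \binom{a+t}{t}_v$ for combining $q$-binomial coefficients along the same row, together with tracking that the power of $v$ accumulates to $2\sum_{j>u}a_{hj}t_u$; the signs are inert in (a), (b) because $h\neq n$ there (in (a), even though $h$ may equal $n$, $E^\theta_{h,h+1}=E^\theta_{n,n+1}$ does not touch the $(n+1,n+1)$ entry in a way that changes $\sgn$, so the sign $\sgn(s_l(\B),s_r(\Aa))$ is constant throughout the induction).

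The substantive case is (c), where $h=n$ and repeated multiplication by $e_{\C}$ with $\C - E^{\theta}_{n+1,n}$ diagonal genuinely interacts with the $(n+1,n+1)$ entry and with the sign. I would again induct on $r$, factoring $e_{\C} = e_{\C_1} * e_{\C_0}$ and applying (\ref{eq33}) to each term $e_{\A(n,t')}$ produced by the inductive step. Two phenomena must be handled with care. First, the split in (\ref{eq33}) between the generic terms $p\neq n+1$ and the special term $p=n+1$ means that when one iterates, each successive hit on the $(n+1,n+1)$ slot contributes a factor of the shape $\frac{(a_{n+1,n+1}+1+2i)_v + (1-\delta_{0,i}\delta_{0,\ro(\A)_{n+1}})v^{a_{n+1,n+1}+2i}}{(i+1)_v}$ as the entry grows from $a_{n+1,n+1}$ to $a_{n+1,n+1}+2t_{n+1}$; collecting these over $i=0,\dots,t_{n+1}-1$ produces the product $\prod_{i=0}^{t_{n+1}-1}L_i$. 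Second — and this is where I expect the main obstacle — the exponent of $v$ picks up the cross terms $2\sum_{N+1-j<u<j}t_jt_u + \sum_{u>n+1}t_u(t_u-1)$ beyond the naive $2\sum_{j<u}a_{n+1,j}t_u$. These come from the $\theta$-symmetry $E^{\theta}_{np} = E_{np}+E_{N+1-n,N+1-p}$ forcing simultaneous modification of the $p$ and $N+1-p$ columns, so that in the iterated sum the factor $v^{2\sum_{j<p}a_{n+1,j}}$ from each step must be re-expressed relative to the \emph{original} matrix $A$, generating correction terms quadratic in the $t_u$. One needs to verify that these corrections assemble precisely into $\tilde\beta(t)$; I would do this by carefully ordering the moves (say, filling column slots $u$ for $u>n+1$ first, then $u=n+1$, then pairing $u<n+1$ with $N+1-u$) so the exponents telescope transparently, the same device used for the analogous formula in \cite{BKLW13} but now with the extra $L_i$-factors.

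The delicate point worth isolating in the write-up is the $\delta_{0,\ro(\A)_{n+1}}$ factor in $L_i$: it is present only when $i=0$, reflecting that the "extra" term $v^{a_{n+1,n+1}+2i}$ in (\ref{eq33}) occurs only on the very first multiplication and only when $\ro(\A)_{n+1}=0$ (i.e.\ when we are in the component-switching situation of Remark \ref{remEx}), whereas all subsequent multiplications in the iteration already have $\ro(\A(n,t'))_{n+1}>0$. Once $r\geq 1$ and the first step has been taken, $\ro$ at position $n+1$ becomes positive and stays positive, so $L_i$ for $i\geq 1$ loses that correction uniformly. I would state this observation as a short lemma inside the proof, then the induction on $r$ closes: the base case $r=1$ is exactly Proposition \ref{prop4}, and the inductive step is the combinatorial reorganization described above, appealing to Lemma \ref{countingisotrop} only indirectly through Proposition \ref{prop4}. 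No step requires new geometry; the entire content is the bookkeeping of $q$-binomial products and $v$-exponents, with the $h=n$ sign/parity interaction being the one place where this paper's formulas diverge from \cite{BKLW13}.
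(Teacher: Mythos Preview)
Your overall strategy is exactly the paper's: parts (a) and (b) are handled by the standard induction on $r$ from \cite[Lemma 3.4]{BLM90}, and part (c) is again induction on $r$, factoring off a single $e_{\C_1}$ and applying (\ref{eq33}) termwise to the inductive output. So the proposal is on the right track and no new geometry is needed.

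Two points deserve correction, however. First, your reading of the $\delta$-factor in $L_i$ is inverted. In (\ref{eq33}) the extra summand $v^{a_{n+1,n+1}}$ carries the coefficient $(1-\delta_{0,\ro(\A)_{n+1}})$, so it is \emph{absent} precisely when $\ro(\A)_{n+1}=0$, not present. Correspondingly, $L_0$ drops that summand only in the degenerate case $\ro(\A)_{n+1}=0$, while for $i\geq 1$ the summand is always there (since after any one step the $(n+1)$-st row sum is already positive). Your lemma-within-the-proof should record this with the sign the right way round.

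Second, your plan to ``order the moves'' (fill $u>n+1$ first, then $u=n+1$, then $u<n+1$) is not how the induction closes: at each step (\ref{eq33}) forces a sum over \emph{all} $p$, so one cannot pick a preferred schedule and then invoke symmetry, because different orderings contribute different $v$-powers at intermediate stages. The paper instead writes $d_{t}(\A)$ for the coefficient of $e_{\A(n,t)}$ in $e_{\C_r}*e_{\A}$ and verifies directly the identity
\[
\sum_{t'+\underline p = t} d_{t'}(\A)\, d_{\underline p}(\A(n,t')) \;=\; (r+1)_v\, d_t(\A),
\]
by computing the quotient of the left side by $d_t(\A)$: one checks that the power of $v^2$ contributed by each $p$ reduces to $\sum_{j<p} t_j$, and that the $v$-binomial part collapses to $(t_p)_v$ in each of the three regimes $p<n+1$, $p=n+1$, $p>n+1$. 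Summing over $p$ then telescopes to $(r+1)_v$. This is the substitute for your ``ordering'' device, and it is where the quadratic correction terms $2\sum_{N+1-j<u<j}t_jt_u+\sum_{u>n+1}t_u(t_u-1)$ in $\tilde\beta(t)$ are seen to be exactly right.
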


Note that $L_i\in \mcal A$ since $a_{n+1, n+1}$ is even.

\begin{proof}
The proof of $(a)$ and $(b)$ is the same as the one for Lemma 3.4 in
\cite{BLM90}.

We now show $(c)$ by induction on $r$.
We rewrite $\C$ as  $\C_r$  to emphasize the dependence on $r$.
Let $d_{t'} (\A)$ be the coefficient of $\A(n, t')$ in the product  $e_{\C_{r}}e_{\A}$.
Let $\underline{p} \in \mbb N^n$ be the vector whose $p$-th entry is 1 and  0 elsewhere.
The statement ($c$) is reduced to show that
for any $t=(t_1,t_2,\cdots, t_N)\in \mbb N^n$ such that  $\sum_ut_u=r+1$, we have
\begin{align}
\label{Ind}
\sum_{t', \underline p}  d_{t'} (\A) \  d_{\underline{p}}(\A(n, t')) = (r+1)_v \ d_t(\A),
\end{align}
where the sum runs over pairs  $(t', \underline p)$ such that $\sum t'_u= r$ and $t'+\underline p= t$.

We shall prove (\ref{Ind}) by induction. When $r=0$, the statement (\ref{Ind}) holds automatically.
We first deal with the case when  $\ro(\A)_{n+1}>0$.
By the induction assumption, we have
\begin{eqnarray*}
&& \sum_{t', \underline{p}} d_{t'}(\A) d_{\underline{p}}(\A(n, t'))
= \sum_{t', \underline{p}} v^{2 \sum_{j<u}a_{n+1,j}t'_u + 2\sum_{N+1-j<u<j}t_j't_u' +
\sum_{u>n+1}t_u'(t_u'-1)}\\
&&\cdot\prod_{u<n+1}
\begin{pmatrix}
a_{n+1,u}+t'_u+t'_{N+1-u}\\
t'_u
\end{pmatrix}_{\!\!\!{}v}
\prod_{i=0}^{t'_{n+1}-1}
\frac{(a_{n+1,n+1}+1+2i)_v+(1-\delta_{0,i}\delta_{0,\ro(\A)_{n+1}}) v^{a_{n+1,n+1}+2i}} {(i+1)_v}\\
&&\cdot \prod_{u> n+1}
\begin{pmatrix}
a_{n+1,u}+t'_u \\
t'_u\end{pmatrix}_{\!\!\!{}v} (a_{n+1,p}+1+t_p'+t_{N+1-p}')_v \ v^{2\sum_{j<p}a_{n+1,j}+t_j'+t_{N+1-j}'}.
\end{eqnarray*}
Since $t'+\underline{p}=t$, we have
\begin{equation*}\label{eq39}
   t_i=t_i'+\delta_{ip}.
\end{equation*}
We can  compute the quotient  $ \sum_{t', \underline{p}} d_{t'}(\A) d_{\underline{p}} (\A(n, t')) / d_t(\A)$.
We  first calculate the power of $v^2$ for each $p$ in this quotient, which is
\begin{eqnarray*}
  &&\sum_{j<u}a_{n+1,j}t'_u+\sum_{N+1-j<u<j}t_j't_u'+
  \sum_{u>n+1}t_u'(t_u'-1)/2+\sum_{j<p}(a_{n+1,j}+t_j'+t_{N+1-j}')\\
  &&-\sum_{j<u}a_{n+1,j}t_u-
  \sum_{N+1-j<u<j}t_jt_u-
  \sum_{u>n+1}t_u(t_u-1)/2\\
  &&=-\sum_{N+1-p<u<p}t_u-\sum_{N+1-j<p<j}t_j+\sum_{j<p}(t_j+t_{N+1-j})+\left\{
  \begin{array}{ll}
    -t_p& {\rm if}\ p>n+1\\
    0&{\rm otherwise}
  \end{array}
  \right.\\
  &&=\sum_{j<p}t_j.
\end{eqnarray*}
We then calculate the coefficients  containing $v$-numbers for each $p$ in the above quotient, which can be broken into the following three cases.
If $p<n+1$, then the coefficient involving $v$-numbers is
$$
(a_{n+1,p}+t_p+t_{N+1-p})_v\begin{pmatrix}
a_{n+1,u}+t_p+t_{N+1-p}-1\\
t_p-1
\end{pmatrix}_{\!\!\!{}v}/\begin{pmatrix}
  a_{n+1,u}+t_p+t_{N+1-p}\\
  t_p
\end{pmatrix}_{\!\!\!{}v}=(t_p)_v.
$$
If $p=n+1$, then the term is
$$
\frac{(t_{n+1})_v}{(a_{n+1,n+1}+2t_{n+1}-1)_v+q^{1/2a_{n+1,n+1}+t_{n+1}}}
((a_{n+1,n+1}+2t_{n+1}-1)_v+ v^{a_{n+1,n+1}+ 2 t_{n+1}})=(t_p)_v.$$
If $p>n+1$, then  the term is
$$
\frac{\begin{pmatrix}
  a_{n+1,N+1-p}+t_p+t_{N+1-p}-1\\
  t_{N+1-p}
\end{pmatrix}_{\!\!\!{}v}}{\begin{pmatrix}
  a_{n+1,N+1-p}+t_p+t_{N+1-p}\\
  t_{N+1-p}
\end{pmatrix}_{\!\!\!{}v}}
\frac{\begin{pmatrix}
a_{n+1,p}+t_p-1 \\
t_p-1\end{pmatrix}_{\!\!\!{}v}}{\begin{pmatrix}
a_{n+1,p}+t_p \\
t_p\end{pmatrix}_{\!\!\!{}v}} (a_{n+1,p}+t_p+t_{N+1-p})_v=(t_p)_v.
$$
Summing up, we  have
\begin{eqnarray*}
  \sum_{t', \underline{p}} d_{t'} (\A) d_{\underline{p}} (\A(n, t')) / d_t(\A)=\sum_p v^{2\sum_{j<p}t_j} (t_p)_v
  =\frac{\sum_p (v^{2\sum_{j\leq p}t_j}-v^{2\sum_{j\leq p-1}t_j})}{v^2-1}=(r+1)_v.
\end{eqnarray*}
This proves (\ref{Ind}) under the assumption that $\ro(\A)_{n+1} > 0$.
The proof of (\ref{Ind}) for the case of $\ro(\A)_{n+1}=0$ is similar and skipped.
\end{proof}

\subsection{$\mcal S$-action on $\mcal V$}

A degenerate version of Proposition \ref{prop4}  gives us  an explicit description of  the $\mcal S$-action on $\mcal V=\mcal A_G(\mscr X\times \mscr Y)$ as follows.
For any $r_j\in [1,N]$, we denote $\check{r}_j=r_j+1$ and $\hat{r}_j=r_j-1$.

\begin{cor}\label{cor9}
  For any $1\leq i\leq n$, we have
  \begin{align}
E_i *  e_{r_1\cdots r_D}
&=v^{-\sum_{1\leq j \leq D} \delta_{i+1,r_j}}
\sum_{p,\   r_p=i} v^{2\sum_{j<p}\delta_{i+1,r_j}} \
e_{r_1\cdots,\check{r}_p\cdots \hat{r}_{D+1-p} \cdots r_{D}}, \nonumber\\
F_i * e_{r_1\cdots r_D}
&=v^{-\sum_{1\leq j \leq D} \delta_{i,r_j}}
\sum_{p,\ r_p=i+1} v^{2\sum_{j>p}\delta_{i, r_j}}
e_{r_1\cdots,\hat{r}_p\cdots \check{r}_{D+1-p} \cdots r_{D}}, \nonumber \\
H_i^{\pm 1} *  e_{r_1\cdots r_D}
& = v^{\mp \sum_{1\leq j \leq D} \delta_{a, r_j}  }  e_{r_1\cdots r_D} \quad {\rm and}\nonumber\\
J_{+} * e_{r_1,\cdots, r_D} &=
\begin{cases}
e_{r_1,\cdots, r_D}, & r_i\neq n+1,\forall i, \#\{ j\in [1,d]| r_j\geq n+1\}  \ is \ even,\\
0, &\mbox{otherwise},
\end{cases}\nonumber \\
J_{-}  *  e_{r_1,\cdots, r_D} &=
\begin{cases}
e_{r_1,\cdots, r_D}, & r_i\neq n+1,\forall i, \#\{ j\in [1,d]| r_j\geq n+1\}  \ is \ odd,\\
0, &\mbox{otherwise},
\end{cases}
\nonumber\\
J_{0} * e_{r_1,\cdots, r_D} &=
\begin{cases}
e_{r_1,\cdots, r_D}, & r_i= n+1,\ \mbox{for some} \ i,\\
0, &\mbox{otherwise}.
\end{cases}
\nonumber
\end{align}
\end{cor}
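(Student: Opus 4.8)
The statement of Corollary~\ref{cor9} is that the $\mcal S$-action on $\mcal V$, transported through the bijection $\Pi \simeq \{\text{admissible sequences}\}$, is given by the listed formulas. My plan is to deduce each formula as a degeneration of the multiplication formulas in $\mcal S$ already established in Proposition~\ref{prop4} (and Corollary~\ref{cor4}), exactly as the analogous type-$\mbf{ABC}$ statement is deduced in \cite{BLM90,BKLW13}. The key conceptual point is that $\mcal V=\mcal A_G(\mscr X\times\mscr Y)$ is a ``degenerate'' version of $\mcal S=\mcal A_G(\mscr X\times\mscr X)$: a complete flag $F\in\mscr Y$ is the same as an $N$-step flag all of whose nonzero successive quotients have dimension $1$, so the orbit data $e_{r_1\cdots r_D}\in\mcal V$ corresponds to the matrix $B\in\Pi$ which, after the identification of its row-sum pattern with a diagonal matrix, is a limiting case of the matrices $\Aa\in\Xi_{\mbf D}$ appearing on the right in Proposition~\ref{prop4}. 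Concretely I would set up, for each $r$-sequence, the diagonal signed matrix whose $\ro$ records the multiplicities $\#\{j: r_j=i\}$ and whose sign component records the parity/zero pattern encoded by $J_+,J_-,J_0$ in Proposition~\ref{prop3}.

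First I would treat $H_i^{\pm1}$ and the idempotents $J_\alpha$: these act diagonally, and the formulas are immediate from the definitions of $H_a^{\pm1}$, $J_\pm$, $J_0$ in Proposition~\ref{prop3} together with the translation of the conditions ``$|V_n|=d$'', ``$|V_n\cap M_d|\equiv d\bmod 2$'' into the combinatorial conditions ``$r_i\neq n+1$ for all $i$'' and the parity of $\#\{j\in[1,d]: r_j\ge n+1\}$ — here one uses that $|F_d\cap M_d|\equiv d\bmod 2$ is built into the definition of $\mscr Y$ and that passing from $F_d$ to a point of $\mscr X^2$ versus $\mscr X^3$ flips this parity by exactly the count of indices with $r_j\ge n+1$. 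Next, for $E_i$ with $i\in[1,n]$, I would apply Proposition~\ref{prop4}(a) with $h=i$, $\B=E_{i,i+1}^\theta+\text{diagonal}$, and $\Aa$ the diagonal signed matrix attached to $(F',F)\in\mscr X\times\mscr Y$; since each $a_{hp}$ that occurs is either $0$ or a ``column-sum-one'' quantity coming from the $\mscr Y$-side, the coefficient $(1+a_{hp})_v$ degenerates to $1$ whenever the move is legal, and the sum over $p$ with $r_p=i$ together with the exponent $v^{2\sum_{j<p}\delta_{i+1,r_j}}$ and the global prefactor $v^{-\sum_j\delta_{i+1,r_j}}$ falls out of the general formula. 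The $F_i$ formula is the mirror statement, obtained from Proposition~\ref{prop4}(b) with $h=i\ (\neq n$, since $i\le n$ and the column-one constraint kills the $a_{n+1,n+1}$ term$)$; alternatively one may deduce it from the $E_i$ case by applying the anti-automorphism $\rho$ of \eqref{rho} and comparing.

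The step I expect to be the main obstacle is the careful bookkeeping of the \emph{sign} component throughout: one must verify that when $e_{r_1\cdots r_D}$ is viewed inside $\mcal V$ the relevant signed matrix on the $\mscr X$-side has $s_l$ and $s_r$ values that make $\sgn(s_l(\B),s_r(\Aa))$ land in $\Xi_{\mbf D}$ and correctly track whether the resulting sequence still satisfies ``$r_i\ne n+1$'' and the parity condition — in other words, that the $E_i$-move does not inadvertently change the connected-component label. This is where Remark~\ref{remEx} is relevant: because $\mcal V$ only involves the $\mscr X$-side and the $\mscr Y$-orbits are genuinely rigid (all quotients one-dimensional), the ``$a_{nn}$ vs.\ $a_{n+1,n}$'' subtlety that distinguishes our multiplication formulas from those of \cite{BKLW13} does not actually arise for $i\le n$ in the $\mcal V$-action, and the na\"\i ve type-$\mbf A$ formulas survive verbatim; making this precise — i.e.\ checking that the exceptional terms of \eqref{eq33}, Corollary~\ref{cor4}(c) never contribute here — is the only genuinely type-$\mbf D$ piece of the argument, and I would isolate it as a short lemma before assembling the six displayed formulas.
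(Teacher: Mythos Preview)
Your overall plan is the paper's: degenerate Proposition~\ref{prop4} to the $\mscr X\times\mscr Y$ setting, handle $H_a^{\pm1}$ and $J_\alpha$ directly from the definitions, and check that the exceptional contribution in~\eqref{eq33} drops out. Two points need correcting, however.

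First, you have parts (a) and (b)/(c) of Proposition~\ref{prop4} assigned to the wrong generators. In $\mcal S$ one has $E_i=\sum[\C]$ with $\C-E^\theta_{i+1,i}$ diagonal and $F_i=\sum[\B]$ with $\B-E^\theta_{i,i+1}$ diagonal (see the display just before Corollary~\ref{S-generator}); thus $E_i$ is governed by Proposition~\ref{prop4}(b), and by (c) when $i=n$, while $F_i$ uses (a) for all $i\in[1,n]$. You can see this from the exponents: part (b) carries $v^{2\sum_{j<p}a_{h+1,j}}$, which with $h=i$ and $a_{i+1,j}=\delta_{i+1,r_j}$ is exactly the $v^{2\sum_{j<p}\delta_{i+1,r_j}}$ in the $E_i$-formula, whereas (a) carries $v^{2\sum_{j>p}a_{hj}}$, matching $F_i$. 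In particular it is $E_n$, not $F_n$, that invokes~\eqref{eq33}; your parenthetical ``$h=i\ (\neq n$, since $i\le n\ldots)$'' is self-contradictory and is a symptom of this swap.

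Second, the mechanism by which the second term of~\eqref{eq33} vanishes is simpler than what you propose, and the sign bookkeeping you flag as the main obstacle is not needed. The matrix $B\in\Pi$ attached to $e_{r_1\cdots r_D}$ is $N\times D$ with $D=2d$ even, so when one reruns the counting argument of Proposition~\ref{prop4}(c) with the right flag in $\mscr Y$, the column index $p$ ranges over $[1,D]$ and there is no self-dual ``middle'' column playing the role of $p=n+1$; that is the entire reason the extra term is absent. No signed matrices appear in the statement of Corollary~\ref{cor9}, so there is nothing to track on that front --- the parity and connected-component data are absorbed into which $J_\alpha$ acts nontrivially, which you already treat separately.
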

\begin{proof}
Since the number of columns of the matrix associated  to $e_{r_1\cdots r_D}$ is $D=2d$, the second term in (\ref{eq33}) disappears when we calculate the $E_n$ action on $e_{r_1\cdots r_D}$.
The first two identities follow directly  from Proposition \ref{prop4}.
The last four  identities are straightforward.
\end{proof}

\subsection{Standard basis of $\mcal S$}\label{sec4.5}

In this subsection, we assume that the ground field is an algebraic closure $\overline{\mbb F}_q$ of $\mbb F_q$ when we talk about the dimension of a $G$-orbit or its stabilizer.
We set
\[
d(\A)={\rm dim} \ \mcal O_{\A}
\quad\mbox{and}\quad
r(\A)={\rm dim} \ \mcal O_{\B}, \quad \forall \A\in \Xi_{\mbf D},
\]
where $\B=(b_{ij})^{\epsilon}$ is the signed diagonal matrix  such that $b_{ii}=\sum_ka_{ik}$ and $\epsilon = \sgn (s_l(\A), s_l(\A))$.
Denote by ${\rm C}_{G}(V,V')$ the stabilizer of $(V,V')$ in $G$.

\begin{lem}
\label{dimension}
 We have
\begin{equation*}
\begin{split}
{\rm dim}\ {\rm C}_{G}(V,V')
& =\frac{1}{2} \left ( \sum_{i\geq k, j\geq l}a_{ij}a_{kl}-\sum_{i\geq n+1, j\geq n+1}a_{ij} \right ),
\quad  {\rm if}\ (V,V') \in \mcal O_{\A},\\
{\rm dim}\ \mcal O_{\A}
&=\frac{1}{2} \left (\sum_{i<k\ {\rm or}\ j<l}a_{ij}a_{kl}-\sum_{i<n+1 \ {\rm or}\ j<n+1}a_{ij} \right ),\\
d(\A)-r(\A)
&=\frac{1}{2} \left (\sum_{i\geq k, j<l}a_{ij}a_{kl}-\sum_{i\geq n+1>j}a_{ij} \right ).
\end{split}
\end{equation*}
\end{lem}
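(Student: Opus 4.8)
\textbf{Proof proposal for Lemma \ref{dimension}.}

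The plan is to compute $\dim \mathrm{C}_G(V,V')$ directly by identifying the stabilizer with the group of $Q$-isometries of $\mathbb F_q^D$ (or $\overline{\mathbb F}_q^D$) preserving both flags $V$ and $V'$, and then counting dimensions via the natural filtration this group carries. First I would choose, for a pair $(V,V')\in\mathcal O_{\A}$, an adapted basis: for each box $(i,j)$ pick a subspace $V_{ij}$ of dimension $a_{ij}$ so that $V_k=\bigoplus_{i\le k, j}V_{ij}$ and $V'_l=\bigoplus_{i,j\le l}V_{ij}$, arranged compatibly with the perpendicularity constraints $V_i=V_{N+1-i}^\perp$ (the symmetry $a_{ij}=a_{N+1-i,N+1-j}$ makes this possible, exactly as in the type-$\mathbf{ABC}$ setup of \cite{BKLW13}). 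An element of $\mathrm{C}_G(V,V')$ is then a block-matrix $(g_{(ij),(kl)})$ that is block-upper-triangular with respect to both the $i$- and $j$-orderings — i.e. $g_{(ij),(kl)}=0$ unless $i\ge k$ and $j\ge l$ — and that additionally lies in $\mathrm{SO}(D)$.

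The second step is the dimension count. Forgetting the orthogonality condition, the space of such block-triangular maps has dimension $\sum_{i\ge k,\ j\ge l}a_{ij}a_{kl}$. Imposing the condition that $g$ preserve $Q$ cuts this down: the orthogonality equations are indexed by pairs of boxes that pair nontrivially under $Q$, and because $Q$ identifies $V_{ij}$ with (a quotient related to) $V_{N+1-i,N+1-j}$, the effective number of independent linear conditions is half of the ``diagonal-type'' contribution, which after bookkeeping yields the correction term $-\sum_{i\ge n+1,\ j\ge n+1}a_{ij}$ and the overall factor $\tfrac12$; passing from $\mathrm{O}(D)$ to $\mathrm{SO}(D)$ changes nothing dimension-wise. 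This gives the first formula. The second formula then follows from $\dim\mathcal O_{\A}=\dim G-\dim\mathrm{C}_G(V,V')$ together with $\dim G=\dim\mathrm{SO}(D)=\binom{D}{2}=\tfrac12(D^2-D)$ and the identity $\sum_{i,j}a_{ij}=D$, by splitting the double sum $\sum_{i,j,k,l}a_{ij}a_{kl}=(\sum a_{ij})^2=D^2$ into the ``$i\ge k,\,j\ge l$'' part and its complement ``$i<k$ or $j<l$'' (and similarly $\sum a_{ij}=D$ into the analogous pieces, with the middle index $n+1$ handled by the $N=2n+1$ odd-size convention). The third formula is then a matter of substituting the definition of $r(\A)$ as $\dim\mathcal O_{\B}$ for the diagonal signed matrix $\B$ with $b_{ii}=\ro(\A)_i$: for such $\B$ the first formula collapses (only $i=k$, $j=l$ survive with $j=i$), and subtracting $d(\A)-r(\A)=\dim\mathrm{C}_G(V_{\B},V'_{\B})-\dim\mathrm{C}_G(V,V')$ leaves exactly $\tfrac12(\sum_{i\ge k,\,j<l}a_{ij}a_{kl}-\sum_{i\ge n+1>j}a_{ij})$ after the same splitting of the double sum.

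The main obstacle I expect is the careful treatment of the ``middle'' index $i=n+1$ (equivalently the $d$-dimensional isotropic piece in the even quadratic space): unlike the non-degenerately-paired off-diagonal boxes, the blocks sitting at row or column $n+1$ interact with $Q$ in a degenerate way, and getting the sign-constraint $\ur(A)\equiv 0\bmod 2$ and the two connected components of the maximal isotropic Grassmannian to not affect the \emph{dimension} count — only the component count — requires the hereditary property of $Q$ from Lemma \ref{lem1} and the explicit isotropic-line counts of Lemmas \ref{S_d} and \ref{countingisotrop}. Once one checks that the stabilizer in $G$ and in $\mathrm{O}(D)$ have equal dimension (so that the parity/component issues are irrelevant here), the computation reduces to the type-$\mathbf{ABC}$ bookkeeping of \cite[\S3]{BKLW13}, and the three displayed formulas drop out by the elementary algebraic manipulations sketched above.
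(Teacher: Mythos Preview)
Your approach is essentially the paper's: decompose $\mathbb F_q^D=\bigoplus Z_{ij}$ compatibly with both flags and with $Q$, describe the stabilizer as maps block-triangular for both orderings, and halve via the involution $(i,j,k,l)\leftrightarrow(N{+}1{-}k,N{+}1{-}l,N{+}1{-}i,N{+}1{-}j)$; the paper does this on the Lie algebra, where the self-paired blocks (those with $i+k=N{+}1$, $j+l=N{+}1$, hence $i,j\ge n{+}1$) satisfy $T=-{}^tT$ and contribute $\tfrac12(a_{ij}^2-a_{ij})$, making the correction term $-\sum_{i,j\ge n+1}a_{ij}$ transparent. One remark: Lemmas~\ref{lem1}, \ref{S_d}, \ref{countingisotrop} are $\mathbb F_q$-point counts and play no role in this proof---the dimension computation over $\overline{\mathbb F}_q$ is pure linear algebra, and nothing special happens at the middle index beyond the block-pairing just described.
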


Notice that the above dimensions are independent of the sign of $\A$.

\begin{proof}
Let $Z_{ij}$ be subspaces of $\mbb F_q^D$ with $\dim Z_{ij}=a_{ij}$ such that $V_r=\oplus_{i\leq r,j}Z_{ij}$, $V'_s=\oplus_{i, j\leq s}Z_{ij}$ for all $r,s\in [1,n]$ and $\mbb F^D_q=\oplus_{i,j}Z_{ij}$.
With respect to the decomposition,  an endomorphism $T$ of $\mbb F_q^D$ is determined by a family of linear maps $T_{(ij), (kl)}: Z_{ij}\rightarrow Z_{kl}$.
Similar to ~\cite[3.4]{BKLW13}, the Lie algebra of ${\rm C}_{G}(V,V')$ is the space of such $T$ satisfying the following conditions.
\begin{itemize}
  \item[(a)] $T_{(ij),(kl)}\neq 0$ implies that $i\geq k$ and $j\geq l$;
  \item[(b)] $T_{(ij),(kl)}=-{}^tT_{(N+1-k,N+1-l),(N+1-i,N+1-j)}, \quad \forall i,j,k,l\in [1,N].$
\end{itemize}
Note that $T_{(ij),(kl)}=-{}^tT_{(ij),(kl)}$ if and only if $i+k=N+1$ and $j+l=N+1$.
In this case, $a_{ij}=a_{kl}$ and the dimension of such $T_{(ij),(kl)}$ is $\frac{1}{2}(a_{ij}a_{kl}-a_{ij})$, from which
the first equality follows.

By using $\dim G=\frac{1}{2}D(D-1)=\frac{1}{2}(\sum_{i,j,k,l}a_{ij}a_{kl}-\sum_{i,j}a_{ij})$, we have the second equality.
The third equality follows from the previous two equalities.
\end{proof}

For any $\A\in \Xi_{\mbf D}$, let
$$[\A]=v^{-(d(\A)-r(\A))}e_{\A}.$$
We define a bar involution `$-$' on $\mcal A$ by $\bar v= v^{-1}$.
By Lemma ~\ref{dimension},  Corollary \ref{cor4} can be rewritten in the following form.

\begin{cor}\label{cor5}
Suppose that $\Aa=A^{\alpha}$, $\B$, $\C  \in \Xi_{\mbf D}$, $h\in [1, n]$ and $r\in \mbb N$.

$(a)$ If ${\rm co}(\B)={\rm ro}(\A)$, $s_r(\B)=s_l(\Aa)$
and  $\B-rE_{h,h+1}^{\theta}$  is diagonal,  then we have
\begin{align}\label{eq22}
[\B] *  [\A]
& =\sum_{t: \sum_{u=1}^N t_u=r} v^{\beta(t)}\prod_{u=1}^N
\overline{\begin{pmatrix}a_{hu}+t_u\\ t_u \end{pmatrix}}_{\!\!\!{}v}\
[\A_{t}], \ \mbox{where} \\
\beta(t)
& = \sum_{j\geq l} a_{hj} t_l - \sum_{j>l} a_{h+1, j} t_l + \sum_{j<l} t_j t_l +
\frac{1}{2}\delta_{hn}(\sum_{j+l<N+1}t_jt_l+\sum_{j<n+1}t_j),  \nonumber\\
\A_t
& =\left (A+\sum_{u=1}^Nt_u(E^{\theta}_{hu}-E^{\theta}_{h+1,u}),\sgn (s_l(\B), s_r(\Aa)) \right ) \in \Xi_{\mbf D}. \nonumber
\end{align}

$(b)$  If $h\neq n$, ${\rm co}(\C)={\rm ro}(\A)$,  $s_r(\C)=s_l(\Aa)$ and  $\C-rE_{h+1,h}^{\theta}$ is diagonal,  then
\begin{align}\label{eq24}
[\C] * [\A]
& =\sum_{t: \sum_{u=1}^N t_u=r}v^{\beta'(t)}\prod_{u=1}^N
\overline{\begin{pmatrix} a_{h+1,u}+t_u\\ t_u\end{pmatrix}}_{\!\!\!{}v}
\ [\A(h, t) ], \ \mbox{where}\\
\beta'(t)
& =\sum_{j\leq l} a_{h+1,j} t_l - \sum_{j<l} a_{hj} t_l + \sum_{j<l} t_j t_l,  \nonumber\\
\A(h, t)
&=\left  (A-\sum_{u=1}^Nt_u(E^{\theta}_{hu}-E^{\theta}_{h+1,u}), \sgn (s_l(\C), s_r(\Aa)) \right ) \in \Xi_{\mbf D}. \nonumber
\end{align}

$(c)$  If the condition $h\neq n$ in (b) is replaced by $h=n$, then we have
\begin{align}\label{eq27}
[\C] * [\A]
&= \sum_{t: \sum_{u=1}^N t_u=r}v^{\beta''(t)} \ \overline{\mathcal G}\
[\A(n,t)], \ \mbox{where}\\
\beta''(t)&=\sum_{j\leq l}a_{n+1,j}t_l-\sum_{j<l} a_{nj} t_l+\sum_{j<l, j+l\geq N+1}t_jt_l
  + r(r-1)/2-\sum_{j<n+1}t_j (t_j-1) / 2, \nonumber \\
\mathcal G
&=
\prod_{u<n+1}
\begin{pmatrix}
a_{n+1,u}+t_u+t_{N+1-u}\\
t_u
\end{pmatrix}_{\!\!\!{}v}
\cdot
 \prod_{u> n+1}
\begin{pmatrix}
a_{n+1,u}+t_u \\
t_u\end{pmatrix}_{\!\!\!{}v}
\cdot
\prod_{i=0}^{t_{n+1}-1}  L_i,  \nonumber\\
L_i
& = \frac{(a_{n+1,n+1}+1+2i)_v+(1-\delta_{0,i}\delta_{0, \ro(\A)_{n+1} }) v^{a_{n+1,n+1}+2i}} {(i+1)_v}. \nonumber
\end{align}
\end{cor}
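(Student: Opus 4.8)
The plan is to derive Corollary~\ref{cor5} directly from Corollary~\ref{cor4} by a change of normalization, using the dimension formulas of Lemma~\ref{dimension}. Recall that $[\A]=v^{-(d(\A)-r(\A))}e_{\A}$, so for any product $e_{\mathbf B}\ast e_{\A}=\sum_{\mathbf C}c_{\mathbf C}\,e_{\mathbf C}$ from Corollary~\ref{cor4}, passing to the square-bracket basis multiplies each coefficient $c_{\mathbf C}$ by
\[
v^{-(d(\mathbf B)-r(\mathbf B))-(d(\A)-r(\A))+(d(\mathbf C)-r(\mathbf C))}.
\]
Since in parts (a)--(c) the left factor $\mathbf B$ (resp. $\mathbf C$) differs from a diagonal matrix by a fixed off-diagonal contribution, the quantity $d(\mathbf B)-r(\mathbf B)$ is an explicit constant; and $r(\mathbf C)=r(\A)$ because $\mathbf C$ and $\A$ have the same row sums in all the relevant cases (the diagonal matrix attached to $\mathbf C$ in the definition of $r$ depends only on $\ro(\mathbf C)=\ro(\A)$ and on $\sgn$). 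So the whole normalization shift reduces to computing $d(\mathbf B)-r(\mathbf B)$ and the difference $d(\mathbf C)-d(\A)$ for each summand indexed by $t$.

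First I would treat cases (a) and (b), which are the unramified-type analogues: here one simply invokes that the argument is identical to the corresponding step in \cite{BLM90} (as already indicated after Corollary~\ref{cor4}), and the exponent $\beta(t)$ (resp. $\beta'(t)$) is obtained by feeding the third formula of Lemma~\ref{dimension} into the bookkeeping $v^{2\sum_{j>u}a_{hj}t_u}\mapsto v^{\beta(t)}$ together with the replacement of the Gaussian binomials $\binom{a_{hu}+t_u}{t_u}_v$ by their bar-conjugates $\overline{\binom{a_{hu}+t_u}{t_u}}_v$ (which accounts for the powers of $v$ absorbed from the $v$-numbers $(1+a_{hu})_v$). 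Concretely, I would compute $d(\A_t)-d(\A)$ using the formula for $\dim\mcal O_{\A}$ in Lemma~\ref{dimension}, expand the difference into the four-fold sums over $a_{ij}a_{kl}$ perturbed by $\sum_u t_u(E^\theta_{hu}-E^\theta_{h+1,u})$, and collect terms; the symmetry $E^\theta$ and the constraint $\sum_u t_u=r$ make the answer land on the stated $\beta(t)$.

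The genuinely new case is (c), where $h=n$, and this is where the main obstacle lies. Two features complicate the computation: the left factor $\C$ now differs from diagonal by $rE^\theta_{n+1,n}$, which straddles the ``middle'' index $n+1$, so the correction $d(\C)-r(\C)$ involves the middle-row/middle-column terms that are subtracted in Lemma~\ref{dimension}; and the matrix $\A(n,t)$ is obtained by subtracting $\sum_u t_u(E^\theta_{nu}-E^\theta_{n+1,u})$, whose $u=n+1$ and $u>n+1$ pieces interact because of the involution $u\leftrightarrow N+1-u$. I would carefully expand $d(\A(n,t))-d(\A)$ term by term, separating the contributions with $u<n+1$, $u=n+1$, $u>n+1$, and keeping track of the $-\sum_{i<n+1\text{ or }j<n+1}a_{ij}$ correction; the extra half-integer contribution $r(r-1)/2-\sum_{j<n+1}t_j(t_j-1)/2$ in $\beta''(t)$ should emerge from exactly these middle terms and from the $d(\C)-r(\C)$ constant. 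Finally, the Gaussian-binomial part of the coefficient and the factors $L_i$ are left untransformed (they are not bar-conjugated, because in case~(c) the $v$-number factors from Corollary~\ref{cor4}(c) were already packaged into $\mathcal G$), so $\overline{\mathcal G}$ in the statement is simply the image of $\mathcal G$ under $v\mapsto v^{-1}$ and no further identity is needed; I would double-check consistency by specializing $t_{n+1}=0$ and $t_u=0$ for $u>n+1$, which must reproduce the type-$\mbf A$-like formula of part~(a) with $h=n$ restricted to $\mscr X^1$. The bulk of the work — and the only place where type-$\mbf D$ geometry genuinely enters — is the exponent comparison for case~(c).
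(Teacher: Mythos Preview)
Your approach is exactly the one the paper intends: derive Corollary~\ref{cor5} from Corollary~\ref{cor4} by inserting the normalization $[\A]=v^{-(d(\A)-r(\A))}e_{\A}$ and computing the resulting exponent shifts via Lemma~\ref{dimension}. The paper itself omits the details, saying only that the proof ``involves lengthy mechanical computations and is hence skipped,'' so your outline is in fact more explicit than what appears there.

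One small bookkeeping slip to fix: you write that $r(\mathbf C)=r(\A)$ because the output $\mathbf C$ (i.e., $\A_t$ or $\A(h,t)$) has the same row sums as $\A$. This is false: since $\A_t=A+\sum_u t_u(E^\theta_{hu}-E^\theta_{h+1,u})$, one has $\ro(\A_t)=\ro(\B)$ (the rows are shifted by $\pm r$ in positions $h,h+1$ and their mirrors), so the correct cancellation is $r(\A_t)=r(\B)$, not $r(\A)$. The net exponent is therefore $-d(\B)+r(\A)+(d(\A_t)-d(\A))$ rather than what you wrote. This does not change your strategy, only the arithmetic of the exponent comparison; once corrected, the computation goes through as you describe.
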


The proof involves lengthy mechanical computations and is hence skipped.

\subsection{Generators of $\mcal S$}
\label{sec-partial order}

Define a partial order $``\leq"$ on $\Xi_{\mbf D}$  by $\Aa\leq \B$ if $\mcal O_{\Aa} \subset \overline{\mcal O}_{\B}$.
For any $\Aa=(a_{ij})^{\alpha}$ and $\B=(b_{ij})^{\epsilon}$ in $\Xi_{\mbf D}$, we say that $\Aa \preceq \B$ if and only if $\alpha=\epsilon$ and
the following two conditions hold.
\begin{align}\label{partial-order}
 \sum_{r\leq i, s\geq j} a_{rs}  & \leq \sum_{r\leq i, s\geq j} b_{rs}, \quad \forall i<j.\\
\sum_{\substack{r\leq n,\\s>N+1-j}} a_{rs} & \equiv \sum_{\substack{r\leq n,\\s>N+1-j}} b_{rs}  \ \bmod 2, \quad \mbox{if}\\
 \ro(\A)_{n+1} & = \co(\A)_{n+1} = 0, \quad  \ro(\B)_{n+1} = \co (\B)_{n+1} =0, \nonumber \\
\sum_{\substack{i\leq r\leq N+1-i,\\ j\leq s\leq N+1-j}} a_{rs} &  =
 \sum_{\substack{i\leq r\leq N+1-i,\\ j\leq s\leq N+1-j}} b_{rs} = 0 \quad  \text{and}
\sum_{\substack{r<i,\\s>N+1-j}} a_{rs} = \sum_{\substack{r<i,\\s>N+1-j}} b_{rs}, \quad  \forall i, j\in [1,n].\nonumber
\end{align}
The  relation $``\preceq" $ defines a second partial order on  $\Xi_{\mbf D}$.
 We say that $\Aa \prec \B$ if $\Aa \preceq \B$ and at least one of the inequalities in (\ref{partial-order}) is strict.
 By ~\cite[Theorem 8.2.8]{BB05} and ~\cite[Lemma 3.8]{BKLW13}, we have the following lemma.
\begin{lem}
  $\Aa \leq \B$ if and only if $\Aa \preceq \B$ for any $\Aa, \B\in \Xi_{\mbf D}$.
\end{lem}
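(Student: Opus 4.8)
The plan is to reduce the statement to a known combinatorial fact about Bruhat orders on parabolic double cosets, using the $G$-orbit parametrization already established. The orbits $\mcal O_{\A}$ for $\A\in\Xi_{\mbf D}$ are indexed by signed matrices, and the first task is to lift a pair $(V,V')\in\mscr X\times\mscr X$ to a pair of complete flags refining them. Concretely, given $\A=(A,\alpha)$, I would write $A$ as a sum over a $D\times D$ ``complete'' matrix $\tilde A$ in the sense of type $\mbf A$ (or rather its type-$\mbf D$ analogue on $\mscr Y\times\mscr Y$, i.e.\ an element of $\Sigma$) whose ``coarsening'' to an $N\times N$ matrix is $A$; such refinements correspond to elements of a parabolic double coset $W_P\backslash W/W_P$ where $W$ is the Weyl group of type $\mbf D_d$ and $P$ is the parabolic determined by the step sizes $\ro(\A),\co(\A)$. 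The orbit closure relation $\mcal O_{\A}\subset\overline{\mcal O}_{\B}$ then translates, via the standard fact that closures of $G$-orbits on products of partial flag varieties are governed by Bruhat order on the corresponding double cosets, into: the minimal-length double coset representative of $\A$ is $\le$ that of $\B$ in Bruhat order, \emph{and} $\A,\B$ have the same sign (since orbits in different $\mscr X^i\times\mscr X^j$-strata are disjoint and the strata are unions of connected components, so one cannot be in the closure of the other).

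The next step is to match this Bruhat-order condition with the inequalities defining $``\preceq"$. The first family of inequalities in (\ref{partial-order}), namely $\sum_{r\le i,s\ge j}a_{rs}\le\sum_{r\le i,s\ge j}b_{rs}$ for $i<j$, is exactly the classical ``rank condition'' description of Bruhat order on $W_P\backslash W/W_P$ in type $\mbf A$, which is precisely the content of ~\cite[Theorem 8.2.8]{BB05} (and was used in ~\cite[Lemma 3.8]{BKLW13} for the type $\mbf{BC}$ case). So for the ``$\Xi$-part'' — ignoring the sign and the parity/middle-row refinements — the equivalence is already in the literature we are allowed to cite. What remains genuinely type-$\mbf D$ specific is: (i) the equality $\alpha=\epsilon$, and (ii) the three extra conditions in (\ref{partial-order}) — the parity condition modulo $2$ when $\ro(\A)_{n+1}=\co(\A)_{n+1}=0$, the vanishing of $\sum_{i\le r\le N+1-i,\,j\le s\le N+1-j}a_{rs}$, and the equality $\sum_{r<i,s>N+1-j}a_{rs}=\sum_{r<i,s>N+1-j}b_{rs}$. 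These encode two phenomena: first, that $\mscr X^2$ and $\mscr X^3$ are swapped by $O(D)\setminus G$ and lie in different connected components, so the sign is a discrete invariant preserved under specialization to orbit closures; second, that when a flag has $|V_n|=d$ (maximal isotropic), the type $\mbf D$ Weyl group ``folds'' and the relevant order is on a double coset in $W(\mbf D_d)$ rather than $W(\mbf B_d)$, which is exactly what distinguishes type $\mbf D$ Bruhat order (the extra mod-$2$ and equality constraints are the hallmark of the $\mbf D_d\subset\mbf B_d$ embedding on parabolic cosets).

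So concretely I would organize the proof as follows. First, establish that $\Aa\le\B$ forces $\sgn(\Aa)=\sgn(\B)$ by the connected-components argument: $\mscr X=\mscr X^1\sqcup\mscr X^2\sqcup\mscr X^3$, and $\mscr X\times\mscr X$ decomposes accordingly into pieces that are open-and-closed, so an orbit closure stays within one piece; combined with the sign function (\ref{sign-e}), this gives $\sgn(\Aa)=\sgn(\B)$, and conversely $``\preceq"$ builds in $\alpha=\epsilon$. Second, on each fixed stratum $\mscr X^i\times\mscr X^j$, identify the orbit set with a parabolic double coset in the appropriate Weyl group ($W(\mbf B_d)$-type combinatorics when at least one of $\ro,\co$ has positive middle entry, $W(\mbf D_d)$-type when both middle entries vanish), using the refinement-to-complete-flags construction and $|F_d\cap M_d|\equiv d\bmod 2$ to pin down which component. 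Third, invoke ~\cite[Theorem 8.2.8]{BB05} and ~\cite[Lemma 3.8]{BKLW13} to translate the Bruhat order into the rank inequalities, and check by a direct (but routine) computation that the type-$\mbf D$ folding produces exactly the extra parity/equality conditions in (\ref{partial-order}). The main obstacle I anticipate is step three in the $\mbf D_d$ case: carefully verifying that the Bruhat order on the relevant $W_P\backslash W(\mbf D_d)/W_P$ double cosets is encoded precisely by the mod-$2$ condition $\sum_{r\le n,s>N+1-j}a_{rs}\equiv\sum_{r\le n,s>N+1-j}b_{rs}$ together with the stated equalities, rather than by some subtly different combination; this requires unwinding the standard presentation of $\mbf D_d$ Bruhat order (e.g.\ via the signed-permutation model with an even number of sign changes) and matching it against the matrix entries, and it is where an off-by-one or a missed edge case would most likely hide.
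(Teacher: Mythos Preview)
Your approach is essentially the same as the paper's: the paper's entire proof is the sentence ``By~\cite[Theorem 8.2.8]{BB05} and~\cite[Lemma 3.8]{BKLW13}, we have the following lemma,'' i.e.\ it defers everything to the rank-condition description of Bruhat order on parabolic double cosets, exactly the references you invoke. Your proposal in fact goes further than the paper does, since you isolate the type-$\mbf D$ specific ingredients (the connected-components argument forcing $\alpha=\epsilon$, and the parity/equality constraints coming from the $\mbf D_d\subset\mbf B_d$ folding) that the bare citations do not automatically cover; your honest flag that step three is where a slip could hide is well placed, but the overall strategy matches the paper's.
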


We shall denote by ``$[ \m]$+ lower terms" an element in $\mcal S$ which is equal to $[\m]$
plus a linear combination of $[\m']$ with $\m' \prec \m$.
By Corollary (\ref{cor5}), we have

\begin{cor} \label{cor7}
 Fix  positive integers  $r$, $c$ and $h$ with $c$ positive and even and  $h\in [1,n]$.

$(a)$ Assume that $\Aa=(a_{ij})^{\alpha} \in \Xi_{\mbf D}$ satisfies one of the following two conditions:
$$\begin{array}{ll}
(1)\ a_{hj}=0, \forall j\geq k, \; a_{h+1,k}=r,   \; a_{h+1,j}=0, \forall j>k,\; {\rm if}\ h<n;\\
(2)\ a_{nj}=0, \forall j\geq k, \; a_{n+1,k}=r+(r+c)\delta_{n+1,k}, \;  a_{n+1,j}=0, \forall j>k,\; {\rm if}\ h=n, k\geq n+1.
\end{array}$$
If   $\B $  is subject to  $\B-rE_{h,h+1}^{\theta}$ is diagonal,
$s_r(\B)=s_l(\Aa)$ and $\co(\B)=\ro(\Aa)$, then
$$[\B] * [\Aa]=[\A_{t(k)}] +{\rm lower\ terms},\quad \mbox{where}
\quad   t(k)_u = r\delta_{u, k} .$$

$(b)$ Assume that $\Aa=(a_{ij})^{\alpha} \in \Xi_{\mbf D}$ satisfies one of the following conditions:
$$\begin{array}{llll}
(1)\ a_{hj}=0, \forall j< k, & a_{hk}=r,  & a_{h+1,j}=0, \forall j\leq k,& {\rm if}\ h<n,  \; \mbox{or}\\
(2)\ a_{nj}=0, \forall j< k, & a_{nk}=r,  & a_{n+1,j}=0, \forall j\leq k, &{\rm if}\ h=n, k\leq n.
\end{array}$$
If $\C$ satisfies that   $\C-rE_{h+1,h}^{\theta}$ is  diagonal,
 $s_r(\C)=s_l(\Aa)$ and $\co(\C)=\ro(\Aa)$, then
$$
[\C] * [\Aa]=[
\A(h, t(k)) ]+{\rm lower\ terms}.
$$
\end{cor}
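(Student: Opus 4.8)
The plan is to derive Corollary~\ref{cor7} directly from the explicit multiplication formulas in Corollary~\ref{cor5}, by identifying which signed matrix indexes the leading term (with respect to the partial order $\preceq$) and checking that its coefficient is $1$.

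\textbf{Part (a).} First I would invoke Corollary~\ref{cor5}(a) (if $h<n$ in case (1)) or Corollary~\ref{cor5}(c) (if $h=n$ in case (2)), which expands $[\B]*[\Aa]$ as $\sum_{t:\,\sum_u t_u=r} v^{\beta(t)}\,\overline{(\cdots)}\,[\A_t]$, the sum over tuples $t=(t_u)\in\mbb N^N$ with $\sum t_u=r$. The claim is that the term $t=t(k)$, i.e.\ $t_u=r\delta_{u,k}$, is the unique leading term and has coefficient $1$. For the coefficient: the product of $v$-binomials $\prod_u \overline{\binom{a_{hu}+t_u}{t_u}}_v$ at $t=t(k)$ reduces to $\overline{\binom{a_{hk}+r}{r}}_v$, and under the hypothesis on $\Aa$ in case (1) we have $a_{hk}=0$ (since $a_{hj}=0$ for $j\ge k$), so this binomial is $\binom{r}{r}_v=1$; similarly in case (2), $a_{nk}=0$ gives the analogous reduction, and the extra factor $\prod_{i=0}^{t_{n+1}-1}L_i$ either does not appear (if $k\ne n+1$) or, if $k=n+1$, must be handled using that $a_{n+1,n+1}=r+c$ is even and the $L_i$'s are explicitly computable — I expect this to still contribute $1$ after the normalization, but this is the spot to check carefully. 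One also needs $\beta(t(k))=0$; this is a bookkeeping check using $a_{hj}=0$ for $j\ge k$ and $a_{h+1,j}=0$ for $j>k$ together with $a_{h+1,k}=r$, killing the cross terms in $\beta(t)$.

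\textbf{Leading-term claim.} The heart of the argument is to show $\A_{t(k)}$ dominates every other $\A_t$ occurring, i.e.\ $\A_t\prec \A_{t(k)}$ for $t\ne t(k)$. Here I would use the criterion $\Aa\preceq\B\iff \mcal O_{\Aa}\subseteq\overline{\mcal O}_{\B}$ from the Lemma preceding Corollary~\ref{cor7}, and compare the defining sums $\sum_{r\le i,\,s\ge j}$ of the matrices $A_t = A + \sum_u t_u(E^\theta_{hu}-E^\theta_{h+1,u})$. Since moving mass from row $h+1$ to row $h$ at column $k$ versus at a column $<k$ changes these partial sums monotonically, the tuple $t(k)$ — which concentrates all of $r$ at the largest allowed column index $k$ — produces the termwise-largest partial sums, hence the $\preceq$-maximal matrix. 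The sign is constant across all terms (it is $\sgn(s_l(\B),s_r(\Aa))$, independent of $t$), so the sign condition in $\preceq$ is automatic. This is essentially the same reasoning as in \cite[Lemma~3.9 etc.]{BKLW13} and in \cite{BLM90}, adapted to signed matrices; part (b) is entirely parallel, using Corollary~\ref{cor5}(b) or (c) and the partial order comparison for $A(h,t)=A-\sum_u t_u(E^\theta_{hu}-E^\theta_{h+1,u})$, with the roles of rows $h$ and $h+1$ swapped.

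\textbf{Main obstacle.} I expect the only genuinely delicate point to be case (a)(2) with $k=n+1$, where the product $\prod_{i=0}^{t_{n+1}-1}L_i$ is active: here the "diagonal-like" structure of $\Aa$ has $a_{n+1,n+1}=r+c$ with $c$ even positive, and one must verify both that the leading term still comes from $t(n+1)$ (the $L_i$ factors and the altered exponent $\tilde\beta$ or $\beta''$ do not disturb the domination argument) and that its coefficient normalizes to $1$ after passing to the $[\cdot]$-basis. Everything else is a routine translation of the type-$\mbf A$ argument, so I would keep the write-up short: state the reduction to Corollary~\ref{cor5}, note the coefficient computation, and cite the partial-order lemma for the domination, flagging case (a)(2) as the one requiring the explicit $L_i$ bookkeeping.
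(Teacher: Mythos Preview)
Your approach is essentially the paper's (which simply says ``by Corollary~\ref{cor5}''), and the coefficient and partial-order arguments are correct in outline. There is, however, a bookkeeping slip that creates a phantom difficulty. In Part~(a), case~(2) with $h=n$, the relevant formula is still Corollary~\ref{cor5}(a), which covers all $h\in[1,n]$ including $h=n$; Corollary~\ref{cor5}(c) governs $[\C]*[\Aa]$ with $\C-rE^\theta_{n+1,n}$ diagonal, and that is Part~(b) with $h=n$, not Part~(a). Consequently the product $\prod_{i=0}^{t_{n+1}-1}L_i$ never appears in Part~(a); the only extra term in $\beta(t)$ for $h=n$ is the $\tfrac12\delta_{hn}(\cdots)$ correction, which vanishes at $t=t(k)$ since $k\ge n+1$ forces both $\sum_{j<n+1}t_j=0$ and $2k\ge N+1$. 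In Part~(b)(2), where Corollary~\ref{cor5}(c) does apply, the hypothesis $k\le n$ gives $t(k)_{n+1}=0$, so the $L_i$-product is empty for the leading term and $\beta''(t(k))=0$ follows from the vanishing hypotheses on rows $n$ and $n+1$ together with $r(r-1)/2-\sum_{j<n+1}t_j(t_j-1)/2=0$. Thus the ``main obstacle'' you flag does not arise, and the leading-term coefficient equals $1$ in every case by the binomial check you already sketched.
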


We define an order on $\mbb N\times \mbb N$ by
\begin{equation}\label{order}
  (i,j)<(i',j')\quad \text{if and only if}\quad  j'-i'<j-i\ {\rm or}\  j'-i'=j-i, i'<i.
\end{equation}
By using Corollary ~\ref{cor7}, we are able to prove the following  theorem.

\begin{thm}\label{thm1}
For any $\Aa=(a_{ij})^{\alpha} \in \Xi_{\mbf D}$, we set $R_{ij}=\sum_{k=1}^ia_{kj}$.
There exist signed matrices $\m(i, j)$  such that
$\m(i, j) - R_{ij}E^{\theta}_{i,i+1}$ is diagonal   and
$$\prod_{1\leq i<j \leq N} [\m (i, j) ]  = [\Aa]+{\rm lower\ terms},$$
where the product is taken in the order  (\ref{order}). The product has $N(N-1)/2$ terms.
\end{thm}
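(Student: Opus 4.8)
The plan is to follow the standard BLM-type strategy for realizing an arbitrary basis element $[\Aa]$ as the leading term of a monomial in the generators, adapted to the signed-matrix setting. First I would fix the signed matrix $\Aa = (a_{ij})^{\alpha}$ and organize the factorization by peeling off one off-diagonal entry at a time, in the order dictated by (\ref{order}). The key combinatorial input is that the order (\ref{order}) lists pairs $(i,j)$ with $i<j$ so that entries closer to the diagonal (in the sense of larger $j-i$ reversed, i.e. smaller $j-i$) are handled first; this guarantees that when we multiply by a generator $[\m(i,j)]$ of the form "diagonal $+\, R_{ij}E^{\theta}_{i,i+1}$", the entry it moves into column $j$ is exactly what is needed, while all previously-placed entries and all strictly-lower-order corrections stay in the "lower terms" bin. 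Concretely, I would prove by downward induction on the order (\ref{order}) that the partial product $\prod_{(i,j) \geq (i_0,j_0)} [\m(i,j)]$ equals $[\Aa^{(i_0,j_0)}] + \text{lower terms}$, where $\Aa^{(i_0,j_0)}$ is the signed matrix obtained from $\Aa$ by zeroing out all entries $a_{ij}$ with $(i,j) < (i_0,j_0)$ and pushing their mass back onto the diagonal (respecting the $\theta$-symmetry and keeping the sign $\alpha$ fixed).

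The inductive step is where Corollary~\ref{cor7} does the work: at stage $(i_0,j_0)$ we multiply the current partial product on the left by $[\m(i_0,j_0)]$ where $\m(i_0,j_0) - R_{i_0 j_0} E^{\theta}_{i_0, i_0+1}$ is diagonal with column vector matching $\ro$ of the current matrix and $s_r(\m(i_0,j_0)) = s_l(\text{current matrix})$. The current matrix $\Aa^{(i_0,j_0)^+}$ (one step further along) has been arranged, by the choice of order, to satisfy precisely hypothesis (1) or (2) of Corollary~\ref{cor7}(a) with $k = j_0$ and $r = R_{i_0 j_0}$: in the relevant row $i_0$ all entries in columns $\geq k$ vanish, and the row $i_0+1$ has all its as-yet-unprocessed mass sitting in column $k$. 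Hence $[\m(i_0,j_0)] * [\Aa^{(i_0,j_0)^+}] = [\Aa^{(i_0,j_0)}] + \text{lower terms}$. Multiplying a "lower terms" element by a generator again yields lower terms (leading-term-times-generator has leading term $\preceq$ the product of leading terms, by Corollary~\ref{cor5} and the compatibility of $\preceq$ with the multiplication filtration), so the induction propagates. When all $N(N-1)/2$ factors have been multiplied we reach $(i_0,j_0)$ minimal and obtain $[\Aa] + \text{lower terms}$, since zeroing out "everything below the bottom" leaves $\Aa$ itself.

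The main obstacle, and the place where the type-$\mbf D$ argument genuinely departs from \cite{BLM90} and \cite{BKLW13}, is the bookkeeping of signs and of the middle row/column $n+1$. Two subtleties need care. First, when $h = n$ the multiplication formula (Proposition~\ref{prop4}(c), Corollary~\ref{cor7}(a)(2)) has the extra $\delta_{n+1,k}$ term and the $(1 - \delta_{0,\ro(\A)_{n+1}})$ correction, so the matrices $\m(n,j)$ must be chosen with the middle entry adjusted by $r + c$ rather than $r$, and one must check that $c$ can always be taken positive and even so that $\m(n,j) \in \Xi_{\mbf D}$ — this is exactly why the statement allows $\m(i,j)$ to be a general signed diagonal-plus-$E^\theta$ matrix rather than pinning down its diagonal. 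Second, one must verify at each stage that the sign conditions $s_r(\m(i_0,j_0)) = s_l(\text{current matrix})$ and the resulting sign $\sgn(s_l(\m(i_0,j_0)), s_r(\text{current}))$ can be matched to the sign $\alpha$ of the target $\Aa$; here the fact (from the Lemma computing $s_l, s_r$ via $\ro_{n+1}, \co_{n+1}$ and $\sgn(\A)$) that sign is determined by the parity data $\p(\A)$ and the vanishing of the middle row/column must be tracked through the induction. I expect the verification that the hypotheses of Corollary~\ref{cor7} are met at every stage — i.e.\ that the order (\ref{order}) really does process entries so that each row $i_0$ is "empty to the right of column $k$" when its turn comes — to be the only genuinely delicate point; everything else is the familiar BLM induction. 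I would finish by remarking that the leading-term identity immediately gives, by the usual triangularity argument, that the $[\m(i,j)]$'s generate $\mcal S$ as an algebra, which is the intended corollary.
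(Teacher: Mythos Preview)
Your strategy matches the paper's: build up $[\Aa]$ as a leading term by successively multiplying generators on the left, with Corollary~\ref{cor7} supplying the inductive step, and with the type-$\mbf D$ novelty being the sign bookkeeping. The paper carries this out explicitly for $n=2$ and asserts the general case is analogous.

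Two concrete points in your write-up would fail as stated and need correction. First, your description of the intermediate matrix $\Aa^{(i_0,j_0)}$ as ``$\Aa$ with the unprocessed entries zeroed and their mass pushed back onto the diagonal'' is inconsistent with the generators $\m(i,j)$, whose off-diagonal size is $R_{ij}=\sum_{k\le i}a_{kj}$ rather than $a_{ij}$. If you run the induction with those generators starting from the diagonal matrix $\mathrm{diag}(\co(\A))$, the intermediate matrices carry \emph{column-partial-sums} $R_{ij}$ on near-diagonal positions (exactly as in the paper's $\A_1$), not the bare $a_{ij}$'s with surplus on the diagonal. Your later phrase ``row $i_0+1$ has all its as-yet-unprocessed mass sitting in column $k$'' is the correct picture and is precisely what makes the hypothesis $a_{h+1,k}=r$ of Corollary~\ref{cor7}(a) hold with $r=R_{i_0 j_0}$; you should replace the ``mass on the diagonal'' description with this one. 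Second, the intermediate signed matrices do \emph{not} all carry the fixed sign $\alpha$: as the paper's inductive rule (its equation for $s_l(\B_{1,i})$) shows, the sign of each factor and each partial product is forced by whether $\ro(-)_{n+1}$ vanishes and by the parity $\p(-)$, and it can alternate along the way before returning to $\alpha$ at the end. You correctly flag sign-tracking as a subtlety, but the specific claim ``keeping the sign $\alpha$ fixed'' is false and would break the compatibility condition $s_r(\m)=s_l(\text{current})$ needed to invoke Corollary~\ref{cor7}. Once these two descriptions are corrected, your induction goes through exactly as in the paper.
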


\begin{proof}
We show the theorem  for $n=2$.
Let $B_{10}$ be a diagonal matrix with diagonal entries being $(\co(\A)_1,\cdots, \co(\A)_5)$.
We set
\[
\B_{10} = (B_{10}, \sgn (s_r(\A), s_r(\A))).
\]
 For $i=1,\cdots, 4$,
let $B_{1i}$ be the matrix  such that $B_{1i}-R_{i,i+1}E^{\theta}_{i,i+1}$ is a diagonal matrix and ${\rm co}(B_{1i})={\rm ro}(B_{1,i-1})$.
We set
\[
\B_{1 i} = (B_{1, i}, \sgn ( s_l(\B_{1, i}), s_r(\B_{1, i})) ),
\]
where $s_l(\B_{1, i})$ and $s_r(\B_{1, i})$ are defined inductively by
\begin{equation}\label{eqsupp1}
  \begin{split}
    s_r(\B_{1, i})&= s_l(\B_{1, i-1}), \ \forall i\in [1,4],\\
    s_l(\B_{1, i})&=\begin{cases}
      1, &{\rm if}\  \ro(\B_{1, i})_{n+1} \neq 0,\\
      s_r(\B_{1, i})+(-1)^{s_r(\B_{1, i})}\p(\B_{1, i}),&{\rm if}\   \ro(\B_{1, i})_{n+1} = 0 = \co(\B_{1, i})_{n+1},\\
      2\ {\rm or}\ 3, & {\rm if} \   \ro(\B_{1, i})_{n+1} = 0, \ \co(\B_{1, i})_{n+1}\neq 0.
    \end{cases}
  \end{split}
\end{equation}
We note that $s_l(\B_{1, i})$ has multiple choices in some cases.
When this case happens, we always set $s_l(\B_{1, i})=2$.
By Corollary \ref{cor7}, we have
\begin{align*}
\begin{split}
[\B_{14}] * [\B_{13}] *  & [\B_{12}] * [\B_{11}] * [\B_{10}]   =[\A_1]+{\rm lower\ terms},\quad
\mbox{where} \\
&\A_1=(A_1, \sgn(s_l(\B_{1, 4}), s_r(\B_{1, 0}))) \ \mbox{with} \
A_1 =\begin{pmatrix}
    *&R_{12}&0&0&0\\R_{45}&*&R_{23}&0&0\\
    0&R_{34}&*&R_{34}&0\\0&0&R_{23}&*&R_{45}\\0&0&0&R_{12}&*
  \end{pmatrix},
\end{split}
\end{align*}
and  the $*$s in the diagonal  are some nonnegative integers  uniquely  determined by ${\rm co}(A_1)={\rm co}(B_{10})$.
Now let $B_{ji}$ be the matrices such that $B_{ji}-R_{i,i+j}E^{\theta}_{i,i+1}$ is a diagonal matrix
and ${\rm co}(B_{j,i})={\rm ro}(B_{j,i-1})$ for all $i\in [1, 5-j], j\in [2,4]$. Here we assume that $B_{j0}=B_{j-1,6-j}$.
We set
\[
\B_{ji} = (B_{ji}, \sgn(s_l(\B_{ji}), s_r(\B_{ji}))),
\]
where $s_l(\B_{ji})$ and $ s_r(\B_{ji})$ are defined in a similar way as (\ref{eqsupp1}) and
$s_l(\B_{ji})=2$ if it has multiple choices.
By repeating the above process, we have
$$
[\B_{41}] * [\B_{32}] * [\B_{31}] * [\B_{23}] * [\B_{22}] * [\B_{21}] * [\B_{14}] * [\B_{13}] * [\B_{12}] * [\B_{11}] * [\B_{10}]
=[\A]+{\rm lower\ terms}.
$$
Theorem follows for $n=2$. The general case can be shown similarly.
\end{proof}

We have immediately

\begin{cor} \label{S-M}
The products  $\m_{\A}=\prod_{1\leq i<j \leq N} [\m (i, j) ] $ for any  $\A\in \Xi_{\mbf D}$ in Theorem ~\ref{thm1} form  a basis for $\mcal S$.
(It is called a monomial basis of $\mcal S$.)
\end{cor}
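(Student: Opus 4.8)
The plan is to obtain Corollary~\ref{S-M} as an immediate consequence of Theorem~\ref{thm1} together with the basis statement recorded in Section~\ref{sec4.3}. Recall that $\{e_{\A}\mid \A\in\Xi_{\mbf D}\}$ is an $\mcal A$-basis of $\mcal S$, and that $[\A]=v^{-(d(\A)-r(\A))}e_{\A}$ differs from $e_{\A}$ by the unit $v^{-(d(\A)-r(\A))}\in\mcal A$; hence $\{[\A]\mid\A\in\Xi_{\mbf D}\}$ is also an $\mcal A$-basis of $\mcal S$, and in particular $\mcal S$ is free over $\mcal A$ of finite rank $\#\Xi_{\mbf D}$, finiteness being ensured by~(\ref{Xi-number}).

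First I would fix a total order $\A_1,\dots,\A_m$ on the finite set $\Xi_{\mbf D}$, with $m=\#\Xi_{\mbf D}$, refining the strict partial order $\prec$, so that $\A_k\prec\A_l$ forces $k<l$. Then, by Theorem~\ref{thm1}, for each $\A\in\Xi_{\mbf D}$ one has
\[
\m_{\A}=\prod_{1\le i<j\le N}[\m(i,j)]=[\A]+\sum_{\B\prec\A}c_{\A,\B}\,[\B],\qquad c_{\A,\B}\in\mcal A.
\]
Thus the matrix expressing $(\m_{\A_1},\dots,\m_{\A_m})$ in terms of $([\A_1],\dots,[\A_m])$ is upper unitriangular with entries in $\mcal A$; its determinant is $1$, so it is invertible over $\mcal A$. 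Consequently $\{\m_{\A}\mid\A\in\Xi_{\mbf D}\}$ is an $\mcal A$-basis of $\mcal S$, which is the assertion.

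There is essentially no obstacle here beyond bookkeeping: the two points worth checking are that $\Xi_{\mbf D}$ is finite, so that the change-of-basis matrix is a genuine finite matrix — this is~(\ref{Xi-number}) — and that the ``lower terms'' produced in Theorem~\ref{thm1} are genuinely linear combinations of $[\B]$ with $\B\prec\A$ for the single fixed partial order $\prec$, so that unitriangularity holds simultaneously for all $\A$. Both are already in place, which is exactly why the corollary follows ``immediately''.
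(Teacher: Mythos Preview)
Your argument is correct and is precisely the unitriangularity reasoning the paper has in mind when it records the corollary as following ``immediately'' from Theorem~\ref{thm1}; the paper offers no further proof beyond that word. Your explicit checks that $\Xi_{\mbf D}$ is finite and that the lower terms lie strictly below in a single partial order $\prec$ are exactly the bookkeeping needed to make the one-line deduction rigorous.
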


By (\ref{eq22}), (\ref{eq24}) and (\ref{eq27}) and Corollary ~\ref{S-M}, we have

\begin{cor} \label{S-1}
The algebra $\mcal S$ (resp. $\mbb Q(v) \otimes_{\mcal A} \mcal S$) is generated by the elements $[\mathfrak e]$ such that
$\mathfrak e - RE^{\theta}_{i, i+1}$ (resp. either $\mathfrak e$ or  $\mathfrak e-E^{\theta}_{i, i+1}$)  is diagonal
for some $R\in \mbb N$ and $i\in [1, N-1]$.
\end{cor}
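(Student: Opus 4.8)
The plan is to read the statement off directly from the monomial basis of Corollary~\ref{S-M} together with the multiplication formulas (\ref{eq22}), (\ref{eq24}) and (\ref{eq27}). For the integral (``resp.'')\ assertion: by Theorem~\ref{thm1} each factor $[\m(i,j)]$ of a monomial basis element $\m_{\A}$ satisfies that $\m(i,j)-R_{ij}E^{\theta}_{i,i+1}$ is diagonal for some $R_{ij}\in\mbb N$ and $i\in[1,N-1]$; since the $\m_{\A}$ span $\mcal S$ over $\mcal A$ by Corollary~\ref{S-M}, the $\mcal A$-subalgebra generated by all $[\mathfrak e]$ with $\mathfrak e-RE^{\theta}_{i,i+1}$ diagonal, with the diagonal $[\mathfrak e]$ being the case $R=0$, already contains every $\m_{\A}$ and therefore equals $\mcal S$.

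For the statement over $\mbb Q(v)$ I would show, by induction on $R$, that every generator $[\mathfrak e]$ with $\mathfrak e-RE^{\theta}_{i,i+1}$ diagonal lies in the subalgebra $\mcal S'$ generated by the diagonal elements and those with $R\le1$; the cases $R\le1$ hold by definition. For $R=r\ge2$, choose a matrix $\mathfrak e^{(1)}$ with $\mathfrak e^{(1)}-E^{\theta}_{i,i+1}$ diagonal and a matrix $\mathfrak e^{(r-1)}$ with $\mathfrak e^{(r-1)}-(r-1)E^{\theta}_{i,i+1}$ diagonal such that $\co(\mathfrak e^{(1)})=\ro(\mathfrak e^{(r-1)})$, the sign conditions $s_r(\mathfrak e^{(1)})=s_l(\mathfrak e^{(r-1)})$ and $\sgn(\mathfrak e)=\sgn(s_l(\mathfrak e^{(1)}),s_r(\mathfrak e^{(r-1)}))$ hold, and the diagonal part of $\mathfrak e^{(r-1)}$ is arranged so that the product computed below has leading term $[\mathfrak e]$; such a choice exists because the diagonal part of such a matrix is constrained only by the total being $D$. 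If $i\le n$ apply (\ref{eq22}) with $h=i$; if $i>n$ observe that $E^{\theta}_{i,i+1}=E^{\theta}_{N+1-i,N-i}$ is a subdiagonal form and apply (\ref{eq24}) when $N-i<n$ or (\ref{eq27}) when $N-i=n$. These formulas are exact sums over the tuples $t$, and in the present situation all but one summand drops out: for $t$ other than the one producing $[\mathfrak e]$, the matrix defining $\A_t$ would have to subtract an $E^{\theta}$ from an off-diagonal slot of $\mathfrak e^{(r-1)}$ that is zero, hence acquires a negative entry and $[\A_t]=0$. Thus $[\mathfrak e^{(1)}]*[\mathfrak e^{(r-1)}]=c\,[\mathfrak e]$ where $c$ is a power of $v$ times $\overline{(r)}_v$ (respectively the corresponding product of $v$-integers and $L_i$-factors in the middle case), which is a unit in $\mbb Q(v)$; therefore $[\mathfrak e]=c^{-1}[\mathfrak e^{(1)}]*[\mathfrak e^{(r-1)}]\in\mcal S'$ by the inductive hypothesis applied to $\mathfrak e^{(r-1)}$. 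Combined with the first step this gives $\mcal S'=\mbb Q(v)\otimes_{\mcal A}\mcal S$.

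The first step is formal; the point requiring care is the collapse of the multiplication formula to a single term in the second step, and especially the middle index, where one uses (\ref{eq27}) rather than (\ref{eq22}) and must check that the additional $L_i$-factors --- which reflect the two connected components of the maximal isotropic Grassmannian --- only rescale the coefficient of the surviving term $[\mathfrak e]$ and do not introduce new basis elements. I expect this bookkeeping, together with checking that compatible $\mathfrak e^{(1)},\mathfrak e^{(r-1)}$ with matching signs and row/column sums always exist, to be the only real work; there is no conceptual obstacle beyond what Corollaries~\ref{cor5} and~\ref{S-M} already provide.
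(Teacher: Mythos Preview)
Your proposal is correct and follows the same approach the paper indicates: the integral statement is immediate from the monomial basis of Corollary~\ref{S-M}, and the rational statement comes from the divided-power recursion extracted from the multiplication formulas (\ref{eq22}), (\ref{eq24}), (\ref{eq27}); indeed the paper records exactly this recursion later as (\ref{divided}).

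One small correction to your commentary: in the middle case ($E^{\theta}_{n+1,n+2}=E^{\theta}_{n+1,n}$, handled by (\ref{eq27})), the surviving tuple has $t_{n+1}=0$, so the product $\prod_{i=0}^{t_{n+1}-1}L_i$ is empty and no $L_i$-factors appear at all; the coefficient is simply $\overline{(r)}_v$ times a power of $v$, just as in the other cases. Your concern about $L_i$ rescaling is therefore moot. The existence of compatible $\mathfrak e^{(1)},\mathfrak e^{(r-1)}$ with the right row/column vectors and signs is routine once one notes that the required diagonal entries (e.g.\ $a_{n+1,n+1}=(\mathfrak e)_{n+1,n+1}+2$ in the $h=n$ raising case) are automatically nonnegative and of the correct parity.
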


Observe that
\[
E_i =\sum [\C], \ F_i = \sum [\B], \ H^{\pm 1}_a =\sum  v^{\mp d_a} [\D],\quad \forall i\in [1, n], a\in [1, n+1],
\]
where $\B$,  $\C$ and $\D$ run over all signed matrices in $\Xi_{\bf D}$ such that $\B-E^{\theta}_{i, i+1}$,  $\C-E^{\theta}_{i+1, i}$ and $\D$ are diagonal, respectively, and  $d_a$ is  the $(a, a)$-entry of the matrix in $\D$.
We have  the following corollary by Corollary ~\ref{S-1}.

\begin{cor} \label{S-generator}
The algebra $\mbb Q(v) \otimes_{\mcal A} \mcal S$ is generated by the functions $E_i$, $F_i$, $H_a^{\pm 1}$, $J_{\alpha}$
for any $i\in [1, n]$, $a\in [1, n+1]$ and $\alpha\in \{ +, 0, -\}$.
\end{cor}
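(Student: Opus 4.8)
The plan is to deduce Corollary~\ref{S-generator} directly from Corollary~\ref{S-1} by a short bookkeeping argument. First I would record the explicit expansions of the convolution functions $E_i$, $F_i$, $H_a^{\pm 1}$, $J_\alpha$ in terms of the standard basis $\{e_\A\}$ (equivalently $\{[\A]\}$): each of these functions is, by its very definition in the formulas preceding Proposition~\ref{prop3}, a linear combination over signed matrices $\A\in\Xi_{\mbf D}$ whose underlying matrix differs from a diagonal matrix by a single off-diagonal bump $E^\theta_{i,i+1}$ (for $E_i$), $E^\theta_{i+1,i}$ (for $F_i$), or is itself diagonal (for $H_a^{\pm1}$ and the $J_\alpha$, which are just sums of characteristic functions of diagonal orbits). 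This is exactly the display stated just before the corollary in the excerpt, and the coefficients are monomials $v^{\mp d_a}$ or $1$.

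Next I would invoke Corollary~\ref{S-1}, which says that $\mbb Q(v)\otimes_{\mcal A}\mcal S$ is generated by the elements $[\mfk e]$ for which $\mfk e$ or $\mfk e - E^\theta_{i,i+1}$ is diagonal, together with all diagonal signed matrices. The content of the present corollary is then that this generating set lies in the subalgebra generated by $E_i,F_i,H_a^{\pm1},J_\alpha$. For the purely diagonal $[\mfk e]$ this is a standard reconstruction: the $H_a^{\pm1}$ and the idempotents $J_+,J_0,J_-$ together cut out each diagonal orbit (the $H_a$ eigenvalues record the diagonal entries of the matrix, i.e. the dimensions $|V'_a/V'_{a-1}|$, and the $J_\alpha$ record which of $\mscr X^1,\mscr X^2,\mscr X^3$ one is in; since the diagonal entries determine $|V_n|$ one sees that the simultaneous eigenspace decomposition of the commuting family $\{H_a^{\pm1},J_\alpha\}$ separates the diagonal $[\mfk e]$). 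For the $[\mfk e]$ with a single $E^\theta_{i,i+1}$ bump, multiplying $E_i$ (which is the sum of all such $[\C']$ with $\C'-E^\theta_{i,i+1}$ diagonal, with unit leading coefficient) by the appropriate diagonal idempotent extracted in the previous step isolates the single desired $[\mfk e]$; likewise $F_i$ handles the $E^\theta_{i+1,i}$ bumps. Hence every generator from Corollary~\ref{S-1} is a polynomial in $E_i,F_i,H_a^{\pm1},J_\alpha$, and conversely these functions lie in $\mcal S$, so they generate $\mbb Q(v)\otimes_{\mcal A}\mcal S$.

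I expect the only delicate point — the "main obstacle", though a mild one — to be the sign/component bookkeeping: because a signed matrix carries the extra datum $\alpha\in\{+,0,-\}$ and the generators $J_\alpha$ rather than $H_a$ detect it, one must check that the idempotents $J_+,J_0,J_-$ really do separate the two connected components of the maximal isotropic Grassmannian inside a single diagonal $\tilde\Phi$-fibre, i.e. that no diagonal orbit is "missed". This is precisely the geometric fact recorded in Section~\ref{sec4.2} (the decomposition $\mscr X=\mscr X^1\sqcup\mscr X^2\sqcup\mscr X^3$ and the bijections~(\ref{parity})), so the argument goes through; I would phrase the proof so as to cite Corollary~\ref{S-1} and the displayed expansions of $E_i,F_i,H_a^{\pm1},H_a,J_\alpha$ and otherwise keep it to a couple of lines, exactly as the excerpt does. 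Everything else is the routine triangularity/leading-term manipulation already used throughout Section~\ref{secS_D}.
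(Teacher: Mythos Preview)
Your proposal is correct and follows essentially the same approach as the paper: the paper's proof consists of nothing more than the displayed expansions of $E_i$, $F_i$, $H_a^{\pm 1}$ in terms of the $[\mathfrak e]$'s immediately preceding the corollary, together with a one-line invocation of Corollary~\ref{S-1}. You have simply spelled out the implicit step of isolating each individual generator $[\mathfrak e]$ by first separating the diagonal idempotents via the commuting family $\{H_a^{\pm 1}, J_\alpha\}$ (Lagrange interpolation over $\mbb Q(v)$ on the finitely many eigenvalues, with $J_\pm$ distinguishing the two signs when $\lambda_{n+1}=0$) and then multiplying $E_i$ or $F_i$ by the resulting projector; this is exactly what the paper leaves tacit. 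One cosmetic slip: in the paper's convention $E_i$ is the sum over $[\C]$ with $\C - E^\theta_{i+1,i}$ diagonal and $F_i$ the sum over $[\B]$ with $\B - E^\theta_{i,i+1}$ diagonal, so your labeling of which bump goes with which generator is swapped, but this is immaterial to the argument.
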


\begin{rem}
The order  (\ref{order})  in Theorem ~\ref{thm1} is different from the ones  in ~\cite[Theorem 3.6.1]{BKLW13} and ~\cite[3.9]{BLM90}.
It can be shown that using the latter orders, one can construct  a different monomial basis for the algebra $\mcal S$.
\end{rem}

\subsection{Canonical basis of $\mcal S$}

In this subsection, we  assume that the ground field is an algebraic closure  $\overline{\mbb F}_q$ of the finite field $\mbb F_q$.
Let $IC_{\A}$ be the intersection cohomology complex of $\overline{\mcal O}_{\A}$,
normalized  so that the restriction of $IC_{\A}$ to $\mcal O_{\A}$ is the constant sheaf on $\mcal O_{\A}$.
Since $IC_{\A}$ is a $G$-equivariant complex and the stabilizers of the points in $\overline{\mcal O}_{\A}$ are connected,
the restriction of the $i$-th cohomology sheaf $\mscr H_{\mcal O_{\B}}^i(IC_{\A})$
of $IC_{\A}$ to $\mcal O_{\B}$ for $\B\leq \A$ is a trivial local system.
We denote $n_{\B,\A,i}$ the rank of  this local system.
We set
\begin{equation}\label{eq35}
  \{\A\}=\sum_{\B \leq \A} P_{\B,\A}[\B ],\quad \mbox{where}\quad
   P_{\B,\A}=\sum_{i\in \mbb Z} n_{\B  , \A,i}v^{i-d(\A)+d(\B )}.
\end{equation}
The polynomials $P_{\B, \A}$ satisfy
\begin{equation}\label{eq36}
P_{\A,\A}=1\quad {\rm and}\quad P_{\B,\A}\in v^{-1}\mbb Z[v^{-1}]\ {\rm for \ any }\ \B <\A.
\end{equation}
Since $\{[\A]| \A\in \Xi_{\bf D}\}$ is an $\mcal A$-basis of $\mcal S$, by (\ref{eq35}) and (\ref{eq36}), we have

\begin{lem}
The set $\{\{\A\}| \A \in \Xi_{\mbf D}\}$ forms an $\mcal A$-basis of $\mcal S$,  called the $canonical$ $basis$.
\end{lem}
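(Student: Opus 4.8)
The goal is to show that the set $\{\{\A\} \mid \A \in \Xi_{\mbf D}\}$ is an $\mcal A$-basis of $\mcal S$, where $\{\A\}$ is defined in (\ref{eq35}) as a triangular modification of the standard basis $\{[\A] \mid \A \in \Xi_{\mbf D}\}$ with coefficients $P_{\B,\A}$ satisfying the triangularity properties (\ref{eq36}). The core observation is purely formal: once one knows that $\{[\A]\}$ is an $\mcal A$-basis (stated just before the lemma), that the transition is triangular with respect to the partial order $\leq$ on the index set $\Xi_{\mbf D}$ (this is exactly the content of the sum in (\ref{eq35}) ranging over $\B \leq \A$), and that the diagonal coefficients $P_{\A,\A}$ are units in $\mcal A$ (here $P_{\A,\A} = 1$ by (\ref{eq36})), the conclusion follows by a standard unitriangularity argument.

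\textbf{Key steps.} First I would record that $\Xi_{\mbf D}$ is a finite set (this is clear from (\ref{Xi-number}), which gives an explicit count) and that $\leq$ is a partial order on it (Section \ref{sec-partial order}, via the identification $\leq\, =\, \preceq$), hence one can enumerate $\Xi_{\mbf D} = \{\A_1, \dots, \A_m\}$ so that $\A_k \leq \A_l$ implies $k \leq l$; such a linear refinement exists for any finite poset. Second, by (\ref{eq35}) and (\ref{eq36}), in this enumeration the matrix $(P_{\A_k, \A_l})_{k,l}$ expressing $(\{\A_l\})_l$ in terms of $([\A_k])_k$ is upper triangular with all diagonal entries equal to $1$; such a matrix is invertible over $\mcal A$ with inverse again upper unitriangular over $\mcal A$ (its entries are $\mbb Z$-polynomial expressions in the $P_{\B,\A}$, obtained by the usual recursive inversion, and $\mcal A = \mbb Z[v,v^{-1}]$ is a commutative ring so no denominators appear). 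Third, I would conclude that $\{\{\A\} \mid \A \in \Xi_{\mbf D}\}$ spans $\mcal S$ over $\mcal A$ (since each $[\A]$ is an $\mcal A$-combination of the $\{\B\}$) and is $\mcal A$-linearly independent (since any $\mcal A$-relation among the $\{\A\}$ transports to one among the $[\A]$ via the invertible transition matrix, forcing all coefficients to vanish). This establishes the basis claim.

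\textbf{Main obstacle.} There is no serious obstacle here; the lemma is a formal consequence of the already-established facts, and the excerpt's own wording (``Since $\{[\A]\}$ is an $\mcal A$-basis of $\mcal S$, by (\ref{eq35}) and (\ref{eq36}), we have'') signals exactly this. The only point requiring a word of care is that one is inverting a triangular matrix over the \emph{ring} $\mcal A$ rather than a field: this is legitimate precisely because the diagonal entries $P_{\A,\A}=1$ are units, so the standard back-substitution inverts the matrix without introducing denominators. If one wanted to avoid even mentioning matrices, the cleanest phrasing is an induction on the poset $\Xi_{\mbf D}$: for $\A$ minimal, $\{\A\} = [\A]$; and for general $\A$, the identity $[\A] = \{\A\} - \sum_{\B < \A} P_{\B,\A}[\B]$ together with the inductive hypothesis that each $[\B]$ with $\B < \A$ lies in the $\mcal A$-span of $\{\{\C\} \mid \C \leq \B\}$ shows $[\A]$ lies in the $\mcal A$-span of $\{\{\C\} \mid \C \leq \A\}$, giving spanning; linear independence then follows from a symmetric downward induction using $P_{\A,\A}=1$. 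Either formulation is a few lines.
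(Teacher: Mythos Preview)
Your proposal is correct and takes essentially the same approach as the paper: the paper simply remarks that since $\{[\A]\}$ is an $\mcal A$-basis and, by (\ref{eq35}) and (\ref{eq36}), the transition from $[\A]$ to $\{\A\}$ is unitriangular with respect to the partial order $\leq$, the result follows. Your write-up merely makes this standard unitriangularity argument explicit.
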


By the sheaf-function principle, we have

\begin{cor} \label{S-positivity}
The structure constants of $\mcal S$ with respect to the canonical basis $\{\{\A\}| \A\in \Xi_{\mbf D}\}$
are in $\mbb N[v, v^{-1}]$.
\end{cor}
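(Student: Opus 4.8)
Corollary \ref{S-positivity} asserts that the structure constants of $\mcal S$ in the canonical basis $\{\{\A\}\}$ lie in $\mbb N[v,v^{-1}]$.

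The plan is to invoke the sheaf-function dictionary in exactly the form already used to define the canonical basis. First I would recall the setup: the canonical basis element $\{\A\}$ is, up to the normalizing power of $v$, the function obtained from the intersection cohomology complex $IC_{\A}$ of $\overline{\mcal O}_{\A}$ by taking alternating traces of Frobenius, as encoded in \eqref{eq35}. So it suffices to show that the convolution product $\{\A\} * \{\B\}$, interpreted geometrically, corresponds to a genuine complex of sheaves on $\mscr X \times \mscr X$ whose stalkwise cohomology is concentrated in appropriate degrees with no cancellation. Concretely, one forms the correspondence diagram $\mscr X \times \mscr X \times \mscr X$ with the three projections $p_{12}, p_{23}, p_{13}$ to the pairwise products, and sets $IC_{\A} * IC_{\B} = (p_{13})_! (p_{12}^* IC_{\A} \otimes p_{23}^* IC_{\B})[\text{shift}]$; the key point is that by the Decomposition Theorem of Beilinson–Bernstein–Deligne–Gabber (applicable since the relevant maps are proper, being maps between partial flag varieties over the orbit closures), this complex is a direct sum of shifted $IC$-complexes $IC_{\C}[m]$ with multiplicities that are honest nonnegative integers — actually sums of $v^{\pm}$ with nonnegative coefficients once one tracks the shifts. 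Passing to the function side via the Grothendieck trace formula, and accounting for the normalization $v = \sqrt q$ together with the $v$-power shifts built into \eqref{eq35}, the structure constant $\{\A\}*\{\B\} = \sum_{\C} g_{\A,\B}^{\C} \{\C\}$ then has $g_{\A,\B}^{\C}$ equal to a sum of $v$-powers with nonnegative integer coefficients, i.e. $g_{\A,\B}^{\C} \in \mbb N[v,v^{-1}]$.

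The steps in order: (1) verify that $\mscr X$, $\mscr X \times \mscr X$, and the triple-product correspondence are the expected algebraic varieties over $\overline{\mbb F}_q$ with $G$ acting, so that $G$-equivariant (hence orbit-wise constant, using the connectedness of stabilizers noted just before the lemma defining $\{\A\}$) constructible complexes form the right category; (2) identify the convolution on $\mcal S_{\overline{\mbb F}_q}$ at $v = \sqrt q$ with the sheaf-theoretic convolution via the trace formula — this is the standard Hecke-algebra-style argument, parallel to \cite{BLM90} and \cite{BKLW13}; (3) apply the Decomposition Theorem to conclude that $IC_{\A} * IC_{\B}$ is pure and splits as $\bigoplus_{\C, m} IC_{\C}[m]^{\oplus n_{\C,m}}$ with $n_{\C,m} \in \mbb N$; (4) take traces of Frobenius and match with the $v$-normalized basis $\{\A\}$ to read off $g_{\A,\B}^{\C} = \sum_m n_{\C,m} v^{?}$, landing in $\mbb N[v,v^{-1}]$; (5) note, as is standard, that the argument over $\overline{\mbb F}_q$ combined with the defining recursion \eqref{eq35}, \eqref{eq36} for the canonical basis transfers the positivity to the $\mcal A$-algebra $\mcal S$ itself, since the structure constants are determined independently of $q$ by the Kazhdan–Lusztig-type combinatorics.

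The main obstacle is the properness/cleanliness of the convolution diagram in the type $\mbf D$ setting, so that the Decomposition Theorem applies cleanly — in particular one must be careful that the flags in $\mscr X$ satisfy the self-duality constraint $V_i = V_{N-i}^{\perp}$ and that the middle-step subtlety (the partition $\mscr X = \mscr X^1 \sqcup \mscr X^2 \sqcup \mscr X^3$ and the sign data in $\Xi_{\mbf D}$) does not spoil the equivariance or introduce disconnected stabilizers; the remark immediately preceding the lemma defining $\{\A\}$ already asserts connectedness of stabilizers on $\overline{\mcal O}_{\A}$, which is precisely what is needed, so this potential difficulty has in effect been dispatched. After that, the positivity is a formal consequence of the sheaf-function principle exactly as in the type $\mbf{ABC}$ cases, and no new computation is required.
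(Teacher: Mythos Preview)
Your proposal is correct and follows essentially the same approach as the paper: the paper's proof consists of the single phrase ``By the sheaf-function principle, we have'' and your argument is precisely the standard unpacking of that phrase (convolution of $IC$-complexes, Decomposition Theorem, trace of Frobenius). The only remark is that you have supplied considerably more detail than the paper itself, including the verification steps about properness and connected stabilizers, but none of this deviates from the intended argument.
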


\subsection{Inner product on $\mcal S$}

We shall define an inner product on $\mcal S$ following  ~\cite[Section 3]{M10} and \cite[3.7]{BKLW13}.
Since the arguments and statements are very similar, we shall be sketchy.

Denote by $^t\!A$ the transposition matrix of $A$.
For any signed matrix $\A =(A,\epsilon)$, we define $^t\! \A = (\ \!{}^t\!A, \epsilon')$
where
$$
\epsilon'=
\begin{cases}
\epsilon, & {\rm if} \sup (\A) \neq (2, 3),  (3,2),\\
-\epsilon, & {\rm otherwise}.
\end{cases}
$$


For any $\A\in \Xi_{\mbf D}$, we set
\begin{equation}
  d_{\A}=d(\A)-r(\A)\quad {\rm and}\quad \mscr X^V_{\A}=\{V'\in \mscr X|(V,V')\in \mcal O_{\A}\}.
\end{equation}
We define a bilinear form
$$(- , - )_D: \mcal S\times \mcal S\rightarrow \mcal A$$
by
$$(f_1, f_2)_D=\sum_{V,V' \in \mscr X}v^{\sum_i|V_i/V_{i-1}|^2-\sum_i|V'_i/V'_{i-1}|^2}f_1(V,V') f_2(V,V'),\quad \forall f_1, f_2\in \mcal S.$$
In particular,
$$(e_{\A},e_{\B})_D=\delta_{\A,\B}v^{2(d_{\A}-d_{^t\A})} \# \mscr X^V_{{}^t\!\A},\quad \forall \A,\B\in \Xi_{\mbf D},$$
where $V$ is any element in $\mscr X$ such that $|V_i/V_{i-1}|={\rm co}(\A)_i$.
By the definition of $d_{\A}$ and Lemma \ref{dimension}, we have
$$d_{\A}-d_{\ \! ^t\!\A}=\frac{1}{4} \sum_i({\rm ro}(\A)_i^2-{\rm co}(\A)_i^2)-\frac{1}{4}({\rm ro}(\A)_{n+1}-{\rm co}(\A)_{n+1}).$$
This implies that
\begin{equation}\label{eq43}
  d_{\A}-d_{\ \! ^t\!\A}+d_{\B}-d_{\ \! ^t\!\B}=d_{\C}-d_{\ \! ^t\!\C}
\end{equation}
if ${\rm ro}(\A)={\rm ro}(\C)$, ${\rm co}(\A)={\rm ro}(\B)$ and ${\rm co}(\B)={\rm co}(\C)$.
By using (\ref{eq43}) and the same argument as the one proving Proposition 3.2 in \cite{M10}, we have the following proposition.

\begin{prop} \label{biadjoint}
For any $\ma, \mb, \mc \in \Xi_{\mbf D}$, we have
$$([\ma]e_{\mb}, e_{\mc})_D=v^{d_{\ma} -d_{^t\ma}}(e_{\mb}, [\ \! ^t\!\ma]e_{\mc} )_D.$$
\end{prop}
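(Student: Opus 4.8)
The plan is to reduce the claimed identity to a purely combinatorial statement about the bilinear form $(-,-)_D$ evaluated on characteristic functions $e_{\A}$, and then verify that the ``twisting factors'' coming from the dimension defects $d_{\A}-d_{^t\A}$ match up correctly under the convolution with $[\ma]$. First I would expand both sides in terms of the standard basis: writing $[\ma]e_{\mb}=\sum_{\C} c_{\C}[\C]$ by the multiplication formulas of Corollary~\ref{cor4} (equivalently Corollary~\ref{cor5}), and similarly $[\,^t\!\ma]e_{\mc}=\sum_{\B'} c'_{\B'}[\B']$, the two sides become $\sum_{\C} c_{\C}([\C],e_{\mc})_D$ and $v^{d_{\ma}-d_{^t\ma}}\sum_{\B'} c'_{\B'}(e_{\mb},[\B'])_D$. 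Since $(e_{\A},e_{\B})_D=\delta_{\A,\B}v^{2(d_{\A}-d_{^t\A})}\#\mscr X^V_{^t\A}$ is diagonal, only the terms with $\C=\mc$ on the left and $\B'=\mb$ on the right survive, so the identity collapses to a single scalar equality: the coefficient of $e_{\mc}$ in $[\ma]e_{\mb}$, times $v^{2(d_{\mc}-d_{^t\mc})}\#\mscr X^V_{^t\mc}$, must equal $v^{d_{\ma}-d_{^t\ma}}$ times the coefficient of $e_{\mb}$ in $[\,^t\!\ma]e_{\mc}$, times $v^{2(d_{\mb}-d_{^t\mb})}\#\mscr X^V_{^t\mb}$.

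Next I would observe that this reduced identity is exactly of the same shape as the one handled in the proof of Proposition~3.2 in~\cite{M10} (and its analogue in~\cite[3.7]{BKLW13}), so the strategy is to import that argument. The two ingredients needed are: (1) a symmetry relating the $e_{\mb}$-coefficient of $[\ma]e_{\mb'}$ to the $e_{\mb'}$-coefficient of $[\,^t\!\ma]e_{\mb}$ up to an explicit power of $v$ and an explicit ratio of orders $\#\mscr X^V_{(\cdot)}$ — this is essentially a double-counting of the set of configurations $(V,V',V'')$ lying in the relevant orbit triple, reading it once from the left and once from the right; and (2) the additivity of the dimension defect, namely the displayed equation~(\ref{eq43}), which guarantees that the $v$-powers appearing on the two sides differ precisely by $d_{\ma}-d_{^t\ma}$ when $\ro(\ma)=\ro(\mc)$, $\co(\ma)=\ro(\mb)$, $\co(\mb)=\co(\mc)$ — the only case in which the coefficient of $e_{\mc}$ in $[\ma]e_{\mb}$ is nonzero. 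I would invoke Lemma~\ref{dimension} for the explicit formula for $d_{\A}-d_{^t\A}$ in terms of $\ro$ and $\co$, which makes~(\ref{eq43}) a routine check, and then feed this into the bookkeeping of the $v$-exponents.

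The main obstacle I anticipate is the sign bookkeeping, which is new to the type $\mbf D$ setting: the definition of $^t\!\A$ flips the sign $\epsilon$ precisely when $\sup(\A)\in\{(2,3),(3,2)\}$, so I must check that when $[\ma]e_{\mb}$ has $e_{\mc}$ appearing with nonzero coefficient, transposing all three signed matrices is compatible with the orbit-triple double count — i.e. that $(V,V',V'')\in\mcal O_{\mc}^{op}\times\cdots$ translates correctly into a statement about $^t\!\ma$, $^t\!\mb$, $^t\!\mc$, including the case where one of the flags lies in $\mscr X^2$ versus $\mscr X^3$. Concretely, the bijections~(\ref{parity}) induced by an element of $O(D)\setminus G$ must be shown to intertwine the transposition operation with the convolution, so that the sign flip in $^t\!\A$ is exactly what is forced. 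Since the dimension formulas in Lemma~\ref{dimension} are sign-independent, the $v$-powers are unaffected by this, and the orders $\#\mscr X^V_{^t\A}$ depend only on $\co(\A)$; hence once the sign compatibility is established the argument of~\cite{M10} goes through verbatim. As in the rest of Section~3.9, I would present this proof sketchily, citing~\cite[Proposition~3.2]{M10} and~\cite[3.7]{BKLW13} for the details of the double-counting, and only spelling out the verification of~(\ref{eq43}) and the sign compatibility, which are the genuinely type-$\mbf D$ points.
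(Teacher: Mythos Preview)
Your proposal is correct and follows essentially the same approach as the paper, which simply invokes equation~(\ref{eq43}) together with the argument proving Proposition~3.2 in~\cite{M10}. You have correctly identified the two key ingredients---the double-counting symmetry and the additivity of the dimension defect---and you are right that the sign bookkeeping for $^t\!\A$ is the only genuinely new type-$\mbf D$ point, which the paper leaves implicit.
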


Moreover, the following proposition  holds from Proposition ~\ref{biadjoint}.

\begin{prop}\label{prop11}
For any $\mb , \mc\in \Xi_{\mbf D}$, we have
\[
([\mb],[\mc])_D \in \delta_{\mb, \mc}+v^{-1}\mbb Z[v^{-1}],
\quad
\mbox{and}
\quad
(x \mb , \mc )_D =(\mb, \rho(x) \mc)_D, \quad \forall x\in \mcal S,
\]
where $\rho$ is defined in (\ref{rho}).
\end{prop}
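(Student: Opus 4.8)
The plan is to deduce Proposition~\ref{prop11} from Proposition~\ref{biadjoint} by essentially the same bookkeeping used in \cite[3.7]{BKLW13}. The two assertions are handled separately, but both rest on the identity
\[
([\ma]e_{\mb}, e_{\mc})_D=v^{d_{\ma}-d_{^t\ma}}(e_{\mb},[^t\!\ma]e_{\mc})_D
\]
together with the triangularity package established earlier: the standard basis $\{[\A]\}$, the monomial basis $\m_{\A}$ of Corollary~\ref{S-M}, and the canonical basis $\{\A\}$ of~(\ref{eq35}) are all related by unitriangular change-of-basis matrices with respect to the order $\prec$ on $\Xi_{\mbf D}$, and $P_{\B,\A}\in v^{-1}\mbb Z[v^{-1}]$ for $\B\prec\A$.

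For the first claim, $([\mb],[\mc])_D\in\delta_{\mb,\mc}+v^{-1}\mbb Z[v^{-1}]$, first I would record the ``diagonal'' evaluation $(e_{\A},e_{\B})_D=\delta_{\A,\B}v^{2(d_{\A}-d_{^t\A})}\#\mscr X^V_{^t\!\A}$, translate it into $([\A],[\B])_D=\delta_{\A,\B}v^{d_{\A}-d_{^t\A}-d_{\B}+d_{^t\B}}\,v^{-?}\#\mscr X^V_{^t\!\A}$ after inserting the normalization $[\A]=v^{-d_{\A}}e_{\A}$ (being careful with the $r(\A)$ shift), and check that the resulting power of $v$ is $0$ and that $\#\mscr X^V_{^t\!\A}$, counted over $\overline{\mbb F}_q$-points, contributes a monic polynomial in $v^2$ of degree $d(^t\!\A)-r(^t\!\A)$; dividing by the $v$-power of that leading term yields $1+v^{-1}\mbb Z[v^{-1}]$ on the diagonal and exactly $0$ off the diagonal. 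That already gives the statement for $[\mb],[\mc]$ themselves; the passage to the canonical basis (if one wants it, though it is not asked here) would then be the standard Gram-matrix argument.

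For the second claim, $(x\mb,\mc)_D=(\mb,\rho(x)\mc)_D$ for all $x\in\mcal S$, the reduction is: by $\mcal A$-bilinearity it suffices to take $x=[\ma]$ ranging over the standard basis, and then over the generators $[\mathfrak e]$ of Corollary~\ref{S-1}, since $\rho$ is an anti-automorphism (so $\rho(xy)=\rho(y)\rho(x)$) and the identity is multiplicative in $x$ once it holds for a generating set. For $x=[\ma]$, Proposition~\ref{biadjoint} gives $([\ma]e_{\mb},e_{\mc})_D=v^{d_{\ma}-d_{^t\ma}}(e_{\mb},[^t\!\ma]e_{\mc})_D$; so it remains only to identify $v^{d_{\ma}-d_{^t\ma}}[^t\!\ma]$ with $\rho([\ma])$ acting on the standard basis. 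Using the explicit formulas~(\ref{rho}) for $\rho$ on the generators $E_n,F_n,H_n^{\pm1}$ (and the type-$\mbf A$ analogues for the remaining $E_i,F_i,H_a$), one computes that $\rho$ sends $e_{\ma}$ to a scalar multiple of $e_{^t\!\ma}$ with the scalar $v^{2(d_{\ma}-d_{^t\ma})}$, which after renormalizing by $[\,\cdot\,]=v^{-d}e$ becomes exactly the factor $v^{d_{\ma}-d_{^t\ma}}$ appearing in Proposition~\ref{biadjoint}; the sign bookkeeping $\epsilon\mapsto\epsilon'$ in the definition of $^t\!\ma$ is precisely what makes $\rho(e_{\ma})$ supported on $\mcal O_{^t\!\ma}$ (recall $\rho(f)(V,V')=f(V',V)$, which swaps $\mscr X^2\leftrightarrow\mscr X^3$ on the $(2,3)$ and $(3,2)$ strata). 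Assembling these pieces gives the desired adjunction.

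The main obstacle I anticipate is the careful tracking of the dimension shifts and the $r(\A)$ versus $d(\A)$ normalizations: getting the exponent of $v$ in $([\mb],[\mc])_D$ to come out identically $0$ requires combining $d_{\A}-d_{^t\!\A}=\tfrac14\sum_i(\ro(\A)_i^2-\co(\A)_i^2)-\tfrac14(\ro(\A)_{n+1}-\co(\A)_{n+1})$ with the count $\#\mscr X^V_{^t\!\A}=v^{2(d(^t\!\A)-r(^t\!\A))}+(\text{lower})$, and verifying that the diagonal case $\mb=\mc$ forces $\ro(\A)=\co(\A)$ so that the shift vanishes. The second part is comparatively routine once the first normalization is pinned down, since it is a generator-by-generator check already modeled on \cite[3.7]{BKLW13}, the only genuinely new ingredient being the sign rule for $^t\!\ma$, which one verifies directly from $\rho(f)(V,V')=f(V',V)$ and the bijections~(\ref{parity}). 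Accordingly I would present the first claim in full and then say that the second follows by the same argument as \cite[Proposition 3.25]{BKLW13} (or the relevant numbered statement), using~(\ref{rho}) and the sign convention above.
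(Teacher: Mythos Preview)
Your overall plan is exactly the paper's: both assertions are deduced from Proposition~\ref{biadjoint} together with the bookkeeping in \cite[3.7]{BKLW13} and \cite{M10}, and the paper gives no further detail. Your treatment of the first claim (almost-orthonormality) is correct: $([\A],[\A])_D=v^{-2d_{^t\!\A}}\,\#\mscr X^V_{^t\!\A}$, and the fiber count is a monic polynomial in $v^2$ of degree $d_{^t\!\A}$, so the result lies in $1+v^{-1}\mbb Z[v^{-1}]$; off the diagonal the pairing is literally $0$.

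There is, however, a concrete slip in your adjunction argument. Since $\rho(f)(V,V')=f(V',V)$, one has $\rho(e_{\ma})=e_{^t\!\ma}$ with \emph{no} scalar factor (your observation about the sign rule for $^t\!\ma$ is precisely what makes this an equality, not merely proportionality). Hence $\rho([\ma])=v^{d_{^t\ma}-d_{\ma}}[^t\!\ma]$, the opposite exponent from what you wrote. Feeding this into Proposition~\ref{biadjoint} yields
\[
([\ma]e_{\mb},e_{\mc})_D=v^{2(d_{\ma}-d_{^t\ma})}\bigl(e_{\mb},\rho([\ma])e_{\mc}\bigr)_D,
\]
so the bare identity $(x\,\mb,\mc)_D=(\mb,\rho(x)\mc)_D$ does not drop out from a single factor $[\ma]$ in the way you sketch. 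The extra power of $v$ must be absorbed using the asymmetric weight $v^{\sum_i(|V_i/V_{i-1}|^2-|V'_i/V'_{i-1}|^2)}$ built into $(\,\cdot\,,\,\cdot\,)_D$ and the telescoping identity~(\ref{eq43}); once you redo that normalization carefully (rather than attributing the scalar to $\rho$ itself), the generator-by-generator reduction you outline goes through.
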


We define a bar involution $\bar{}: \mcal S\to \mcal S$ by
\[
\bar{v} =v^{-1} \quad \mbox{and} \quad
\overline{[\mathfrak e]}=[\mathfrak e],
\]
for any $\mathfrak e$ in $\Xi_{\mbf D}$ such that $\mathfrak e-RE^{\theta}_{i, i+1}$ for some $R\in \mbb N$ and $i \in [1, N-1]$.
By an argument similar to ~\cite{BLM90} and ~\cite{BKLW13}, we have
$\overline{\{\A\}} =\{\A\}$. By Proposition ~\ref{prop11} and this observation, we have

\begin{cor}
The canonical basis $\{ \{\A\}|  \A \in \Xi_{\mbf D} \}$  of $\mbb Q(v)\otimes_{\mcal A} \mcal S $ is characterized up to sign by the properties:
\[
\{\A\} \in \mcal S, \quad
\overline{\{\A\}} =\{\A\}
\quad \mbox{and} \quad
( \{ \ma \} , \{ \ma' \})_D \in \delta_{\A, \A'} +v^{-1}\mbb Z[v^{-1}].
\]
\end{cor}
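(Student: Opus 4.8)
The plan is to prove the two halves of the statement separately: first that the canonical basis $\{\{\A\}\mid\A\in\Xi_{\mbf D}\}$ satisfies the three listed properties, and then that any family satisfying them coincides with it up to signs (and, a priori, a relabelling of the index set).

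For the first half, $\{\A\}\in\mcal S$ is clear from (\ref{eq35}) and $\overline{\{\A\}}=\{\A\}$ has just been recorded, so the only substantive point is the near-orthonormality $(\{\A\},\{\A'\})_D\in\delta_{\A,\A'}+v^{-1}\mbb Z[v^{-1}]$. I would derive this from Proposition ~\ref{prop11}, which gives $([\B],[\C])_D\in\delta_{\B,\C}+v^{-1}\mbb Z[v^{-1}]\subseteq\mbb Z[v^{-1}]$ for all $\B,\C$, together with the unitriangularity (\ref{eq35})--(\ref{eq36}). Indeed, writing
\[
(\{\A\},\{\A'\})_D=\sum_{\B\leq\A}\ \sum_{\C\leq\A'}P_{\B,\A}\,P_{\C,\A'}\,([\B],[\C])_D,
\]
each $P_{\B,\A}$ lies in $\mbb Z[v^{-1}]$ (it equals $1$ when $\B=\A$ and lies in $v^{-1}\mbb Z[v^{-1}]$ when $\B<\A$), so every summand lies in $\mbb Z[v^{-1}]$; and every summand with $\B<\A$ or $\C<\A'$ lies in $v^{-1}\mbb Z[v^{-1}]$, leaving only $([\A],[\A'])_D\equiv\delta_{\A,\A'}$ modulo $v^{-1}\mbb Z[v^{-1}]$.

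For uniqueness, let $(b_{\A})_{\A\in\Xi_{\mbf D}}$ satisfy $b_{\A}\in\mcal S$, $\overline{b_{\A}}=b_{\A}$, and $(b_{\A},b_{\A'})_D\in\delta_{\A,\A'}+v^{-1}\mbb Z[v^{-1}]$. Since $\{\{\B\}\mid\B\in\Xi_{\mbf D}\}$ is an $\mcal A$-basis of $\mcal S$, write $b_{\A}=\sum_{\B}p_{\B,\A}\{\B\}$ with $p_{\B,\A}\in\mcal A$; bar-invariance of $b_{\A}$ and of each $\{\B\}$ forces $\overline{p_{\B,\A}}=p_{\B,\A}$, so every nonzero $p_{\B,\A}$ is a symmetric Laurent polynomial, whence $\deg_v p_{\B,\A}\geq 0$, with equality iff $p_{\B,\A}$ is an integer. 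The key step is then a leading-term analysis of $(b_{\A},b_{\A})_D=\sum_{\B,\C}p_{\B,\A}p_{\C,\A}(\{\B\},\{\C\})_D$. Put $\delta:=\max_{\B}\deg_v p_{\B,\A}$ (note $b_{\A}\neq 0$, as $(b_{\A},b_{\A})_D$ has constant term $1$). By the first half, $(\{\B\},\{\B\})_D-1$ and each $(\{\B\},\{\C\})_D$ with $\B\neq\C$ have $v$-degree $\leq -1$, so the part of degree $2\delta$ comes solely from $\sum_{\B}p_{\B,\A}^{2}$ and has coefficient $\sum_{\B\,:\,\deg_v p_{\B,\A}=\delta}(\text{leading coeff of }p_{\B,\A})^{2}>0$; hence $\deg_v(b_{\A},b_{\A})_D=2\delta$. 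But $(b_{\A},b_{\A})_D\in 1+v^{-1}\mbb Z[v^{-1}]$ has $v$-degree $0$, so $\delta=0$: every $p_{\B,\A}$ is an integer, and comparing constant terms gives $\sum_{\B}p_{\B,\A}^{2}=1$. Therefore exactly one $p_{\B,\A}$ equals $\pm1$ and the rest vanish, i.e. $b_{\A}=\varepsilon_{\A}\{\sigma(\A)\}$ for a sign $\varepsilon_{\A}$ and a map $\sigma\colon\Xi_{\mbf D}\to\Xi_{\mbf D}$.

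Finally, $\sigma$ is injective: if $\sigma(\A)=\sigma(\A')$ with $\A\neq\A'$, then $(b_{\A},b_{\A'})_D=\pm(\{\sigma(\A)\},\{\sigma(\A)\})_D$ has constant term $\pm1$, contradicting $(b_{\A},b_{\A'})_D\in v^{-1}\mbb Z[v^{-1}]$; being an injection of the finite set $\Xi_{\mbf D}$ to itself, $\sigma$ is a bijection, and so $\{b_{\A}\}=\{\varepsilon_{\A}\{\A\}\}$ as a set. This is the asserted characterization. I expect the main obstacle to be the degree bookkeeping in the previous paragraph: one must confirm that no cancellation occurs in degree $2\delta$, which uses both the positivity of the leading coefficients of the $p_{\B,\A}^{2}$ and the bounds $\deg_v\big((\{\B\},\{\B\})_D-1\big)\leq -1$ and $\deg_v(\{\B\},\{\C\})_D\leq -1$ for $\B\neq\C$ coming from the first half.
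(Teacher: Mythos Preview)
Your proof is correct and follows essentially the same (implicit) approach as the paper, which simply states that the corollary follows from Proposition~\ref{prop11} together with bar-invariance; you have supplied the standard leading-term/almost-orthonormality argument that the paper leaves to the reader (and to the references \cite{BLM90}, \cite{BKLW13}). Your treatment of both halves---deriving $(\{\A\},\{\A'\})_D\in\delta_{\A,\A'}+v^{-1}\mbb Z[v^{-1}]$ from unitriangularity and Proposition~\ref{prop11}, and the uniqueness via the degree-$2\delta$ analysis of $\sum_{\B}p_{\B,\A}^{2}$---is the expected one and is sound.
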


\section{The limit algebra $\mcal K$ and its canonical basis}
\label{algebra-KD}

We shall apply the stabilization process to the algebras $\mcal S$ in (\ref{S(X)})  as $D$ goes to $\infty$, following ~\cite{BLM90}.
We write $\mcal S_D$ to emphasize the dependence on $D$, and
$\Xi_{\mbf D}(D)$ for the set $\Xi_{\mbf D}$ in (\ref{Xi}) for the same reason.

\subsection{Stabilization} \label{Stab}

Let $I'=I-E_{n+1,n+1}$, where $I$ is the identity matrix. We set
$$
{}_pA=A+2pI'
\quad \mbox{and} \quad
{}_p\A=( {}_pA, \alpha), \quad \mbox{if}\ \A=(A,\alpha).
$$
Let
\begin{align}\label{tXi}
\widetilde{\Xi}_{\mbf D} =\{\A=(A, \alpha ) \in {\rm Mat}_{N\times N}(\mbb Z)\times \{+, 0, -\}  | {}_p \A  \in \Xi_{\mbf D}(D) \ \text{for some}\ p\in \mbb Z, D\in \mbb N\}.
\end{align}
For any matrix  $\A\in \wt{\Xi}_{\mbf D}$,
the notations introduced  in (\ref{Anot}) and (\ref{eq68}) are still well-defined and will be used freely in the following.
Moreover, we observe that  $\sgn(\A)=\sgn({}_p\A)$.
Let
$$\mcal K= \mbox{span}_{\mcal A} \{ [\A] | \A\in \wt{\Xi}_{\mbf D} \},
$$
where the notation $[\A]$ is a formal  symbol
bearing no geometric meaning.
Let $v'$ be a second indeterminate, and
\begin{align} \label{R}
\mfk R=\mbb Q(v)[v', v'^{-1}].
\end{align}
We have

\begin{prop}
\label{prop5}
Suppose that $\A_1, \A_2,\cdots, \A_r \ (r\geq 2)$ are signed matrices in $\widetilde{\Xi}_{\mbf D}$
such that ${\rm co}(\A_i)={\rm ro}(\A_{i+1})$ and
 $s_r(\A_i)=s_l(\A_{i+1})$ for $1\leq i \leq r-1$.
There exist $\z_1, \cdots, \z_m\in \widetilde{\Xi}_{\mbf D}$, $G_j(v,v')\in \mfk R$ and $p_0\in \mbb N$ such that in $\mcal S_D$ for some $D$,  we have
$$[{}_p \A_1] * [{}_p\A_2] * \cdots *[{}_p \A_r]=\sum_{j=1}^mG_j(v,v^{-p})[{}_p \z_j],\quad
\forall p \in 2\mbb N, p\geq p_0.$$
\end{prop}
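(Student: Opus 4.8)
The plan is to adapt the stabilization method of \cite{BLM90} and \cite{BKLW13}, with the identity matrix $I$ used there replaced throughout by $I'=I-E_{n+1,n+1}$, and then to unwind the multiplication formulas of Corollary \ref{cor4} and observe that the structure constants, regarded as functions of $p$, are Laurent polynomials in $v^{-p}$ over $\mbb Q(v)$. The basic fact underpinning everything is that $I'$ is diagonal with vanishing $(n+1,n+1)$-entry, so for $\A=(A,\alpha)\in\wt\Xi_{\mbf D}$ one has $\sgn({}_p\A)=\sgn(\A)$, $\ro({}_p\A)_{n+1}=\ro(\A)_{n+1}$, $\co({}_p\A)_{n+1}=\co(\A)_{n+1}$, $\ur({}_p\A)=\ur(\A)$, and the $(n+1,n+1)$-entry of ${}_pA$ equals that of $A$. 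Consequently the quantities $s_l$, $s_r$, $\p$ of (\ref{eq68}) and (\ref{Anot}) are unchanged by the shift, the off-diagonal entries and the parities are unchanged, and ${}_p\A\in\Xi_{\mbf D}(D)$ for $D=D(\A)+4pn$ once $p\in2\mbb N$ is large enough that all shifted diagonal entries are nonnegative. This compatibility is exactly what makes the type-$\mbf D$ formulas amenable to stabilization.

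First I would reduce to $r=2$ by induction on $r$, strengthening the inductive statement to record that each output $\z_j$ satisfies $\co(\z_j)=\co(\A_{r-1})$ and $s_r(\z_j)=s_r(\A_{r-1})$. Writing the product as $\big([{}_p\A_1]*\cdots*[{}_p\A_{r-1}]\big)*[{}_p\A_r]=\sum_j G_j(v,v^{-p})\,[{}_p\z_j]*[{}_p\A_r]$, the hypotheses $\co(\z_j)=\co(\A_{r-1})=\ro(\A_r)$ and $s_r(\z_j)=s_r(\A_{r-1})=s_l(\A_r)$ put each pair $[{}_p\z_j]*[{}_p\A_r]$ under the $r=2$ case; since $\mfk R$ is a ring, collecting terms keeps the coefficients in $\mfk R$ evaluated at $v'=v^{-p}$, and the strengthened bookkeeping propagates because the formulas of Corollary \ref{cor4} produce only matrices whose $\co$ and $s_r$ coincide with those of the right factor.

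For $r=2$ I would further reduce to the case where $\A_1$ is \emph{elementary}, i.e.\ $\A_1-RE^{\theta}_{h,h+1}$ or $\A_1-RE^{\theta}_{h+1,h}$ is diagonal for some $R\in\mbb N$ and $h\in[1,n]$. By Theorem \ref{thm1} applied to ${}_p\A_1$ we have $[{}_p\A_1]=\prod[\m(i,j)]+(\text{lower terms})$ with each $\m(i,j)$ elementary; since in the product only the diagonal fillers depend on $p$ (the off-diagonal data $R_{ij}$, being sums over rows $k\le i<j$, and the column-$(n+1)$ entries are untouched by ${}_p(\cdot)$), each $\m(i,j)$ has the form ${}_p\m'(i,j)$ for a fixed elementary $\m'(i,j)$. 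Multiplying on the right by $[{}_p\A_2]$ and inducting downward along the partial order $\preceq$ of Section \ref{sec-partial order} then reduces the claim to a product of two elementary $p$-shifted elements. In that core case one reads $[{}_p\A_1]*[{}_p\A_2]$ off from (\ref{at}), (\ref{aht}), (\ref{ann}): every term is $[{}_p\z]$ for a fixed $\z\in\wt\Xi_{\mbf D}$, and each coefficient is a product of $v$-powers and $v$-binomial coefficients whose only $p$-dependence enters through the diagonal entries $a_{ii}$, $i\neq n+1$, each shifted by $2p$. Since $(a_{ii}+2p)_v=(v^{2a_{ii}}v^{4p}-1)/(v^2-1)$ and the remaining $v$-powers grow linearly in $p$, each coefficient lies in $\mbb Q(v)[v^{2p},v^{-2p}]$, i.e.\ in $\mfk R$ evaluated at $v'=v^{-p}$, and with $p$-independent $\z$'s and $G_j$'s; choosing $p_0$ so large that ${}_p\A_1$, ${}_p\A_2$ and all the ${}_p\z$ lie in $\Xi_{\mbf D}(D)$ for a common $D$ (which may grow with $p$) finishes the argument.

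The main obstacle is the bookkeeping in the reduction to elementary factors: propagating the signs $s_l,s_r$ and parities $\p$ through the $\preceq$-induction, and checking that the lower-order corrections supplied by Theorem \ref{thm1} also stabilize (equivalently, that the change of basis between the monomial and standard bases commutes with the shift ${}_p(\cdot)$, which itself is bootstrapped from the elementary case). Conceptually, however, the single point that genuinely had to be gotten right — and that distinguishes type $\mbf D$ from \cite{BLM90} and \cite{BKLW13} — is that shifting by $2pI'$ rather than $2pI$ leaves the middle-entry data $a_{n+1,n+1}$ and $\ro(\A)_{n+1}$, and hence the factors $L_i$ of (\ref{ann}), completely untouched, so that the type-$\mbf D$ multiplication formulas retain the stabilization property.
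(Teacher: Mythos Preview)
Your argument is correct and follows the same route as the paper (and \cite{BLM90}): reduce to elementary left factors via Theorem~\ref{thm1}, then read off stabilization from the explicit multiplication formulas, noting that the only $p$-dependence enters through the shifted diagonal entries. One cosmetic point: the paper works with Corollary~\ref{cor5} rather than Corollary~\ref{cor4} (since the statement concerns $[\A]$, not $e_{\A}$) and, complementary to your emphasis that $L_i$ is untouched, makes explicit the adjusted twists $\gamma(t)=\beta(t)-a_{nn}\sum_{l\le n}t_l$ and $\gamma''(t)=\beta''(t)+a_{nn}\sum_{n<l}t_l$ needed when $h=n$, which is precisely the content of your remark that ``the remaining $v$-powers grow linearly in $p$''.
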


\begin{proof}
The proof is essentially the same as the one for Proposition 4.2 in \cite{BLM90}
by using Corollary \ref{cor5} and Theorem \ref{thm1}.
The main difference is that  when  $h=n$,  the twists  $\beta(t)$ and  $ \beta''(t)$ in (\ref{eq22}) and (\ref{eq27}), respectively, change when $\A$ is replaced by
${}_p \A$.
To  remedy this difference,  we adjust these two twists as follows.
 $$
 \gamma(t)=\beta(t)- a_{nn}  \sum_{l\leq n} t_l\quad
{\rm and}\quad
\gamma''(t)=\beta''(t)+ a_{nn} \sum_{n<l}t_l,\quad \mbox{if}\  h=n.
$$
Then the new twists $\gamma(t)$ and $\gamma''(t)$ remain the same when $\A$ is replaced by ${}_p\A$.
For example, when $r=2$ and $\A_1$ is chosen such that
 $\A_1-RE_{n,n+1}^{\theta}$ is a diagonal with $R\in \mbb N$,
the structure constant $G_{t}(v,v')$ is defined by
$$G_{t}(v,v')=v^{\gamma(t)}\prod_{\overset{1\leq u\leq N}{u\neq n}}\overline{\begin{pmatrix}a_{nu}+t_u\\
  t_u\end{pmatrix}}_{\!\!\!{}v}\prod_{1\leq i\leq t_n}\frac{v^{-2(a_{nn}+t_n-i+1)}v'^2-1}{v^{-2i}-1}
  v^{\sum_{l\leq n}a_{nn}t_l}v'^{-\sum_{l\leq n}t_l}.$$
Similaryly,
if  $r=2$ and $\A_1$ is chosen such that
$\A_1-RE_{n+1,n}^{\theta}$ is  diagonal with $R\in \mbb N$,
the structure constant $G_{t}(v,v')$ is defined by
\begin{equation*}
\begin{split}
G_{t}(v,v')=&v^{\gamma''(t)}v^{-\sum_{n<l}a_{nn}t_l}
\prod_{1\leq u\leq N,u\neq n+1}\overline{\begin{pmatrix}a_{n+1,u}+t_u\\
  t_u\end{pmatrix}}_{\!\!\!{}v}\\
  &\prod_{1\leq t\leq t_{n+1}}\frac{v^{-2(a_{n+1,n+1}+t_{n+1}-i+1)}v'^2-1}{v^{-2i}-1}
  \cdot v'^{\sum_{n<l}t_l}.
\end{split}
\end{equation*}
For the case when $\A_1$ is chosen such that
$\A_1- R E^{\theta}_{h, h+1}$ or $\A_1-RE^{\theta}_{h+1, h}$  is diagonal for some  $h<n$,
then  the structural constant $G_t(v,v')$ is defined  similarly as that in the proof of Proposition 4.2 in \cite{BLM90}, i.e.,
\[
G_{t}(v,v')=v^{\beta(t)}\prod_{\overset{1\leq u\leq N}{u\neq h}}\overline{\begin{pmatrix}a_{hu}+t_u\\
  t_u\end{pmatrix}}_{\!\!\!{}v}\prod_{1\leq i\leq t_h}\frac{v^{-2(a_{hh}+t_h-i+1)}v'^2-1}{v^{-2i}-1},
\]
for $\A_1$ such that $\A_1- R E^{\theta}_{h, h+1}$ is diagonal for some $h<n$,
and
\[
G_{t}(v,v')=v^{\beta'(t)}
\prod_{1\leq u\leq N,u\neq h+1}\overline{\begin{pmatrix}a_{h+1,u}+t_u\\
  t_u\end{pmatrix}}_{\!\!\!{}v}\\
  \prod_{1\leq t\leq t_{h+1}}\frac{v^{-2(a_{h+1,h+1}+t_{h+1}-i+1)}v'^2-1}{v^{-2i}-1},
\]
for $\A_1$ such that $\A_1- R E^{\theta}_{h+1, h}$ is diagonal for some $h<n$.
Bearing in mind  the above modifications, the rest of the proof for Proposition 4.2 in \cite{BLM90} can be repeated here.
\end{proof}

By specialization $v'$ at $v'=1$, we have

\begin{cor}
\label{cor6}
 There is a unique associative $\mcal A$-algebra structure on $\mcal K$, without unit,  where
 the product is given by
 $$[\A_1] \cdot [\A_2]\cdot \dots \cdot  [\A_r] =\sum_{j=1}^m G_j(v,1)[\z_j]$$
 if $\A_1,\cdots, \A_r$ are as in Proposition \ref{prop5}.
 \end{cor}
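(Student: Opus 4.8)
The plan is to promote the $r=2$ case of Proposition~\ref{prop5} to a well-defined bilinear operation on $\mcal K$ and then read off associativity from the associativity of the convolution products on the $\mcal S_D$. For $\A,\B\in\wt{\Xi}_{\mbf D}$ with $\co(\B)\neq\ro(\A)$ or $s_r(\B)\neq s_l(\A)$, I declare $[\B]\cdot[\A]=0$; in the complementary case, Proposition~\ref{prop5} (with $r=2$) furnishes $\z_1,\dots,\z_m\in\wt{\Xi}_{\mbf D}$, Laurent polynomials $G_j(v,v')\in\mfk R$, and $p_0$ with $[{}_p\B]*[{}_p\A]=\sum_j G_j(v,v^{-p})[{}_p\z_j]$ in the relevant $\mcal S_D$ for all even $p\ge p_0$. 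For $p$ large the $[{}_p\z_j]$ are distinct elements of the $\mcal A$-basis $\{[\A']:\A'\in\Xi_{\mbf D}(D)\}$, so each $G_j(v,v^{-p})$ equals a genuine structure constant of $\mcal S_D$; since the values $v^{-p}$ with $p\in2\mbb N$ are pairwise distinct in $\mbb Q(v)$ and a nonzero element of $\mfk R$ has only finitely many zeros among them, the collection $\{\z_j\}$ (those $\z$ with a nonzero associated constant) and the Laurent polynomials $G_j(v,v')$ are uniquely determined by the pair $(\B,\A)$. I then \emph{define} $[\B]\cdot[\A]=\sum_j G_j(v,1)[\z_j]$ and extend $\mcal A$-bilinearly; this is visibly the unique bilinear operation on $\mcal K$ with the prescribed values on basis elements. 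That it lands in $\mcal K$, i.e. $G_j(v,1)\in\mcal A$, is checked from the explicit formulas for the $G_j(v,v')$ recorded in the proof of Proposition~\ref{prop5}: at $v'=1$ each is a product of ordinary Gaussian binomials (after using the negation identity for Gaussian binomials to absorb the possibly negative diagonal entries of matrices in $\wt{\Xi}_{\mbf D}$) times a monomial in $v$.

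\textbf{Associativity and the $r$-fold formula.} By Corollary~\ref{cor5} together with the sign rules of Theorem~\ref{thm1}, every $\z$ occurring in $[\A_1]\cdot[\A_2]$ satisfies $\ro(\z)=\ro(\A_1)$, $\co(\z)=\co(\A_2)$, $s_l(\z)=s_l(\A_1)$, and $s_r(\z)=s_r(\A_2)$; hence both $([\A_1]\cdot[\A_2])\cdot[\A_3]$ and $[\A_1]\cdot([\A_2]\cdot[\A_3])$ vanish unless $\co(\A_i)=\ro(\A_{i+1})$ and $s_r(\A_i)=s_l(\A_{i+1})$ for $i=1,2$, so assume these compatibility conditions. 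Write $g^{\z}_{\B,\A}(v,v')\in\mfk R$ for the structure constants above, so that $[{}_p\B]*[{}_p\A]=\sum_{\z}g^{\z}_{\B,\A}(v,v^{-p})[{}_p\z]$ for $p$ large. Choosing $p_0$ large enough to work simultaneously for all the finitely many pairs involved, iterating the $r=2$ identity in $\mcal S_D$ gives, for even $p\ge p_0$,
\[
\big([{}_p\A_1]*[{}_p\A_2]\big)*[{}_p\A_3]=\sum_{\z}\Big(\sum_{\mfk y}g^{\mfk y}_{\A_1,\A_2}(v,v^{-p})\,g^{\z}_{\mfk y,\A_3}(v,v^{-p})\Big)[{}_p\z],
\]
and likewise the other bracketing equals $\sum_{\z}\big(\sum_{\mfk w}g^{\mfk w}_{\A_2,\A_3}\,g^{\z}_{\A_1,\mfk w}\big)[{}_p\z]$. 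Associativity of $*$ on $\mcal S_D$ and linear independence of the $[{}_p\z]$ force, for each $\z$, the two inner sums to agree at $v'=v^{-p}$ for all even $p\ge p_0$, hence to agree as elements of $\mfk R$; specializing $v'=1$ yields exactly $([\A_1]\cdot[\A_2])\cdot[\A_3]=[\A_1]\cdot([\A_2]\cdot[\A_3])$ on basis elements. The identical lift-and-specialize argument, now invoking Proposition~\ref{prop5} for general $r$ and comparing with the iterated binary product in $\mcal S_D$, shows the iterated product of $[\A_1],\dots,[\A_r]$ equals $\sum_j G_j(v,1)[\z_j]$ with $\z_j,G_j$ as in that proposition, which is the displayed formula.

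\textbf{Main obstacle.} Given Proposition~\ref{prop5}, the argument is structurally routine, and the two genuinely delicate points are the following. First, the integrality $G_j(v,1)\in\mcal A$ does \emph{not} follow formally from integrality of the structure constants of the $\mcal S_D$, because $v'=1$ is not among the specialization values $v'=v^{-p}$; it must instead be extracted from the explicit shape of the $G_j(v,v')$ produced in the proof of Proposition~\ref{prop5}. Second, one must track the $\sgn$/support data $s_l,s_r$ through iterated products, which is more subtle here than in type $\mbf{ABC}$ precisely because the signs interact under $*$ (cf. Remark~\ref{remEx}); the propagation of compatibility used above relies on the explicit rule $\z=(A+\cdots,\sgn(s_l(\text{left}),s_r(\text{right})))$ from Proposition~\ref{prop4} and Corollary~\ref{cor5}. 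Both points are a matter of careful inspection rather than of new ideas.
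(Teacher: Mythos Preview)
Your proposal is correct and follows the same approach as the paper, which simply records ``By specialization $v'$ at $v'=1$'' and leaves the details implicit (following~\cite{BLM90}); you have spelled out precisely those details, including the uniqueness of the $G_j$'s via infinitely many evaluations $v'=v^{-p}$, the integrality $G_j(v,1)\in\mcal A$ from the explicit formulas in the proof of Proposition~\ref{prop5}, and the lift-and-specialize argument for associativity. The only minor remark is that your tracking of the $s_l,s_r$ data through iterated products is exactly what the paper encodes in the hypothesis $s_r(\A_i)=s_l(\A_{i+1})$ of Proposition~\ref{prop5} together with the sign assignments in Corollary~\ref{cor5}, so nothing beyond what you have written is needed.
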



By corollary ~\ref{cor6} and comparing the $G_t(v,1)$'s with (\ref{eq22}), (\ref{eq24}) and (\ref{eq27}), the structure of $\mcal K$ can be determined by the following multiplication formulas.
Recall the notations from (\ref{Anot}).

Let $\A$ and  $\B \in \wt{\Xi}_{\mbf D} $ be chosen such that $\B -rE_{h,h+1}^{\theta}$
is  diagonal for some $1\leq h\leq n, r\in \mbb N$
satisfying ${\rm co}(\B)={\rm ro}(\A)$ and $s_r(\B)=s_l(\A)$. Then we have
\begin{equation}\label{eq58}
[ \B] \cdot  [\A]
 =\sum_{t} v^{\beta(t)}\prod_{u=1}^N
\overline{\begin{pmatrix}a_{hu}+t_u\\ t_u \end{pmatrix}}_{\!\!\!{}v}\
[\A_{t}],
\end{equation}
where the sum is taken over all $t=(t_u)\in \mbb N^N$ such that
$\sum_{u=1}^Nt_u=r$,
$\beta(t)$ is defined in (\ref{eq22}),   and
$\A_{t} \in \widetilde{\Xi}_{\mbf D}$ is in (\ref{at}).

Similarly, if $\A, \C \in \widetilde{\Xi}_{\mbf D}$  are chosen such that $\C-rE_{h+1,h}^{\theta}$
is diagonal  for some $1\leq h< n, r\in \mbb N$ satisfying ${\rm co}(\C)={\rm ro}(\A)$ and $s_r(\C)=s_l(\A)$, then we have
\begin{align}\label{eq57}
[\C] \cdot [\A]
 =\sum_{t}v^{\beta'(t)}\prod_{u=1}^N
\overline{\begin{pmatrix} a_{h+1,u}+t_u\\ t_u\end{pmatrix}}_{\!\!\!{}v}
\ [\A(h, t) ],
\end{align}
where  the sum is  taken over all $t=(t_u)\in \mbb N^N$ such that $\sum_{u=1}^Nt_u=r$,
$\beta'(t)$ is defined in (\ref{eq24}),
and
$\A(h, t) \in \widetilde{\Xi}_{\mbf D}$ is in (\ref{aht}).

If $\A, \C \in \widetilde{\Xi}_{\mbf D}$ are chosen  such that  $\C-rE_{n+1,n}^{\theta}$ is  diagonal  for some $r\in \mbb N$
satisfying ${\rm co}(\C)={\rm ro}(\A)$ and $s_r(\C)=s_l(\A)$, then we have
\begin{equation} \label{eq56}
[\C] \cdot [\A]
= \sum_{t: \sum_{u=1}^N t_u=r}v^{\beta''(t)} \  \overline{\mathcal G}\
[\A(n,t)],
\end{equation}
where
the sum is taken over all $t=(t_u)\in \mbb N^N$ such that $\sum_{u=1}^Nt_u=r$,
$\mathcal G$ and $\beta''(t)$ are in  (\ref{eq27}) and $\A(n, t)\in \wt{\Xi}_{\mbf D}$.

Given $\A, \A'\in \widetilde{\Xi}_{\mbf D}$, we shall denote $\A' \sqsubseteq \A $ if
$\A' \preceq \A$, ${\rm co}(\A')={\rm co}(\A)$, ${\rm ro}(\A')={\rm ro}(\A)$, $s_l(\A')=s_l(\A)$ and $s_r(\A')=s_r(\A)$.

By using (\ref{eq58}), (\ref{eq57}) and (\ref{eq56}) and arguing in a similar way as the proof of Theorem ~\ref{thm1}, we have

\begin{prop} \label{K-monomial}
For any $\A\in \wt{\Xi}_{\mbf D}$,
there exist signed matrices $\m(i,j)$  such that $\m(i,j) - R_{ij} E^{\theta}_{i, i+1}$ is diagonal with $R_{ij} =\sum_{k=1}^i a_{kj}$ and
\begin{equation} \label{eq59}
\mathfrak m_{\A} \equiv  \prod_{1\leq i<j \leq N}[\m(i, j) ]=[\A]+\sum_{\A'\sqsubseteq \A, \A'\neq \A}
\gamma_{\A', \A} [\A'],
\end{equation}
where $\gamma_{\A', \A}\in \mcal A$ and the product is taken in the order (\ref{order}).
\end{prop}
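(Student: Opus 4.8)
The plan is to reproduce the inductive argument proving Theorem~\ref{thm1}, with the convolution product of $\mcal S$ replaced by the product on $\mcal K$ from Corollary~\ref{cor6}, which on the relevant elements is given by the explicit formulas (\ref{eq58}), (\ref{eq57}) and (\ref{eq56}) --- the $\mcal K$-counterparts of Corollary~\ref{cor5}. The first step is to establish the limit analogue of Corollary~\ref{cor7}: when $\A=(a_{ij})^{\alpha}\in\wt{\Xi}_{\mbf D}$ is ``triangular'' in the corner dictated by the hypotheses of Corollary~\ref{cor7}(a),(b), and $\B$ (resp.\ $\C$) is a signed matrix with $\B-rE^{\theta}_{h,h+1}$ (resp.\ $\C-rE^{\theta}_{h+1,h}$) diagonal, $\co(\B)=\ro(\A)$ and $s_r(\B)=s_l(\A)$, then the product (\ref{eq58}) (resp.\ (\ref{eq57}) or (\ref{eq56})) equals $[\A_{t(k)}]$ (resp.\ $[\A(h,t(k))]$) plus a combination of terms $[\A_t]$ with $\A_t\sqsubseteq\A_{t(k)}$, the leading term occurring with coefficient $1$, where $t(k)_u=r\delta_{u,k}$. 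This is proved just as Corollary~\ref{cor7}: at the leading index the $v$-binomial factors and, in the case of (\ref{eq56}), the coefficients $L_i$ all collapse to $1$, while the inequalities (\ref{partial-order}) hold because redistributing the mass $t$ between two adjacent rows can only decrease the partial sums $\sum_{r\leq i,\,s\geq j}a_{rs}$.

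Granting this leading-term lemma, I would fill in the entries of $\A$ following the order (\ref{order}) on pairs $(i,j)$, exactly as in the proof of Theorem~\ref{thm1}: start from the signed diagonal matrix with diagonal $\co(\A)$ and sign $\sgn(s_r(\A),s_r(\A))$, and then successively left-multiply by signed matrices $\m(i,j)$ with $\m(i,j)-R_{ij}E^{\theta}_{i,i+1}$ diagonal, the signs $s_l(\m(i,j))$ and $s_r(\m(i,j))$ being pinned down by the same inductive recipe as in (\ref{eqsupp1}) (with the convention $s_l=2$ whenever there is a genuine choice). By the $\theta$-symmetry of the matrices $E^{\theta}$, these elementary factors run through all three types covered by (\ref{eq58})--(\ref{eq56}), so all three multiplication formulas are used, just as Corollary~\ref{cor7} uses all of Corollary~\ref{cor5}. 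By the leading-term lemma each multiplication contributes the expected partially-filled signed matrix together with strictly $\sqsubseteq$-smaller terms; and since $\ro$, $\co$ and the sign carried by every term produced by (\ref{eq58})--(\ref{eq56}) are preserved under the product, the quantities $s_l$ and $s_r$ are preserved as well, so all error terms lie below $\A$ in $\sqsubseteq$. After all $N(N-1)/2$ steps one arrives at $[\A]+\sum_{\A'\sqsubseteq\A,\,\A'\neq\A}\gamma_{\A',\A}[\A']$; and $\gamma_{\A',\A}\in\mcal A$ because, once $v'$ is specialised to $1$ as in Corollary~\ref{cor6}, the structure constants in (\ref{eq58})--(\ref{eq56}) are visibly elements of $\mcal A$ (the $v$-binomials being integral, and the $L_i$ too since $a_{n+1,n+1}$ is even).

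The step I expect to be the main obstacle is the $h=n$ case of the leading-term lemma, in particular for the formula (\ref{eq56}). As Remark~\ref{remEx} illustrates, once the computation touches the middle column $n+1$, the set of surviving terms, the ``doubled'' binomial factors occurring in $\mathcal G$, the extra factors $L_i$ (whose very form depends on whether $\ro(\A)_{n+1}=0$), and the signs attached to the resulting signed matrices all behave differently from the type-$\mbf{ABC}$ situation in ~\cite{BKLW13}, so one must check by hand that a single $\sqsubseteq$-maximal term persists and occurs with coefficient $1$. Everything else transcribes directly from the type-$\mbf{ABC}$ arguments, since (\ref{eq58})--(\ref{eq56}) are obtained from Corollary~\ref{cor5} through the substitution of Proposition~\ref{prop5}, and the combinatorics is unchanged away from the column $n+1$.
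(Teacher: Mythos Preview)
Your proposal is correct and takes the same approach as the paper, which simply states that the proposition follows from formulas (\ref{eq58}), (\ref{eq57}) and (\ref{eq56}) by arguing as in the proof of Theorem~\ref{thm1}. Your additional observation that every term appearing in those formulas shares the same $\ro$, $\co$ and sign --- so that the error terms are $\sqsubseteq$-smaller, not merely $\preceq$-smaller --- is exactly the point implicit in the paper's one-line justification.
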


As  a consequence of the above proposition, we have

\begin{prop}
The algebra $\mcal K$ is generated by the elements $[\mathfrak e]$ such that
$\mathfrak e - rE^{\theta}_{i,i+1}$ is diagonal for some $r\in \mbb N$ and $i$ such that $1\leq i<  N$.
\end{prop}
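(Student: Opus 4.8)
The plan is to read this off Proposition~\ref{K-monomial} by a standard triangularity argument with respect to the partial order $\preceq$. Write $\mcal K'$ for the (non-unital) $\mcal A$-subalgebra of $\mcal K$ generated by all $[\mathfrak e]$ with $\mathfrak e - rE^{\theta}_{i,i+1}$ diagonal for some $r\in\mbb N$ and $1\le i< N$. Taking $r=0$, every diagonal signed matrix $\mathfrak d\in\wt{\Xi}_{\mbf D}$ has $[\mathfrak d]\in\mcal K'$; and for $1\le i<j\le N$ the matrices $\m(i,j)$ of Proposition~\ref{K-monomial}, which satisfy that $\m(i,j)-R_{ij}E^{\theta}_{i,i+1}$ is diagonal, are among these generators, so each monomial $\mathfrak m_{\A}=\prod_{1\le i<j\le N}[\m(i,j)]$ lies in $\mcal K'$. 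Since $\{[\A]\mid\A\in\wt{\Xi}_{\mbf D}\}$ spans $\mcal K$ over $\mcal A$, it then suffices to prove that $[\A]\in\mcal K'$ for every $\A\in\wt{\Xi}_{\mbf D}$.

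To run the induction I would fix the data $(\ro(\A),\co(\A),s_l(\A),s_r(\A))$ and work inside the corresponding finite set $B$ of signed matrices sharing that data, which is partially ordered by $\preceq$. The point, built into the definition of $\sqsubseteq$, is that every $\A'$ appearing in the lower terms of (\ref{eq59}) lies in this same finite set $B$ and is strictly below $\A$ for $\preceq$; hence $\preceq$ restricted to $B$ is a finite poset suitable for a downward induction. For $\A$ minimal in $B$ the sum in (\ref{eq59}) is empty, so $[\A]=\mathfrak m_{\A}\in\mcal K'$. For general $\A$, each $[\A']$ with $\A'\sqsubseteq\A$, $\A'\ne\A$ lies in $\mcal K'$ by the inductive hypothesis, so
\[
[\A]=\mathfrak m_{\A}-\sum_{\A'\sqsubseteq\A,\ \A'\ne\A}\gamma_{\A',\A}\,[\A']\in\mcal K',
\]
which closes the induction and shows $\mcal K=\mcal K'$.

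I do not expect a genuine obstacle here: all of the substantive work is already packaged into Proposition~\ref{K-monomial} (which in turn rests on the stabilized multiplication formulas (\ref{eq58})--(\ref{eq56}) and the order-(\ref{order}) factorization argument modeled on Theorem~\ref{thm1}). The only two points deserving a moment's attention are (i) that the list of generators includes the purely diagonal $[\mathfrak d]$, so that the \emph{entire} monomial $\mathfrak m_{\A}$ --- whose leftmost factor is a diagonal matrix --- is manifestly in $\mcal K'$, not merely its off-diagonal factors; and (ii) that $\sqsubseteq$ confines all lower terms to a fixed finite block, which is exactly what guarantees the induction terminates. Both are immediate from the definitions, so the proof is essentially a one-line consequence of Proposition~\ref{K-monomial}.
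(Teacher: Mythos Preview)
Your argument is correct and matches the paper's approach: the paper simply records this proposition as ``a consequence of the above proposition'' (Proposition~\ref{K-monomial}), and what you have written is precisely the standard triangularity/induction argument one uses to unpack that one-line deduction. One small remark: your point (i) is a non-issue, since every factor $[\m(i,j)]$ in $\mathfrak m_{\A}$ already satisfies $\m(i,j)-R_{ij}E^{\theta}_{i,i+1}$ diagonal with $R_{ij}\in\mbb N$ (the off-diagonal entries of matrices in $\wt{\Xi}_{\mbf D}$ are nonnegative), so there is no distinguished ``diagonal leftmost factor'' to worry about --- each factor is a generator on the nose.
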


We set
\begin{align}
[k]_v = \frac{v^{k} -v^{-k}}{v-v^{-1}}.
\end{align}
By applying  (\ref{eq58}), (\ref{eq57}) and (\ref{eq56}), we have
 \begin{align} \label{divided}
[\mathfrak e]  \cdot [\mathfrak e'] = [r+1]_v   [\mathfrak e'' ],
\end{align}
if
$\mathfrak e- E^{\theta}_{i, i+1}$, $\mathfrak e'-rE^{\theta}_{i, i+1}$, and $\mathfrak e'' -(r+1) E^{\theta}_{i,i+1}$ are diagonal for some $i\in [1, N-1]$,
$s_r(\mathfrak e)= s_l(\mathfrak e')$,
$s_l(\mathfrak e)= s_l(\mathfrak e'')$ and
$s_r(\mathfrak e') =s_r(\mathfrak e'')$.
From this observation, we have the following corollary.

 \begin{cor}
 The algebra $\mbb Q(v)\otimes_{\mcal A} \mcal K$ is generated by the elements
 $[\mathfrak e]$  such that either $\mathfrak e$ or  $\mathfrak e - E^{\theta}_{i,i+1}$ is diagonal
 for some   $i$ such that $1\leq i<N$.
 \end{cor}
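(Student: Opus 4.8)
The plan is to bootstrap from the Proposition just stated above, which asserts that $\mcal K$ is generated as an $\mcal A$-algebra --- hence $\mbb Q(v)\otimes_{\mcal A}\mcal K$ as a $\mbb Q(v)$-algebra --- by the elements $[\mathfrak e]$ with $\mathfrak e-rE^{\theta}_{i,i+1}$ diagonal for some $r\in\mbb N$ and $1\le i<N$. Write $\mcal G$ for the subset of these with $r\in\{0,1\}$, that is, those $[\mathfrak e]$ with $\mathfrak e$ or $\mathfrak e-E^{\theta}_{i,i+1}$ diagonal, and let $\mcal K'$ be the (non-unital) $\mbb Q(v)$-subalgebra of $\mbb Q(v)\otimes_{\mcal A}\mcal K$ generated by $\mcal G$. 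It then suffices to prove that $[\mathfrak e]\in\mcal K'$ whenever $\mathfrak e-rE^{\theta}_{i,i+1}$ is diagonal; I would carry this out by induction on $r$.

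The cases $r=0$ and $r=1$ are immediate, as then $[\mathfrak e]\in\mcal G\subseteq\mcal K'$. For the inductive step fix $r\ge 2$ and $\mathfrak e''$ with $\mathfrak e''-rE^{\theta}_{i,i+1}$ diagonal. The key step is to exhibit $\mathfrak e,\mathfrak e'\in\wt{\Xi}_{\mbf D}$ with $\mathfrak e-E^{\theta}_{i,i+1}$ diagonal and $\mathfrak e'-(r-1)E^{\theta}_{i,i+1}$ diagonal, satisfying the hypotheses of the multiplication identity (\ref{divided}) with $r-1$ in place of $r$: the sign conditions $s_r(\mathfrak e)=s_l(\mathfrak e')$, $s_l(\mathfrak e)=s_l(\mathfrak e'')$, $s_r(\mathfrak e')=s_r(\mathfrak e'')$, together with $\co(\mathfrak e)=\ro(\mathfrak e')$, which is needed for the product to be nonzero in $\mcal K$. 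A concrete choice: let $\mathfrak e'$ have off-diagonal part $(r-1)E^{\theta}_{i,i+1}$ and diagonal part that of $\mathfrak e''$ with the $(i+1,i+1)$- and $(N-i,N-i)$-entries each raised by one, let $\mathfrak e$ have off-diagonal part $E^{\theta}_{i,i+1}$ and diagonal part determined by $\co(\mathfrak e)=\ro(\mathfrak e')$, and fix the signs of $\mathfrak e,\mathfrak e'$ by means of the recipe (\ref{eq68}) so the three sign identities hold. Granting such a choice, (\ref{divided}) yields $[\mathfrak e]\cdot[\mathfrak e']=[r]_v\,[\mathfrak e'']$, so
\[
[\mathfrak e'']=[r]_v^{-1}\,[\mathfrak e]\cdot[\mathfrak e'],
\]
which makes sense since $[r]_v$ is invertible in $\mbb Q(v)$ for $r\ge 1$. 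Now $[\mathfrak e]\in\mcal G$, and $[\mathfrak e']\in\mcal K'$ by the inductive hypothesis --- its parameter $r-1$ is smaller than $r$, and if $r-1\le 1$ then $[\mathfrak e']\in\mcal G$ outright. Hence $[\mathfrak e'']\in\mcal K'$, closing the induction.

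The step I expect to be the main obstacle is the existence assertion inside the key step: that for every $\mathfrak e''\in\wt{\Xi}_{\mbf D}$ of the prescribed shape one can genuinely find $\mathfrak e,\mathfrak e'\in\wt{\Xi}_{\mbf D}$ satisfying all the row-sum, column-sum and sign constraints of (\ref{divided}) simultaneously, the real content being the signs and the requirement of remaining inside $\wt{\Xi}_{\mbf D}$. Checking this amounts to unwinding the definitions (\ref{eq68}) of $s_l,s_r$ and (\ref{Anot}), and verifying that the $\ro(\cdot)_{n+1}$, $\co(\cdot)_{n+1}$ and $\ur$-parity data are compatible with the desired sign assignments; this is finite and mechanical, but the case $i=n$ is delicate, since there $E^{\theta}_{n,n+1}$ involves the middle index $n+1$, the relevant product is governed by (\ref{eq56}) rather than (\ref{eq58}), and the sign bookkeeping in (\ref{eq68}) --- through $\p(\A)$ and the $\delta_{0,\ro(\A)_{n+1}}$ terms --- is at its most intricate.
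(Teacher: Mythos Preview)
Your proposal is correct and follows essentially the same route as the paper: invoke the preceding Proposition to reduce to elements $[\mathfrak e]$ with $\mathfrak e-rE^{\theta}_{i,i+1}$ diagonal, then use the identity (\ref{divided}) to peel off one $E^{\theta}_{i,i+1}$ at a time by induction on $r$, dividing by $[r]_v\in\mbb Q(v)^{\times}$. One small correction: the case governed by (\ref{eq56}) rather than (\ref{eq58}) is $i=n+1$ (since $E^{\theta}_{n+1,n+2}=E^{\theta}_{n+1,n}$), not $i=n$; the case $i=n$ is still handled by (\ref{eq58}) with $h=n$.
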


\subsection{Bases of $\mcal K$}

We define a bar involution $^- \ : \mcal K\to \mcal K$ by
\[
\bar v = v^{-1}, \quad
\overline{[\mathfrak e]} = [\mathfrak e],
\]
for any $\mathfrak e$ such that  $\mathfrak e - RE^{\theta}_{i, i+1}$  is diagonal for some $R\in \mbb N$ and $i\in [1, N-1]$.
By using (\ref{eq59}), we have
\[
\overline{[\A]} =[\A] + \sum_{\A': \A' \sqsubseteq \A, \A'\neq \A} c_{\A', \A} [\A'], \quad \mbox{for some} \ c_{\A', \A} \in \mcal A.
\]

By a standard  argument similar to  the proof of Proposition 4.7 in \cite{BLM90}, we have the following proposition.

\begin{prop}\label{prop6}
For any $\A\in \widetilde{\Xi}_{\mbf D}$, there exists a unique element $\{\A\}$ in $\mcal K$ such that
$$\overline{\{\A\}}=\{\A\},\quad \{\A\}=[\A]+\sum_{\A'\sqsubset \A, \A'\neq \A}\pi_{\A', \A}[\A'],
\quad \pi_{\A', \A} \in v^{-1} \mbb Z [v^{-1}].$$
\end{prop}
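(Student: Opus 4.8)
The statement asserts the existence and uniqueness of a bar-invariant ``canonical'' element $\{\A\}$ for each $\A\in\widetilde{\Xi}_{\mbf D}$, of the shape $[\A]+\sum_{\A'\sqsubset\A}\pi_{\A',\A}[\A']$ with $\pi_{\A',\A}\in v^{-1}\mbb Z[v^{-1}]$. This is the standard Kazhdan--Lusztig-type construction, and the plan is to run the usual abstract existence-and-uniqueness argument against the partial order $\sqsubseteq$. The key inputs are already in hand: (i) from Proposition \ref{K-monomial}, equation (\ref{eq59}), together with the displayed formula for $\overline{[\A]}$ preceding the proposition, we know that the bar involution is well-defined on $\mcal K$, is $\mbb Q$-linear, sends $v\mapsto v^{-1}$, is a ring homomorphism (this last from bar-invariance of the divided-power generators and the fact that these generate $\mcal K$), and acts on the standard basis by $\overline{[\A]}=[\A]+\sum_{\A'\sqsubset\A,\ \A'\neq\A}c_{\A',\A}[\A']$ for some $c_{\A',\A}\in\mcal A$; (ii) the relation $\sqsubseteq$ is a partial order on $\widetilde{\Xi}_{\mbf D}$, and for a fixed $\A$ the set $\{\A'\mid \A'\sqsubseteq\A\}$ is finite (it refines $\preceq$ and in addition fixes ${\rm ro}$, ${\rm co}$, $s_l$, $s_r$, so it sits inside the orbits of a fixed $\mcal S_D$), which makes the recursion terminate. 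That is all the structure the abstract lemma needs.

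First I would verify the involution property $\bar{\bar{\phantom{x}}}=\mathrm{id}$ and unitriangularity of the matrix $(c_{\A',\A})$ with respect to $\sqsubseteq$: since $c_{\A,\A}=1$ and $c_{\A',\A}=0$ unless $\A'\sqsubseteq\A$, and $\overline{\overline{[\A]}}=[\A]$, one gets the identity $\sum_{\A'} c_{\A'',\A'}\,\overline{c_{\A',\A}}=\delta_{\A'',\A}$ summed over $\A''\sqsubseteq\A'\sqsubseteq\A$. Then I would invoke (or reprove in two lines, as in \cite[4.7]{BLM90}) the general lemma: given a poset-unitriangular matrix $(c_{\A',\A})$ over $\mcal A$ satisfying the involution identity, there is a unique unitriangular matrix $(\pi_{\A',\A})$ with $\pi_{\A,\A}=1$, $\pi_{\A',\A}\in v^{-1}\mbb Z[v^{-1}]$ for $\A'\neq\A$, and such that $\{\A\}:=\sum_{\A'\sqsubseteq\A}\pi_{\A',\A}[\A']$ is bar-invariant. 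The proof of this lemma is by induction on the poset: assuming $\{\A'\}$ has been constructed for all $\A'\sqsubset\A$, write $\overline{[\A]}-[\A]=\sum_{\A'\sqsubset\A}c_{\A',\A}[\A']$, rewrite the right side in the (already bar-invariant) basis $\{\{\A'\}\mid \A'\sqsubset\A\}$ to get an expression $\sum r_{\A'}\{\A'\}$ with $r_{\A'}\in\mcal A$ satisfying $\overline{r_{\A'}}=-r_{\A'}$ (this sign constraint comes from $\overline{\overline{[\A]}}=[\A]$), and then use that every $r\in\mcal A$ with $\bar r=-r$ is uniquely of the form $g-\bar g$ with $g\in v^{-1}\mbb Z[v^{-1}]$; setting $\{\A\}=[\A]+\sum r_{\A'}^{-}\{\A'\}$ with the appropriate negative-degree halves does the job, and uniqueness follows because any two candidates differ by a bar-invariant element of $\bigoplus_{\A'\sqsubset\A} v^{-1}\mbb Z[v^{-1}][\A']$, which must vanish.

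The only genuine content beyond bookkeeping is checking that the recursion stays inside $\mcal K$ and inside the span indexed by $\{\A'\sqsubseteq\A\}$ — i.e. that applying the bar involution and re-expanding never produces a term $[\A'']$ with $\A''\not\sqsubseteq\A$. This is exactly guaranteed by (\ref{eq59}) and the displayed expansion of $\overline{[\A]}$, both of which are already established with the index set $\{\A'\sqsubseteq\A\}$; since $\sqsubseteq$ is transitive, expanding $\{\A'\}$ (supported on $\{\A''\sqsubseteq\A'\}$) inside an expression supported on $\{\A'\sqsubseteq\A\}$ stays supported on $\{\A''\sqsubseteq\A\}$. So there is no real obstacle here; the proposition follows verbatim from the argument of \cite[Proposition 4.7]{BLM90} (and \cite{BKLW13}), the point of this proof being merely to record that all the hypotheses of that argument — bar-invariance of generators, unitriangularity with respect to a genuine finite partial order, and the splitting $\mcal A = v^{-1}\mbb Z[v^{-1}]\oplus\mbb Z[v]$ adapted to the bar involution — hold in the present type-$\mbf D$ setting. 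I would therefore keep the write-up short, stating the induction and citing \emph{loc. cit.} for the routine details.
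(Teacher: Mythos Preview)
Your proposal is correct and takes essentially the same approach as the paper, which simply invokes the standard argument of \cite[Proposition~4.7]{BLM90} once the unitriangularity of the bar involution with respect to the finite partial order $\sqsubseteq$ has been established. In fact you have supplied considerably more detail than the paper itself, which is content to cite \emph{loc.\ cit.}\ without further comment.
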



By Propositions ~\ref{K-monomial} and ~\ref{prop6}, we have

\begin{cor}
The algebra $\mcal K$ possesses a standard basis $\{[\A] | \A \in \widetilde \Xi_{\mbf D} \}$, a monomial basis $\{\m_{\A} | \A\in \widetilde \Xi_{\mbf D}\}$ and a canonical basis $\{ \{ \A\} | \A\in \widetilde \Xi_{\mbf D}\}$.
\end{cor}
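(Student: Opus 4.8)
The plan is to deduce all three assertions from the triangularity statements already established, exploiting the fact that, by construction in Subsection \ref{Stab}, $\mcal K$ is the free $\mcal A$-module on the formal symbols $[\A]$, $\A\in\wt{\Xi}_{\mbf D}$, made into a nonunital associative $\mcal A$-algebra by Corollary \ref{cor6}. The standard-basis claim is then essentially tautological: $\{[\A]\mid \A\in\wt{\Xi}_{\mbf D}\}$ spans $\mcal K$ freely by definition. If one wants an intrinsic justification of linear independence rather than a definitional one, I would invoke the stabilization: for $p\gg 0$ the shifted symbols ${}_p[\A]$ are realized inside $\mcal S_D$ for a common $D$, where they are linearly independent, being nonzero scalar multiples of characteristic functions of distinct $G$-orbits.

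For the monomial basis I would start from Proposition \ref{K-monomial}, which gives $\m_{\A}=[\A]+\sum_{\A'\sqsubseteq\A,\ \A'\neq\A}\gamma_{\A',\A}[\A']$ with $\gamma_{\A',\A}\in\mcal A$; for the canonical basis I would use Proposition \ref{prop6}, which gives $\{\A\}=[\A]+\sum_{\A'\sqsubset\A,\ \A'\neq\A}\pi_{\A',\A}[\A']$ with $\pi_{\A',\A}\in v^{-1}\mbb Z[v^{-1}]$. In each case the change of basis from the standard basis is unitriangular with respect to $\sqsubseteq$, so it suffices to argue that this transition is invertible over $\mcal A$. Here I would use that the relation $\sqsubseteq$ relates only signed matrices sharing the same $\ro$, $\co$, $s_l$, $s_r$; since there are only finitely many $\A\in\wt{\Xi}_{\mbf D}$ with a prescribed such datum, both sums above range over a fixed finite set, and, after choosing any linear refinement of $\preceq$ on that set, the relevant matrices become block-diagonal with finite unitriangular blocks, hence invertible over $\mcal A$. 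Assembling the blocks shows that $\{\m_{\A}\}$ and $\{\{\A\}\}$ are $\mcal A$-bases of $\mcal K$.

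I expect the main (indeed only) delicate point to be this local-finiteness observation: because $\wt{\Xi}_{\mbf D}$ is infinite, "basis" here means the transition from $\{[\A]\}$ is a locally finite invertible matrix, and one must genuinely check that $\sqsubseteq$ cuts $\wt{\Xi}_{\mbf D}$ into finite posets --- which follows by inspecting the defining inequalities (\ref{partial-order}) once $\ro$ and $\co$ are fixed --- so that the formal linear algebra over $\mcal A$ goes through. Everything beyond that is routine, and it would also be worth remarking in passing that $\{\m_{\A}\}$ and $\{\{\A\}\}$ are themselves related by a unitriangular change of basis within each block.
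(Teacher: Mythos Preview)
Your approach is exactly the paper's: the corollary is stated immediately after Propositions~\ref{K-monomial} and~\ref{prop6} with no further argument, so the intended proof is precisely the unitriangularity deduction you outline.

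One correction, however. You assert that there are only finitely many $\A\in\wt{\Xi}_{\mbf D}$ with a prescribed quadruple $(\ro,\co,s_l,s_r)$, and hence that the transition matrix is block-diagonal with \emph{finite} blocks. This is false: already for $N=3$ one can fix $\ro$ and $\co$ and still vary the anti-diagonal entry $a_{13}=a_{31}\in\mbb N$ freely (compensating on the diagonal), producing infinitely many matrices. What is true---and what your later remark about ``inspecting the defining inequalities~(\ref{partial-order})'' actually establishes---is that each \emph{principal lower set} $\{\A':\A'\sqsubseteq\A\}$ is finite. Indeed, for $i<j$ every term in $\sum_{r\le i,\,s\ge j}a'_{rs}$ is a nonnegative off-diagonal entry, so $0\le a'_{ij}\le\sum_{r\le i,\,s\ge j}a'_{rs}\le\sum_{r\le i,\,s\ge j}a_{rs}$; the symmetry $a'_{ij}=a'_{N+1-i,N+1-j}$ then bounds all off-diagonal entries of $A'$, and the diagonal is determined by $\ro(\A')=\ro(\A)$. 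That finiteness is exactly what is needed: it lets you invert the unitriangular system by downward induction on $\sqsubseteq$ to write each $[\A]$ as a finite $\mcal A$-combination of the $\m_{\A'}$ (respectively $\{\A'\}$) with $\A'\sqsubseteq\A$, and the linear independence follows by looking at a $\sqsubseteq$-maximal term. So your argument goes through once you replace ``finite blocks'' by ``finite lower intervals.''
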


\subsection{From $\mathcal K$ to $\mcal S$}

We define an $\mcal A$-linear map
\begin{align} \label{Psi}
\Psi: \mcal K \to \mcal S
\end{align}
by
\[
\Psi([\A]) =
\begin{cases}
[\A], & \mbox{if} \ \A\in \Xi_{\mbf D},\\
0, & \mbox{otherwise}.
\end{cases}
\]
By comparing the multiplication formulas (\ref{eq22}), (\ref{eq24}) and (\ref{eq27})
with (\ref{eq58}), (\ref{eq57}) and (\ref{eq56}), respectively,   and following an argument in ~\cite{F12} and  ~\cite[Appendix A]{BKLW13}, we have

\begin{thm} \label{KS}
The map $\Psi$ in (\ref{Psi})  is a surjective algebra homomorphism. Moreover we have
\[
\Psi(\{\A\}) =
\begin{cases}
\{ \A\}, & \mbox{if} \ \A\in \Xi_{\mbf D},\\
0, & otherwise.
\end{cases}
\]
\end{thm}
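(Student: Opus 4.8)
The plan is to show that $\Psi$ is an algebra homomorphism first, and then establish the compatibility with canonical bases. For the homomorphism property, the key observation is that both $\mcal K$ and $\mcal S$ are generated by the ``Chevalley-type'' elements $[\mathfrak e]$ with $\mathfrak e - RE^{\theta}_{i,i+1}$ diagonal (Corollary \ref{S-1} and Proposition \ref{K-monomial}), and that the multiplication formulas \eqref{eq58}, \eqref{eq57}, \eqref{eq56} in $\mcal K$ were \emph{constructed} as the $v' = 1$ specializations of the stabilized products, matching term-by-term the formulas \eqref{eq22}, \eqref{eq24}, \eqref{eq27} in $\mcal S$ whenever all the matrices involved actually lie in $\Xi_{\mbf D}(D)$. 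Concretely, I would fix $\A, \B \in \widetilde\Xi_{\mbf D}$ with $\B - rE^{\theta}_{h,h+1}$ diagonal, $\co(\B) = \ro(\A)$, $s_r(\B) = s_l(\A)$, and check that $\Psi([\B] \cdot [\A]) = \Psi([\B]) \cdot \Psi([\A])$: if $\A \notin \Xi_{\mbf D}$ then $\ro(\A)$ fails the bound defining $\Xi$, hence every $\A_t$ appearing in \eqref{eq58} also fails it (the row sums of $\A_t$ equal those of $\A$), so both sides are $0$; if $\A \in \Xi_{\mbf D}$ but $\B \notin \Xi_{\mbf D}$, a similar bookkeeping on $\co(\B) = \ro(\A)$ forces $\B \in \Xi_{\mbf D}$ automatically, so this case is vacuous; and if both lie in $\Xi_{\mbf D}$, the formulas \eqref{eq58} and \eqref{eq22} agree coefficient-by-coefficient, with the terms $[\A_t]$ for $\A_t \notin \Xi_{\mbf D}$ killed consistently by $\Psi$ on the left and absent on the right. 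The same analysis applies to \eqref{eq57}/\eqref{eq24} and \eqref{eq56}/\eqref{eq27}. Since these generators span, bilinearity extends the identity to all of $\mcal K$; surjectivity is then immediate because $\Psi$ sends the generating set of $\mcal K$ onto that of $\mcal S$ described in Corollary \ref{S-1}.

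Next I would turn to the statement about canonical bases, which is the substantive part. The strategy, following \cite{F12} and \cite[Appendix A]{BKLW13}, is to exploit the fact that $\Psi$ intertwines the bar involutions and respects the relevant partial orders. First, $\Psi$ commutes with the bar involution: both bar involutions are defined to fix $v \mapsto v^{-1}$ and to fix the generators $[\mathfrak e]$ with $\mathfrak e - RE^{\theta}_{i,i+1}$ diagonal, and $\Psi$ maps such a generator either to the corresponding generator of $\mcal S$ or to $0$ (the latter only if the diagonal part violates the $\Xi$-constraint, but a pure ``$RE^{\theta}_{i,i+1}$ plus diagonal'' matrix with the right row/column sums lies in $\Xi_{\mbf D}$ once $D$ is fixed appropriately), so $\Psi \circ {}^- = {}^- \circ \Psi$ on generators, hence everywhere. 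Second, I would check that $\Psi$ is compatible with the two partial orders: if $\A \in \Xi_{\mbf D}$ and $\A' \sqsubseteq \A$ in $\widetilde\Xi_{\mbf D}$, then $\A'$ automatically lies in $\Xi_{\mbf D}$ as well (it has the same row sums, column sums, and signs as $\A$, and $\preceq$ is exactly the closure order), so the ``lower terms'' in \eqref{eq59} that survive under $\Psi$ are precisely the lower terms in the corresponding monomial expansion inside $\mcal S$.

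With these two compatibilities in hand, the conclusion $\Psi(\{\A\}) = \{\A\}$ for $\A \in \Xi_{\mbf D}$ follows from the uniqueness characterization of the canonical basis. Indeed, $\Psi(\{\A\})$ is bar-invariant (by the first compatibility), and using Proposition \ref{prop6} together with the order compatibility, $\Psi(\{\A\}) = [\A] + \sum_{\A' \sqsubset \A} \pi_{\A',\A} \Psi([\A'])$, where each surviving $\Psi([\A']) = [\A']$ has $\A' < \A$ in $\Xi_{\mbf D}$ and $\pi_{\A',\A} \in v^{-1}\mbb Z[v^{-1}]$; by the uniqueness part of the defining property of $\{\A\}$ in $\mcal S$ (the bar-invariant element equal to $[\A]$ plus $v^{-1}\mbb Z[v^{-1}]$-combination of strictly lower standard basis elements), we get $\Psi(\{\A\}) = \{\A\}$. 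For $\A \notin \Xi_{\mbf D}$, the monomial $\mathfrak m_{\A}$ and hence $\{\A\} = [\A] + \sum_{\A' \sqsubset \A}\pi_{\A',\A}[\A']$ involves only $\A'$ with the same row sums as $\A$ (since $\sqsubseteq$ preserves $\ro$), all of which fail the $\Xi$-bound, so $\Psi(\{\A\}) = 0$.

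The main obstacle I anticipate is verifying cleanly that the bar involution on $\mcal K$ is well-defined and that $\Psi$ intertwines it --- this hinges on the monomial-basis triangularity \eqref{eq59} and requires care that the coefficients $c_{\A',\A}$ and the passage between the standard and monomial bases behave uniformly across the stabilization, exactly as in \cite[Appendix A]{BKLW13}; the type-$\mbf D$ wrinkle is that one must track the sign components $s_l, s_r$ through every product, and confirm that the constraint ``$\A' \sqsubseteq \A$ forces $\A' \in \Xi_{\mbf D}$ when $\A \in \Xi_{\mbf D}$'' genuinely holds --- i.e.\ that the closure order never escapes $\Xi_{\mbf D}$ --- which is where the geometric input (the partial-order lemma preceding Corollary \ref{cor7}, identifying $\leq$ with $\preceq$) does the real work.
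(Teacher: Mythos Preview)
Your approach is correct and matches the paper's, which simply cites \cite{F12} and \cite[Appendix A]{BKLW13} after noting that the multiplication formulas \eqref{eq22}--\eqref{eq27} and \eqref{eq58}--\eqref{eq56} agree. Your expanded outline---comparing formulas for the homomorphism property, then intertwining the bar involutions and invoking the uniqueness characterization of the canonical basis---is exactly the content of those references adapted to the type-$\mbf D$ signed-matrix bookkeeping.

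One small correction: the case ``$\A \in \Xi_{\mbf D}$ but $\B \notin \Xi_{\mbf D}$'' is \emph{not} vacuous. With $\B - rE^{\theta}_{h,h+1}$ diagonal and $\co(\B) = \ro(\A)$, the diagonal entry $b_{h+1,h+1} = \ro(\A)_{h+1} - r$ can be negative even when $\ro(\A)$ has all nonnegative components. The fix is immediate and fits your template: in that situation $\ro(\B)_{h+1} = b_{h+1,h+1} < 0$, and since every $\A_t$ appearing in \eqref{eq58} satisfies $\ro(\A_t) = \ro(\B)$, each $\A_t$ has a row with negative sum and hence a negative entry, so $\A_t \notin \Xi_{\mbf D}$ and both sides of $\Psi([\B]\cdot[\A]) = \Psi([\B])\cdot\Psi([\A])$ vanish. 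The analogous remark applies to $\C$ in \eqref{eq57} and \eqref{eq56}. This does not affect your overall argument.
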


Now the algebra $\mcal K$ acts on the $\mcal A$-module $\mcal V$ in (\ref{V}) via $\Psi$ and the $\mcal S$-action. By Lemma \ref{eq34}, we have

\begin{prop}
The algebra $\mcal K$ and $\mcal H_{\mscr Y}$ form a double centralizer, i.e.,
\[
\End_{\mcal K} (\mcal V) \simeq \mcal H_{\mscr Y},
\quad \mbox{if} \ n\geq d,\  \mbox{and} \quad \mcal K\to \End_{\mcal H_{\mscr Y}}(\mcal V)
\mbox{ is surjective.}
\]
\end{prop}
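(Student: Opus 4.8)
The plan is to reduce the statement entirely to the geometric double centralizer for the convolution algebra $\mcal S=\mcal S_{\mscr X}$, namely Lemma~\ref{eq34}, via the surjective specialization homomorphism $\Psi\colon\mcal K\to\mcal S$ of Theorem~\ref{KS}. As recorded just before the statement, the $\mcal K$-action on the $\mcal A$-module $\mcal V=\mcal A_G(\mscr X\times\mscr Y)$ is by construction the composite $\mcal K\xrightarrow{\Psi}\mcal S\to\End_{\mcal A}(\mcal V)$, the second arrow being the left $\mcal S$-action. Since $\Psi$ is surjective, the image of $\mcal K$ in $\End_{\mcal A}(\mcal V)$ coincides with the image of $\mcal S$; in particular this common image contains the identity operator on $\mcal V$, the image of $1_{\mcal S}$, even though $\mcal K$ itself has no unit.

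First I would treat the centralizer on the $\mcal K$-side. An $\mcal A$-linear endomorphism $\varphi$ of $\mcal V$ lies in $\End_{\mcal K}(\mcal V)$ if and only if it commutes with the action of every $[\A]$, $\A\in\wt{\Xi}_{\mbf D}$, equivalently with every element of $\Psi(\mcal K)=\mcal S$, equivalently $\varphi\in\End_{\mcal S}(\mcal V)$. Hence $\End_{\mcal K}(\mcal V)=\End_{\mcal S}(\mcal V)$, and by Lemma~\ref{eq34} the latter is isomorphic to $\mcal H_{\mscr Y}$ when $n\geq d$. Since the right $\mcal H_{\mscr Y}$-action on $\mcal V$ commutes with the $\mcal S$-action, hence with the $\mcal K$-action, this isomorphism is realized by the original right $\mcal H_{\mscr Y}$-action, as expected.

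Next I would treat the surjectivity onto $\End_{\mcal H_{\mscr Y}}(\mcal V)$. Because the $\mcal K$- and $\mcal S$-actions on $\mcal V$ have the same image in $\End_{\mcal A}(\mcal V)$, the natural maps $\mcal K\to\End_{\mcal H_{\mscr Y}}(\mcal V)$ and $\mcal S\to\End_{\mcal H_{\mscr Y}}(\mcal V)$ have the same image inside $\End_{\mcal A}(\mcal V)$. The latter map is surjective by Lemma~\ref{eq34}, so the former is surjective too.

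I do not expect a genuine obstacle here: the whole substance is already contained in Lemma~\ref{eq34} (whose proof rests on \cite[Theorem 2.1]{P09}, adapted to the base ring $\mcal A$) and in Theorem~\ref{KS}. The one point deserving a word of care is the non-unitality of $\mcal K$: one must check that the operators coming from $\mcal K$ already exhaust the image of $\mcal S$, which is immediate from the surjectivity of $\Psi$, together with the fact that $1_{\mcal S}$ is the finite sum of the classes $[\mathfrak e]$ over diagonal $\mathfrak e\in\Xi_{\mbf D}$, each of which equals $\Psi([\mathfrak e])$ for the corresponding element $[\mathfrak e]$ of $\mcal K$.
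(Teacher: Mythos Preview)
Your proposal is correct and follows exactly the same approach as the paper: the paper's entire argument is the one-line observation that $\mcal K$ acts on $\mcal V$ via the surjection $\Psi\colon\mcal K\to\mcal S$, so the claim reduces immediately to Lemma~\ref{eq34}. Your elaboration on the non-unitality of $\mcal K$ and the equality of images in $\End_{\mcal A}(\mcal V)$ simply makes explicit what the paper leaves implicit.
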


\subsection{Towards a   presentation of $\mbb Q(v)\otimes_{\mcal A} \mcal K$}\label{sec5.3}

We make an observation of the signed diagonal matrices in $\wt{\Xi}_{\mbf D}$ in (\ref{tXi}).
We denote by $D_{\lambda}$ the diagonal matrix whose $i$-th diagonal entry is $\lambda_i$,
for any $\lambda=(\lambda_i) \in \mbb Z^N$. We have

\begin{lem} \label{diagonal}
Suppose that $\D=(D_{\lambda}, \epsilon)$ is a signed diagonal matrix in $\wt{\Xi}_{\mbf D}$. Then we have
$\lambda_i = \lambda_{N+1-i}$ and $\lambda_{n+1}\in 2\mbb N$. Moreover,
$$
\lambda_{n+1} =
\begin{cases}
0  & {\rm if} \ \sgn (\D) = \pm, \\
\geq 2 & {\rm if} \ \sgn (\D) =0.
\end{cases}
$$
\end{lem}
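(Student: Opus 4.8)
The plan is to unwind the definition of $\widetilde{\Xi}_{\mbf D}$ directly: by (\ref{tXi}), a signed diagonal matrix $\D=(D_{\lambda},\epsilon)$ lies in $\widetilde{\Xi}_{\mbf D}$ precisely when ${}_p\D=(D_{\lambda}+2pI',\epsilon)$ lies in $\Xi_{\mbf D}(D)$ for some $p\in\mbb Z$ and $D\in\mbb N$. Since $I'=I-E_{n+1,n+1}$ is itself diagonal, ${}_p\D$ is again a signed diagonal matrix, with diagonal entries $\lambda_i+2p$ for $i\neq n+1$ and $\lambda_{n+1}$ for $i=n+1$. So the statement reduces to reading off the constraints that membership in $\Xi_{\mbf D}(D)=\Xi^+\sqcup\Xi^0\sqcup\Xi^-$ imposes on a diagonal element, and then checking those constraints are insensitive to the shift by $2pI'$ in the entries away from the center.

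First I would recall from (\ref{Xi}) and the definition of $\Xi$ in Section~\ref{sec4.2} that every matrix $A\in\Xi$ satisfies $a_{ij}=a_{N+1-i,N+1-j}$; applied to a diagonal matrix this forces $\lambda_i=\lambda_{N+1-i}$ for all $i$, and in particular this holds for ${}_p\D$ and hence for $\D$. Next, for the middle entry: since $D_{\lambda}+2pI'$ has $(n+1,n+1)$-entry exactly $\lambda_{n+1}$, and a matrix in $\Xi_{\mbf D}(D)$ has all entries in $\mbb N$, we get $\lambda_{n+1}\in\mbb N$; the parity condition $\ur(\sigma)\equiv 0\bmod 2$ is vacuous for diagonal matrices, so the evenness of $\lambda_{n+1}$ must come from elsewhere. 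Here I would invoke the computation (\ref{Xi-1}) in the proof of the cardinality lemma, where $\#\Xi$ is enumerated by writing $\sum_{i\le n;j}a_{ij}+\sum_{j\le n}a_{n+1,j}=d-\tfrac{a_{n+1,n+1}}{2}$; this identity is part of the structure of $\Xi$ and already encodes that $a_{n+1,n+1}$ is even (the constraint that $|V_n|$-type data be integral, equivalently that $\ro(\sigma)_{n+1}$ etc. behave), so $\lambda_{n+1}\in 2\mbb N$. Alternatively — and perhaps more cleanly — one notes that the orbit geometry forces the $(n+1,n+1)$ entry, which measures $\dim(V_i\cap V_j'/\cdots)$ across the self-dual middle, to be even because it pairs a subspace with its own image under the form; the remark ``$a_{n+1,n+1}$ is even'' immediately after Corollary~\ref{cor4} records exactly this fact, and it is clearly preserved under ${}_p(\cdot)$ since the shift leaves the middle entry untouched.

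For the dichotomy on $\lambda_{n+1}$ according to $\sgn(\D)$, I would use the definition (\ref{Xi-set}): $\Xi^0=\{A\in\Xi\mid \ro(A)_{n+1}>0,\ \co(A)_{n+1}>0\}\times\{0\}$, while $\Xi^{\pm}=(\Xi\setminus\Xi^0)\times\{\pm\}$. For a diagonal matrix, $\ro(A)_{n+1}=\co(A)_{n+1}=a_{n+1,n+1}=\lambda_{n+1}$. Thus $\sgn(\D)=0$ forces $\lambda_{n+1}=\ro(A)_{n+1}>0$, and being even gives $\lambda_{n+1}\ge 2$; whereas $\sgn(\D)=\pm$ forces the diagonal matrix to lie in $\Xi\setminus\Xi^0$, i.e. $\ro(A)_{n+1}=0$ or $\co(A)_{n+1}=0$, which for a diagonal matrix both say $\lambda_{n+1}=0$. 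Since ${}_p\D$ has the same middle entry $\lambda_{n+1}$ as $\D$ and $\sgn({}_p\D)=\sgn(\D)$ (noted explicitly after (\ref{tXi})), these conclusions transfer back to $\D$ itself, completing the proof.

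The only genuine subtlety — the step I expect to need the most care — is pinning down \emph{why} $\lambda_{n+1}$ is even rather than merely a nonnegative integer, since the parity constraint $\ur\equiv 0\bmod 2$ in the definition of $\Xi$ is trivial on diagonal matrices. The cleanest route is to cite the already-established fact (used right after Corollary~\ref{cor4} and again in Corollary~\ref{cor4}(c), and visible in the counting identity (\ref{Xi-1}) where $a_{n+1,n+1}/2$ appears as an integer) that the $(n+1,n+1)$-entry of any matrix in $\Xi$ is even; this is forced geometrically because, writing $V_n=\bigoplus_{j}Z_{n+1,j}\oplus(\text{lower})$ in the notation of the proof of Lemma~\ref{dimension}, the middle block $Z_{n+1,n+1}$ carries a nondegenerate form inducing a symplectic-type pairing and hence has even dimension. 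Everything else is a short unwinding of definitions.
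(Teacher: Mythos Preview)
Your argument is correct and follows the only natural route: unwinding the definitions of $\widetilde{\Xi}_{\mbf D}$, $\Xi$, and $\Xi^0,\Xi^{\pm}$. The paper itself offers no proof of this lemma, treating it as immediate from the definitions, so there is nothing to compare against beyond that.

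One remark: you work harder than necessary on the evenness of $\lambda_{n+1}$. The constraint $\ur\equiv 0\bmod 2$ is a condition on $\Sigma$, not on $\Xi$, so it is not merely vacuous here but irrelevant. The clean reason $a_{n+1,n+1}$ is even for \emph{any} $A\in\Xi$ is simply that the symmetry $a_{ij}=a_{N+1-i,N+1-j}$ pairs every entry with a distinct partner except the center, so $\sum_{i,j}a_{ij}=D=2d$ forces $a_{n+1,n+1}$ to be even; this is exactly what underlies the identity in (\ref{Xi-1}) you cite. Your geometric sketch via the block $Z_{n+1,n+1}$ is also valid in spirit (it carries the induced nondegenerate symmetric form of Lemma~\ref{lem1}, hence has even dimension), but the combinatorial parity argument is shorter and self-contained.
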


For any signed diagonal matrix $\md$, we set
\begin{equation*}
\begin{split}
&E_h \md  = [(\md - E^{\theta}_{h, h} + E^{\theta}_{h+1,h},\sgn(\md) )],\quad \forall h\in [1,n], \\
&F_h \md  = [(\md -E^{\theta}_{h+1, h+1} +  E^{\theta}_{h,h+1}, \sgn(\md)) ],\quad \forall h\in [1,n-1].
\end{split}
\end{equation*}
For a signed diagonal matrix $\md =(D_{\lambda}, 0)$ of sign $0$, we set
\begin{equation*}
F_n \md =
\begin{cases}
[( \md - E^{\theta}_{n+1, n+1} +  E^{\theta}_{n,n+1}, 0) ] &  \mbox{if} \  \lambda_{n+1} \geq 4,\\
[(\D -  E^{\theta}_{n+1, n+1} +  E^{\theta}_{n,n+1}, +)]
+[(\D -  E^{\theta}_{n+1, n+1} +  E^{\theta}_{n,n+1}, -)] & \mbox{if} \ \lambda_{n+1} =2.
\end{cases}
\end{equation*}

For any element $y\in \U$ in Section \ref{U-first} and singed diagonal matrix $\md$, we shall define the notation $y\md$.
We may assume that $y$ is homogeneous.
We assume that $x\md $ is defined for all  homogenous $x\in \U$ of degree strictly less than $y$ ,
then  we define
\begin{equation}\label{eq63}
\textstyle E_j x \md =
\sum [\mathfrak e_j] \cdot x\md,
 \quad \forall j\in [1,n],
\end{equation}
where the sum runs over all signed matrices $\mathfrak e_j$  in $\widetilde \Xi_{\mbf D}$
such that $\mathfrak e_j - E^{\theta}_{j+1,j}$ is diagonal.
Although an infinite sum,  there is only finitely many nonzero terms, hence well-defined.
Similarly, we can define $F_j x \md $ for any $j\in [1,n]$.
Therefore, the notation $y \md$ for $y\in \U$ is well-defined.

\begin{prop}\label{prop13}
 For any signed diagonal matrices
 $\md = (D_{\lambda},\epsilon)$, $\md' =(D_{\lambda'}, \epsilon')$  in $\widetilde \Xi_{\mbf D}$, we have
the following relations in $\mcal K$.
\allowdisplaybreaks
\begin{eqnarray}
\label{i}
&& \md \md' =\delta_{\md, \md'} \md. \\
\label{ii}
&& \md'  E_n \md =0, \quad  {\rm if}\ \sgn (\md') =\pm, \\
&& \md'  F_n \md=0, \quad  {\rm if}\ \sgn(\md) = \pm, \nonumber \\
&&  \md E_h [(\md - E^{\theta}_{h, h} + E^{\theta}_{h+1,h+1}, \sgn (\md) ) ] = E_h [(\md - E^{\theta}_{h, h} + E^{\theta}_{h+1,h+1},\sgn(\md))], \nonumber \\
&& \md F_h [(\md + E^{\theta}_{h, h} - E^{\theta}_{h+1,h+1}, \sgn(\md))]  = E_h [(\md + E^{\theta}_{h, h} - E^{\theta}_{h+1,h+1}, \sgn(\md))],
 {\rm if} \ h\neq  n, \nonumber \\
\label{iii}
&& F_nE_n \md - \md' F_n E_n \md = [\lambda_n]_v \md, \quad \quad \quad \hspace{58pt}   {\rm if} \ \lambda=\lambda', \epsilon = - \epsilon' \neq 0,  \\
&&  \md' F_n E_n \md =0, \quad  \md F_n E_n  \md' =0, \quad  \quad  \hspace{56pt} \  {\rm if} \ \sgn (\md) =0, \sgn (\md') =\pm, \nonumber \\
\label{iv}\
&& (E_iF_j-F_jE_i) \md =0, \quad \hspace{114pt} \ {\rm if} \  i\neq j,\\
&& (E_iF_i-F_iE_i) \md  = [\lambda_{i+1}- \lambda_i]_v \md, \quad  \hspace{64pt} {\rm if}\ i\neq n, \nonumber \\
\label{v}\
&& (E_iE_iE_j- [2]_v  E_iE_jE_i+E_jE_iE_i) \md =0,\quad  \hspace{24pt}{\rm if}\ |i-j|=1,\\
&& (F_iF_iF_j-  [2]_v F_iF_jF_i+F_jF_iF_i) \md =0,\quad \quad  \hspace{23pt}{\rm if}\ |i-j|=1, \nonumber \\
\label{vi}\
&& (E_iE_j-E_jE_i) \md = 0, \quad (F_iF_j-F_jF_i)\md =0, \quad  \  {\rm if}\ |i-j|>1,\\
\label{vii}\
&&(E^2_nF_n+F_nE_n^2) \D =  [2]_v (E_nF_nE_n-E_n (v^{\lambda_{n+1} - \lambda_n+1} +v^{- \lambda_{n+1} +\lambda_n-1} )) \D,\\
&& (F_n^2E_n+E_nF_n^2) \D = [2]_v (F_nE_nF_n- (v^{\lambda_{n+1}-\lambda_n-2} +v^{-\lambda_{n+1} + \lambda_n+2} ) F_n) \D. \nonumber
\end{eqnarray}
\end{prop}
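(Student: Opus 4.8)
The plan is to verify each of the relations (\ref{i})--(\ref{vii}) by direct computation, applying the multiplication formulas (\ref{eq58}), (\ref{eq57}), (\ref{eq56}) and the divided-power formula (\ref{divided}) for $\mcal K$, while unwinding the recursive definition (\ref{eq63}) of the action $y\md$ of an element $y\in\U$ on a signed diagonal matrix.

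First I would assemble the basic building blocks. Relation (\ref{i}) is immediate from (\ref{eq58}) with $r=0$: the product $\md\md'$ vanishes unless $\co(\md)=\ro(\md')$ and $s_r(\md)=s_l(\md')$, and by Lemma \ref{diagonal} a signed diagonal matrix is determined by its diagonal together with its sign, so these conditions force $\md=\md'$, whence $\md\md'=\md$. Next, from the definitions preceding the proposition together with (\ref{eq58}) and (\ref{eq57}), each $E_h\md$ and $F_h\md$ is a single standard basis element, with sign inherited from $\md$, except that $F_n\md$ splits into a sign-$+$ and a sign-$-$ term when $\lambda_{n+1}=2$. Iterating (\ref{eq63}) and (\ref{divided}), every monomial $y\md$ is then an explicit $\mcal A$-linear combination of standard basis elements, with coefficients that are products of the $v$-binomial and $[k]_v$ factors occurring in (\ref{eq58}), (\ref{eq57}), (\ref{eq56}) and (\ref{divided}).

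With these in hand, the relations that do not involve the special node $n$ --- namely (\ref{iv}) with $i\neq j$, the second line of (\ref{iv}) with $i\neq n$, the Serre relations (\ref{v}), and the commutations (\ref{vi}) --- reduce to the corresponding type-$\mbf A$ identities established in \cite{BLM90}: for the indices in question, none of the operators touches the $(n+1)$-st row or column, the sign $\sgn(\md)$ is preserved throughout, and the multiplication formulas restrict to the type-$\mbf A$ ones. Relations (\ref{ii}) and (\ref{iii}) are then a matter of sign bookkeeping: by Lemma \ref{diagonal}, $\lambda_{n+1}=0$ when $\sgn(\md)=\pm$ and $\lambda_{n+1}\geq2$ when $\sgn(\md)=0$, while applying $E_n$ or $F_n$ changes $\lambda_{n+1}$ by $\mp2$; feeding this into the formulas (\ref{eq68}) for $s_l,s_r$ shows exactly which products are forced to vanish and, for the surviving one, computes $F_nE_n\md$ explicitly, producing the coefficient $[\lambda_n]_v$ of (\ref{iii}). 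This last identity is the $\mcal K$-shadow of the fifth defining relation of $\U$ and of its geometric incarnation in the proof of Proposition \ref{prop3}.

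The substantive computations are the two quartic identities (\ref{vii}) at the node $n$. Here I would compute $E_n^2F_n\md$, $F_nE_n^2\md$, $E_nF_nE_n\md$ and $E_n\md$ separately from (\ref{eq56}) and its transpose analogue, following the proof of the penultimate and last identities of Proposition \ref{prop3} but now over $\widetilde\Xi_{\mbf D}$; the exponents $v^{\lambda_{n+1}-\lambda_n+1}$ and $v^{-\lambda_{n+1}+\lambda_n-1}$ on the right-hand sides arise from the factors $L_i$ in (\ref{eq27}) evaluated at $\md$, where $\ro(\md)_{n+1}=\lambda_{n+1}$, and the evenness of $\lambda_{n+1}$ keeps all coefficients in $\mcal A$. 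The second identity of (\ref{vii}) is then obtained by the same type of computation as the first (equivalently, by transporting it through the anti-automorphism of (\ref{rho})). I expect the main obstacle to be precisely this node-$n$ sign-and-twist bookkeeping: one must check that the $\lambda_{n+1}=2$ splitting of $F_n\md$ is compatible with all the relations --- for instance, that its two sign components recombine correctly inside $E_nF_n\md$ --- and that the stabilization device of Section \ref{Stab}, which replaces $I$ by $I'=I-E_{n+1,n+1}$, introduces no spurious shift in the $v$-exponents appearing in (\ref{vii}).
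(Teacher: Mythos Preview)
Your treatment of (\ref{i})--(\ref{iii}) matches the paper's: both compute directly from (\ref{eq58}) and track the sign via Lemma~\ref{diagonal}. The paper in fact writes out $F_nE_n\md=[\lambda_n]_v[\md]+[(D_{\lambda'}+E_{n,n+2}^\theta,-\epsilon)]$ and $\md'F_nE_n\md=[(D_{\lambda'}+E_{n,n+2}^\theta,-\epsilon)]$ to get (\ref{iii}), just as you outline.

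For (\ref{iv})--(\ref{vii}), however, the paper takes a different and shorter route. Rather than recomputing each identity in $\mcal K$ from (\ref{eq58})--(\ref{eq56}), it invokes a single \emph{stabilization principle}: for any $x$ of the form appearing in these relations, one writes $x\md=\sum C_{x\md,\A}[\A]$ in $\mcal K$ and $x\,{}_p\md=\sum {}_pC_{x\md,\A}(v,v')|_{v'=v^{-p}}[{}_p\A]$ in $\mcal S_{D'}$ for large $p$; by comparing (\ref{eq22})--(\ref{eq27}) with (\ref{eq58})--(\ref{eq56}) one has $C_{x\md,\A}={}_pC_{x\md,\A}(v,1)$. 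Since the relations already hold in $\mcal S_{D'}$ for all large $D'$ by Proposition~\ref{prop3}, they hold as identities in $v'$ and hence at $v'=1$. This buys you all of (\ref{iv})--(\ref{vii}) at once, without touching the individual terms $E_n^2F_n\md$, $E_nF_nE_n\md$, etc., and in particular sidesteps the $\lambda_{n+1}=2$ splitting bookkeeping you anticipate as the main obstacle.

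One genuine oversight in your plan: you group the Serre relations (\ref{v}) and the commutators (\ref{iv}) for $i\neq j$ among ``the relations that do not involve the special node $n$'', asserting that none of the operators touches the $(n+1)$-st row or column. But (\ref{v}) includes the cases $\{i,j\}=\{n-1,n\}$, and (\ref{iv}) includes $i=n$ or $j=n$; in these cases $E_n$ acts through (\ref{eq56}) (or $F_n$ through the dual formula), which does touch the $(n+1)$-st row and is not the type-$\mbf A$ formula. Your direct-computation strategy can still handle these cases, but they are not covered by the reduction to \cite{BLM90}; you would need to treat them alongside (\ref{vii}), or --- more economically --- adopt the paper's stabilization argument, which handles all of (\ref{iv})--(\ref{vii}) uniformly.
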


\begin{proof}
The proof of the identities in (\ref{i}) and (\ref{ii}) are straightforward.
We now show (\ref{iii}).
By  the multiplication formula (\ref{eq58}),  we have
\begin{equation*}
\begin{split}
F_n E_n \md
 &=[\lambda_n]_v [\md]+[( D_{\lambda'}+E_{n,n+2}^{\theta}, - \epsilon)],\\
\D' F_nE_n \D  &=[(D_{\lambda'} +E_{n,n+2}^{\theta}, -\epsilon)], \quad \quad \quad \mbox{where}\ \lambda_i'=\lambda_i-\delta_{i,n}-\delta_{i, n+2}.
\end{split}
\end{equation*}
So the first identity in (\ref{iii}) holds.
Observe that  if $\sgn(\D)=0$, then $\lambda_{n+1}\neq 0$; and if $\sgn (\D)\neq 0$ then $\lambda_{n+1}=0$.
We have the second  identities in (\ref{iii}) by this observation.

For the remaining relations, they can be proved by the following principle.
Suppose that $x\md = \sum C_{x\md, \A} \A$ with $C_{x\md, \A}\in \mcal A$.
We can  pick a large enough $p$ such that $_p\md$ and $_p\A$ all have non-negative entries.
For an appropriate $D'$, we have an element in $\mcal S_{D'}$ of the form $x\, {}_p\md$ defined in a similar way as that  in $\mcal K$.
We can write
\[
x \, {}_p \md = \sum {}_pC_{x {}\md, {}\A} (v, v')|_{v'=v^{-p}}  \ {}_p\A \quad \mbox{in} \ \mcal S_{D'},
\]
where  ${}_pC_{x {}\md, {}\A} (v, v') \in \mathfrak R$ in (\ref{R}).
If $x$ is of the form in the remaining relations, we have
\[
C_{x\md, \A} = {}_pC_{x {}\md, {}\A} (v, v')|_{v'=1}.
\]
This follows from the comparison of (\ref{eq58}), (\ref{eq57}) and (\ref{eq56}) in $\mcal K$ with
(\ref{eq22}), (\ref{eq24}) and (\ref{eq27})   in $\mcal S_{D'}$, respectively.
Now the remaining relations all hold in $\mcal S_{D'}$ for all $D'$ large enough by Proposition ~\ref{prop3},
so are those relations without specializing $v'$. Now relations  in $\mcal K$ are obtained by specializing $v'=1$.
\end{proof}

\subsection{The algebra $\mcal U$}\label{sec6}

In this section, we shall define a new algebra $\mcal U$ in the  completion of $\mcal K$ similar to  ~\cite[Section 5]{BLM90}.
We show that $\mcal U$ is a quotient of the algebra $\U$ defined in Section ~\ref{U-first}.

Let $\hat{\mcal K}$ be the $\mbb Q(v)$-vector space of all formal sum
$\sum_{\A\in \tilde{\Xi}_{\mbf D}}\xi_{\A} [\A]$ with $\xi_{\A}\in \mbb Q(v)$ and  a locally finite property, i.e.,
for any ${\mbf t}\in \mbb Z^N$, the sets $\{\A\in \tilde{\Xi}_{\mbf D}|{\rm ro}(\A)={\mbf t}, \xi_{\A} \neq 0\}$
and
$\{\A\in \widetilde{\Xi}_{\mbf D} | {\rm co}(\A)={\mbf t}, \xi_{\A} \neq 0\}$ are finite.
The space $\hat{\mcal K}$ becomes an  associative algebra over $\mbb Q(v)$
 when equipped  with  the following multiplication:
$$
\sum_{\A\in \tilde{\Xi}_{\mbf D}} \xi_{\A} [\A]   \cdot \sum_{\B \in \tilde{\Xi}_{\mbf D}} \xi_{\B} [\B]
=\sum_{\A, \B} \xi_{\A} \xi_{\B} [\A] \cdot [\B],
$$
where the product $[\A] \cdot [\B]$ is taken  in $\mcal K$.
This is shown in exactly the same as ~\cite[Section 5]{BLM90}.

Observe that the algebra $\hat{\mcal K}$ has a unit element $\sum\md$, the  summation  of  all diagonal signed matrices.

We define the following  elements in $\hat{\mcal K}$.
For any nonzero signed matrix $\A=(A, \epsilon)\in \wt{\Xi}_{\mbf D}$,
let $\hat{\A}=(\hat A, \epsilon)$,
where $\hat A$ is the matrix obtained
by replacing diagonal entries of $A$ by zeroes.
We set
$$
\hat{\Xi}_{\mbf D}= \{ \hat{\A} | \A\in \wt{\Xi}_{\mbf D} \}.
$$
For any $\hat{\A}$ in $\hat{\Xi}_{\mbf D}$ and ${\mbf j}=(j_1,\cdots, j_N)\in \mbb Z^N$, we define
\begin{equation} \label{wtA}
\hat{\A} ({\mbf j})=\sum_{\lambda}v^{\lambda_1j_1+\cdots+\lambda_{n+1}j_{n+1}}[ (\hat{\A} + D_{\lambda}, \sgn(\hat{\A})) ]\quad
\end{equation}
where the  sum runs through all $\lambda=(\lambda_i)\in \mbb Z^N$ such that
$(\hat{\A} + D_{\lambda}, \sgn(\hat{\A})) \in \wt{\Xi}_{\mbf D}$.

For any $i\in [1,n]$, there exist $\A=(A, \epsilon)$ such that
$\hat{\A}=(E_{i+1, i}^{\theta}, \epsilon)$ (resp. $\hat{\A}=(E_{i, i+1}, \epsilon)$.
So by (\ref{wtA}), the elements  $E_{i+1,i}^{\theta, \epsilon}(\mbf j)$ (resp. $E_{i, i+1}^{\theta, \epsilon}(\mbf j)$) are well-defined,
for any $\mbf j\in \mbb Z^N$.
Moreover, this definition is independent of the choice of $\hat{\A}$.
For $i\in [1,n]$, let
\begin{equation*}
E_i=E_{i+1,i}^{\theta, +}(0)+E_{i+1,i}^{\theta, 0}(0)+E_{i+1,i}^{\theta, -}(0)\quad{\rm and}\quad
F_i=E_{i, i+1}^{\theta,+}(0)+E_{i, i+1}^{\theta,0}(0)+E_{i, i+1}^{\theta,-}(0).
\end{equation*}
For simplicity, we shall write $E_i^{\epsilon}(\mbf j)$ (resp. $F_i^{\epsilon}(\mbf j)$) instead of  $E_{i+1,i}^{\theta, \epsilon}(\mbf j)$ (resp. $E_{i,i+1}^{\theta, \epsilon}(\mbf j)$).

We also define
\begin{equation*}
\begin{split}
  0(\mbf j)&=0^+(\mbf j)+0^0(\mbf j)+0^-(\mbf j),\quad {\rm where}\\
  0^{\epsilon}(\mbf j)&=\sum v^{\lambda_1j_1+\cdots+\lambda_{n+1}j_{n+1}}[\D],
\end{split}
\end{equation*}
where the sum runs through all diagonal matrices $\D$ with sign $\epsilon$ and $\lambda_i$'s are diagonal entries of $\D$.

Let $\mcal U$ be the subalgebra of $\hat{\mcal K}$ generated by $E_i, F_i, 0(\mbf j), 0^+(0), 0^0(0)$ and
$0^-(0)$ for all $i\in [1,n]$ and $\mbf j\in \mbb Z^N$.

\begin{prop}\label{prop-a}
The following relations hold in $\mcal U$.
 \allowdisplaybreaks
\begin{eqnarray} \label{eq60}
&&0(\mbf j)0(\mbf j')=0(\mbf j')0(\mbf j),\ 0^{\pm}(0)0(\mbf j)=0(\mbf j)0^{\pm}(0),\ 0^{\pm}(0)^2=0^{\pm}(0),\\
&& 0^+(0)+0^0(0)+0^-(0)=1,\  0^{\alpha}(0)0^{\epsilon}(0)=\delta_{\alpha,\epsilon}0^{\alpha}(0), \nonumber \\
\label{A-ii}
&&0(\mbf j)F_h=v^{j_h-j_{h+1}-\delta_{hn}j_{n+1}}F_h0(\mbf j),\
       0(\mbf j)E_h=v^{-j_h+j_{h+1}+\delta_{hn}j_{n+1}}E_h0(\mbf j),\\
 \label{A-iii}
&&0^{\pm}(0)E_h=(1-\delta_{hn})E_h 0^{\pm}(0),\  F_h0^{\pm}(0)=(1-\delta_{hn})0^{\pm}(0)F_h,\\
\label{A-iv}
&& 0^{\pm}(0)F_nE_n-F_nE_n0^{\mp}(0)=\frac{0(\underline n)-0(-\underline n)}{v-v^{-1}}(0^{\pm}(0)-0^{\mp}(0)),\\
\label{A-v}&&F_hE_h-E_hF_h=(v-v^{-1})^{-1}(0(\underline h-\underline{h+1})-0(\underline{h+1} -\underline h)),\\
&&E_iF_n=F_nE_i,\quad F_iE_n=E_nF_i,\ \hspace{.7cm}  {\rm if} \ i\in [1,n-2], \nonumber \\
\label{A-vi} &&E_iE_j=E_jE_i,\quad F_iF_j=F_jF_i,\quad \hspace{.6cm}  {\rm if}\ |i-j|>1,\\
\label{A-vii}
&& E^2_nF_n+F_nE_n^2= [2]_v (E_nF_nE_n
      -E_n(v0(\underline{n+1}-\underline n)+v^{-1}0(\underline n- \underline{n+1}))),\\
&&F_n^2E_n+E_nF_n^2=[2]_v (F_nE_nF_n
      -(v0(\underline{n+1}-\underline n))+v^{-1}0(\underline n-\underline{n+1}))F_n), \nonumber \\
\label{A-viii} &&E_i^2E_j- [2]_v E_iE_jE_i+E_jE_i^2=0,\quad\ {\rm if}\ |i-j|=1,\ i,j\in [1,n-1],\\
&&F_i^2F_j-[2]_v F_iF_jF_i+F_jF_i^2=0,\quad \hspace{.4cm} {\rm if}\ |i-j|=1,\ i,j\in [1,n-1], \nonumber
\end{eqnarray}
where $\mbf j, \mbf j'\in \mbb Z^N$, $\alpha$, $\epsilon\in \{ \pm , 0\}$, $h, i, j\in [1, n]$ and  $\underline i \in \mbb N^N$ is the vector whose $i$-th entry is 1 and 0 elsewhere.
\end{prop}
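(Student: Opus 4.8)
The plan is to deduce every identity from the localized identities already recorded in Proposition~\ref{prop13}, together with the multiplication formulas (\ref{eq58}), (\ref{eq57}), (\ref{eq56}), by the standard completion argument of \cite[Section~5]{BLM90}. Recall that $\hat{\mcal K}$ has unit $1 = \sum_{\md}\md$, the sum running over all signed diagonal matrices in $\wt{\Xi}_{\mbf D}$, and that by (\ref{i}) of Proposition~\ref{prop13} these $\md$ are pairwise orthogonal idempotents. For any $y$ in the subalgebra $\mcal U$ the element $y\,\md$ lies in $\mcal K$ (a finite $\mcal A$-linear combination of standard basis elements), and $y = y\cdot 1 = \sum_{\md} y\,\md$ is a locally finite sum; hence an identity $y_1 = y_2$ in $\mcal U$ holds if and only if $y_1\,\md = y_2\,\md$ in $\mcal K$ for every signed diagonal $\md$. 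So the first step is to reduce each of (\ref{eq60})--(\ref{A-viii}) to a finite check after right multiplication by an arbitrary $\md = (D_\lambda,\epsilon)$.

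\textbf{Carrying out the checks.} I would then treat the relations one family at a time, matching them against Proposition~\ref{prop13}. The relations (\ref{eq60}) are immediate from (\ref{i}) of Proposition~\ref{prop13} together with $0^\alpha(0) = \sum_{\sgn(\md)=\alpha}\md$, $\sum_\alpha 0^\alpha(0) = 1$, and the fact that $0(\mbf j)\,\md$ is a scalar multiple of $\md$, commuting with every diagonal element. The weight relations (\ref{A-ii}) follow from the observation, visible in (\ref{eq57})--(\ref{eq58}), that adding or removing $E^\theta_{h,h+1}$ from the underlying matrix shifts $\ro$ and $\co$ by $\underline h - \underline{h+1}$ (with the extra $\delta_{hn}$ correction coming from the middle column); thus $0(\mbf j)E_h\,\md$ and $v^{-j_h+j_{h+1}+\delta_{hn}j_{n+1}}E_h\,0(\mbf j)\,\md$ are the same standard basis element with the same coefficient, and likewise for $F_h$. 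Relations (\ref{A-iii}) and (\ref{A-iv}) are exactly (\ref{ii}) and (\ref{iii}) of Proposition~\ref{prop13} after rewriting $0(\pm\underline n)\,\md = v^{\pm\lambda_n}\md$ and $0^\pm(0)\,\md = \delta_{\sgn(\md),\pm}\,\md$; relation (\ref{A-v}) is (\ref{iv}) of Proposition~\ref{prop13} (in the opposite order) after $0(\underline h - \underline{h+1})\,\md = v^{\lambda_h-\lambda_{h+1}}\md$; the distance ${>}1$ commutations and the identities $E_iF_n = F_nE_i$, $F_iE_n = E_nF_i$ for $i\le n-2$ in (\ref{A-vi}) follow from disjointness of supports exactly as in the type-$\mbf A$ computations underlying Proposition~\ref{prop3}; the Serre relations (\ref{A-viii}) are (\ref{v}) of Proposition~\ref{prop13}; and the two relations (\ref{A-vii}) are (\ref{vii}) of Proposition~\ref{prop13} after substituting $0(\underline{n+1}-\underline n)\,\md = v^{\lambda_{n+1}-\lambda_n}\md$ and $0(\underline n - \underline{n+1})\,\md = v^{\lambda_n-\lambda_{n+1}}\md$. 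Throughout one uses Lemma~\ref{diagonal} to know $\lambda_{n+1}\in 2\mbb N$, $\lambda_i = \lambda_{N+1-i}$, and that $\sgn(\md)\ne 0$ forces $\lambda_{n+1}=0$, which is needed to match signs and to see that the relevant matrices stay in $\wt{\Xi}_{\mbf D}$.

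\textbf{Main obstacle.} The delicate bookkeeping concerns the generator $F_n$ and hence the relations (\ref{A-iv}) and (\ref{A-vii}): by definition $F_n$ applied to a sign-$0$ diagonal matrix with middle entry $\lambda_{n+1}=2$ breaks into a sign-$+$ plus a sign-$-$ term, reflecting the two connected components of the maximal isotropic Grassmannian, and the multiplication formula (\ref{eq56}) carries the extra summand $(1-\delta_{0,i}\delta_{0,\ro(\A)_{n+1}})\,v^{a_{n+1,n+1}+2i}$ responsible for this splitting. So when expanding words such as $F_nE_n\,\md$, $E_n^2F_n\,\D$ or $F_n^2E_n\,\D$ one must track carefully which sign-components survive and verify that the $v$-powers produced by (\ref{eq56}) agree with those predicted by (\ref{iii}) and (\ref{vii}) of Proposition~\ref{prop13} after the substitutions above; this is precisely the point where the type-$\mbf D$ geometry departs from type $\mbf{ABC}$. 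A secondary, routine point is checking that the infinite sums defining $E_i$, $F_i$, $0(\mbf j)$ are locally finite and that right multiplication by $\md$ commutes with forming them, so that the ``evaluate on each $\md$'' reduction is legitimate; this is identical to \cite[Section~5]{BLM90} and would only be indicated.
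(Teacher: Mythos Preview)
Your proposal is correct and follows essentially the same approach as the paper: both reduce the identities in $\hat{\mcal K}$ to localized statements at each signed diagonal idempotent $\md$ and then invoke the corresponding relations of Proposition~\ref{prop13}. The paper performs a few of the weight relations (e.g.\ (\ref{A-ii})) by directly expanding the defining sums over $\lambda,\lambda'$ rather than post-multiplying by a single $\md$, but this is only a cosmetic difference; the content --- tracking how $\ro$, $\co$, and the sign change under $E_h$, $F_h$ via (\ref{eq58})--(\ref{eq56}), and quoting (\ref{i})--(\ref{vii}) of Proposition~\ref{prop13} for the rest --- is the same, and your identification of the $F_n$ sign-splitting at $\lambda_{n+1}=2$ as the delicate point matches the paper's treatment of (\ref{A-iv}) and (\ref{A-vii}).
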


\begin{proof}
We show (\ref{A-ii}).
By checking the values of functions $s_l$ and $s_r$ defined in (\ref{eq68}) at $ 0(\mbf j)$ and $F_n$, we have
\begin{equation*}
\label{eq64}
\begin{split}
0(\mbf j)F_n
&=0^+(\mbf j)F^+_n(0)+0^0(\mbf j)F^0_n(0)+0^-(\mbf j)F^-_n(0)\\
&=\textstyle\sum_{\lambda,\lambda'}v^{\sum \lambda_kj_k}[D_{\lambda}^+]
[( E_{n,n+1}^{\theta,+}+D_{\lambda'}, +)]\\
& +\textstyle\sum_{\lambda,\lambda'}v^{\sum \lambda_kj_k}[D_{\lambda}^-]
[( E_{n,n+1}^{\theta,-}+D_{\lambda'}, -)]
\textstyle+\sum_{\lambda,\lambda'}v^{\sum \lambda_kj_k}[D_{\lambda}^0]
[(E_{n,n+1}^{\theta,0}+D_{\lambda'}, 0)]\\
&\textstyle =v^{j_n}(F_n^+(\mbf j)+F_n^0(\mbf j)+F_n^-(\mbf j))=v^{j_n}F_n(\mbf j),
\end{split}
\end{equation*}
where the sums run through in an obvious range by the definition in (\ref{wtA}).
\begin{equation*}
\begin{split}
F_n 0(\mbf j)
&=F^+_n(0)0^0(\mbf j)+F^0_n(0)0^0(\mbf j)+F^-_n(0)0^0(\mbf j)\\
&=\textstyle \sum_{\lambda, \lambda' }v^{\sum \lambda_kj_k}[(E_{n,n+1}^{\theta,+}+D_{\lambda'}, +) ][D_{\lambda}^0]\\
&\textstyle +\sum_{\lambda,\lambda'}v^{\sum z_kj_k} [(E_{n,n+1}^{\theta,0}+D_{\lambda}, 0)] [D_{\lambda}^0]
+\sum_{\lambda, \lambda'}v^{\sum \lambda_kj_k}[(E_{n,n+1}^{\theta,-}+D_{\lambda'}, -)][D_{\lambda}^0]\\
 &=v^{2j_{n+1}}(F_n^+(\mbf j)+F_n^0(\mbf j)+F_n^-(\mbf j))=v^{2j_{n+1}}F_n(\mbf j).
 \end{split}
 \end{equation*}
 So we have the first identity in (\ref{A-ii})  for the case of $h=n$.
 Other cases for the first identity and all other identities in (\ref{eq60}) and (\ref{A-ii}) can be shown similarly.

We show (\ref{A-iii}).
By the definition of $0^+(0)$ and $F_h$ for $h<n$, we have
\begin{equation*}
\begin{split}
0^+(0)F_h
&=0^+(0)F_h^+(0)
= \textstyle\sum_{\lambda,\lambda'}[D_{\lambda}^+]
[(D_{\lambda'}^+ + E_{h,h+1}^{\theta}, +)]\\
&\textstyle=\sum_{\lambda'} [( D_{\lambda'}^+ + E_{h+1,h}^{\theta}, +)]=F_h0^+(0).
\end{split}
\end{equation*}
The other identities in (\ref{A-iii}) can be shown similarly.

We show (\ref{A-iv}). By Proposition \ref{prop13} (\ref{A-iii}), we have
  \begin{equation*}
  \begin{split}
    &0^+(0)F_nE_n=
    \textstyle \sum_{\lambda}[D_{\lambda}^+]F_nE_n
    =\sum_{\lambda}([\lambda_n]_v [D^+_{\lambda_n}]+[(D_{\lambda}^++E_{n,n+2}^{\theta}, +)])\\
   &F_nE_n0^-(0)=\textstyle \sum_{\lambda}F_nE_n[D_{\lambda}^-]
    =\sum_{\lambda}([\lambda_n]_v [D^-_{\lambda_n}]+[(D_{\lambda}^++E_{n,n+2}^{\theta}, +)]).
  \end{split}
  \end{equation*}
Therefore,
\begin{equation*}
0^+(0)F_nE_n-F_nE_n0^-(0)\textstyle=\sum_{\lambda}
[\lambda_n]_v
([D^+_{\lambda}]-[D^-_{\lambda}])
    =\frac{0(\underline n)-0(- \underline n)}{v-v^{-1}}(0^{+}(0)-0^{-}(0)).
\end{equation*}

We now show (\ref{A-vii}).
By definition, we have
\begin{equation*}
v0(\underline{n+1}- \underline n)F_n=F_n  0 (\underline{n+1}- \underline n)=
\textstyle  \sum_{\lambda}v^{\lambda_{n+1}-\lambda_n-2}F_nD_{\lambda}^0.
\end{equation*}
Similarly, $v^{-1}0(\underline n- \underline{n+1}) F_n= \sum_{\lambda}v^{\lambda_n-\lambda_{n+1}+2}F_nD_{\lambda}^0$.
Moreover,
\begin{equation*}
\textstyle F_n^2E_n+E_nF_n^2-[2]_v F_nE_nF_n=\sum_{\lambda} (F_n^2E_n+E_nF_n^2-[2]_v F_nE_nF_n)D_{\lambda}^0.
\end{equation*}
The identity (\ref{A-vii}) follows from Proposition \ref{prop13}.
All other identities in (\ref{A-v})-(\ref{A-viii}) can be shown similarly.
\end{proof}

\begin{prop} \label{Upsilon}
The assignment $E_i\mapsto E_i$, $F_i\mapsto F_i$, $H_a \mapsto 0(- \underline a)$
 and $J_{\alpha}\mapsto 0^{\alpha}(0)$,  for any $i\in [1,n]$, $a\in [1, n+1]$ and $\alpha\in \{0, +, -\}$,
 defines a surjective  algebra homomorphism $\Upsilon: \U\rightarrow \mcal U$ where $\U$ is defined in Section ~\ref{U-first}.
\end{prop}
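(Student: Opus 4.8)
The plan is to check that the images of the generators of $\U$ satisfy the defining relations of $\U$ from Section~\ref{U-first}; since $\U$ is presented by generators and relations, this yields a unique algebra homomorphism $\Upsilon$, and surjectivity is then read off from the list of algebra generators of $\mcal U$. Put $\Upsilon(E_i)=E_i$, $\Upsilon(F_i)=F_i$, $\Upsilon(H_a)=0(-\underline a)$, $\Upsilon(H_a^{-1})=0(\underline a)$, $\Upsilon(J_\alpha)=0^\alpha(0)$. The dictionary between the two presentations is supplied by the multiplicativity $0(\mbf j)\,0(\mbf j')=0(\mbf j+\mbf j')$ --- which follows from the orthogonality of the idempotents $\md$ (relation~(\ref{i}) of Proposition~\ref{prop13}) together with the definition~(\ref{wtA}) --- so that $\Upsilon(H_iH_{i+1}^{-1})=0(\underline{i+1}-\underline i)$, $\Upsilon(H_i^{-1}H_{i+1})=0(\underline i-\underline{i+1})$, and likewise for the other Cartan monomials. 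Under this substitution essentially every relation in the defining list of $\U$ becomes, word for word, one of the relations gathered in Proposition~\ref{prop-a}: the $J$- and $H$-idempotent/weight relations are~(\ref{eq60}); the interaction of the $H_a$'s with $E_i,F_i$ is~(\ref{A-ii}); that of the $J_\pm$'s with $E_i,F_i$ is~(\ref{A-iii}); the $J_\pm F_nE_n$ relation is~(\ref{A-iv}); the rank-one Cartan relations $E_iF_i-F_iE_i=\frac{H_iH_{i+1}^{-1}-H_i^{-1}H_{i+1}}{v-v^{-1}}$ for $i\ne n$ are~(\ref{A-v}); the distant-commutation relations are~(\ref{A-vi}); the Serre relations with both indices $\le n-1$ are~(\ref{A-viii}); and the two quartic relations at the node $n$ are~(\ref{A-vii}).

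The points that need a little care are all localized at the node $n$. First, the right-hand side of $J_\pm F_nE_n-F_nE_nJ_\mp=\frac{H_n^{-1}H_{n+1}-H_nH_{n+1}^{-1}}{v-v^{-1}}(J_\pm-J_\mp)$ maps to $\frac{0(\underline n-\underline{n+1})-0(\underline{n+1}-\underline n)}{v-v^{-1}}(0^\pm(0)-0^\mp(0))$, whereas~(\ref{A-iv}) records the apparently different expression $\frac{0(\underline n)-0(-\underline n)}{v-v^{-1}}(0^\pm(0)-0^\mp(0))$; the two agree because by Lemma~\ref{diagonal} every sign-$\pm$ diagonal matrix has vanishing $(n+1)$-st diagonal entry, so $0(\pm\underline{n+1})$ acts trivially on $0^\pm(0)$ and on $0^\mp(0)$. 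Second, the quantum Serre relations of $\U$ in which one index equals $n$, together with the commutators $E_iF_j=F_jE_i$ ($i\ne j$) not already contained in Proposition~\ref{prop-a}, are handled through Proposition~\ref{prop13}: identities~(\ref{iv}) and~(\ref{v}) there state that the relevant expressions annihilate every diagonal signed matrix $\md$, and an element $x\in\hat{\mcal K}$ vanishes once $x\md=0$ for all $\md$, since $\sum_\md\md$ is the unit of $\hat{\mcal K}$. In fact this last principle reduces every remaining relation of $\U$ to its counterpart in Proposition~\ref{prop13}, with Proposition~\ref{prop-a} needed only for the purely toral relations.

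For surjectivity, recall that $\mcal U$ is generated by $E_i$, $F_i$, the $0(\mbf j)$ for $\mbf j\in\mbb Z^N$, and $0^+(0),0^0(0),0^-(0)$. We have $E_i=\Upsilon(E_i)$, $F_i=\Upsilon(F_i)$, $0^\alpha(0)=\Upsilon(J_\alpha)$, and, since $0(\mbf j)$ depends only on $(j_1,\dots,j_{n+1})$ and $0$ is additive in $\mbf j$, $0(\mbf j)=\prod_{a=1}^{n+1}0(j_a\underline a)=\Upsilon\bigl(\prod_{a=1}^{n+1}H_a^{-j_a}\bigr)$. Hence every algebra generator of $\mcal U$ lies in the image of $\Upsilon$, and $\Upsilon$ is surjective.

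I expect the main obstacle to be precisely the node-$n$ bookkeeping described above --- matching the sign-decorated monomials $H_n^{\pm1}H_{n+1}^{\mp1}$ with the operators $0(\pm(\underline n-\underline{n+1}))$ across the three sectors of diagonal matrices, and confirming that the apparent discrepancies disappear by Lemma~\ref{diagonal}. Away from that node everything is a routine transcription between the relation list defining $\U$ and Propositions~\ref{prop-a} and~\ref{prop13}.
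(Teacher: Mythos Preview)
Your proof is correct and follows the same approach as the paper: verify that the defining relations of $\U$ are sent to relations in $\mcal U$ via Proposition~\ref{prop-a}, handle the $J_\pm F_nE_n$ relation separately using the fact that $\lambda_{n+1}=0$ for sign-$\pm$ diagonal matrices (your Lemma~\ref{diagonal} argument matches the paper's computation $0(\underline{n+1}-\underline n)0^{\pm}(0)=0(-\underline n)0^{\pm}(0)$), and read off surjectivity from the generators of $\mcal U$. You are in fact more careful than the paper in observing that the Serre relations with an index equal to $n$ and several of the commutators $E_iF_j=F_jE_i$ are not explicitly listed in Proposition~\ref{prop-a} and must instead be deduced from Proposition~\ref{prop13} via the unit $\sum_{\md}\md$; the paper's proof simply asserts that ``all defining relations of $\U$ map to the corresponding relations in $\mcal U$ given in Proposition~\ref{prop-a}'' without isolating these cases.
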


\begin{proof}
Under the map $\Upsilon$, all defining relations of $\U$ map  to the  corresponding relations in $\mcal U$ given in Proposition \ref{prop-a} except the commutator relation between $J_{\pm}$ and $F_nE_n$.
Since
\begin{equation*}
\textstyle 0(\underline{n+1}-\underline n)0^{\pm}(0)=\sum_{\lambda}v^{\lambda_{n+1}-\lambda_n}[D_{\lambda}^{\pm}]
=\sum_{\lambda}v^{-\lambda_n}[D_{\lambda}^{\pm}]=0(- \underline n)0^{\pm}(0),
\end{equation*}
we have
\begin{equation*}
\begin{split}
&\Upsilon(J_{\pm}F_nE_n-F_nE_nJ_{\mp}-\frac{H_n^{-1}H_{n+1} -H_n H_{n+1}^{-1} }{v-v^{-1}} (J_{\pm}-J_{\mp}))\\
& =0^{\pm}(0)F_nE_n-F_nE_n0^{\mp}(0)-\frac{0( \underline n)-0(- \underline n)}{v-v^{-1}}(0^{\pm}(0)-0^{\mp}(0))=0
\end{split}
\end{equation*}
This shows that $\Upsilon$ is an algebra homomorphism. The surjectivity is clear.
\end{proof}

\begin{rem}
It is not clear if $\ker \Upsilon=0$.
\end{rem}

\section{Case II}

In this section, we turn to the case when  all flags at the $n$-th step are assumed  to be maximal isotropic.

\subsection{The second double centralizer}
\label{sec6.1}

We define $\mscr X^m$ to be the subset of $\mscr X$ in Section \ref{setup} subject to the condition that the $n$-th step of the flags is maximal isotropic. In particular, we have
$V_n = V_{n+1}$ for any $V\in \mscr X^m$, and thus
$$
\mscr X^m = \mscr X^2 \sqcup \mscr X^3.
$$
Similar to the definition of the algebra $\mcal S$ in Section \ref{sec4.2}, we consider the convolution algebra
$$\mcal S^m=\mcal A_G (\mscr X^m \times \mscr X^m)$$ on
$\mscr  X^m \times \mscr  X^m $ and the free $\mcal A$-module
$$\mcal W=\mcal A_G(\mscr  X^m \times \mscr Y).$$
Under the  convolution product, $\mcal W$ has a $\mcal S^m$-$\mcal H_{\mscr Y}$-bimodule structure.
By ~\cite{P09},  we have

\begin{lem} \label{Geometric-duality-II}
The triple $(\mcal S^m, \mcal H_{\mscr Y}; \mcal W)$ satisfies the  double centralizer property, i.e.,
\begin{align}
\label{Geometric-duality-2}
\End_{\mcal S^m}(\mcal W)=\mcal H_{\mscr Y}\quad {\rm and}\quad \End_{\mcal H_{\mscr Y}}(\mcal W)=\mcal S^m, \quad \mbox{if}\; n\geq d.
\end{align}
\end{lem}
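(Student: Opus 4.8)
The plan is to imitate the proof of Lemma \ref{eq34}, applying \cite[Theorem 2.1]{P09} to the triple $(\mscr X^m, \mscr Y; G)$ in place of $(\mscr X, \mscr Y; G)$. The two ingredients that \emph{loc.\ cit.} requires are: that $G$ acts transitively on $\mscr Y$, which is noted in Section \ref{setup} and is exactly where the parity condition $|F_d\cap M_d|\equiv d\bmod 2$ is used; and that $\mscr X^m=\mscr X^2\sqcup\mscr X^3$ is a finite union of $G$-orbits each point of which is refined by some complete flag in $\mscr Y$ --- this last property is precisely what fails for $n<d$ and holds for $n\geq d$, so the hypothesis $n\geq d$ enters here, just as in the $\mscr X$ case. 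Granting, as remarked after Lemma \ref{eq34}, that the argument of \cite{P09} adapts from $\mbb C$ to the ground ring $\mcal A$, one obtains both equalities of (\ref{Geometric-duality-2}).

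It may be more transparent to organize this as a reduction to Lemma \ref{eq34} itself. Since $\mscr X^m$ is an open and closed $G$-stable subvariety of $\mscr X$ (the locus where the $n$-th step is maximal isotropic), the characteristic function $e\in\mcal S$ of $\{(V,V):V\in\mscr X^m\}$ is an idempotent, and one checks at once that $e*\mcal S*e\cong\mcal S^m$ as algebras and $e*\mcal V\cong\mcal W$ as $(\mcal S^m,\mcal H_{\mscr Y})$-bimodules. Decomposing $\mcal V=e*\mcal V\oplus(1-e)*\mcal V$ as right $\mcal H_{\mscr Y}$-modules and extracting the $(1,1)$-corner of $\End_{\mcal H_{\mscr Y}}(\mcal V)$ gives $\End_{\mcal H_{\mscr Y}}(\mcal W)\cong e*\End_{\mcal H_{\mscr Y}}(\mcal V)*e\cong e*\mcal S*e=\mcal S^m$ by Lemma \ref{eq34}. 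For the opposite centralizer one would first check that $\mcal W$ generates $\mcal V$ as a left $\mcal S$-module --- every $G$-orbit in $\mscr X^1\times\mscr Y$ lies in the $\mcal S$-span of orbits in $\mscr X^m\times\mscr Y$, obtained by enlarging the left-hand isotropic flag to a maximal one, which is possible since $n\geq d$ --- and that $\mcal W$ is a faithful $\mcal H_{\mscr Y}$-module; a Morita-type argument then identifies $\End_{\mcal S^m}(\mcal W)$ with $\End_{\mcal S}(\mcal V)=\mcal H_{\mscr Y}$.

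The step I expect to be the main obstacle is the fullness half, $\End_{\mcal H_{\mscr Y}}(\mcal W)\subseteq\mcal S^m$ --- equivalently the nontrivial direction of \cite[Theorem 2.1]{P09} in this truncated setting, or in the second route the combination of the generation statement $\mcal S*\mcal W=\mcal V$ with the corner extraction. Concretely this amounts to the assertion that a $G$-orbit on $\mscr X^m\times\mscr X^m$ is determined by its intersection pattern with the $G$-orbits on $\mscr X^m\times\mscr Y$; for $n<d$ distinct such orbits collapse once the right factor is refined to a complete flag and the double centralizer genuinely fails, so $n\geq d$ is used in an essential way. Beyond this, the only inputs needed are Lemma \ref{eq34} and the decomposition $\mscr X^m=\mscr X^2\sqcup\mscr X^3$ recorded at the start of Section \ref{sec6.1}.
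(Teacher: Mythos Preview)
Your proposal is correct, and your first paragraph matches the paper's approach exactly: the paper simply cites \cite{P09} (just as for Lemma~\ref{eq34}) and gives no further argument, so your identification of \cite[Theorem 2.1]{P09} applied to $(\mscr X^m,\mscr Y;G)$ is precisely what is intended.

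Your second and third paragraphs go well beyond the paper by offering an alternative reduction to Lemma~\ref{eq34} via the idempotent $e$ and a corner extraction. This is a genuinely different route: the paper treats the $\mscr X^m$ case as a fresh instance of Pouchin's theorem, while you derive it from the $\mscr X$ case already established. Your approach has the advantage of making the logical dependence on Lemma~\ref{eq34} explicit and of isolating exactly where $n\geq d$ enters (the generation statement $\mcal S*\mcal W=\mcal V$ and the orbit-separation remark); the paper's approach is shorter but opaque, relying on the reader to rerun the argument of \cite{P09} in the new setting. Either way the content is the same, and your discussion of the potential obstacle is on point but not actually needed here since the paper is content with the bare citation.
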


Let $\Pi^m=\{B\in \Pi|b_{n+1,j}=0, \forall j\}$, where $\Pi$ is defined in Section \ref{sec3.2}.
A restriction of the bijection (\ref{eq44}) in Section \ref{sec3.2} yields a bijection
$$
G \backslash \mscr  X^m \times \mscr Y \xrightarrow{\sim} \Pi^m.
$$
Moreover, the isomorphism (\ref{V-V}) restricts to an isomorphism
\begin{align} \label{W-W}
\mbf W^{\otimes d} \overset{~}{\to} \mbb Q(v)\otimes \mcal W,
\end{align}
where $\mbf W^{\otimes d}$ is defined in Section ~\ref{Um}.


Observe that the algebra $\mcal S^m$ is naturally a subalgebra of $\mcal S$, while $\mcal W$ is an $\mcal A$-submodule of  $\mcal V$ in (\ref{V}).
So we can define the function $E_i$, $F_i$, $H_a^{\pm}$, for $i\in [1, n-1]$,  $a\in [1, n]$,  and $J_{\pm}$ in $\mcal S^m$  to be  the restrictions of the functions in $\mcal S$ under the same notations, respectively.
We further define
\begin{equation}
T(V,V')=
\left\{
\begin{array}{ll}
v^{1-\lambda'_n}, & {\rm if}\ |V_n\cap V_n'|= d-1,\ V_j=V_j',\  \forall j\in [1,n-1];\\
0,&{\rm otherwise}.
\end{array}
\right.
\end{equation}
By definitions, we have
\begin{equation} \label{T=FE}
T= \left ( F_nE_n+\frac{H_nH_{n+1}^{-1} - H_n^{-1}H_{n+1} }{v-v^{-1}} \right )|_{\mscr X^m \times \mscr X^m}.
\end{equation}

We see immediately

\begin{lem} \label{Sm-W}
The actions of $E_i$, $F_i$, $H_a^{\pm 1}$ and $J_{\pm}$ for $i\in [1, n-1]$, $a\in [1,n]$  on $\mcal W$ are given by the formulas in Corollary ~\ref{cor9}, with the action of $T$ on $\mcal W$ given by
$F_nE_n+\frac{H_nH_{n+1}^{-1} - H_n^{-1}H_{n+1} }{v-v^{-1}}$ from Corollary ~\ref{cor9} again.
\end{lem}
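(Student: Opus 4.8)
The plan is to deduce the lemma from the description of the $\mcal S$-action on $\mcal V$ in Corollary~\ref{cor9} together with the compatibility of the geometric convolutions for the pairs $(\mcal S^m,\mcal W)$ and $(\mcal S,\mcal V)$. First I would record the following \emph{restriction principle}: under the inclusions $\mcal S^m\hookrightarrow\mcal S$ (extension of functions by zero off $\mscr X^m\times\mscr X^m$) and $\mcal W\hookrightarrow\mcal V$ (extension by zero off $\mscr X^m\times\mscr Y$), for any $f\in\mcal S^m$ and $w\in\mcal W$ the element $f*w$ computed in $\mcal V$ lies in $\mcal W$ and agrees with the product $f*w$ computed in $\mcal W$. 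Indeed $(f*w)(V,F)=\sum_{V''\in\mscr X}f(V,V'')w(V'',F)$, and a summand vanishes unless $V''\in\mscr X^m$ (as $w$ is supported on $\mscr X^m\times\mscr Y$), while the whole sum vanishes unless $V\in\mscr X^m$ (as $f$ is supported on $\mscr X^m\times\mscr X^m$); hence $f*w$ is supported on $\mscr X^m\times\mscr Y$ and is obtained by summing over $\mscr X^m$ only.

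Next, for $i\in[1,n-1]$ and $a\in[1,n]$, the functions $E_i,F_i,H_a^{\pm 1},J_\pm\in\mcal S^m$ are by definition the restrictions to $\mscr X^m\times\mscr X^m$ of the functions of the same name in $\mcal S$; moreover each of those functions in $\mcal S$ is supported on pairs $(V,V')$ with $V_j=V'_j$ for all $j\neq i$ (or with $V=V'$), so $E_i(V,V')\neq0$ (etc.) with $V'\in\mscr X^m$ already forces $V_n=V'_n$ to be maximal isotropic, i.e. $V\in\mscr X^m$. Hence the $\mcal S$-action of each of these functions on a vector of $\mcal W$ stays in $\mcal W$, and by the restriction principle it coincides with the $\mcal S^m$-action; combined with Corollary~\ref{cor9} this gives the asserted formulas. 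Concretely, on the basis $\{e_{r_1\cdots r_D}\}$ of $\mcal W$ (those $\mbf r$ with $r_j\neq n+1$ for all $j$) the operators $E_i,F_i$ with $i\leq n-1$ only turn an entry $i$ into $i+1\leq n$ (and the mirror entry $N+1-i$ into $N-i$) or conversely, never producing the value $n+1$, while $H_a^{\pm1},J_\pm$ act by scalars and the $J_0$-summand of Corollary~\ref{cor9} is identically zero on $\mcal W$.

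Finally, for $T$ I would invoke the identity $T=(F_nE_n+\frac{H_nH_{n+1}^{-1}-H_n^{-1}H_{n+1}}{v-v^{-1}})|_{\mscr X^m\times\mscr X^m}$ from~(\ref{T=FE}), which exhibits $T$ as a genuine element of $\mcal S^m$; by the restriction principle its action on $\mcal W$ is the restriction to $\mcal W$ of the $\mcal S$-action of $F_nE_n+\frac{H_nH_{n+1}^{-1}-H_n^{-1}H_{n+1}}{v-v^{-1}}$ on $\mcal V$, which is precisely the combination from Corollary~\ref{cor9}. The one point requiring a check is that this combination does preserve $\mcal W$: applied to $e_{r_1\cdots r_D}$ with all $r_j\neq n+1$, the operator $E_n$ produces a sum of basis vectors each having exactly two coordinates equal to $n+1$, namely a position $p$ whose former value was $n$ and its mirror $D+1-p$ whose former value was $n+2$; then $F_n$ acts only at those two positions and removes both values $n+1$ at once (since a coordinate equals $n+1$ precisely when its mirror does), returning a linear combination of basis vectors of $\mcal W$, and the diagonal $H$-terms obviously preserve $\mcal W$. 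This is the main (albeit minor) obstacle, and geometrically it reflects that $E_n$ necessarily routes through flags in $\mscr X^1$ before $F_n$ restores maximal isotropy. With this verified, the formula for the $T$-action follows, completing the proof.
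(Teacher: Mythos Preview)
Your proposal is correct and matches the paper's (implicit) argument: the paper gives no proof beyond ``We see immediately,'' relying on the inclusions $\mcal S^m\subset\mcal S$, $\mcal W\subset\mcal V$, Corollary~\ref{cor9}, and the identity~(\ref{T=FE}), which is precisely the restriction/compatibility argument you spell out in detail. Your explicit verification that $F_nE_n$ applied to $e_{\mbf r}$ with no $r_j=n+1$ returns to $\mcal W$ (equivalently, that $(F_nE_n+\frac{H_nH_{n+1}^{-1}-H_n^{-1}H_{n+1}}{v-v^{-1}})(V,V'')$ vanishes when $V''\in\mscr X^m$ but $V\notin\mscr X^m$) is the one nontrivial check, and you handle it correctly.
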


\subsection{Relations for $\mcal S^m$}

We now determine the relations for the algebra $\mcal S^m$. By using Proposition ~\ref{prop3} and (\ref{T=FE}),  we have

\begin{prop} \label{prop7}
The functions $E_i$, $F_i$ and $H_a^{\pm 1}$, for $i\in [1, n-1]$, $a\in [1,n]$  together with the functions $J_{\pm}$ and $T$ in $\mcal S^m$
satisfy the defining relations of the algebra $\U^m$ in Section \ref{Um}.
\end{prop}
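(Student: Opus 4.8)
The plan is to verify the defining relations of $\U^m$ listed in Section~\ref{Um} one block at a time, reusing as much of Proposition~\ref{prop3} as possible. The key observation is that $\mcal S^m$ is a subalgebra of $\mcal S$, that $\mscr X^m = \mscr X^2\sqcup\mscr X^3$ carries exactly the two $G$-orbits detected by $J_+$ and $J_-$ (so $J_0$ restricts to $0$ and the relation $J_++J_-=1$ is automatic), and that by (\ref{T=FE}) the function $T$ is literally the restriction to $\mscr X^m\times\mscr X^m$ of the element $F_nE_n+\tfrac{H_nH_{n+1}^{-1}-H_n^{-1}H_{n+1}}{v-v^{-1}}$ of $\mcal S$. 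Thus every relation of $\U^m$ that does not involve $T$ follows by restricting the corresponding relation of Proposition~\ref{prop3}: the relations among $E_i,F_i,H_a^{\pm1}$ for $i\in[1,n-1]$, $a\in[1,n]$ (including the $\mathfrak{sl}$-Serre relations, the $H$-$E$/$H$-$F$ commutation, and $E_iF_j-F_jE_i=\delta_{ij}\tfrac{H_iH_{i+1}^{-1}-H_i^{-1}H_{i+1}}{v-v^{-1}}$, all with $i,j\le n-1$), together with $J_\pm H_a=H_aJ_\pm$, $J_\pm E_i=E_iJ_\pm$, $F_iJ_\pm=J_\pm F_i$ for $i\le n-1$ (note the $\delta_{in}$ factors in Section~\ref{U-first} all drop out here since $i\le n-1$). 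One must also check $H_iT=TH_i$ and $TE_i=E_iT$, $TF_i=F_iT$ for $i\le n-2$: since $T$ is built from the $n$-th step data and $H_i,E_i,F_i$ for $i\le n-2$ act only on steps strictly below $n-1$, these follow from the commutation relations already recorded in Proposition~\ref{prop3} applied to the constituents $F_nE_n$, $H_n$, $H_{n+1}$ of $T$ (using $H_nH_{n+1}=H_{n+1}H_n$ and that $H_n,H_{n+1}$ commute with $E_i,F_i$ for $i\le n-2$ by the $H$-$E$/$H$-$F$ relations).

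The genuinely new relations are the five involving $T$: $J_\pm T=TJ_\mp$; the two $T$-Serre relations $E_{n-1}^2T-(v+v^{-1})E_{n-1}TE_{n-1}+TE_{n-1}^2=0$ and its $F$-analogue; and the two ``affine'' relations $T^2E_{n-1}-(v+v^{-1})TE_{n-1}T+E_{n-1}T^2=E_{n-1}$ and its $F$-analogue. The relation $J_\pm T=TJ_\mp$ is immediate from the parity-shifting behaviour of $F_nE_n$ recorded in the proof of Proposition~\ref{prop3} (the identity $J_+F_nE_n-F_nE_nJ_-=\tfrac{H_n^{-1}H_{n+1}-H_nH_{n+1}^{-1}}{v-v^{-1}}(J_+-J_-)$ there, combined with $T=F_nE_n+\tfrac{H_nH_{n+1}^{-1}-H_n^{-1}H_{n+1}}{v-v^{-1}}$, rearranges to $J_+T=TJ_-$, and the $\pm\leftrightarrow\mp$ version is symmetric; one uses that on $\mscr X^m$ one has $\lambda_{n+1}'=\lambda_n'$ so $H_n=H_{n+1}$ there, which simplifies the computation). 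For the remaining four, the cleanest route is to expand $T$ in terms of characteristic functions of $G$-orbits on $\mscr X^m\times\mscr X^m$ and compute the convolution products directly at the specialization $v\mapsto\sqrt q$, exactly in the style of the penultimate-identity computation in the proof of Proposition~\ref{prop3}; alternatively, one can recognize that $\mcal S^m$ contains the coideal algebra $\U^\imath$ of \cite[5.3]{BKLW13} as a subalgebra and invoke the corresponding relations proved there for the $T$ of type $\mbf B/\mbf C$, since the $n$-th-step local geometry (a maximal isotropic subspace sitting one step above $V_{n-1}$, with $V_{n+1}=V_n$) is the same as in \emph{loc.\ cit.}\ once one passes to $V_{n-1}^\perp/V_{n-1}$, whose form is again of the split shape by Lemma~\ref{lem1}.

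The main obstacle will be the two affine relations $T^2E_{n-1}-(v+v^{-1})TE_{n-1}T+E_{n-1}T^2=E_{n-1}$ and its $F$-version: these are where the type-$\mbf D$ feature that $\mscr X^m$ has two components $\mscr X^2,\mscr X^3$ genuinely enters, and one must track how the $\pm$ signs flip under successive applications of $T$ (recall from Remark~\ref{remEx} and Proposition~\ref{prop4}(c) that the $E_n$/$F_n$ multiplications behave differently in type $\mbf D$ — sometimes producing a single sign-shifted term rather than a sum). My plan is to reduce, as in the proof of Proposition~\ref{prop3}, to the case $D=4$, $n=2$ (so $E_{n-1}=E_1$ acts on a two-step flag, and $T$ toggles the top step between the two maximal isotropics compatible with $V_1$), where the identity becomes a finite check on a handful of orbit characteristic functions $e_{\A}$ with $\A\in\Xi^{\pm}$; the appearance of the inhomogeneous term $E_{n-1}$ on the right comes precisely from applying $T$ twice and returning to the original component, contributing the ``$+1$'' seen in the last two relations of $\U^m$. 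Once this small-rank case is settled, the general case follows by the standard localization argument (all the relations being supported on a bounded window of the flag), completing the proof.
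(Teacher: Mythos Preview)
Your high-level strategy --- restrict the relations of Proposition~\ref{prop3} to $\mscr X^m\times\mscr X^m$ via (\ref{T=FE}) --- is exactly the paper's first (one-line) proof, and your treatment of the ``easy'' relations and of $J_\pm T=TJ_\mp$ is correct in outline. One small slip: on $\mscr X^m$ you do \emph{not} have $\lambda'_{n+1}=\lambda'_n$; rather $V_{n+1}=V_n$ forces $\lambda'_{n+1}=0$, so $H_{n+1}$ restricts to the identity. Your cancellation still goes through with this correction.

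Where you diverge from the paper is in the four hard $T$-relations. The paper does \emph{not} reduce to $D=4$; instead it introduces an auxiliary function
\[
\widetilde T=T+vH_nH_{n+1}^{-1}
\]
(supported on $|V_n\cap V'_n|\ge d-1$ rather than $=d-1$), computes the convolutions $E_{n-1}^2\widetilde T$, $\widetilde T E_{n-1}^2$, $E_{n-1}\widetilde T E_{n-1}$, $\widetilde T^2E_{n-1}$, $E_{n-1}\widetilde T^2$, $\widetilde T E_{n-1}\widetilde T$ directly for arbitrary $(V,V')$ by case analysis on the incidence of $V_{n-1},V'_{n-1},V_n,V'_n$, obtains the Serre and affine relations for $\widetilde T$, and then converts back to $T$ via (\ref{eq28}). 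The $F$-versions are obtained by applying the anti-automorphism $\rho$ of (\ref{rho}), not by a separate computation.

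Your reduction to $D=4$, $n=2$ is not sound as written: the relations involve $\lambda'_{n-1}$ and $\lambda'_n$, which can be arbitrary, so a literal small-rank check does not cover all cases. A correct localization would pass to $V_{n-2}^{\perp}/V_{n-2}$ (using Lemma~\ref{lem1}) and argue that the convolution identities depend only on this quotient, which still leaves a family of dimensions to handle --- essentially the same case analysis the paper carries out. Your alternative of invoking \cite{BKLW13} is reasonable in spirit but would need a careful comparison of the two convolution algebras, which the paper avoids by computing directly.
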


\begin{rem} \label{Sm-action}
The function $T$ has a geometric interpretation.  More preciely, we  set
\begin{equation*}
  S(T)=\{(V,V')||V_n\cap V_n'|=d-1,\ V_j=V_j', \forall j\in [1,n-1]\}.
\end{equation*}
By (\ref{partial-order}), we see that  $S(T)$ is a smooth closed subvariety of $\mscr X^m\times \mscr X^m$  over the algebraic closure of the field $\mbb F_q$. So the function $T$ is the  function version of  the intersection complex associated to the variety $S(T)$, up to a shift.
\end{rem}

The rest of this subsection is devoted to give another more direct proof of  Proposition \ref{prop7}.

We first define an auxiliary  function   $\wt T$  by
\begin{equation}
\wt T(V,V')=\left\{\begin{array}{ll}
    v^{1-\lambda'_n}, & {\rm if}\ |V_n\cap V_n'|\geq d-1,\ V_j=V_j', \forall j\in [1,n-1];\\
    0,&{\rm otherwise},
  \end{array}\right.
\end{equation}
where $\lambda'_n=|V'_n|-|V'_{n-1}|$.
Moreover, we have
\begin{equation}\label{eq28}
\wt T=T+ vK_n = (F_nE_n+v\frac{vK_n-v^{-1}K_n^{-1}}{v-v^{-1}})|_{\mscr X^m \times \mscr X^m}.
\end{equation}

By a direct computation, we have
\begin{equation*}
  E^2_{n-1}\wt T(V,V')=\left\{\begin{array}{ll}
  (v^2+1)v^{-3\lambda_n'}& {\rm if}\ V_{n-1}\overset{2}{\subset}V_{n-1}'\subset V_n,\ |V_n\cap V_n'|\geq d-1,\vspace{6pt}\\
  0&{\rm otherwise}.
  \end{array}
  \right.
\end{equation*}
\begin{equation*}
\wt T E^2_{n-1}(V,V')=\left\{\begin{array}{ll}
  (v^2+1)v^{-3\lambda_n'-2}& {\rm if}\ V_{n-1}\overset{2}{\subset}V_{n-1}',\ |V_n\cap V_n'|\geq d-1,\vspace{6pt}\\
  0&{\rm otherwise}.
  \end{array}
  \right.
\end{equation*}
\begin{equation*}
E_{n-1}\wt T E_{n-1}(V,V')=\left\{\begin{array}{ll}
  (v^2+1)v^{-3\lambda_n'-1}& {\rm if}\ V_{n-1}\overset{2}{\subset}V_{n-1}'\subset V_n,\ |V_n\cap V_n'|\geq d-1,\vspace{6pt}\\
  v^{-3\lambda_n'-1}& {\rm if}\ V_{n-1}\overset{2}{\subset}V_{n-1}'\not \subset V_n,\ |V_n\cap V_n'|\geq d-1,\vspace{6pt}\\
  0&{\rm otherwise}.
  \end{array}
  \right.
\end{equation*}
So we have
$$
E_{n-1}^2\wt T-[2]_v E_{n-1}\wt TE_{n-1}+\wt TE_{n-1}^2=0,
$$
and by (\ref{eq28}) it implies that
\begin{align} \label{ET}
E_{n-1}^2 T-[2]_v E_{n-1} TE_{n-1}+TE_{n-1}^2=0.
\end{align}

A direct computation shows that we have
\begin{equation*}
\wt T^2E_{n-1}(V,V')=\left\{\begin{array}{ll}
  (\frac{v^{2\lambda'_n+2}-1}{v^2-1}+1)v^{-3\lambda'_n}& {\rm if}
  \ V_{n-1}\overset{1}{\subset}V'_{n-1},\ V_n=V'_n, \vspace{6pt}\\
  (v^2+1)v^{-3\lambda'_n}& {\rm if}\ V_{n-1}\overset{1}{\subset}V'_{n-1},\
   |V_n\cap V_n'|=d-2,\vspace{6pt}\\
  2v^{-3\lambda'_n}& {\rm if}\ V_{n-1}\overset{1}{\subset}V'_{n-1},\
   |V_n\cap V_n'|=d-1,\vspace{6pt}\\
  0&{\rm otherwise}.
  \end{array}
  \right.
\end{equation*}
\begin{equation*}
E_{n-1}\wt T^2(V,V')=\left\{\begin{array}{ll}
  (\frac{v^{2\lambda'_n}-1}{v^2-1}+1)v^{-3\lambda'_n+2}& {\rm if}
  \ V_{n-1}\overset{1}{\subset}V'_{n-1}\subset V_n,\ V_n=V'_n, \vspace{6pt}\\
  (v^2+1)v^{-3\lambda'_n+2}& {\rm if}\ V_{n-1}\overset{1}{\subset}V'_{n-1}\subset V_n,\
   |V_n\cap V_n'|=d-2,\vspace{6pt}\\
  2v^{-3\lambda'_n+2}& {\rm if}\ V_{n-1}\overset{1}{\subset}V'_{n-1}\subset V_n,\
   |V_n\cap V_n'|=d-1,\vspace{6pt}\\
  0&{\rm otherwise}.
  \end{array}
  \right.
\end{equation*}
\begin{equation*}
\wt TE_{n-1}\wt T(V,V')=\left\{\begin{array}{ll}
  (\frac{v^{2\lambda'_n}-1}{v^2-1}+1)v^{-3\lambda'_n+1}& {\rm if}
  \ V_{n-1}\overset{1}{\subset}V'_{n-1}\subset V_n,\ V_n=V'_n, \vspace{6pt}\\
  (v^2+1)v^{-3\lambda'_n+1}& {\rm if}\ V_{n-1}\overset{1}{\subset}V'_{n-1}\subset V_n,\
   |V_n\cap V_n'|=d-2,\vspace{6pt}\\
v^{-3\lambda'_n+1}& {\rm if}\ V_{n-1}\overset{1}{\subset}V'_{n-1}\not \subset V_n,\
   |V_n\cap V_n'|=d-2,\vspace{6pt}\\
  2v^{-3\lambda'_n+1}& {\rm if}\ V_{n-1}\overset{1}{\subset}V'_{n-1}\subset V_n,\
   |V_n\cap V_n'|=d-1,\vspace{6pt}\\
   v^{-3\lambda'_n+1}& {\rm if}\ V_{n-1}\overset{1}{\subset}V'_{n-1}\not \subset V_n,\
   |V_n\cap V_n'|=d-1,\vspace{6pt}\\
  0&{\rm otherwise}.
  \end{array}
  \right.
\end{equation*}
\begin{equation*}
E_{n-1}\wt T(V,V')=\left\{\begin{array}{ll}
  v^{-2\lambda'_n+1}& {\rm if}\ V_{n-1}\overset{1}{\subset}V'_{n-1}\subset V_n,\
   |V_n\cap V_n'|\geq d-1,\vspace{6pt}\\
  0&{\rm otherwise}.
  \end{array}
  \right.
\end{equation*}
\begin{equation*}
\wt T E_{n-1}(V,V')=\left\{\begin{array}{ll}
  v^{-2\lambda'_n}& {\rm if}\ V_{n-1}\overset{1}{\subset}V'_{n-1},\
   |V_n\cap V_n'|\geq d-1,\vspace{6pt}\\
  0&{\rm otherwise}.
  \end{array}
  \right.
\end{equation*}
This implies that we have
$$
\wt T^2E_{n-1}-[2]_v\wt TE_{n-1}\wt T+E_{n-1}\wt T^2=E_{n-1}-(v-v^{-1})(v\wt TE_{n-1}-E_{n-1}\wt T)H_nH_{n+1}^{-1},
$$
which implies again by (\ref{eq28}) that
\begin{equation}\label{TE}
T^2E_{n-1}- [2]_v TE_{n-1}T+E_{n-1}T^2=E_{n-1}.
\end{equation}

Now apply the map $\rho$ in (\ref{rho})  to (\ref{ET}) and (\ref{TE}), we get
\begin{align*}
& F_{n-1}^2 T - [2]_v F_{n-1}TF_{n-1}+ TF_{n-1}^2=0,\\
& T^2F_{n-1} - [2]_vTF_{n-1}T+F_{n-1}T^2=F_{n-1}.
\end{align*}
The other defining equations of $\U^m$ are straightforward to check and skipped. This finishes the proof of  Proposition \ref{prop7}.

\subsection{Generators and bases for $\mcal S^m$}

We consider the following subset of $\Xi_{\mbf D}$.
$$\Xi_{\mbf D}'=\{ \A \in \Xi_{\mbf D}| \ro (\A)_{n+1}=\co(\A)_{n+1}=0\}.$$
We then have $\sgn(\A) \in \{ +, -\}$ if $\A\in \Xi_{\mbf D}'$. Moreover, we have a bijection
\begin{align}
G\backslash \mscr X^m\times \mscr X^m \simeq \Xi_{\mbf D}',
\end{align}
inherited from the bijection (\ref{Phi}).

Recall from Theorem ~\ref{thm1}  that we set $R_{ij}=\sum_{k=1}^ia_{kj}$ for a signed matrix $\A=(A, \epsilon)$.
Let $\mathfrak e_{i,t}$ denote a signed matrix such that
$\mathfrak e_{i, t} -R_{i, i+t} E^{\theta}_{i, i+1}$ is diagonal.
For  a sequence $a_s, a_{s+1},\cdots, a_{r}$ with $s\leq r$, we set
\[
\overset{s}{\underset{i=r}{\sqcap}}
a_i=a_ra_{r-1}\cdots a_s.
\]

\begin{thm}\label{thm2}
For any $\A=A^{\epsilon} \in \Xi'_{\mbf D}$,
there exists a product of  signed  matrices   $\mathfrak e_{i, t}$
\begin{equation} \label{Ma-ii}
\begin{split}
\mathfrak n_{\A} = \left (
\overset{n+1}{\underset{t=N-1}{\sqcap}}
\overset{1}{\underset{i=N-t}{\sqcap}}
[\mathfrak e_{i, t}]
\right )
\overset{1}{\underset{t=n}{\sqcap}}
 \left (
\overset{n+2}{\underset{i=N-t}{\sqcap}}
[\mathfrak e_{i, t}]
([\mathfrak e_{n, t+1}][\mathfrak e_{n+1, t}])
\overset{n-t+1}{\underset{i=n-1}{\sqcap}}
[\mathfrak e_{i, t+1}]
\overset{1}{\underset{i=n-t}{\sqcap}}
[\mathfrak e_{i,t}]
 \right )
\end{split}
\end{equation}
 such that
\begin{equation}
\mathfrak n_{\A} =[\A]+{\rm lower\ terms},
\end{equation}
where the matrices $\mathfrak e_{i, t}$  are completely determined by the conditions
$\ro ( \mathfrak e_{1, N-1})=\ro ( \A)$ and $\co (\mathfrak e_{1, 1}) = \co (\A)$ and the signs of  $\mathfrak e_{i, t}$ are inductively determined by the conditions that
$s_r(\mathfrak e_{1,1}) = s_r (\A) $ and
$s_l (\mathfrak e_{i, t}) = s_r (\mathfrak e_{i, t} ) + (-1)^{ s_r (\mathfrak e_{i, t} ) } \p ( \mathfrak{ e}_{i , t} )$.
\end{thm}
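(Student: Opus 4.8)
The strategy mirrors the proof of Theorem~\ref{thm1}, but with two new ingredients: first, we must restrict attention to signed matrices in $\Xi'_{\mbf D}$, where by definition $\ro(\A)_{n+1}=\co(\A)_{n+1}=0$ and therefore $\sgn(\A)\in\{+,-\}$; second, the ordering of the factors in \eqref{Ma-ii} differs from the one in \eqref{order} precisely at the ``middle'' of each antidiagonal strip, where the two rows $n$ and $n+1$ must be handled together rather than separately. The plan is to show that, reading the product in \eqref{Ma-ii} from right to left and applying the multiplication formulas of Corollary~\ref{cor7} one factor at a time, each factor $[\mathfrak e_{i,t}]$ fills in the entry $R_{i,i+t}$ of the target matrix $A$ in the off-diagonal position $(i,i+1)$ without disturbing the entries already placed, up to strictly lower terms in the $\prec$-order. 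Since the signs are prescribed recursively by $s_r(\mathfrak e_{1,1})=s_r(\A)$ and $s_l(\mathfrak e_{i,t})=s_r(\mathfrak e_{i,t})+(-1)^{s_r(\mathfrak e_{i,t})}\p(\mathfrak e_{i,t})$ — which is exactly the rule \eqref{eqsupp1} from the proof of Theorem~\ref{thm1} specialized to the case $\ro=\co=0$ in the middle row — the sign bookkeeping reduces to checking that each intermediate signed matrix again lies in $\Xi'_{\mbf D}$, i.e.\ that applying $E^\theta_{i,i+1}$-type elementary moves never creates a nonzero entry in row or column $n+1$.

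First I would set up the induction on the columns of $A$, processing the antidiagonal strips in the order dictated by \eqref{Ma-ii}: the innermost product $\overset{1}{\underset{i=n-t}{\sqcap}}[\mathfrak e_{i,t}]$ followed by the middle block $([\mathfrak e_{n,t+1}][\mathfrak e_{n+1,t}])$ and then the outer piece $\overset{n+2}{\underset{i=N-t}{\sqcap}}[\mathfrak e_{i,t}]$. For the factors with $i<n$ and $i>n+1$ the relevant multiplication rule is Corollary~\ref{cor7}(a) or (b) with $h\neq n$, and the argument is identical to the type-$\mbf A$ and type-$\mbf{ABC}$ situations: the leading term is $[\A_{t(k)}]$ with $t(k)_u=R\,\delta_{u,k}$ and everything else is lower. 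The genuinely new step is the middle block $[\mathfrak e_{n,t+1}][\mathfrak e_{n+1,t}]$: here I would invoke Corollary~\ref{cor7}(a) with $h=n$ and the more delicate Corollary~\ref{cor7}(b)(2) with $h=n$, $k\le n$, noting that because we are inside $\mscr X^m$ we always have $|V_n|=|V_{n+1}|=d$, so the entries $a_{n,n+1}$, $a_{n+1,n}$ and $a_{n+1,n+1}$ stay zero throughout, the second (``extra'') term in \eqref{eq33} and in Remark~\ref{remEx} never contributes, and the product collapses to a single leading term with the correct sign $\sgn(s_l,\,s_r)$ by the recursion on the signs.

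The main obstacle I anticipate is controlling the $\prec$-order and the sign simultaneously in the middle block: one must check that after placing $[\mathfrak e_{n,t+1}]$ the intermediate signed matrix still has $\ro_{n+1}=\co_{n+1}=0$ so that Corollary~\ref{cor7}(b)(2) applies with the ``diagonal'' hypothesis on $\C-rE^\theta_{n+1,n}$ intact, and that the sign produced by $\sgn(s_l(\C),s_r(\A))$ agrees with the prescribed $s_l(\mathfrak e_{n+1,t})$ coming from the recursion. This is where the parity function $\p(\cdot)$ and Lemma on $\sup(\A)=(s_l(\A),s_r(\A))$ enter essentially. Once this is verified the rest is a bookkeeping induction: the leading term of the full product is $[\A]$ because the entries $R_{ij}$ reconstruct $A$ by telescoping ($R_{ij}=\sum_{k\le i}a_{kj}$, so consecutive differences recover $a_{ij}$), and all error terms accumulated along the way are $\prec\A$. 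Finally, the bijection $G\backslash(\mscr X^m\times\mscr X^m)\simeq\Xi'_{\mbf D}$ guarantees that $\{[\A]\mid\A\in\Xi'_{\mbf D}\}$ is a basis of $\mcal S^m$, so the triangularity of $\{\mathfrak n_{\A}\}$ with respect to $\prec$ gives the theorem, and as a corollary (to be recorded next) $\mathfrak n_{\A}$ is a monomial basis and $\mcal S^m$ is generated by the elements $[\mathfrak e]$ with $\mathfrak e-RE^\theta_{i,i+1}$ diagonal.
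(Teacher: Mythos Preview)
Your overall plan is right in spirit, but there is a genuine gap in your treatment of the middle block $[\mathfrak e_{n,t+1}][\mathfrak e_{n+1,t}]$, and your justification for why it collapses to a single leading term is incorrect.

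You claim that ``because we are inside $\mscr X^m$ \ldots\ the entries $a_{n,n+1}$, $a_{n+1,n}$ and $a_{n+1,n+1}$ stay zero throughout,'' and that consequently ``the second (`extra') term in \eqref{eq33} and in Remark~\ref{remEx} never contributes.'' Both assertions are false. The individual factors $\mathfrak e_{n,t+1}$ and $\mathfrak e_{n+1,t}$ have nonzero row or column $n+1$ (indeed $E^\theta_{n,n+1}$ and $E^\theta_{n+1,n+2}=E^\theta_{n+1,n}$ both involve the middle row/column), so they are \emph{not} elements of $\mcal S^m$; the product is computed in the larger algebra $\mcal S$. After applying the first factor $[\mathfrak e_{n+1,t}]$ (which is a $\C$ with $\C-rE^\theta_{n+1,n}$ diagonal, so Proposition~\ref{prop4}(c) applies), the leading intermediate term already has $\ro_{n+1}\neq 0$ and hence lies outside $\Xi'_{\mbf D}$. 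It is only after multiplying by the second factor $[\mathfrak e_{n,t+1}]$ that the result lands back in $\Xi'_{\mbf D}$.

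The mechanism that makes this work is precisely Remark~\ref{remEx}, but not in the way you describe. That remark shows that when $h=n$ and one multiplies $e_{\B}*e_{\A}$ with $A-E^\theta_{n+1,n}$ diagonal, the product is a \emph{single} term---either $(1+a_{nn})_v\,e_{\A_n}$ or $e_{\A_{n+2}}$---and \emph{which one} depends on whether $\sgn(\B)=\sgn(\A)$. Both outcomes genuinely occur; neither is an ``extra term that never contributes.'' In the paper's $n=2$ computation this is exactly why $[\M_1]$ is either $[(D_\lambda,\epsilon)]$ or $[(D_{\lambda'}+R_{34}E^\theta_{24},\epsilon)]$, with the choice governed by the parity of $a_{14}+a_{24}$. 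The sign recursion $s_l(\mathfrak e_{i,t})=s_r(\mathfrak e_{i,t})+(-1)^{s_r(\mathfrak e_{i,t})}\p(\mathfrak e_{i,t})$ in the theorem statement is what keeps track of this dichotomy, not a device to stay inside $\Xi'_{\mbf D}$ at every intermediate step. Once you compute the two-factor product $[\mathfrak e_{n,t+1}][\mathfrak e_{n+1,t}]$ as a single element of the form $[\mathfrak e']$ with $\mathfrak e'-RE^\theta_{n,n+2}$ diagonal (or $\mathfrak e'$ diagonal) up to lower terms, the rest of your induction via Corollary~\ref{cor7} goes through.
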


\begin{proof}
The proof is  a modification of the one of  Theorem \ref{thm1}.
We show it for $n=2$.
We consider a signed matrix $\A=(A, +1)$ in $\Xi'_{\mbf D}$.
Without lost of generality, we assume that $\ur(\A)$ is even, i.e. $a_{14}+a_{24}+a_{15}+a_{25}$ is even.
Let $B_{10}$ be a diagonal matrix with diagonal entries being the entries of $\co (\A)$.
Let $B_{11}$ be the matrix such that $B_{11}-R_{12}E^{\theta}_{12}$ is a diagonal matrix and ${\rm co}(B_{11})={\rm ro}(B_{10})$.
Let $[M_1]=[D_{24}+R_{24}E^{\theta}_{23}][D_{34}+R_{34}E^{\theta}_{34}]$, where $D_{24}$
and $D_{34}$ are uniquely determined  by the condition  ${\rm co}(D_{34}+R_{34}E^{\theta}_{34})={\rm ro}(B_{11})$ and
${\rm co}(D_{24}+R_{24}E^{\theta}_{23})={\rm ro}(D_{34}+R_{34}E^{\theta}_{34})$.
Let $B_{14}$ be the matrix such that $B_{14}-R_{45}E^{\theta}_{45}$ is a diagonal matrix and ${\rm co}(B_{14})={\rm ro}(D_{24}+R_{24}E^{\theta}_{23})$.

By the example in Remark \ref{remEx},
$[M_1]$ is either $[(D_{\lambda},\epsilon)]$ or $[(D_{\lambda'}+R_{34}E^{\theta}_{24}, \epsilon)]$
up to a scalar for some $\lambda, \lambda'$ and $\epsilon$.
So we can talk about the sign of $M_1$.
We set
\begin{align*}
\B_{10} & = (B_{10}, + ), \quad
\B_{11}  = (B_{11}, +), \\
\M_1 & = \begin{cases}
  (M_1, +),  & {\rm if}\  a_{14}+a_{24}\ {\rm is\ even},\\
  (M_1, -),& {\rm if}\  a_{14}+a_{24}\ {\rm is\ odd},
\end{cases}\\
\B_{14} & = \begin{cases}
  (B_{14}, +),& {\rm if}\  a_{14}+a_{24}\ {\rm is\ even},\\
  (B_{14}, -),& {\rm if}\  a_{14}+a_{24}\ {\rm is\ odd}.
\end{cases}
\end{align*}
By Corollary \ref{cor7}, we have
$$[\B_{14}] [\M_1] [\B_{11}] [\B_{10}]=[\A_1]+ {\rm lower\ terms},$$
where
$$
\A_1= \begin{cases}
  (A_1, +),& {\rm if}\  a_{14}+a_{24}\ {\rm is\ even},\\
  (A_1, -),& {\rm if}\  a_{14}+a_{24}\ {\rm is\ odd},
\end{cases}
\quad
A_1=
\begin{pmatrix}
*&R_{12}&0&0&0\\R_{45}&*&0&R_{24}&0\\
0&0&0&0&0\\0&R_{24}&0&*&R_{45}\\0&0&0&R_{12}&*
\end{pmatrix},
$$
and the $*$'s are some positive numbers unique determined  by ${\rm co}(A_1)={\rm co}(B_{10})$.
Now let $B_{31}$ be the matrix such that $B_{31}-R_{14}E^{\theta}_{12}$ is a diagonal matrix and ${\rm co}(B_{31})={\rm ro}(B_{14})$ and
$[M_2]=[D_{25}+R_{25}E^{\theta}_{23}][D_{35}+R_{35}E^{\theta}_{34}]$.
We set
\begin{align*}
\B_{31} & = \begin{cases}
  (B_{31}, +),& {\rm if}\  a_{14}+a_{24}\ {\rm is\ even},\\
  (B_{31}, -),& {\rm if}\  a_{14}+a_{24}\ {\rm is\ odd},
\end{cases}\\
\M_2 &= ( M_2, +).
\end{align*}
Then we have
$$[\M_2] [\B_{31}] [\A_1]=[\A_2]+{\rm lower\ terms},$$
where  $\A_2=(A_2, +)$ with
$$
A_2=
\begin{pmatrix}
*&R_{12}&0&a_{14}&0\\a_{45}&*&0&a_{24}&R_{25}\\
0&0&0&0&0\\R_{25}&a_{24}&0&*&a_{45}\\0&a_{14}&0&R_{12}&*
\end{pmatrix},
$$
$D_{ij}$ and the $*$'s are unique determined .
Let $B_{41}$  be the matrix such that $B_{41}-R_{15}E^{\theta}_{12}$ is a diagonal matrix and ${\rm co}(B_{41})={\rm ro}(A_{2})$. We set
\begin{align*}
\B_{41} = (B_{41}, +).
\end{align*}
We have
$$[\B_{41}] [\M_2] [\B_{31}] [\B_{14}] [\M_1] [\B_{11}] [\B_{10}]=[\A]+{\rm lower\ terms}.$$
This finishes the proof for $n=2$ and positively signed matrices.
The case for  the negatively signed matrices  can be shown similarly and so is the  general case.
\end{proof}

By Theorem ~\ref{thm2}, we can deduce the following results for $\mcal S^m$ similar to those for $\mcal S$.

\begin{prop} \label{lumpsum}

(a)
The algebra $\mcal S^m$ is generated by $[\mathfrak e]$ such that either  $\mathfrak e - R E^{\theta}_{n, n+2}$, $\mathfrak e-R E^{\theta}_{i, i+1}$  or $\mathfrak e - R E^{\theta}_{i+1,i}$ is diagonal for some $R\in \mbb N$ and $i\in [1, n-1]$.

(b)
The algebra $\mcal S^m$ admits a standard basis $\{ [\A]|\A\in \Xi'_{\mbf D}\}$,
a monomial basis $\{ \mathfrak n_{\A} | \A\in \Xi'_{\mbf D}\}$ and the  canonical basis
$\{\{ \A\} | \A\in \Xi'_{\mbf D}\}$, where $\mathfrak n_{\A}$ is in (\ref{Ma-ii}).

 (c)
The algebra $\mbb Q(v)\otimes_{\mcal A} \mcal S^m$ is generated by the functions $E_i, F_i$, $H_a^{\pm 1}$, $J_{\pm}$ and $T$ for  any $i\in [1, n-1]$ and $a\in [1, n]$.
\end{prop}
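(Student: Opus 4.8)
The plan is to transcribe, for $\mcal S^m$, the arguments that produced the bases and generators of $\mcal S$ (Corollaries \ref{S-M}, \ref{S-1}, \ref{S-generator}, \ref{S-positivity} and the canonical basis construction), using Theorem \ref{thm2} in place of Theorem \ref{thm1} and exploiting that $\mscr X^m\times\mscr X^m$ is a union of connected components of $\mscr X\times\mscr X$ (the dimensions $|V_i|$ are locally constant on $\mscr X$, so $\mscr X^m=\{V\,:\,|V_n|=d\}$ is clopen). In particular, if $\A\in\Xi'_{\mbf D}$ and $\mcal O_{\A'}\subseteq\overline{\mcal O}_{\A}$, then $\A'\in\Xi'_{\mbf D}$; equivalently, the partial order $\preceq$ of Section \ref{sec-partial order} restricts to $\Xi'_{\mbf D}$.

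For (b): the standard basis is immediate from the bijection $G\backslash\mscr X^m\times\mscr X^m\simeq\Xi'_{\mbf D}$, which makes $\{e_{\A}\}_{\A\in\Xi'_{\mbf D}}$, hence $\{[\A]\}_{\A\in\Xi'_{\mbf D}}$, an $\mcal A$-basis of $\mcal S^m$. For the monomial basis, Theorem \ref{thm2} gives $\mathfrak n_{\A}=[\A]+(\text{lower terms})$ for $\A\in\Xi'_{\mbf D}$, and by the clopen remark the lower terms are again indexed by $\Xi'_{\mbf D}$; hence $\mathfrak n_{\A}\in\mcal S^m$ and the transition matrix from $\{\mathfrak n_{\A}\}$ to $\{[\A]\}$ over $\Xi'_{\mbf D}$ is unitriangular with respect to $\preceq$, so $\{\mathfrak n_{\A}\}_{\A\in\Xi'_{\mbf D}}$ is an $\mcal A$-basis. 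For the canonical basis, the intersection cohomology complex of $\overline{\mcal O}_{\A}$ for $\A\in\Xi'_{\mbf D}$, computed inside $\mscr X^m\times\mscr X^m$, coincides with the one computed inside $\mscr X\times\mscr X$; thus the canonical basis element $\{\A\}$ of $\mcal S$ lies in $\mcal S^m$ for $\A\in\Xi'_{\mbf D}$, is unitriangular to $\{[\A]\}_{\A\in\Xi'_{\mbf D}}$, and $\{\{\A\}\}_{\A\in\Xi'_{\mbf D}}$ is a basis; positivity of the structure constants is inherited through the sheaf–function principle exactly as in Corollary \ref{S-positivity}.

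For (a): inverting the unitriangular transition matrix of Theorem \ref{thm2} over $\mcal A$ writes each $[\A]$ ($\A\in\Xi'_{\mbf D}$) as an $\mcal A$-combination of products of the elementary signed matrices $[\mathfrak e_{i,t}]$ occurring in \eqref{Ma-ii}. For $i\in[1,n-1]$ these $\mathfrak e_{i,t}$ are of the form ``diagonal plus $RE^{\theta}_{i,i+1}$'' or ``diagonal plus $RE^{\theta}_{i+1,i}$'' with vanishing $(n+1)$-st row and column, hence lie in $\mcal S^m$ and are among the listed generators. The only delicate point is the ``middle'' factors, which in \eqref{Ma-ii} always occur in the bracketed pairs $[\mathfrak e_{n,t+1}][\mathfrak e_{n+1,t}]$: a single such factor involves $E^{\theta}_{n,n+1}$ and is not supported on $\mscr X^m\times\mscr X^m$, but by the $h=n$ multiplication formula \eqref{eq33} restricted to $\mscr X^m$ --- where, as in Remark \ref{remEx}, passing through the middle forces a sign change rather than a summation --- the product $[\mathfrak e_{n,t+1}]\ast[\mathfrak e_{n+1,t}]$ equals, up to a scalar in $\mcal A$, an element $[\mathfrak e]$ with $\mathfrak e-RE^{\theta}_{n,n+2}$ diagonal (and zero $(n+1)$-st row and column), which does lie in $\mcal S^m$. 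Equivalently, one reruns the inductive construction of Theorem \ref{thm2} entirely inside $\mcal S^m$, replacing the two middle moves by the single $E^{\theta}_{n,n+2}$-move. This yields (a).

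For (c): over $\mbb Q(v)$ the generators of (a) with arbitrary $R\in\mbb N$ reduce to those with $R=1$ via the divided-power multiplication formulas, exactly as Corollary \ref{S-1} gives Corollary \ref{S-generator}. Among the $R=1$ generators, the diagonal ones are $\mbb Q(v)$-combinations of $H_a^{\pm1}$ ($a\in[1,n]$) and $J_{\pm}$; the $E^{\theta}_{i,i+1}$- and $E^{\theta}_{i+1,i}$-type ones with $i\in[1,n-1]$ are recovered from $E_i,F_i$ through the orbit decompositions (as in the displayed formulas before Corollary \ref{S-generator}, restricted from $\mcal S$ to $\mcal S^m$); and the $E^{\theta}_{n,n+2}$-type ones are recovered from $T$ via \eqref{T=FE}, since $T=\bigl(F_nE_n+\tfrac{H_nH_{n+1}^{-1}-H_n^{-1}H_{n+1}}{v-v^{-1}}\bigr)|_{\mscr X^m\times\mscr X^m}$ is, by a direct comparison with the multiplication formulas, the sum of the $E^{\theta}_{n,n+2}$-type generators weighted by the $J_{\pm}$-idempotents. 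Together with Proposition \ref{prop7} and Lemma \ref{Geometric-duality-II} this gives (c). The main obstacle throughout is the bookkeeping around the middle row and column in (a) and (c): identifying $E^{\theta}_{n,n+2}$ (rather than $E^{\theta}_{n,n+1}$, which leaves $\mcal S^m$) as the correct elementary move, showing that the middle pairs in \eqref{Ma-ii} collapse to a single such move inside $\mcal S^m$, and checking that $T$ --- not a combination involving $E_n,F_n$, which are not elements of $\mcal S^m$ --- realizes these generators; everything else is a routine transcription of the corresponding statements for $\mcal S$.
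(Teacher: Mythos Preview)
Your proposal is correct and follows essentially the same approach as the paper, which simply states that the results follow from Theorem \ref{thm2} analogously to how the corresponding results for $\mcal S$ followed from Theorem \ref{thm1}; you have spelled out the details the paper omits. One small imprecision: the product $[\mathfrak e_{n,t+1}][\mathfrak e_{n+1,t}]$ need not be a \emph{single} $[\mathfrak e]$ with $\mathfrak e-RE^{\theta}_{n,n+2}$ diagonal up to scalar, but rather a linear combination of such elements (with varying $R$, possibly including a diagonal term) --- this is harmless for (a), since a unitriangular induction on $R$ still places each such $[\mathfrak e]$ in the subalgebra generated by the listed elements.
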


\begin{rem}
The basis $\{\mathfrak n_{\A} \}$ of $\mcal S^m$ is not a subset of the basis $\{ \m_{\A}\}$ of $\mcal S$ in Corollary \ref{S-M}.
But the canonical basis $\{ \{\A\} | \A \in \Xi'_{\mbf D}\}$ of $\mcal S^m$  is a subset of the basis $\{ \{ \A \} | \A \in \Xi_{\mbf D} \}$ of $\mcal S$. 
\end{rem}

\subsection{The algebra $\mcal K^m$}

Recall that $\Xi_{\mbf D}'=\{\A\in \Xi_{\mbf D}|\ro(\A)_{n+1} = \co (\A)_{n+1} =0 \}$.
Let
$$
\widetilde{\Xi}'_{\mbf D}=\{ \A\in \widetilde{\Xi}_{\mbf D}|a_{n+1,j}=a_{j,n+1}=0, \forall j\}.
$$
Let $\mcal K^m$ be the subalgebra of $\mcal K$ spanned by the elements $[\A]$ for any $\A\in \wt{\Xi}'_{\mbf D}$.
Notice that $\mcal K^m$ can be   obtained via a stabilization similar to Section ~\ref{Stab} by using the algebras $\mcal S^m$.
Similar to Theorem ~\ref{thm2}, we have
\[
\mathfrak n_{\A} = \A +\mbox{lower terms}, \quad \forall \A\in \wt{\Xi}'_{\mbf D},
\]
where $\mathfrak n_{\A}$ is defined in (\ref{Ma-ii}).
Moreover, by (\ref{eq58}),
we have
\[
[(D_{\lambda} + E_{n,n+1}^{\theta}, \epsilon )]\cdot
[(D_{\lambda} +E_{n+1,n}^{\theta}, \epsilon')] =
 \begin{cases}
[(D_{\lambda}+ E^{\theta}_{n,n}, \epsilon)] & \mbox{if} \ \epsilon= \epsilon',\\
[(D_{\lambda} + E^{\theta}_{n, n+2}, \epsilon)] & \mbox{if} \ \epsilon \neq \epsilon'.
\end{cases}
\]
From this observation, we have the following results for $\mcal K^m$ and $\mbb Q(v)\otimes_{\mcal A} \mcal K^m$ similar to those for $\mathcal K$ and $\mbb Q(v) \otimes_{\mcal A} \mathcal K$.

\begin{prop} \label{K-generator}
(a) The algebra $\mcal K^m$ is generated by the elements $[\mathfrak e]$ such that either $\mathfrak e - R E^{\theta}_{n, n+2}$,
$\mathfrak e- R E^{\theta}_{i, i+1}$ or $\mathfrak e - R E^{\theta}_{i+1, i}$
 is diagonal for some $i\in [1, n-1]$ and $R\in \mbb N$.

(b) The algebra $\mbb Q(v)\otimes_{\mcal A} \mcal K^m$ is generated by the elements
$[\mathfrak e]$ such that
either $\mathfrak e$, $\mathfrak e - E^{\theta}_{n, n+2}$,  $\mathfrak e -E^{\theta}_{i,i+1}$ or $\mathfrak e - E^{\theta}_{i+1,i}$ is diagonal for some
$i \in [1,n-1]$.

(c)
The algebras $\mcal K^m$ and $\mbb Q(v)\otimes_{\mcal A} \mcal K^m$ possess three bases: the standard basis $\{ [\A] | \A\in \wt{\Xi}'_{\mbf D}\}$, the monomial basis
$\{ \mathfrak n_{\A} | \A\in \wt{\Xi}'_{\mbf D}\}$,  and the canonical basis $\{ \{ A\} | \A\in \wt{\Xi}'_{\mbf D}\}$.

(d)
The restriction of $\Psi$ in Theorem ~\ref{KS} defines a surjective algebra homomorphism $\Psi^m: \mcal K^m \to \mcal S^m$ such that
$\Psi^m ([\A]) = [\A]$ if $\A\in \Xi_{\mbf D}'$ and $0$ otherwise. Moreover $\Psi^m (\{ \A\})=\{\A\}$ if $\A\in \Xi'_{\mbf D}$ and $0$ otherwise.
\end{prop}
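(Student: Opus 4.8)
The plan is to transcribe, almost verbatim, the arguments already given for $\mcal K$ and $\mbb Q(v)\otimes_{\mcal A}\mcal K$ in Subsections~\ref{Stab} and~\ref{sec5.3} (the generation statements, the corollary listing the three bases, and Theorem~\ref{KS}), feeding in the two $\mcal K^m$-specific facts recorded just above the statement: the stabilized monomial identity $\mathfrak n_{\A}=[\A]+\text{lower terms}$ for $\A\in\widetilde{\Xi}'_{\mbf D}$ (the analogue of Theorem~\ref{thm2}, obtained by running the stabilization of Subsection~\ref{Stab} over the algebras $\mcal S^m$ in place of $\mcal S$), and the product formula for $[(D_{\lambda}+E^{\theta}_{n,n+1},\epsilon)]\cdot[(D_{\lambda}+E^{\theta}_{n+1,n},\epsilon')]$, which says precisely that the only factors of $\mathfrak n_{\A}$ not already lying in $\mcal K^m$ — the adjacent pairs $[\mathfrak e_{n,t+1}][\mathfrak e_{n+1,t}]$ occurring in~(\ref{Ma-ii}) — collapse to a single element of $\mcal K^m$, of diagonal or of $E^{\theta}_{n,n+2}$-type.

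For~(a): by the monomial identity the elements $\mathfrak n_{\A}$, $\A\in\widetilde{\Xi}'_{\mbf D}$, form a $\sqsubseteq$-unitriangular system against the standard basis $\{[\A]\}$ of $\mcal K^m$, hence span it; and, after rewriting each pair $[\mathfrak e_{n,t+1}][\mathfrak e_{n+1,t}]$ by the product formula, every $\mathfrak n_{\A}$ becomes a product of elements of the three listed types (note that $R=0$ is allowed, so signed diagonal matrices are among these generators, and that for $i<n$ the factor $\mathfrak e_{i,t}$ is literally of $E^{\theta}_{i,i+1}$- or $E^{\theta}_{i+1,i}$-type). Part~(b) is then deduced over $\mbb Q(v)$ exactly as the corresponding corollary for $\mcal K$: the divided-power relation~(\ref{divided}), whose coefficient $[r+1]_v$ becomes invertible, together with its evident analogue for the $E^{\theta}_{n,n+2}$-generators (read off from~(\ref{eq58}), (\ref{eq57}), (\ref{eq56})), reduces each $R$-th power generator to a product of the $R=1$ and $R=0$ ones; equivalently one identifies these generators with the images of $E_i,F_i,H_a^{\pm1},J_{\pm},T$ of $\U^m$.

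For~(c): the standard basis statement is the definition of $\mcal K^m$, and the monomial basis is the unitriangular family $\{\mathfrak n_{\A}\}$ of~(a). For the canonical basis one first checks that the bar involution of $\mcal K$ restricts to $\mcal K^m$: since $\sqsubseteq$ preserves $\ro$ and $\co$ and the off-diagonal entries of signed matrices in $\widetilde{\Xi}_{\mbf D}$ are nonnegative, every $\A'\sqsubseteq\A$ with $\A\in\widetilde{\Xi}'_{\mbf D}$ again has vanishing $(n+1)$-st row and column, so that $\overline{[\A]}=[\A]+(\text{lower terms lying in }\mcal K^m)$; the argument of Proposition~\ref{prop6} then produces the bar-invariant $\{\A\}=[\A]+\sum_{\A'\sqsubset\A}\pi_{\A',\A}[\A']$ with $\pi_{\A',\A}\in v^{-1}\mbb Z[v^{-1}]$, and by uniqueness these agree with the canonical basis elements of $\mcal K$ indexed by $\widetilde{\Xi}'_{\mbf D}$, which therefore lie in $\mcal K^m$. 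The statements for $\mbb Q(v)\otimes_{\mcal A}\mcal K^m$ follow by base change.

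For~(d): for $\A\in\widetilde{\Xi}'_{\mbf D}$ one has $\A\in\Xi_{\mbf D}$ if and only if $\A\in\Xi'_{\mbf D}=\Xi_{\mbf D}\cap\widetilde{\Xi}'_{\mbf D}$, and $\mcal S^m$ is spanned by $\{[\A]:\A\in\Xi'_{\mbf D}\}$; hence $\Psi$ of Theorem~\ref{KS} restricts to a surjective algebra homomorphism $\Psi^m:\mcal K^m\to\mcal S^m$ with $\Psi^m([\A])=[\A]$ for $\A\in\Xi'_{\mbf D}$ and $0$ otherwise, and $\Psi^m(\{\A\})=\Psi(\{\A\})$ equals the canonical basis element $\{\A\}$ of $\mcal S$ when $\A\in\Xi'_{\mbf D}$ and $0$ otherwise, which by the remark following Proposition~\ref{lumpsum} (the canonical basis of $\mcal S^m$ being the sub-collection of that of $\mcal S$ indexed by $\Xi'_{\mbf D}$) is the desired formula. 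The one genuinely new bookkeeping point — the single place where the type-$\mbf D$ geometry of the two maximal isotropic components interferes — is the verification in~(a) that the factors $[\mathfrak e_{n,t+1}]$ and $[\mathfrak e_{n+1,t}]$, which individually occupy the $n$-th and $(n+1)$-st rows and columns and so escape $\mcal K^m$, always occur adjacently in~(\ref{Ma-ii}) and with matched powers, so that the product formula returns the product to $\mcal K^m$; I expect this to be the main obstacle, everything else being a routine transcription of the $\mcal K$-arguments with $\widetilde{\Xi}_{\mbf D}$ replaced by $\widetilde{\Xi}'_{\mbf D}$.
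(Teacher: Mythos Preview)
Your proposal follows the paper's approach exactly: the paper gives no proof beyond displaying the stabilized monomial identity $\mathfrak n_{\A}=[\A]+\text{lower terms}$ and the $R=1$ product formula, then writes ``From this observation, we have the following results for $\mcal K^m$ \ldots\ similar to those for $\mcal K$'', and you have correctly unpacked what that sentence means.

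One imprecision worth noting: your claim that the pair $[\mathfrak e_{n,t+1}][\mathfrak e_{n+1,t}]$ ``collapses to a single element of $\mcal K^m$, of diagonal or of $E^{\theta}_{n,n+2}$-type'' is only literally true when $R=R_{n,n+t+1}=1$ (the case the paper displays). For general $R$, applying (\ref{eq58}) with $h=n$ to $\A=D_\lambda+RE^{\theta}_{n+1,n}$ (with $\lambda_{n+1}=0$, as holds in the $\mathfrak n_{\A}$ chain) yields a sum over $t_n+t_{n+2}=R$, and each resulting $\A_t$ equals $D_{\lambda'}+t_{n+2}E^{\theta}_{n,n+2}$ for a suitable diagonal $D_{\lambda'}$; so the pair collapses to an $\mcal A$-linear combination of generator-type elements, not a single one. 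This does not damage your argument for (a): after substituting this linear combination, $\mathfrak n_{\A}$ becomes an $\mcal A$-linear combination of products of generators, hence lies in the subalgebra they generate, and the unitriangularity argument finishes as you describe. Similarly, in (b) your appeal to an ``evident analogue'' of (\ref{divided}) for the $E^{\theta}_{n,n+2}$-generators is a bit quick, but your alternative route via identification with $E_i,F_i,H_a^{\pm1},J_{\pm},T$ (equivalently, via the computation just sketched, which expresses each $R$-generator through lower ones) is valid. Everything else---the treatment of (c) via $\sqsubseteq$-closure of $\widetilde\Xi'_{\mbf D}$ and Proposition~\ref{prop6}, and the restriction argument for (d)---is correct and matches the paper's intent.
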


\subsection{A presentation of $\mbb Q(v)\otimes_{\mcal A} \mcal K^m$}


To a diagonal signed matrix $\D =(D_{\lambda}, \epsilon)$ in $\wt{\Xi}'_{\mbf D}$, we  set
\begin{equation}\label{eq65}
 T \D= F_n E_n \D - [\lambda_n]_v  \D.
 \end{equation}
where $F_nE_n \D$ is defined in (\ref{eq63}) and  lies in $\mcal K^m$. Note that  $\lambda_{n+1}=0$ in this case.

\begin{prop}\label{prop14}
Let $\D=(D_{\lambda}, \epsilon)$ and $\D' = (D_{\lambda'}, \epsilon')$ be two signed diagonal matrices in $\wt{\Xi}'_{\mbf D}$.
The following relations hold in $\mcal K^m$.
\begin{eqnarray*}
&& \D  \D'  = \delta_{\D, \D' } \D,\\
&& T \D - \D'  T \D=0, \quad \hspace{5cm} {\rm if} \ \lambda=\lambda', \epsilon =\epsilon',\\
&& (E_iF_j-F_jE_i) \D =0,\quad \hspace{4.1cm} {\rm if}\ i\neq j,\\
&& (E_iF_i-F_iE_i) \D = [\lambda_{i+1}-\lambda_{i}]_v \D, \quad \hspace{2.15cm}  {\rm if}\ i\neq n,\\
&& (E_{n-1}^2T-[2]_v E_{n-1}TE_{n-1}+TE_{n-1}^2) \D =0,\\
&& (F_{n-1}^2T-[2]_v F_{n-1}TF_{n-1}+TF_{n-1}^2) \D=0, \\
&& (T^2E_{n-1}-[2]_v TE_{n-1}T+E_{n-1}T^2) \D =E_{n-1} \D,\\
&& (T^2F_{n-1}-[2]_v TF_{n-1}T+F_{n-1}T^2) \D= F_{n-1} \D, \\
&& (E_iE_j-E_jE_i) \D=0, \quad (F_iF_j-F_jF_i) \D = 0,\quad {\rm if}\ |i-j|>1,\\
&&(E_iE_iE_j-[2]_v E_iE_jE_i+E_jE_iE_i) \D=0, \quad \hspace{.75cm} {\rm if} \ |i-j|=1,\\
&& (F_iF_iF_j-[2]_v F_iF_jF_i+F_jF_iF_i) \D=0, \quad \hspace{1.1cm} {\rm if} \ |i-j|=1.
\end{eqnarray*}
\end{prop}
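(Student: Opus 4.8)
The plan is to follow the same strategy already used to prove Proposition~\ref{prop13}, adapting it to the maximal-isotropic setting where all flags in $\mscr X^m$ satisfy $V_n = V_{n+1}$, so that $\lambda_{n+1} = 0$ for every diagonal signed matrix $\D = (D_\lambda, \epsilon)$ in $\wt\Xi'_{\mbf D}$. First I would verify the two genuinely new relations, namely $T\D - \D' T\D = 0$ when $\lambda = \lambda'$, $\epsilon = \epsilon'$, and the four relations involving $T$ and $E_{n-1}, F_{n-1}$. By the very definition (\ref{eq65}), $T\D = F_nE_n\D - [\lambda_n]_v\D$, and by the multiplication formula (\ref{eq58}) applied in $\mcal K^m$ we can compute $F_nE_n\D$ explicitly: it equals $[\lambda_n]_v\D + [(D_{\lambda'} + E^\theta_{n,n+2}, \epsilon)]$ with $\lambda'_i = \lambda_i - \delta_{i,n} - \delta_{i,n+2}$ --- here the sign does \emph{not} flip, because inside $\mcal K^m$ the element $[(D_{\lambda'}+E^\theta_{n,n+2},\epsilon)]$ already carries sign $\epsilon$ (contrast with the computation in the proof of Proposition~\ref{prop13}(\ref{iii}), where the sign flips; the difference is exactly that in $\mcal K^m$ the $(n+1,n+1)$-entry is forced to be $0$, and one stays within $\wt\Xi'_{\mbf D}$). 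Thus $T\D = [(D_{\lambda'}+E^\theta_{n,n+2},\epsilon)]$, a single standard basis element, and the relation $T\D - \D' T\D = 0$ is immediate from orthogonality of idempotents $\D\D' = \delta_{\D,\D'}\D$.

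Next I would establish the remaining relations by the transfer principle already invoked at the end of the proof of Proposition~\ref{prop13}: for a monomial $x$ in the generators applied to $\D$, one picks $p$ large enough that $_p\D$ and all $_p\A$ appearing have nonnegative entries, interprets $x\,{}_p\D$ as an element of $\mcal S^m_{D'}$ for suitable $D'$, writes its expansion with structure constants ${}_pC_{x\D,\A}(v,v')\in\mfk R$, and uses that the $\mcal K^m$-structure constant $C_{x\D,\A}$ is obtained by specializing $v' = 1$ (and is independent of the shift because of the adjusted twists $\gamma(t)$, $\gamma''(t)$ from the proof of Proposition~\ref{prop5}). The relations to check are then precisely the defining relations of $\U^m$ from Section~\ref{Um}, evaluated on highest-weight-type elements $\D$: the $\mfk{sl}_2$-commutator $(E_iF_i - F_iE_i)\D = [\lambda_{i+1}-\lambda_i]_v\D$ for $i\neq n$, the commuting relations for $|i-j|>1$ and $i\neq j$, the Serre relations, and the four $T$-relations $E_{n-1}^2T - [2]_v E_{n-1}TE_{n-1} + TE_{n-1}^2 = 0$, its $F$-analogue, $T^2E_{n-1} - [2]_v TE_{n-1}T + E_{n-1}T^2 = E_{n-1}$, and its $F$-analogue. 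All of these hold in $\mcal S^m_{D'}$ for all large $D'$ by Proposition~\ref{prop7} (which asserts exactly that $E_i, F_i, H_a^{\pm1}, J_\pm, T$ satisfy the defining relations of $\U^m$), hence they hold before specializing $v'$, hence in $\mcal K^m$ after setting $v'=1$.

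For the $F$-versions of the $T$-relations I would use the anti-automorphism $\rho$ as in the proof of Proposition~\ref{prop7}: applying $\rho$ to (\ref{ET}) and (\ref{TE}) already gives the identities $F_{n-1}^2T - [2]_v F_{n-1}TF_{n-1} + TF_{n-1}^2 = 0$ and $T^2F_{n-1} - [2]_v TF_{n-1}T + F_{n-1}T^2 = F_{n-1}$ in $\mcal S^m$, and one transfers them back to $\mcal K^m$ the same way. Alternatively one can check them directly via the multiplication formulas in $\mcal K^m$, which is the approach I would prefer if the $\rho$-transfer turns out to interact awkwardly with the shift.

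The main obstacle I anticipate is bookkeeping the signs: in passing from $\mcal S^m$ to $\mcal K^m$ (and in the shift $\A \mapsto {}_p\A$) one must confirm that every intermediate matrix appearing in the products stays inside $\wt\Xi'_{\mbf D}$ --- i.e. that the $(n+1,j)$ and $(j,n+1)$ entries remain zero --- and that the sign assignments $s_l, s_r$ behave consistently, so that the $T$-relations, which genuinely involve the sign-doubling phenomenon, come out with the correct signs. The computation of $T\D = [(D_{\lambda'} + E^\theta_{n,n+2}, \epsilon)]$ is the key place where the absence of a middle diagonal entry forces the sign \emph{not} to flip, and keeping that straight throughout the Serre-type relations for $T$ and $E_{n-1}$ is where care is needed; everything else reduces to the corresponding computations for $\mcal S^m_{D'}$ already done in Proposition~\ref{prop7} and the stabilization machinery of Proposition~\ref{prop5}.
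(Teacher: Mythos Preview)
Your overall strategy---direct computation of $T\D$ via (\ref{eq65}) together with the transfer principle (shift by $2pI'$, work in $\mcal S^m_{D'}$, invoke Proposition~\ref{prop7}, then specialize $v'=1$)---is exactly the two-line proof the paper gives. So at the structural level you are doing the same thing.

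There is, however, a genuine error in your explicit computation of $T\D$. You claim that in $\mcal K^m$ the sign does \emph{not} flip, in contrast to Proposition~\ref{prop13}(\ref{iii}). This is wrong: the computation in the proof of Proposition~\ref{prop13}(\ref{iii}) is carried out precisely under the hypothesis $\epsilon\neq 0$, which already forces $\lambda_{n+1}=0$ by Lemma~\ref{diagonal}. There is no difference between the $\mcal K$ and $\mcal K^m$ settings here; that computation gives
\[
F_nE_n\D \;=\; [\lambda_n]_v[\D] + [(D_{\lambda'}+E^\theta_{n,n+2},\, -\epsilon)],
\]
so $T\D = [(D_{\lambda'}+E^\theta_{n,n+2}, -\epsilon)]$ with the sign \emph{flipped}. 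This is consistent with the defining relation $J_\pm T = TJ_\mp$ of $\U^m$ (geometrically: two maximal isotropics meeting in codimension one lie in different components). The flaw in your argument is the assertion that ``staying within $\wt\Xi'_{\mbf D}$'' changes the sign bookkeeping; it does not, because the intermediate $E_n\D$ leaves $\wt\Xi'_{\mbf D}$ regardless, and the sign of the final product is dictated by $\sgn(s_l(\B),s_r(\A))$ in the multiplication formulas, not by membership in $\wt\Xi'_{\mbf D}$.

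Consequently the second relation should read $T\D = \D'T\D$ with $\lambda'=\lambda$ and $\epsilon' = -\epsilon$ (matching Proposition~\ref{prop13}(\ref{iii})); the condition $\epsilon=\epsilon'$ printed in the statement appears to be a typo. Your incorrect sign computation happens to ``verify'' the typo, which is why it looked consistent to you. Once you correct the sign, the rest of your argument goes through unchanged.
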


\begin{proof}
Proposition can be shown by using  (\ref{eq65}) and Proposition \ref{prop13}.
One could prove them directly by using the same argument as we make for Proposition \ref{prop13}.
More precisely,
all identities can be reduced into $\mcal S^m$ by replacing $[\A]$ by $[{}_p \A]$.
Proposition then follows from Proposition \ref{prop7}.
\end{proof}

\subsection{The identification  $\mcal K^m  = \dot{\U}^m$}

Recall the algebra $\U^m$ from Section ~\ref{Um}.
Following \cite[Section 23]{Lu93}, we shall define the modified form $\dot{\U}^m$ of $\U^m$.
We set
$$\Lambda^m=\{\lambda\in \mbb Z^N|\lambda_i = \lambda_{N+1-i}, \lambda_{n+1}=0\}.$$
For any $\lambda, \lambda' \in \Lambda^m$, we define
$$
{}_{\lambda}\U^m_{\lambda'}
= \U^m / (\sum_{a=1}^{n+1}(H_a-v^{-\lambda_a}) \U^m
+\sum_{a=1}^{n+1} \U^m  (H_a-v^{-\lambda_a})).
$$
Let $\pi_{\lambda, \lambda'}:\U^m \rightarrow {}_{\lambda} \U^m_{\lambda'}$ be the canonical projection.
We set $\sgn (\pi_{\lambda, \lambda} (J_+))=+$ and $\sgn (\pi_{\lambda, \lambda} (J_-) )= -$.
Set
$$
\dot{\U}^m  = \oplus_{\lambda,\lambda'\in \Lambda^m} {}_{\lambda} \U^m_{\lambda'}.
$$
Similarly, we can define $\dot{\U}^{\imath}$ by replacing $\U^m$ by its subalgebra $\U^{\imath}$. 
(See also ~\cite[5.6]{BKLW}.)
Following \cite[Section 23]{Lu93}, we have
\begin{equation} \label{Um-Ui}
\dot{\U}^m
=\sum_{\D} \U^m \D
=\sum_{\sgn (\D) =+} \U^{\imath} \D \oplus \sum_{\sgn (\D)=-}  \U^{\imath} \D
\simeq \dot \U^{\imath} \oplus \dot \U^{\imath},
\end{equation}
as vector spaces, where the sum runs over all elements $\D$ of the form $\pi_{\lambda, \lambda}(J_+)$ or
$\pi_{\lambda, \lambda } (J_-)$ for $\lambda\in \Lambda^m$.

Let $\mrm A_{\mbf D}$ be the associative   $\mbb Q(v)$-algebra without unit generated by $E_i \D, F_i \D$,
 $T \D$ and $\D$ for all $i\in [1,n-1]$ and $\D$ runs over all diagonal signed matrices in $\wt{\Xi}'_{\mbf D}$, subjects to
the  relations (i)-(viii) in Proposition \ref{prop14}. We have

\begin{prop}
\label{lem3}
The map $\phi: \mrm A_{\mbf D} \rightarrow \dot{\U}^m$ sending generators in $\mrm A_{\mbf D}$ to the respective elements in $\dot \U^m$  is an algebra isomorphism.
\end{prop}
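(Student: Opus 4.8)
The plan is to establish the isomorphism $\phi\colon \mrm A_{\mbf D}\to\dot\U^m$ in two stages: first show $\phi$ is a well-defined surjective algebra homomorphism, then show injectivity by a dimension/basis count via the comparison with $\mcal K^m$. For well-definedness, I would check that the defining relations (i)--(viii) of $\mrm A_{\mbf D}$ in Proposition~\ref{prop14} map to valid identities in $\dot\U^m$. Most of these are the relations of $\U^m$ from Section~\ref{Um} specialized at a weight $\lambda$ (read through the projections $\pi_{\lambda,\lambda'}$), together with the splitting $\dot\U^m\simeq\dot\U^\imath\oplus\dot\U^\imath$ from~(\ref{Um-Ui}); the only point needing care is the relation $T\D-\D'T\D=0$ when $\lambda=\lambda'$, $\epsilon=\epsilon'$, which records that $T$ preserves the two sign-sectors, i.e.\ $J_\pm T=TJ_\pm$ on the nose after passing to $\dot\U^m$ --- this follows from the relation $J_\pm T=TJ_\mp$ in $\U^m$ once one observes that on a fixed weight space the roles of $J_+$ and $J_-$ have been made into genuine idempotents cutting $\dot\U^m$ into two copies of $\dot\U^\imath$. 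Surjectivity of $\phi$ is immediate since the generators $E_i\D$, $F_i\D$, $T\D$, $\D$ of $\mrm A_{\mbf D}$ hit a generating set of $\dot\U^m$: indeed $\dot\U^m$ is generated over the idempotents $\D$ by the images of $E_i,F_i$ ($i\le n-1$) and $T$, which is exactly the statement that $\U^m=\langle E_i,F_i,H_a^{\pm1},T,J_\pm\rangle$.

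For injectivity I would argue through the already-established surjection $\Psi^m\colon\mcal K^m\to\mcal S^m$ of Proposition~\ref{K-generator}(d) and the multiplication formulas~(\ref{eq58})--(\ref{eq56}) restricted to $\wt\Xi'_{\mbf D}$. The key is to build, from the presentation $\mrm A_{\mbf D}$, a surjection $\psi\colon\mrm A_{\mbf D}\to\mcal K^m$ sending $E_i\D\mapsto[(\D-E^\theta_{i,i}+E^\theta_{i+1,i},\sgn\D)]$, $F_i\D$ likewise, $T\D\mapsto$ the element defined in~(\ref{eq65}), and $\D\mapsto[\D]$. That $\psi$ respects the relations (i)--(viii) is precisely Proposition~\ref{prop14}. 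Combining $\psi$ with $\phi$, it suffices to produce a matching surjection $\dot\U^m\to\mcal K^m$ compatible with $\psi=(\text{that map})\circ\phi$, and then to show $\psi$ is an isomorphism; since $\mcal K^m$ has the standard basis $\{[\A]\mid\A\in\wt\Xi'_{\mbf D}\}$, it is enough to check $\psi$ is injective, which reduces to showing that the monomials $\mathfrak n_{\A}$ (images under $\psi$ of explicit monomials in the generators, built as in Theorem~\ref{thm2}) are linearly independent --- and this is exactly the triangular relation $\mathfrak n_{\A}=[\A]+\text{lower terms}$ noted just before Proposition~\ref{K-generator}. So the chain is: $\mrm A_{\mbf D}\twoheadrightarrow\mcal K^m$ is an isomorphism by the monomial-basis triangularity, $\mrm A_{\mbf D}\twoheadrightarrow\dot\U^m$ is surjective by the presentation, and the two targets have the same structure, forcing $\phi$ to be an isomorphism.

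To make the last step rigorous I would instead set it up symmetrically: show directly that $\dot\U^m$ also carries a filtration/monomial spanning set indexed by $\wt\Xi'_{\mbf D}$ that maps bijectively under the composite $\dot\U^m\to\mcal K^m$, so that all three algebras $\mrm A_{\mbf D}$, $\dot\U^m$, $\mcal K^m$ have bases indexed by $\wt\Xi'_{\mbf D}$ and the natural surjections among them are forced to be isomorphisms. Concretely: (1) $\mrm A_{\mbf D}\to\mcal K^m$ via Proposition~\ref{prop14} is surjective and $\mcal K^m$ has standard basis indexed by $\wt\Xi'_{\mbf D}$; (2) Theorem~\ref{thm2}-style monomials span $\mrm A_{\mbf D}$, giving $\dim\mrm A_{\mbf D}\le\#\wt\Xi'_{\mbf D}$ (in the appropriate graded sense); (3) hence $\mrm A_{\mbf D}\xrightarrow{\sim}\mcal K^m$; (4) the known identification of $\dot\U^\imath$ with its BLM-type limit algebra in~\cite{BKLW13}, together with the splitting~(\ref{Um-Ui}), identifies $\dot\U^m$ with $\mcal K^m$ as well; (5) chase the triangle to conclude $\phi$ is an isomorphism. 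The main obstacle I anticipate is step~(2)/(4): controlling the "doubling" of idempotents correctly, i.e.\ verifying that the two sign sectors of $\dot\U^m$ do not collapse and that $T$ acts within each sector --- equivalently, that the extra relation $T\D=\D'T\D$ is exactly what is needed and sufficient, no more and no less. Everything else is a bookkeeping exercise translating the $\U^m$-relations of Section~\ref{Um} through $\pi_{\lambda,\lambda'}$ and matching them term-by-term with Proposition~\ref{prop14}.
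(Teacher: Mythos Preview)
Your well-definedness and surjectivity discussion is fine and matches the paper. Your injectivity argument, however, takes a substantially longer route than the paper and contains a real gap.

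The paper's injectivity proof is one line: by the same sign-sector decomposition used to get~(\ref{Um-Ui}) for $\dot\U^m$, the abstract algebra $\mrm A_{\mbf D}$ also satisfies $\mrm A_{\mbf D}\simeq \dot\U^\imath\oplus\dot\U^\imath$ as $\mbb Q(v)$-vector spaces. Since $\phi$ is surjective and respects this splitting, it must be an isomorphism. The point is that once you fix a right idempotent $\D$, the generators $E_i\D,F_i\D,T\D$ together with the relations of Proposition~\ref{prop14} are exactly those defining $\dot\U^\imath$ acting on that weight; the extra idempotent $J_\pm$ only doubles the index set. No appeal to $\mcal K^m$, to monomial bases, or to \cite{BKLW13} is needed.

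Your route through $\mcal K^m$ is not wrong in spirit, but step~(2)---``Theorem~\ref{thm2}-style monomials span $\mrm A_{\mbf D}$''---is unjustified. That is a PBW-type spanning statement for an algebra given by generators and relations; it does not follow from the triangularity $\mathfrak n_{\A}=[\A]+\text{lower}$, which lives in $\mcal K^m$, not in $\mrm A_{\mbf D}$. Without~(2) you cannot bound $\dim\mrm A_{\mbf D}$ from above, so the surjection $\psi:\mrm A_{\mbf D}\to\mcal K^m$ alone does not give injectivity. You would have to prove the spanning directly from the relations (i)--(viii), which is precisely the work the paper avoids by invoking $\dot\U^\imath$. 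Step~(4) is also imprecise: the splitting~(\ref{Um-Ui}) is only as vector spaces, so identifying $\dot\U^m$ with $\mcal K^m$ as algebras from \cite{BKLW13} plus~(\ref{Um-Ui}) needs an additional argument about how $T$ intertwines the two copies.

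Finally, a small but genuine confusion: you write that $T$ ``preserves the two sign-sectors, i.e.\ $J_\pm T=TJ_\pm$''. The defining relation in $\U^m$ is $J_\pm T=TJ_\mp$, so $T$ \emph{switches} the sign sectors; correspondingly, in $\mcal K^m$ one computes $T\D=[(D_{\lambda'}+E^\theta_{n,n+2},-\epsilon)]$, which has left support opposite to that of $\D$. This does not affect the vector-space decomposition via right idempotents, but your description of what the relation ``records'' is inverted.
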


\begin{proof}
Observe that all relations in $\U^m$ can be transformed into corresponding relations in $\dot{\U}^m$ by adjoining
diagonal signed matrixes.
By comparing the defining  relations of $\U^m$ and those in Proposition \ref{prop14}, we have that
$\dot \U^m$ is an associative  $\mbb Q(v)$-algebra generated by
$E_i \D$, $F_i \D$, $T \D$ and $\D$ for all $i\in [1,n-1]$ and $\D$ diagonal signed matrices in $\wt{\Xi}'_{\mbf D}$ and
subject to the defining   relations of $\mrm A_{\mbf D}$.
So we see that the map $\phi$ is a surjective algebra homomorphism.

It is  left to show $\phi$ is injective.
By using the same argument  of (\ref{Um-Ui}),
we have $\mrm A_{\mbf D}\simeq \dot{\mbf U}^{\imath}\oplus \dot{\mbf U}^{\imath}$, as vector spaces.
So the map $\phi$ is injective. We are done.
 \end{proof}

\begin{thm} \label{Km-present}
The assignment of sending generators in $\dot \U^m$ to the respective generators in $\mcal K^m$
defines an algebra isomorphism $\Upsilon': \dot{\U}^m \rightarrow \mbb Q(v)\otimes_{\mcal A}  \mcal K^m$.
\end{thm}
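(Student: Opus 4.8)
The plan is to construct $\Upsilon'$ by factoring it through the auxiliary algebra $\mrm A_{\mbf D}$ and then to prove bijectivity by combining surjectivity with a block-wise dimension count, in the spirit of the stabilization arguments of ~\cite{BLM90} and ~\cite{BKLW13}. By Proposition ~\ref{prop14}, the elements $E_i\D$, $F_i\D$, $T\D$ and $\D$ of $\mbb Q(v)\otimes_{\mcal A}\mcal K^m$ (with $T\D$ as in (\ref{eq65})) satisfy relations (i)--(viii), which are exactly the defining relations of $\mrm A_{\mbf D}$; hence the generator-to-generator assignment extends to an algebra homomorphism $\mrm A_{\mbf D}\to\mbb Q(v)\otimes_{\mcal A}\mcal K^m$, and precomposing with $\phi^{-1}$ from Proposition ~\ref{lem3} yields $\Upsilon'$.

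Surjectivity should be immediate from Proposition ~\ref{K-generator}(b): $\mbb Q(v)\otimes_{\mcal A}\mcal K^m$ is generated by the $[\mathfrak e]$ with $\mathfrak e$, $\mathfrak e-E^{\theta}_{i,i+1}$, $\mathfrak e-E^{\theta}_{i+1,i}$ ($1\leq i\leq n-1$), or $\mathfrak e-E^{\theta}_{n,n+2}$ diagonal. The first three families are the $\Upsilon'$-images of $\D$, $F_i\D$, $E_i\D$; and for the last, the formula for $F_nE_n\D$ computed in the proof of Proposition ~\ref{prop13} together with (\ref{eq65}) gives $T\D=[(D_{\lambda'}+E^{\theta}_{n,n+2},-\sgn\D)]$ with $\lambda'_i=\lambda_i-\delta_{i,n}-\delta_{i,n+2}$, and as $\D$ runs over the diagonal signed matrices in $\widetilde{\Xi}'_{\mbf D}$ these exhaust that family. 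So $\Upsilon'$ is onto.

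For injectivity, observe that both $\dot{\U}^m$ and $\mbb Q(v)\otimes_{\mcal A}\mcal K^m$ carry a two-sided block decomposition indexed by ordered pairs of diagonal signed matrices (cutting by the idempotents $\D$), and $\Upsilon'$ respects it since $\Upsilon'(\D)=[\D]$. On the $\mcal K^m$ side, the $r=0$ case of (\ref{eq58}) and its right-handed analogue show that left or right multiplication of a standard basis element $[\A]$ ($\A\in\widetilde{\Xi}'_{\mbf D}$) by a diagonal idempotent is either zero or merely adjusts the sign of $\A$, in a manner dictated by the functions $s_l,s_r$ of (\ref{eq68}) and the parity $\p(\A)$; so each block is finite-dimensional with an explicit standard basis. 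Since $\Upsilon'$ is surjective, each block map is a surjection of finite-dimensional spaces, and injectivity is thus equivalent to the equality, for every block, of $\dim_{\mbb Q(v)}$ of the $\dot{\U}^m$-block with the number of $\A\in\widetilde{\Xi}'_{\mbf D}$ indexing the matching $\mcal K^m$-block.

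It remains to prove the upper bound on the dimensions of the $\dot{\U}^m$-blocks. For this I would use the vector-space decomposition $\dot{\U}^m\simeq\dot{\U}^{\imath}\oplus\dot{\U}^{\imath}$ of (\ref{Um-Ui}) and the analysis in the proof of Proposition ~\ref{lem3}: each block of $\dot{\U}^m$ is identified with a weight block of $\dot{\U}^{\imath}$, whose dimension is given by the standard (equivalently monomial) basis of $\dot{\U}^{\imath}$ from ~\cite{BKLW13}, namely the number of signed matrices of type $\mbf B/\mbf C$ with prescribed row and column sums; one then checks this matches the count of $\A\in\widetilde{\Xi}'_{\mbf D}$ in the corresponding $\mcal K^m$-block, the two summands $\dot{\U}^{\imath}$ accounting for the sign $\pm$ of the diagonal idempotents. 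This forces equality of dimensions block by block, so $\Upsilon'$ is an isomorphism. I expect the main obstacle to be precisely this sign bookkeeping --- matching, on the two sides, how idempotents of sign $\pm$ interact with $\ur(\A)\bmod 2$ through $s_l$ and $s_r$ --- after which the remaining steps are routine adaptations of the arguments in ~\cite{BLM90} and ~\cite{BKLW13}.
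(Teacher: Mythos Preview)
Your proposal is correct and follows essentially the same approach as the paper: construct $\Upsilon'$ via Propositions~\ref{prop14} and~\ref{lem3}, get surjectivity from the generators, and deduce injectivity from the vector-space identification of both sides with $\dot{\U}^{\imath}\oplus\dot{\U}^{\imath}$. The paper's proof is terser, simply observing that $\mbb Q(v)\otimes_{\mcal A}\mcal K^m$ is (like $\dot{\U}^m$ via (\ref{Um-Ui})) a direct sum of two copies of $\dot{\U}^{\imath}$ as $\mbb Q(v)$-vector spaces; your block-wise dimension count makes this implicit argument explicit.
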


\begin{proof}
By Propositions ~\ref{prop14} and ~\ref{lem3}, we see that  $\Upsilon'$ is a surjective algebra homomorphism.
We observe that $\mbb Q(v)\otimes_{\mcal A}   \mcal K^m$
is a direct sum of two copies of $\dot \U^{\imath}$ as $\mbb Q(v)$ vector spaces. So we have the injectivity.
\end{proof}



\subsection{The algebra $\mcal U^m$}


Recall the algebra $\hat{\mcal K}$ and the notations $0^{\pm}$ from Section \ref{sec6}
and the notation $\hat{\A} (\mbf j)$ from (\ref{wtA}).
We consider the following elements in $\hat{\mcal K}$.
\begin{equation*}
\begin{split}
\mrm O (\mbf j) & =0^+(\mbf j)+0^-(\mbf j),\quad \forall \mbf j\in \mbb Z^N,\\
E_i &=E_{i+1,i}^{\theta, +}(0)+E_{i+1,i}^{\theta, -}(0),\\
F_i & =E_{i, i+1}^{\theta,+}(0)+E_{i, i+1}^{\theta,-}(0),\quad \forall i\in [1,n-1],\\
T &=
\textstyle
 \sum_{\lambda } ([D_{\lambda}^++E_{n,n+2}^{\theta}]+[D_{\lambda}^-+E_{n,n+2}^{\theta}]).
\end{split}
\end{equation*}
Let $\mcal U^m $ be the subalgebra of $\hat{\mcal K}$ generated by $E_i, F_i, T, \mrm O(\mbf j), 0^+(0)$ and
$0^-(0)$ for all $i\in [1,n-1]$ and $\mbf j\in \mbb Z^N$.
By a similar argument as  Proposition \ref{prop12}, we have the following proposition.

\begin{prop}\label{prop12}
The following relations hold in $\mcal U^m$.
\allowdisplaybreaks
\begin{eqnarray*}
&& \mrm O(\mbf j) \mrm O(\mbf j')= \mrm O(\mbf j') \mrm O(\mbf j),\ 0^{\pm}(0) \mrm O(\mbf j)= \mrm O(\mbf j)0^{\pm}(0),\ \ 0^{\alpha}(0)0^{\epsilon}(0)=\delta_{\alpha,\epsilon}0^{\alpha}(0),\\
&& \mrm O(\mbf j)F_h=v^{j_h-j_{h+1}}F_h  \mrm O(\mbf j),\
      \mrm O (\mbf j)E_h=v^{-j_h+j_{h+1}}E_h \mrm O(\mbf j),\ \\
&& E_hT=TE_h,\quad F_hT=TF_h,\quad \hspace{2cm} {\rm if} \ h\in [1,n-2],\\
&&0^{\pm}(0)E_h=E_h 0^{\pm}(0),\  F_h0^{\pm}(0)=0^{\pm}(0)F_h,\ \mrm O(\mbf j)T=T \mrm O(\mbf j),\ 0^{\pm}(0)T=T0^{\mp}(0),\\
&&F_hE_h-E_hF_h=(v-v^{-1})^{-1}(0( \underline h- \underline{h+1})-0( \underline{h+1} - \underline h))\\
&&E_{n-1}^2T-[2]_v E_{n-1}TE_{n-1}+TE_{n-1}^2=0,\\
&&F_{n-1}^2T-[2]_v F_{n-1}TF_{n-1}+TF_{n-1}^2=0,\\
&&T^2E_{n-1}-[2]_v TE_{n-1}T+E_{n-1}T^2=E_{n-1},\\
&&T^2F_{n-1}-[2]_v TF_{n-1}T+F_{n-1}T^2=F_{n-1},\\
&&E_iE_j=E_jE_i,\quad F_iF_j=F_jF_i,\quad \hspace{1.8cm} {\rm if}\ |i-j|>1,\\
&&E_i^2E_j-[2]_v E_iE_jE_i+E_jE_i^2=0,\quad \hspace{1.3cm}{\rm if}\ |i-j|=1,\\
&&F_i^2F_j-[2]_v F_iF_jF_i+F_jF_i^2=0,\quad \hspace{1.5cm}  {\rm if}\ |i-j|=1.
\end{eqnarray*}
\end{prop}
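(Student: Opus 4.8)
The plan is to imitate the proof of Proposition~\ref{prop-a} essentially verbatim, with Proposition~\ref{prop14} now playing the role that Proposition~\ref{prop13} played there. First I would record the expansions of the generators of $\mcal U^m$ as locally finite sums over signed diagonal matrices in $\wt{\Xi}'_{\mbf D}$: by definition $E_i=E_{i+1,i}^{\theta,+}(0)+E_{i+1,i}^{\theta,-}(0)$, $F_i=E_{i,i+1}^{\theta,+}(0)+E_{i,i+1}^{\theta,-}(0)$, $\mrm O(\mbf j)=0^{+}(\mbf j)+0^{-}(\mbf j)$, and $T=\sum_{\lambda}([D_{\lambda}^{+}+E_{n,n+2}^{\theta}]+[D_{\lambda}^{-}+E_{n,n+2}^{\theta}])$, the $v'$-independent coefficients being read off from~\eqref{wtA}. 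Since every $\A\in\wt{\Xi}'_{\mbf D}$ has $s_l(\A),s_r(\A)\in\{2,3\}$, each of these sums, and every product formed from them, is locally finite and hence lies in $\hat{\mcal K}$; this legitimizes all the manipulations below.

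Next, to establish a given identity I would multiply it on the right by an arbitrary signed diagonal matrix $\D=(D_{\lambda},\epsilon)\in\wt{\Xi}'_{\mbf D}$ (so $\lambda_{n+1}=0$). Because $\mrm O(\mbf j)$ and $0^{\pm}(0)$ are diagonal, this reduces the identity to one among elements $x\D$ of $\mcal K^m$ in the sense of~\eqref{eq63}. For the relations not involving $T$ these are exactly~\eqref{iv},~\eqref{v},~\eqref{vi} of Proposition~\ref{prop13} together with~\eqref{A-v} of Proposition~\ref{prop-a}, restricted to diagonal matrices of sign $\pm$; for the relations involving $T$ one first substitutes $T\D=F_nE_n\D-[\lambda_n]_v\D$ from~\eqref{eq65}, turning the four quadratic relations among $T,E_{n-1},F_{n-1}$ into the corresponding relations of Proposition~\ref{prop14}. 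The weight relations $\mrm O(\mbf j)F_h=v^{j_h-j_{h+1}}F_h\mrm O(\mbf j)$, $\mrm O(\mbf j)E_h=v^{-j_h+j_{h+1}}E_h\mrm O(\mbf j)$, $E_hT=TE_h$ and $F_hT=TF_h$ for $h\le n-2$, $\mrm O(\mbf j)T=T\mrm O(\mbf j)$, and the idempotent relations $0^{\alpha}(0)0^{\epsilon}(0)=\delta_{\alpha,\epsilon}0^{\alpha}(0)$, $0^{\pm}(0)E_h=E_h0^{\pm}(0)$, $F_h0^{\pm}(0)=0^{\pm}(0)F_h$ are checked by the same bookkeeping with the functions $s_l,s_r$ of~\eqref{eq68} as in the proof of Proposition~\ref{prop-a}. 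The only genuinely new point is the sign swap $0^{\pm}(0)T=T0^{\mp}(0)$: it is forced because a matrix with a nonzero $(n,n+2)$-entry has $\sup$ in $\{(2,3),(3,2)\}$, so that appending a diagonal flips the sign, exactly the rule recorded in the definition of ${}^{t}\!\A$.

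A cleaner alternative bypasses Propositions~\ref{prop13} and~\ref{prop14} and reduces directly to the geometric algebras. For a fixed relation one replaces every $[\A]$ occurring by $[{}_{p}\A]$ with $p\in 2\mbb N$ chosen large enough that all matrices involved have nonnegative entries, landing inside an algebra $\mcal S^m$ for $\mbb F_q^{D'}$ with $D'$ large, and observes — via the comparison of~\eqref{eq58},~\eqref{eq57},~\eqref{eq56} in $\mcal K^m$ with~\eqref{eq22},~\eqref{eq24},~\eqref{eq27} in $\mcal S^m$ — that the two sets of structure constants agree upon setting $v'=1$. Since the relation holds in every such $\mcal S^m$ by Proposition~\ref{prop7}, it holds in $\mcal K^m$, hence in $\mcal U^m$, after letting $p$ and $D'$ vary.

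The step I expect to be the main obstacle is the block $T^{2}E_{n-1}-[2]_{v}TE_{n-1}T+E_{n-1}T^{2}=E_{n-1}$ and its $F$-analogue: unlike the homogeneous Serre relations these have the inhomogeneous right-hand side $E_{n-1}$ (resp. $F_{n-1}$), so no degree-counting shortcut is available, and one must keep track of the lower-order correction terms produced when $T$ is rewritten in terms of $F_nE_n$ and the diagonal part. This is precisely where the auxiliary function $\wt T$ of~\eqref{eq28} and the identities~\eqref{ET},~\eqref{TE} obtained in the course of proving Proposition~\ref{prop7} do the work. A secondary, purely formal subtlety is confirming that all the infinite sums remain locally finite after each substitution, which again follows from $s_l,s_r\in\{2,3\}$ on $\wt{\Xi}'_{\mbf D}$.
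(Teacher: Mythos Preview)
Your proposal is correct and follows exactly the route the paper intends: the paper's own ``proof'' is the single sentence ``By a similar argument as Proposition~\ref{prop-a}'' (the printed self-reference to Proposition~\ref{prop12} is a typo), and you have unpacked that sentence faithfully, with Proposition~\ref{prop14} in the role of Proposition~\ref{prop13} and with the sign-swap $0^{\pm}(0)T=T0^{\mp}(0)$ correctly traced to $\p(D_\lambda+E_{n,n+2}^\theta)=1$. Your alternative reduction to $\mcal S^m$ via the stabilization comparison of \eqref{eq58}--\eqref{eq56} with \eqref{eq22}--\eqref{eq27} is likewise the mechanism behind Proposition~\ref{prop14} itself, so both routes collapse to the same argument.
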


By comparing the defining relations and graded dimensions, we have

\begin{cor}
We have a unique  isomorphism $\U^m \to \mcal U^m$
defined by  $E_i\mapsto E_i$, $F_i\mapsto F_i$, $T\mapsto T$, $H_a \mapsto {\rm O}(- \underline a)$
 and $J_{\pm}\mapsto 0^{\pm}(0)$,  for any $i\in [1,n-1]$ and $a\in [1, n]$.
\end{cor}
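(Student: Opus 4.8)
The plan is to follow the proof of Proposition~\ref{Upsilon} to construct the homomorphism and establish surjectivity, and then to obtain injectivity --- which was not available for $\Upsilon$ in Proposition~\ref{Upsilon} --- from the identification $\mbb Q(v)\otimes_{\mcal A}\mcal K^m\simeq\dot\U^m$ of Theorem~\ref{Km-present}. First I would substitute $H_a\mapsto\mrm O(-\underline a)$ and $J_\pm\mapsto0^\pm(0)$ into each defining relation of $\U^m$ from Section~\ref{Um}; every such relation maps either to one of the relations of Proposition~\ref{prop12} or else (as with $H_aH_a^{-1}=1$, which becomes $\mrm O(-\underline a)\,\mrm O(\underline a)=0^+(0)+0^-(0)$, the unit of $\hat{\mcal K}$) to an identity holding trivially in $\hat{\mcal K}$. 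This produces a well-defined $\mbb Q(v)$-algebra homomorphism $\Theta:\U^m\to\mcal U^m$. It is surjective: $E_i$, $F_i$, $T$, $0^+(0)$, $0^-(0)$ lie visibly in the image, and every $\mrm O(\mbf j)$ is a $\mbb Q(v)$-linear combination of Laurent monomials in the $\mrm O(-\underline a)=\Theta(H_a)$ for $a\in[1,n]$ --- here one uses that, by Lemma~\ref{diagonal}, the diagonal signed matrices in $\wt{\Xi}'_{\mbf D}$ satisfy $\lambda_i=\lambda_{N+1-i}$ and $\lambda_{n+1}=0$, so only $\lambda_1,\dots,\lambda_n$ intervene.

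The substance of the proof is injectivity. Since $\mcal U^m$ is a unital subalgebra of $\hat{\mcal K}$ with unit $0^+(0)+0^-(0)=\sum_\D\D$, an element $u\in\mcal U^m$ vanishes if and only if $\D\,u\,\D'=0$ in $\mcal K^m$ for every pair of diagonal signed matrices $\D,\D'$ in $\wt{\Xi}'_{\mbf D}$. For $x\in\U^m$, unwinding the definitions of the generators of $\mcal U^m$ as locally finite sums shows that $\D\,\Theta(x)\,\D'$ is the element ``$x\D'$'' built inductively as in (\ref{eq63}); under the isomorphism $\mcal K^m\simeq\dot\U^m=\bigoplus_{\lambda,\lambda'}{}_\lambda\U^m_{\lambda'}$ of Theorem~\ref{Km-present} --- in which the sign of a diagonal signed matrix records which of the two $\dot\U^\imath$-summands of (\ref{Um-Ui}) the corresponding idempotent lies in --- this corresponds to $x\cdot\pi_{\lambda',\lambda'}(J_\pm)\in\dot\U^m$, the sign matching that of $\D'$. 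Hence $\Theta(x)=0$ forces, via $J_++J_-=1$, the vanishing of the image of $x$ in every weight quotient ${}_\lambda\U^m_{\lambda'}$; since $\U^m$ embeds into $\prod_{\lambda,\lambda'}{}_\lambda\U^m_{\lambda'}$ --- a standard consequence of the triangular decomposition of $\U^m$ and of the embedding $\mbb Q(v)[H_1^{\pm1},\dots,H_n^{\pm1}]\hookrightarrow\prod_\lambda\mbb Q(v)$, $H_a\mapsto v^{-\lambda_a}$, cf.\ \cite[Section~23]{Lu93} --- we conclude $x=0$. This is exactly the point at which Theorem~\ref{Km-present} is used and where the corresponding argument for $\Upsilon$ broke down; it is also the sense in which one ``compares graded dimensions'', since on each finite-dimensional piece ${}_\lambda\mcal K^m_{\lambda'}$ the induced map out of ${}_\lambda\U^m_{\lambda'}$ is surjective, and equality of dimensions (via (\ref{Um-Ui}) and \cite{BKLW13}) forces it to be an isomorphism.

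I expect the main obstacle to be the bookkeeping in this injectivity step: correctly matching the two families of $\pm$-signed diagonal idempotents on the $\mcal K^m$-side with the projectors $\pi_{\lambda,\lambda}(J_\pm)$ on the $\dot\U^m$-side --- the sign-swap $J_\pm T=TJ_\mp$, reflecting the two connected components of the maximal isotropic Grassmannian, is what forces the doubling and must be tracked through every product --- verifying that $\D\,\Theta(x)\,\D'$ really is the inductively defined element of (\ref{eq63}), and confirming the embedding of $\U^m$ into the product of its weight quotients. The low-rank cases $n\le2$ are checked directly by the explicit computations underlying Theorems~\ref{thm1} and \ref{thm2}.
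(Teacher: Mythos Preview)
Your proposal is correct and follows the same line the paper indicates: the paper's own proof is the single sentence ``by comparing the defining relations and graded dimensions,'' and you have correctly unpacked both parts, using Proposition~\ref{prop12} for the homomorphism and Theorem~\ref{Km-present} together with (\ref{Um-Ui}) for the injectivity via weight-space comparison.
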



\begin{thebibliography}{99999}\frenchspacing

\bibitem[BKLW14]{BKLW13}  H. Bao, J. Kujawa, Y. Li,  W. Wang, {\em Geometric Schur duality of classical type, with Appendix A by H. Bao, Y. Li, and W. Wang},  arXiv:1404.4000.

\bibitem[BW13]{BW} H. Bao, W. Wang,  {\em A new approach to Kazhdan-Lusztig theory of type B via quantum symmetric pairs},
               arXiv:1310.0103.

\bibitem[BBD82]{BBD82} A.  Beilinson, J. Bernstein, P. Deligne,
         {\em Faisceaux pervers},
         Ast\'{e}risque {\bf 100} (1982).

\bibitem[BLM90]{BLM90} A. Beilinson, G. Lusztig, R. McPherson,
          {\em A geometric setting for the quantum deformation of $GL_n$}, Duke Math. J., {\bf 61} (1990), 655-677.

\bibitem[BB05]{BB05} A. Bj\"{o}rner, F. Brenti, {\em Combinatorics of Coxeter groups}, Graduate Texts in Mathematics, {\bf 231.} Springer, 2005.


\bibitem[D02]{D02} S. Doty, A. Giaquinto,
{\em Presenting Schur algebras},
 International Mathematics Research Notices,
{\bf 36},1907--1944 (2002).

\bibitem[ES13a]{ES13a} M. Ehrig, C. Stroppel,
{\em Diagrams for perverse sheaves on isotropic Grassmannians and the supergroup $SOSP(m|2n)$}, arXiv:1306.4043.

\bibitem[ES13b]{ES13b} M. Ehrig, C. Stroppel,
{\em Nazarov-Wenzl algebras, coideal subalgebras and categorified skew Howe duality}, 	 arXiv:1310.1972.


\bibitem[Fu12]{F12} Q. Fu,
                {\em BLM realization for ${\mathcal U}_{\mbb Z} (\widehat{\mathfrak{gl}}_n)$}, arXiv:1204.3142.


\bibitem[GV93]{GV93} V. Ginzburg and E. Vasserot,
                                     {\em Langlands reciprocity for affine quantum groups of type $A_n$},
                                     Internat. Math. Res. Notices {\bf 3} (1993), 67--85.

\bibitem[G97]{G97} R.  Green, {\em Hyperoctaheral Schur algebras}, J. Algebra  {\bf 192}, (1997) 418-438.

\bibitem[GL92]{GL92} I. Grojnowski, G. Lusztig,
          {\em  On bases of irreducible representations of quantum $GL_n$},  in  {\em Kazhdan-Lusztig theory and related topics}
          (Chicago, IL, 1989), 167-174, Contemp. Math., {\bf 139}, Amer. Math. Soc., Providence, RI, 1992.

\bibitem[KL79]{KL79} D. Kazhdan, G.  Lusztig,
           {\em Representations of Coxeter groups and Hecke algebras},
           Invent. Math. {\bf 53} (1979), no. 2, 165-184.

\bibitem[KhLa10]{KL10} M. Khovanov, A. Lauda,
          {\em  A diagrammatic approach to categorification of quantum groups III}, Quantum Topology, Vol 1, Issue 1, 2010, pp. 1-92.

\bibitem[La10]{L10} A. Lauda,  {\em A categorification of quantum sl(2)},  Adv. in Math., Volume 225, Issue 6, 2010, 3327-3424.

\bibitem[Le02]{Le02}
G. Letzter, {\em Coideal subalgebras and quantum symmetric pairs},
New directions in Hopf algebras (Cambridge), MSRI publications, vol. {\bf 43}, Cambridge Univ. Press, 2002, pp. 117--166.

\bibitem[Lu93]{Lu93} G. Lusztig, {\em Introduction to Quantum groups}, Modern Birkh{\"a}user Classics,
Reprint of the 1993 Edition, Birkh\"{a}user, Boston, 2010.

\bibitem[Lu99]{L99} G. Lusztig,
          {\em Aperiodicity in quantum affine $\mathfrak{gl}_n$}, Asian J. Math. {\bf 3} (1999), 147--177.

\bibitem[Lu00]{L00} G. Lusztig, {\em Transfer maps for quantum affine $\mathfrak{sl}_n$},
               in Representations and quantizations (Shanghai, 1998), 341-356, China High. Educ. Press, Beijing, 2000.


\bibitem[M10]{M10} K. McGerty, {\em On the geometric realization of the inner product and canonical basis for quantum affine $\mfk{sl}_n$},  Algebra Number Theory 6 (2012), no. 6, 1097-1131.

\bibitem[P09]{P09} G. Pouchin,
         {\em A geometric Schur-Weyl duality for quotients of affine Hecke algebras,}
        J. Algebra {\bf  321}  (2009), no. 1, 230-247.

\bibitem[SV00]{SV00}  O. Schiffmann and E. Vasserot,
         {\em Geometric construction of the global base of the quantum modified algebra of $\widehat{ \mathfrak{gl}}_n$},
         Transform. Groups {\bf 5} (2000), 351--360.

\bibitem[W93]{W93} Z. Wan,
  {\em Geometry of classical groups over finite fields}. Studentlitteratur, 1993.

\end{thebibliography}
\end{document}